\documentclass[a4paper,12pt]{amsart}

\title[Log splitting and loop theorems]{Splitting and loop theorems in logarithmic Gromov--Witten theory}

\usepackage[dash,dot]{dashundergaps}

\usepackage{amsmath, amssymb, mathrsfs, amsthm, shorttoc, stmaryrd}
\usepackage{mathtools} 
\usepackage{hyperref}
\usepackage[dvipsnames]{xcolor}
\usepackage{subcaption}
\hypersetup{
colorlinks,
linkcolor={black},
citecolor={blue!50!black},
urlcolor={blue!80!black}
}

\usepackage{aliascnt}

\usepackage[all]{xy}

\usepackage{bbm}

\usepackage{tabularx}
\usepackage{longtable}
\numberwithin{equation}{subsection}

\usepackage{enumitem}

\usepackage{cleveref}
\crefformat{equation}{(#2#1#3)}
\let\oref\ref
\let\tilde\widetilde
\AtBeginDocument{\renewcommand{\ref}[1]{\Cref{#1}}}

\usepackage{tikz}
\usetikzlibrary{matrix, calc, arrows, patterns}

\usetikzlibrary{decorations.pathmorphing, decorations.pathreplacing, knots, calligraphy}
\usetikzlibrary{arrows.meta,calc,decorations.markings}

\usepackage{tikz-cd} 

\newcommand{\scr}[1]{\mathscr #1}

\newcommand{\OO}{\mathscr O}

\newcommand{\bra}[1]{{\left[#1\right]}}
\newcommand{\pra}[1]{{\left(#1\right)}}

\renewcommand{\frak}[1]{\mathfrak{#1}}

\newcommand{\Cl}[1]{C^{\rm log}_{#1}}

\newcommand{\lccx}[1]{\mathbb{L}^{\rm log}_{#1}}

\newcommand{\lkah}[1]{\Omega^{\rm log}_{#1}}

\newcommand{\WR}{{\scr R}}

\newcommand{\longsimeq}{\overset{\sim}{\longrightarrow}}

\newcommand{\on}[1]{\operatorname{#1}}
\newcommand{\bb}[1]{{\mathbb{#1}}}

\newcommand{\ca}[1]{{\mathcal{#1}}}
\newcommand{\bd}[1]{{\mathbf{#1}}}

\newcommand{\ul}[1]{{\underline{#1}}}

\newcommand{\Span}[1]{\left<#1\right>}

\newcommand*{\cal}[1]{\mathcal{#1}}

\newcommand*{\af}[1]{\ca A_{#1}}

\newcommand*{\Aff}{\mathbb{A}}

\def\Alt{\operatorname{Alt}}

\newcommand{\sharedthm}[2]{
  \newaliascnt{#1}{definition}%
  \newtheorem{#1}[#1]{#2}
  \aliascntresetthe{#1}%
}

\theoremstyle{definition}
\newtheorem{definition}{Definition}[subsection]   
\sharedthm{condition}{Condition}
\sharedthm{problem}{Problem}
\sharedthm{situation}{Situation}
\sharedthm{construction}{Construction}
\sharedthm{question}{Question}

\theoremstyle{plain}
\sharedthm{conjecture}{Conjecture}
\sharedthm{proposition}{Proposition}
\sharedthm{lemma}{Lemma}
\sharedthm{choice}{Choice}
\sharedthm{theorem}{Theorem}
\sharedthm{corollary}{Corollary}
\sharedthm{claim}{Claim}

\theoremstyle{remark}
\sharedthm{remark}{Remark}
\sharedthm{aside}{Aside}
\sharedthm{example}{Example}
\sharedthm{warning}{Warning}

\crefname{condition}{Condition}{Conditions}
\crefname{problem}{Problem}{Problems}
\crefname{situation}{Situation}{Situations}
\crefname{construction}{Construction}{Constructions}
\crefname{question}{Question}{Questions}
\crefname{conjecture}{Conjecture}{Conjectures}
\crefname{choice}{Choice}{Choices}
\crefname{claim}{Claim}{Claims}
\crefname{aside}{Aside}{Asides}
\crefname{warning}{Warning}{Warnings}

\usepackage{letltxmacro}
\LetLtxMacro{\phiorig}{\phi}
\renewcommand{\phi}{\varphi}

\newcommand{\bas}{\mathsf{bas}}

\newcommand{\ZZ}{{\mathbb Z}}

\newcommand{\RR}{{\mathbb R}}

\newcounter{nootje}
\usepackage{comment}

\newcommand*{\pb}{\ar[dr, phantom, very near start, "{\ulcorner}"]}

\newcommand*{\lpb}{\ar[dr, phantom, very near start, "{\ulcorner \ell}"]}
\newcommand*{\lpbstrict}{\ar[dr, phantom, very near start, "{\ulcorner \ell s}"]}

\renewcommand{\tilde}[1]{\widetilde{#1}}
\renewcommand{\hat}[1]{\widehat{#1}}

\newcommand{\ccx}[1]{{\bb L}_{#1}}

\usepackage{xparse}
\NewDocumentCommand{\bl}{ O{\lambda} m }{{{\left({#2}\right)}^{\sim}_{#1}}}

\tikzset{
  symbol/.style={
    draw=none,
    every to/.append style={
      edge node={node [sloped, allow upside down, auto=false]{$#1$}}}
  }
}

\newcommand{\ev}{\mathsf{ev}}
\newcommand{\vir}{\mathsf{vir}}
\DeclareMathOperator{\Log}{Log}

\DeclareMathOperator{\CH}{CH}
\DeclareMathOperator{\LogCH}{LogCH}
\DeclareMathOperator{\LogGW}{LogGW}

\DeclareMathOperator{\DR}{DR}
\DeclareMathOperator{\LogDR}{LogDR}
\DeclareMathOperator{\LogDDR}{LogDDR}
\DeclareMathOperator{\LogDRL}{LogDRL}
\DeclareMathOperator{\LogDDRL}{LogDDRL}

\renewcommand{\angle}[1]{\hspace{-2pt}\left\langle #1 \right\rangle}

\DeclareMathOperator{\Sym}{Sym}
\DeclareMathOperator{\Bl}{Bl}

\newcommand{\colim}{\operatornamewithlimits{colim}}

\newcommand{\Mbar}{\overline{\M}}
\newcommand{\Mfrak}{\mathfrak{M}}

\newcommand{\isom}{\stackrel{\sim}{\longrightarrow}}

\newcommand{\cat}[1]{\bd{#1}}
\renewcommand{\log}{{\mathsf {log}}}

\newcommand{\trop}{{\mathsf {trop}}}

\newcommand{\NN}{\mathbb N}

\newcommand{\M}{{\mathsf {M}}}
\newcommand{\gp}{{\mathsf {gp}}}
\newcommand{\g}[1]{#1^{\mathsf {gp}}}
\newcommand{\punc}{{\mathsf {punc}}}
\newcommand{\st}{{\mathsf {st}}}
\newcommand{\gl}{{\mathsf{gl}}}

\newcommand{\sPP}{{\mathsf{sPP}}}
\newcommand{\ghost}{\overline{{\mathsf {M}}}}

\newcommand{\Mpt}{\bb M}
\newcommand{\Mptst}{\Mpt^\st}

\newcommand{\tensor}{\otimes}

\newcommand{\LogSch}{\operatorname{LogSch}}

\DeclareMathOperator{\PP}{PP}

\newcommand{\A}{\mathbb{A}}

\newcommand{\E}{\mathbb{E}}
\newcommand{\F}{\mathbb{F}}
\newcommand{\G}{\mathbb{G}}

\newcommand{\LL}{\mathbb{L}}
\newcommand{\N}{\mathbb{N}}
\renewcommand{\P}{\mathbb{P}}
\newcommand{\Q}{\mathbb{Q}}
\newcommand{\R}{\mathbb{R}}

\newcommand{\Z}{\mathbb{Z}}
\newcommand{\Acal}{\mathcal{A}}
\newcommand{\Bcal}{\mathcal{B}}

\newcommand{\Mcal}{\mathcal{M}}

\newcommand{\Ocal}{\mathcal{O}}

\newcommand{\Rb}{\mathbf{R}}

\newcommand*{\GG}{\mathbb G}

\newcommand{\ol}{\overline}

\newcommand{\pt}{\{*\}}

\newcommand{\prodvfc}{\mathsf{prod}}

\newcommand{\CO}{\mathsf{CO}}
\newcommand{\Disc}{\mathsf{Disc}}

\newcommand{\Mptm}{\Mpt^\mu}
\newcommand{\Mptmt}{\Mpt^{\mu, t}}
\usetikzlibrary{calc}

\author{Leo Herr}
\author{David Holmes}
\author{Pim Spelier}

\date{\today}

\begin{document}
\begin{abstract}
We define a new theory of logarithmic Gromov--Witten invariants allowing negative contact orders using pierced logarithmic curves. We prove that these classes satisfy logarithmic birational invariance and satisfy loop and splitting theorems generalising those for classical Gromov--Witten invariants. 
\end{abstract}
 
\maketitle

\tableofcontents

\section{Introduction}
\subsection{Background}
Gromov--Witten invariants count curves in a target algebraic variety; for example, there are 620 rational curves of degree 4 in $\bb P^2$ through 11 points in general position. Log Gromov--Witten invariants are a generalisation which count curves with tangency conditions; for example there are 100 rational curves of degree 3 in $\bb P^2$ tangent to two fixed lines and passing through 6 points in general position \cite{ernstrom1996recursive}. More interesting than the exact numbers are the many structural properties satisfied by these invariants.

Gromov--Witten invariants form a fundamental recursive algebraic structure called a cohomological field theory \cite{Kontsevich1994Gromov-Witten-c,Behrend1997GromovWitten,Pandharipande2018CohFTCalculations}. For example, these play a key role in Givental--Teleman reconstruction \cite{giventalreconstruction,Teleman2012} which, for a fixed target, relate the Gromov--Witten invariants of low genus curves to the invariants of high-genus curves via the \emph{splitting} and \emph{loop} axioms written out below. 

Traditional log Gromov-Witten invariants (including those of \cite{BNR}, the most refined invariants available prior to the present work) do not satisfy the splitting and loop axioms. They do however satisfy a different set of structural properties, known as \emph{degeneration formulae}: one can deform a complicated target until it breaks up into simpler pieces, and then reconstruct the Gromov--Witten invariants of the complicated target from the log Gromov--Witten invariants of the simple pieces \cite{Li2001Stable-morphism,Li2002A-degeneration-,ranganathan2019logarithmic,MaulikRanganathan}.

The main goal of this work is to develop a new theory of log Gromov--Witten invariants, which do satisfy the splitting and loop axioms, making it possible to reconstruct high-genus log Gromov-Witten invariants from low-genus ones, and with the eventual goal of allowing us to combine these techniques with degeneration formulae. 

The splitting and loop axioms in classical Gromov--Witten theory arise from the study of gluing maps for moduli spaces of prestable curves: 
\begin{equation}\label{eq:gl}
\gl\colon \frak M_{g_1, n_1 + 1} \times \frak M_{g_2, n_2 + 1} \to \frak M_{g_1 + g_2, n_1 + n_2} \text{ and }\gl\colon \frak M_{g, n+2} \to \frak M_{g+1, n}, 
\end{equation}
where the first ``splitting'' map glues together the last markings on the curves of genera $g_1$ and $g_2$, and the second ``loop'' map glues together the last two markings on the curve of genus $g$. Suppressing decorations, these maps fit into commutative diagrams 
\begin{equation}\label{dia:glueing}
	\begin{tikzcd}[column sep = tiny]
		& {\Mbar_{g+1,n}(X)}\ar[dl] & D  \ar[l]\ar[rr]\ar[dl]\ar[d]&&  \Mbar_{g,n+2}(X) \ar[d]\\
		\Mfrak_{g+1, n} &  \Mfrak_{g, n + 2}\ar[l, "\gl"] & X\ar[rr, "\Delta"] && {X \times X}
	\end{tikzcd}
\end{equation}
(and similarly for the splitting map) where the right vertical map is given by evaluation at the last two markings, and both squares are pullbacks. The splitting `axiom' (really a theorem) relates the pullbacks of the virtual classes in this diagram along the maps $\gl$ and $\Delta_X$:
\[\gl^![\Mbar_{g_1 + g_2, n_1 + n_2}(X)]^{\vir} = \Delta^!\left([\Mbar_{g_1, n_1 + 1}(X)]^{\vir} \boxtimes [\Mbar_{g_2, n_2 + 1}(X)]^{\vir}\right), \]
and the loop axiom is analogous:
\[\gl^![\Mbar_{g+1, n}(X)]^{\vir} = \Delta^![\Mbar_{g, n + 2}(X)]^{\vir}. \]
A basic obstruction to generalising these axioms to log Gromov--Witten theory is that in the latter situation every leg has a contact order with the boundary of $X$, and if two legs are to be glued into an edge then their contact orders must sum to zero; in particular, unless both of them are trivial, we must allow for the possibility of negative contact orders. 

Negative contact order (or negative tangency) to a boundary divisor does not make sense in classical algebraic geometry, but is possible in principle in log geometry. It means that we must allow certain elements to lie in the groupification of a monoid instead of in the monoid itself \cite[\S 1.1]{Abramovich2020Punctured-logar}. On the other hand, defining log Gromov--Witten invariants in a way which allows negative contact orders, and proving splitting and loop axioms for these invariants, is not straightforward. 

Spaces of relative and logarithmic stable maps with negative contact orders were constructed in \cite{structuresingenuszerorelativeGWtheory} and \cite{Abramovich2020Punctured-logar}, and the latter defines a Gromov--Witten correspondence which takes a tropical virtual class to an algebraic one. Spaces of tropical stable maps with negative contact orders are generally highly singular and non-equidimensional, so defining a tropical virtual class (and hence an algebraic virtual class) is not straightforward; but it is unavoidable if one aims to define log Gromov-Witten invariants. 

One approach (adopted in \cite{Abramovich2020Punctured-logar}) is to use tropical data to pick out irreducible components of the tropical moduli space, and to use the fundamental class of such a component as a virtual fundamental class. Another definition was proposed in \cite{BNR} building on idea from \cite{fan2020structures}, settling the question of how logarithmic and orbifold invariants are related in genus 0, but which does not satisfy splitting or loop axioms, nor the birational invariance satisfied by classical log Gromov--Witten invariants \cite{Abramovich2018Birational-inva} (except in some special cases; see \cite{johnston2026birational}).

\subsection{Pierced log curves}

The difficulties faced by both these theories is most easily seen on the tropical level. Suppose one wants to glue two legs (markings) of a tropical curve (in essence, a metric graph) together to produce an edge of a higher genus tropical curve. This new edge will have some finite length, and the data of this length must come from somewhere. It may be supplied as some auxiliary data (as in \cite{Huszar2019Clutching-and-gluing}), or it may be determined by requiring that the legs of the original tropical curve have finite length (these lengths are then summed to yield the length of the new edge). This latter approach is the one adopted in \cite{Holmes2023LogarithmicCohomologicalFT}, where two of us introduced \emph{pierced} log curves. These are a variant of Kato's log curves \cite{Kato2000Log-smooth-defo} whose underlying curve is still a prestable curve. They have the same log structure at nodes and at unmarked points, but a different log structure at the markings which in particular assigns a length to each leg; see \ref{def:piercedcurve} for details. In particular, we have glueing maps for pierced curves:
\begin{equation}\label{eq:gl_pie}
\gl\colon \Mpt_{g_1, n_1 + 1} \times \Mpt_{g_2, n_2 + 1} \to \Mpt_{g_1 + g_2, n_1 + n_2} \text{ and }\gl\colon \Mpt_{g, n+2} \to \Mpt_{g+1, n}, 
\end{equation}

We use our theory of pierced curves to define a stack of pierced maps $\Mpt_\Lambda(X)$ to a log target $X$, as well as a tropical analogue $\Mpt^t_\Lambda(X)$. Here $\Lambda$ is a collection of data which captures the genus, number of markings, contact orders, and curve class. 

The marked points of a pierced log curve are actual log morphisms, unlike for Kato's log curves. This means that the `evaluation at a marked point' maps $\ev_i\colon \Mbar_{g,n}(X) \to X$ lift to the stack of pierced maps (as do their factorisations via the inclusions of suitable strata of $X$), yielding maps 
$\ev_i\colon \Mpt_\Lambda(X) \to X$, and allowing us to avoid working with the evaluation stacks of \cite{Abramovich2020Punctured-logar}. 

Putting this together, the key diagram \ref{dia:glueing} makes sense for pierced log curves; this will allow for splitting and loop axioms as described below. 

\subsection{Introducing pierced Gromov--Witten invariants}

Our first main result is that the space $\Mpt_{\Lambda}(X)$ of stable pierced log maps is proper. This is essential for defining Gromov--Witten invariants, since it allows for pushing forward to the moduli stack of stable curves, and more generally for integration. 

\begin{proposition}[\ref{prop:mpieralgebraicproper}]
The moduli space $\Mpt_{\Lambda}(X)$ is an algebraic stack with log structure. The moduli space $\Mpt_{\Lambda}(X)$ is proper if $X$ is log smooth and projective.
\end{proposition}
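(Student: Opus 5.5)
The plan is to reduce both statements to known results for ordinary (Kato/Gross--Siebert/Abramovich--Chen) log stable maps and for punctured maps, using the fact that a pierced log curve differs from a Kato log curve only in the log structure at the markings.

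\emph{Algebraicity.} First, I will realise the stack $\Mpt$ of pierced log curves as an algebraic stack with log structure. It should be a log modification, or an explicit root/toric stack, over the stack $\Mfrak$ of prestable curves: the extra leg lengths are adjoined as generators of the base monoid. Next, I will express $\Mpt_\Lambda(X)$ as the stack of log morphisms $C \to X$ over $\Mpt$, cut out by the prescribed contact orders and curve class. By Olsson's theory of log structures, the stack of log morphisms from a family of log schemes to a fixed log target with fine saturated log structure is algebraic: it sits over the algebraic stack of underlying maps $\Mfrak(\ul X)$ and is locally of finite presentation. Fixing genus, markings, class and contact orders picks out an open and closed substack. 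The step to check here is the existence of a \emph{minimal} (basic) log structure, so that $\Mpt_\Lambda(X)$ is a stack over schemes carrying a log structure rather than merely a stack over log schemes. I will construct the basic monoid tropically, as the dual of the cone of tropical pierced maps $\Mpt^t_\Lambda(X)$ over a geometric point. This mirrors the Gross--Siebert construction, with additional variables for the leg lengths constrained by the negative contact orders. Openness of basicness then follows exactly as in Gross--Siebert.

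\emph{Properness.} Boundedness, i.e.\ finite type, follows from boundedness of the underlying stable maps $\Mbar_{g,n}(\ul X,\beta)$, which is standard since $X$ is projective. We also need finiteness of the possible combinatorial types, which follows from the bounded contact orders and the finitely many strata of $X$. Separatedness and the existence part of the valuative criterion are where the real work lies. I will use the valuative criterion with a DVR $R$, generic point $\eta$ and closed point $0$, allowing finite base change. Starting from a pierced log map over $\eta$, I will first extend the underlying stable map via properness of $\Mbar(\ul X)$ after base change. I will then extend the log structure as follows:
\begin{enumerate}
\item Forget the pierced structure to obtain a punctured map in the sense of Abramovich--Chen--Gross--Siebert, equivalently a log map with punctures.
\item Invoke their properness and valuative criterion for punctured maps when $X$ is log smooth and projective; this includes existence of the punctured log structure extending the generic one.
\item Lift back to a pierced structure.
\end{enumerate}

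The main obstacle is step (3): showing that the leg lengths over $\eta$ extend uniquely to lengths over $R$ that are compatible with the basic monoid. I expect this to reduce to a tropical statement. Over $R$, the tropical pierced map is determined by the punctured tropical map together with a length on each leg. These lengths must dominate the values forced by the negative slopes, so that the leg endpoint stays inside the relevant cone of $\Sigma(X)$. Existence should follow by extending generic lengths as elements of the valuation monoid $\N$. Uniqueness should follow because basic structures are determined by their tropical moduli. I would first try to prove that the forgetful map from pierced to punctured maps is representable and proper, e.g.\ as a composition of a log blowup with a finite map. Properness of $\Mpt_\Lambda(X)$ would then follow from properness of punctured maps.
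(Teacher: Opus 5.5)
Your final suggestion is exactly the paper's proof, but your sketch leaves out the two ingredients that make it work. The valuative-criterion route you lead with does not supply them either.

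The paper shows (\ref{thm:closedinblowup}) that $\Phi_{\scr Q}\colon \Mpt_\Lambda(X)\to \Mcal^\punc_\Lambda(X)_{\scr Q}=\Mcal^\punc_\Lambda(X)\times_{\ul{B\G_m^n}}\scr Q^n$ factors as a closed immersion followed by a log blowup. Since $\scr Q\to\ul{B\G_m}$ is an isomorphism on underlying stacks, algebraicity and properness are then inherited formally from the punctured space of Abramovich--Chen--Gross--Siebert.

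\emph{First missing ingredient.} The lengths $\ell_i\in\ghost_{S,>0}$ are extra log data that the punctured map forgets, and each comes with an $\ca O^\times$-torsor. A free choice of torsor would add a factor $B\G_m$ to the underlying stack and destroy separatedness. What saves you is that this torsor is the cotangent line $\bb L_i$ at $p_i$ (the square \ref{eq:mpiertompundiag}). Consequently the comparison map lands in $\Mcal^\punc_\Lambda(X)_{\scr Q}$ and is a log monomorphism there. Uniqueness of the pierced lift comes from the maximality condition (6) of \ref{def:piercedcurve}, not from tropical moduli.

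\emph{Second missing ingredient.} The image is explicit. A punctured map with lengths lifts if and only if, for every $(a,n)\in\ghost_{\check C,p_i}$ with $n<0$, the element $a+n\ell_i$ lies in $\ghost_S$ and $\alpha(a+n\ell_i)=0$. After the log blowup making $a+n\ell_i$ comparable with $0$, this is a closed condition. This is exactly your step (3), which you leave at ``should follow''.

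The route you actually propose also has problems of its own.
\begin{itemize}
\item Olsson's framework for stacks of log morphisms is for fine log structures, but a pierced curve is not fine at its markings (\ref{ex:puncturingA1}). You must first factor $C\to\check C\to X$ through the punctured curve before that theory applies.
\item Finiteness of combinatorial types does not follow from the marking contact orders and the finiteness of the strata alone. You must bound slopes along internal edges, which needs balancing against the curve class. That is the nontrivial content of boundedness already for ordinary log maps \cite{Abramovich2017Boundedness-of-}, so it should be inherited from the punctured space rather than reproved.
\item ``Extending generic lengths as elements of the valuation monoid $\N$'' misplaces the lengths. Each $\ell_i$ is a nonzero element of $\ghost_S$ already over the generic point, so a valuative criterion has to be run on underlying stacks with basic log structures.
\end{itemize}
The paper sidesteps all of this because each piece of its factorisation (closed immersion, log blowup, base change along $\scr Q^n\to\ul{B\G_m^n}$) is visibly representable and proper on underlying stacks.
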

The proof runs essentially by reduction to the analogous result for punctured maps in \cite{Abramovich2020Punctured-logar}. 

Our next main task is to equip this space with a virtual fundamental class. A virtual pullback for the map $\Mpt_{\Lambda}(X)\to \Mpt_{\Lambda}^t(X^t)$ to the corresponding tropical moduli space\footnote{Throughout the paper, we use a superscript ${}^t$ to denote tropicalisation; see \ref{sec:log_targets}. } is constructed following ideas of \cite{Abramovich2020Punctured-logar}. It then remains to construct a virtual class on the tropical moduli space $\Mpt_{\Lambda}^t(X^t)$, which is not completely straightforward since this space is neither smooth nor equidimensional. Inspired by \cite{BNR}, we construct a class by writing down a homological piecewise polynomial function on $\Mpt_{\Lambda}^t(X^t)$
in the sense of \cite{RPSS_log_taut}, which yields an element of Chow homology. The precise definition of our polynomial is in \ref{subsec:tropvfc}, but the essential idea is that the polynomial measures the distance from the \emph{tip} of a leg to the boundary of the cone in whose interior it lies; this is possible with pierced curves as legs have finite length. 

With these tools in hand, we can define pierced log Gromov--Witten invariants as
\[
\int_{[\Mpt_\Lambda(X)]^\vir} \prod_{i=1}^n \ev_i^* \gamma_i
\]
where $\gamma_i$ are classes on (strata of blowups of) $X$; descendant invariants are defined similarly.

\subsection{Main results}

With this new virtual class, we are able to prove a birational invariance statement analogous to that proven for classical log Gromov--Witten invariants by Abramovich and Wise \cite{Abramovich2018Birational-inva}. Such birational invariance fails in general for the class constructed in \cite{BNR}, though it does hold in some special situations, see \cite{johnston2026birational}.

\begin{theorem}[\ref{thm:log_bir_inv}] Let $X$ be a target as in \ref{sec:log_targets} and let $\tilde X \to X$ be a log blowup with both $X$ and $\tilde X$ smooth and log smooth. There are finitely many discrete data $\tilde \Lambda$ for $\tilde X$ which are balanced and compatible with given discrete data $\Lambda$ for $X$, and we have 
\begin{equation}
\sum_{\tilde \Lambda}f_*[\Mpt_{\tilde{\Lambda}}(\tilde{X})]^\vir = [\Mpt_{\Lambda}(X)]^\vir. 
\end{equation}
\end{theorem}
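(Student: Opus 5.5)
The plan is to follow the Abramovich--Wise strategy: split the virtual class into an obstruction-theoretic part and a tropical part, and treat each separately. As described above, $[\Mpt_\Lambda(X)]^\vir$ is obtained by virtual pullback along $\Mpt_\Lambda(X)\to \Mpt^t_\Lambda(X^t)$ of a tropical class $[\Mpt^t_\Lambda(X^t)]^\vir$, itself given by a homological piecewise polynomial. The log blowup $\tilde X\to X$ induces a subdivision $\tilde X^t\to X^t$ of tropicalisations. I will build the commutative square
\[
\begin{tikzcd}
\bigsqcup_{\tilde\Lambda}\Mpt_{\tilde\Lambda}(\tilde X) \ar[r]\ar[d,"f"] & \bigsqcup_{\tilde\Lambda}\Mpt^t_{\tilde\Lambda}(\tilde X^t)\ar[d,"f^t"]\\
\Mpt_\Lambda(X)\ar[r] & \Mpt^t_\Lambda(X^t)
\end{tikzcd}
\]
and reduce the theorem to two statements. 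First, the square is cartesian in fs log stacks, up to a proper birational modification. Second, the tropical classes satisfy $\sum_{\tilde\Lambda} f^t_*[\Mpt^t_{\tilde\Lambda}(\tilde X^t)]^\vir=[\Mpt^t_\Lambda(X^t)]^\vir$.

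For the first statement, note that a pierced map to $\tilde X$ is the same as a pierced map to $X$ together with a lift of its tropicalisation to the subdivision $\tilde X^t$; this holds because $\tilde X\to X$ is log étale and is pulled back from $\tilde X^t\to X^t$. The contact orders of the lift are then determined cone by cone. Finiteness of the balanced compatible $\tilde\Lambda$ follows because each leg's contact order in $X^t$, a vector in some cone, lies in finitely many cones of the subdivision. Finiteness of the refined curve class data follows by bounding the degrees against a relatively ample class. Since $\tilde X\to X$ is log étale, $\mathbb L^{\log}_{\tilde X}=f^*\mathbb L^{\log}_X$. Hence the relative obstruction theories agree under pullback, and by compatibility of virtual pullback with proper pushforward (Manolache) the algebraic statement reduces to the tropical one.

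For the second statement, I would use the description of both tropical classes as piecewise polynomials and the pushforward formalism of \cite{RPSS_log_taut}. Over the locus where the tropical moduli is a subdivision of the base, pushforward of a piecewise polynomial is computed cone by cone. The polynomial measuring the distance from each leg tip to the boundary of its containing cone must then be compared. In $\tilde X^t$ the tip lies in a smaller cone $\tilde\sigma\subset\sigma$, and the distance to $\partial\tilde\sigma$ is a refinement of the distance to $\partial\sigma$. Summing over the possible $\tilde\Lambda$ and over the cones $\tilde\sigma$ subdividing $\sigma$ should recover the original polynomial, by a telescoping/inclusion--exclusion identity: the contributions from interior walls of the subdivision cancel in pairs, while the outer boundary contributes exactly the polynomial on $\sigma$.

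I expect this tropical identity to be the main obstacle, because the tropical spaces are non-equidimensional and the map $f^t$ is not a subdivision everywhere. On components where legs have negative contact orders the lifts can jump dimension. I would first verify the identity on the moduli of a single leg mapping into one cone, where the space is explicit, and then globalise using the product structure of the polynomial over legs. Wrong-dimensional contributions would be shown to vanish by degree reasons after pushforward.
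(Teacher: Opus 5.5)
Your reduction of the algebraic statement to the tropical one is essentially the paper's: cartesian squares $\Mpt_{\tilde\Lambda}(\tilde X)\to\Mpt_{\tilde\Lambda}(\tilde X,\tilde X^t)\to\Mpt^t_{\tilde\Lambda}(\tilde X^t)$ over the analogous spaces for $X$, obstruction theories that are pulled back because $\tilde X=X\times_{X^t}\tilde X^t$, and push--pull compatibility. Manolache's theorem only covers projective pushforward, which is why the paper proves a proper, DM-type version in an appendix.

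The gap is in the tropical identity, which is the real content of the theorem. You leave it as ``should recover the original polynomial'', and the mechanism you propose is not the right one. You only track the factor $r_i$. But when the ray from the vertex in direction $a_i$ crosses walls of $\tilde X^t$, a tropical map to $\tilde X^t$ acquires $2$-valent vertices there. These are components in exceptional divisors, and they are stable because they carry nonzero curve class. So the pierced leg of the $\tilde X$-curve is only the final segment, and \emph{both} $\ell_i$ and $r_i$ change; see the leg length $(\ell-r)/2$ in \ref{ex:birational}.

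The correct local model is as follows. Let $\bb M$ be the space obtained by forgetting leg lengths. Every cone of $\Mptmt_\Lambda(X)$ not contained in the boundary is a cube $\prod_i[0,L_i]$ over a cone of $\bb M$, where $L_i=\ell_i+r_i$, and it carries the class $\prod_i b_i(L_i-b_i)$. Subdivide $\bb M$ to $\tilde{\bb M}$ so that the wall-crossing pattern is constant and $L_i=L_{i,1}+\dots+L_{i,m_i}$ with each $L_{i,j}$ linear. Then the $\tilde X$-spaces are a lattice refinement of the decomposition into subcubes $\prod_i[0,L_{i,j_i}]$, with classes $\prod_i b_i(L_{i,j_i}-b_i)$. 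In particular there is no dimension jumping to dispose of: cones not of cube form lie in the boundary, where every hPP vanishes.

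With this in hand, the identity $\pi_*q=p$ is not a telescoping in which interior walls cancel in pairs. The functions $b(L-b)$ and $b_j(L_j-b_j)$ disagree pointwise, and the interior walls are boundary of the source, where $q$ vanishes. In \ref{ex:birational} each of the two pieces pushes forward to $\tfrac12[M]^\vir$, and nothing cancels. The actual argument runs:
\begin{enumerate}
\item Both sides are products over legs, so reduce to $n=1$.
\item Both have degree $2$, so it suffices to compare on $2$-dimensional cones.
\item Cones not lying over a ray of the base are endpoint faces, on which both vanish.
\item Over a ray only one $L_j$ is nonzero, so the decomposition is trivial there and $p=q$.
\end{enumerate}
One then needs injectivity of $\sPP_*\to\CH_*$ to transfer push--pull compatibility to hPPs.

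Separately, your finiteness argument for curve classes fails. A bound against a relatively ample class does not bound the extensions $\tilde\beta$ of $\beta$: for a point blowup of a surface, the total transform of $\beta$ plus any non-negative multiple of the exceptional curve is effective and pushes forward to $\beta$. What determines $\tilde\beta$ is balancing, through the pushout $\on{Pic}(\tilde X)=\on{Pic}(X)\sqcup_{PL(X^t)}PL(\tilde X^t)$. This uses connectedness of the fibres of $X\to X^t$ and smoothness of both fans, and it forces $\tilde\beta$ once $\beta$ and the contact orders are given.
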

The proof of this result is essentially tropical, and depends on a careful comparison of the balancing conditions for $X$ and $\tilde X$. \Cref{ex:birational} shows that this sum is necessary; the discrete data for $X$ really can break into pieces when pulled back to $\tilde X$, and several different discrete data for $\tilde X$ can all make non-zero contributions.

Next we move on to the splitting and loop axioms, which are the technical heart of this work. We fix a log target $X$, which is again assumed smooth and log smooth. We fix a gluing map as in \ref{eq:gl_pie}, and we fix discrete data $\Lambda$ for the target of the gluing map. There is then a finite set $\gl^*\Lambda$ of discrete data $\Xi$ for the source of the gluing map which are compatible with $\Lambda$. We work throughout in logarithmic intersection theory and Chow groups, see \ref{sec:log_intersection_theory} for details. Our first result is
\begin{theorem}[\ref{thm:loggluingclean}]
For $\gl$ either the splitting or loop gluing map we have 
\[s_+s_- \gl^! [\Mpt_{\Lambda}(X)]^\vir = \sum_{\Xi \in \gl^* \Lambda} c_+^{\Xi} c_-^{\Xi}\Delta_X^! [\Mpt_{\Xi}(X)]^\vir\]
in log Chow homology. 
\end{theorem}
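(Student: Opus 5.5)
We prove the identity by factoring the virtual class as a virtual pullback of a tropical class, $[\Mpt_\Lambda(X)]^\vir = \pi^!_{\Lambda}[\Mpt^t_\Lambda(X^t)]^\vir$, where $\pi_\Lambda\colon \Mpt_\Lambda(X)\to\Mpt^t_\Lambda(X^t)$ is the map to the tropical moduli space. This splits the comparison into two parts: an algebraic part, handled by functoriality of virtual pullbacks, and a tropical part, handled by piecewise polynomials. The cut-out classes $s_\pm$ and correction factors $c_\pm^\Xi$ appear only in the tropical part.

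\emph{Algebraic step.} We form the log analogue of \ref{dia:glueing}. Along $\gl$ the pierced map space pulls back to $\Mpt_\Xi(X)$ for the various $\Xi\in\gl^*\Lambda$, cut out by the condition that the two glued legs evaluate to the same point. We then show this square is a log fibre product compatible with the tropical analogue $\gl^t\colon \Mpt^t_\Xi(X^t)\to\Mpt^t_\Lambda(X^t)$.

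The point is to compare relative obstruction theories. At the glued node, the obstruction theory of $\Mpt_\Lambda(X)\to \Mpt^t_\Lambda$ differs from that of $\Mpt_\Xi(X)\to\Mpt^t_\Xi$ by the standard normalisation sequence. This sequence contributes exactly $\ev^*T^{\log}_X$, whose Gysin map is $\Delta_X^!$. Here we use that pierced markings give honest evaluation maps to $X$, so $\Delta_X$ is a genuine lci diagonal. Since $X$ is log smooth, the log tangent bundle governs the deformations.

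Functoriality and commutativity of virtual pullbacks in log Chow homology (\ref{sec:log_intersection_theory}) then reduce the theorem to the tropical identity
\[ s_+s_-\,(\gl^{t})^![\Mpt^t_\Lambda(X^t)]^\vir=\sum_{\Xi}c_+^\Xi c_-^\Xi\,[\Mpt^t_\Xi(X^t)]^\vir \]
on the tropical moduli spaces.

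\emph{Tropical step.} The virtual class on $\Mpt^t_\Lambda(X^t)$ is given by a homological piecewise polynomial (\ref{subsec:tropvfc}): a product over legs of the distance from the tip of the leg to the boundary of its cone. Gluing two legs $\ell_\pm$ into an edge $e$ sets $\mathrm{len}(e)=\mathrm{len}(\ell_+)+\mathrm{len}(\ell_-)$. The cones of $\Mpt^t_\Lambda$ meeting the image of $\gl^t$ decompose according to the possible contact-order splittings $\Xi$ across $e$.

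We would compute $(\gl^t)^!$ cone by cone as restriction of piecewise polynomials. On the glued side, the edge $e$ contributes no tip factor. On the source side, the two new legs contribute tip factors $\phi_\pm$. We identify the piecewise linear functions $s_\pm$ as the tip-distance functions of $\ell_\pm$, and the $c^\Xi_\pm$ as the multiplicity and lattice-index corrections that arise because balancing at the tips of $\ell_\pm$ is not automatic. With these identifications the polynomials agree after multiplying by $s_+s_-$ and $c_+^\Xi c_-^\Xi$ respectively. The remaining leg factors match trivially. Summing over $\Xi$ uses that the $\Mpt^t_\Xi$ cover the preimage with disjoint interiors.

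\emph{Main obstacle.} The hard part is the tropical step. The tropical moduli spaces are non-equidimensional, so $(\gl^t)^!$ must be made sense of as a refined Gysin map along a possibly non-flat map of cone stacks. I would first choose a common subdivision on which $\gl^t$ becomes combinatorially flat, justified by invariance of log Chow under subdivision. I would then verify the identity there by a local computation of lattice indices in a single cone. The genus-dependent difference between splitting and loop should only affect which vertices are identified, not this local computation.
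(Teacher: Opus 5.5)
\textbf{There is a genuine gap: you use the wrong diagonal in the algebraic step, and this propagates into the tropical step.} The normalisation sequence at the glued node contributes $\ev^*\Omega^{\log}_X$ (dually $T^{\log}_X$). That is the conormal bundle of the \emph{log} diagonal $\Delta^{\log}\colon X\to X\times_{X^t}X$, not of $\Delta_X\colon X\to X\times X$, whose conormal bundle is $\Omega_X$. The two are related by $X\times_{X^t}X=(X\times X)\times_{X^t\times X^t}X^t$. So $\Delta_X$ factors as $\Delta^{\log}$ followed by a base change of the tropical diagonal $\Delta_{X^t}$, and you need $\Delta_X^!=\Delta^{\log,!}\circ\Delta_{X^t}^!$ (as in \ref{prop:deltadeltadeltaPOT}).

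Your obstruction-theory comparison therefore only produces $\Delta^{\log,!}$; the factor $\Delta_{X^t}^!$ has to stay on the tropical side. Saying ``$\Delta_X$ is a genuine lci diagonal'' also does not suffice. The square over $\Delta_X$ is only a log fibre square, not a fibre square of underlying stacks (\ref{rk:not_fibre}), so the ordinary lci Gysin map of $\Delta_X$ does not apply directly. The paper gets around this as follows:
\begin{itemize}
\item It introduces $M=\Mpt_{\Lambda|\Xi}(X^t)\times_{\Mpt_\Xi(X^t)}\Mpt_\Xi(X)$.
\item Over $M$, the space $\Mpt_{\Lambda|\Xi}(X)$ is cut out by the \emph{strict} map $\Delta^{\log}$, so that square is cartesian on underlying stacks.
\item It proves $P^!=\Delta^{\log,!}\circ P_\Xi^!$ via a Behrend-style compatibility datum built from exactly your normalisation sequence.
\item It then uses $\Delta^{\log,!}\circ\Delta_{X^t}^!=\Delta_X^!$.
\end{itemize}

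\textbf{Consequently, your reduced tropical identity is not well-posed.} There is no map $\gl^t\colon\Mpt^t_\Xi(X^t)\to\Mpt^t_\Lambda(X^t)$, because the tips of the two legs need not have the same image in $X^t$. The pullback $(\gl^t)^!$ lands on $\bigsqcup_\Xi\Mpt^t_{\Lambda|\Xi}(X^t)$, which is the fibre product of $\Mpt^t_\Xi(X^t)$ with the diagonal of $X^t$. The right-hand side must therefore be $\sum_\Xi c_+^\Xi c_-^\Xi\,\Delta_{X^t}^![\Mpt^t_\Xi(X^t)]^\vir$.

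The tropical diagonal is exactly where $s_\pm$ enter. They are not the tip-distance functions $r_\pm$, and the $c^\Xi_\pm$ are not balancing corrections. Rather, $s_\pm$ are the characteristic PL functions on $X^t$ of the two Radon rays of a Radon subdivision at the edge slope $b$, pulled back along $\ev_\pm$. The integers $c_\pm^\Xi$ are defined by $c_\pm^\Xi r_\pm=\ev_\pm^*s_\pm$ (\ref{prop:radonsubdiv}).

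Once this is set up correctly, the tropical step is short and is not the main obstacle. The map $\gl$ factors through base changes of the log smooth map $\R_{\ge0}^2\to\R_{\ge0}$ and the strict codimension-two embedding $\R_{>0}^2\to\R_{\ge0}^2$, so $\gl^!$ is multiplication by $\ell_+\ell_-$. The pullback $\Delta_{X^t}^!$ is pullback of functions. Both sides then equal $s_+s_-\ell_+\ell_-\prod_i\ell_ir_i$. No auxiliary subdivision making $\gl^t$ flat is needed. The real work is the lifting through $M$ described above, which your proposal skips by attributing all of $\Delta_X^!$ to the normalisation sequence.
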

Here $\Delta_X\colon X \to X \times X$ is the diagonal of $X$, $c_\pm^\Xi$ are certain integers computed from the tropical data, and $s_\pm$ are piecewise linear functions on the moduli space, which for each fixed $\Xi$ are pulled back from $X$. The space $\Mpt_{\Xi}(X)$ is either a space of stable maps on lower genus curves (in the non-separating case), or a product of two such spaces (in the separating case). To obtain this result we first prove a combinatorial analogue (\ref{thm:tropicalgluingsep}) where the precise structure of our tropical virtual class plays a central role. We then lift this combinatorial statement to a logarithmic one using a new theory of logarithmic Gysin pullbacks developed in \ref{sec:log_vir_pull}. 

The classes in logarithmic Chow homology induced by $s_+$ and $s_-$ may be nilpotent, so this gluing formula does not always determine $\gl^! [\Mpt_{\Lambda}(X)]^\vir$ from the lower genus invariants. Our final two results allow us to rectify this in certain situations. First, if $X$ admits a suitable torus action (for example, if $X$ is toric) then we can lift $s_+$ and $s_-$ to an equivariant Chow ring in which they are not zero divisors. 

\begin{theorem}[\ref{thm:loggluingequiv}]
Suppose that $X$ admits a suitable action of a torus $T$. Then 
\[\gl^! [\Mpt_{\Lambda}(X)]^\vir = Q(\sum_{\Xi \in \gl^* \Lambda} \frac{1}{s_+s_- }c_1^{\Xi} c_2^{\Xi}\Delta_X^{T,!} [\Mpt_{\Xi}(X)/T]^\vir)\]
in $\bigoplus_{\Xi \in \gl^* \Lambda} \LogCH(\Mpt_{\Lambda | \Xi}(X))$, 
where 
\[Q\colon \LogCH(\Mpt_{\Lambda | \Xi}(X)/T) \to \LogCH(\Mpt_{\Lambda | \Xi}(X))\] 
is the pullback map. 
\end{theorem}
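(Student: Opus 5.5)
The plan is to run the whole argument of \ref{thm:loggluingclean} $T$-equivariantly, and then invert $s_+s_-$ in a localised equivariant log Chow group before forgetting the torus action. Since $T$ acts on $X$ compatibly with the log structure, it also acts on $\Mpt_\Lambda(X)$, on each $\Mpt_\Xi(X)$, and on the gluing diagram \ref{dia:glueing}. The first step is to check that every ingredient in the proof of \ref{thm:loggluingclean} is $T$-equivariant:
\begin{itemize}
\item the tropical virtual class, given by a homological piecewise polynomial pulled back along the tropicalisation map, which is $T$-invariant since $T$ acts trivially on $X^t$;
\item the virtual pullback along $\Mpt_\Lambda(X)\to\Mpt^t_\Lambda(X^t)$;
\item the logarithmic Gysin pullbacks of \ref{sec:log_vir_pull}.
\end{itemize}
Granting this, the identity
\[
s_+s_-\,\gl^![\Mpt_\Lambda(X)/T]^\vir=\sum_{\Xi}c_+^\Xi c_-^\Xi\,\Delta_X^{T,!}[\Mpt_\Xi(X)/T]^\vir
\]
holds in $\bigoplus_\Xi \LogCH(\Mpt_{\Lambda|\Xi}(X)/T)$. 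Here $s_\pm$ are now the $T$-equivariant lifts of the piecewise linear functions. For fixed $\Xi$ these are pulled back from $X$, and the hypothesis ``suitable action'' is exactly what lets us choose lifts. Concretely, the $\Xi$-component of $s_\pm$ is a piecewise linear function on $X$ (a boundary divisor or combination, with slope determined by the contact order of the glued leg), and we linearise the corresponding line bundle equivariantly.

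The second step is to show that $s_+s_-$ becomes invertible after localisation. We tensor the equivariant log Chow groups over $\CH_T^*(\mathrm{pt})=\Z[t_1,\dots,t_r]$ with the fraction field, or with a suitable localisation at the equivariant weights. The key claim is that the equivariant first Chern class of each $s_\pm$ has nonzero weight restriction at every $T$-fixed locus of $X$. ``Suitable'' must mean that the linearisation can be chosen with nonvanishing weights on fixed components of the relevant strata; for toric $X$ these are characters that do not vanish on the fixed points. I would prove this via the localisation theorem: restricted to each fixed component $F$, the class $c_1^T$ equals its weight plus a nilpotent part, so it is a unit after localisation, and hence a unit globally. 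I expect this to be the main obstacle, because $s_\pm$ are log piecewise linear rather than honest divisors, so the argument has to be made on a log modification where $s_\pm$ become Cartier and then descended. That is presumably why the answer is stated in $\LogCH$.

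The third step is to divide and descend. Dividing by $s_+s_-$ gives
\[
\gl^![\Mpt_\Lambda(X)/T]^\vir=\sum_\Xi \frac{1}{s_+s_-}c_+^\Xi c_-^\Xi\,\Delta_X^{T,!}[\Mpt_\Xi(X)/T]^\vir
\]
in the localised group. The left side is a genuine, non-localised equivariant class, so the right side, although a priori only localised, lies in the image of the unlocalised group; this is also why the right side is only claimed to make sense after applying $Q$. Finally, $Q$ (pullback to the non-equivariant log Chow group, i.e. restriction along $\Mpt_{\Lambda|\Xi}(X)\to\Mpt_{\Lambda|\Xi}(X)/T$) sends $\gl^![\Mpt_\Lambda(X)/T]^\vir$ to $\gl^![\Mpt_\Lambda(X)]^\vir$, because virtual classes and Gysin maps commute with this flat pullback. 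One needs that $Q$ is defined on the relevant (integral) part of the localised group; I would handle this by writing the quotient as a polynomial in the equivariant parameters, which the previous remark guarantees, and then setting the parameters to zero.
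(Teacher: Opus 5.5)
Your first step is exactly the paper's: the equivariant identity comes from rerunning the proof of \ref{thm:loggluingclean} with the bottom of \ref{eqn:biggluing} replaced by \ref{eqn:gluing_equiv}. The gap is in how you pass from that identity to the stated formula.

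Showing that $s_+s_-$ becomes a unit after tensoring with $\mathrm{Frac}(\CH^*_T(\mathrm{pt}))$ does not suffice. The expression $Q\bigl(\frac{1}{s_+s_-}\cdots\bigr)$ is well defined, and equal to $\gl^![\Mpt_\Lambda(X)]^\vir$, only if the map from the unlocalised group $\LogCH(\Mpt_{\Lambda|\Xi}(X)/T)$ to the localised one is injective. That injectivity is what makes the preimage of the localised right-hand side \emph{unique}. Your observation that the left-hand side is an honest class only shows that some preimage exists, and $Q$ need not vanish on the kernel of localisation. For example, take $M=T$ with $T$ acting freely. Then $\CH_*(M/T)=\CH_*(\mathrm{pt})=\Q$ and every positive-degree equivariant parameter acts by zero, so any such localisation is $0$. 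Yet $Q[M/T]=[M]\neq 0$.

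Inverting all nonzero elements of $\CH^*_T(\mathrm{pt})$ kills every torsion class, for instance pushforwards of classes from invariant closed substacks on which $T$ has no fixed points. So the Edidin--Graham/Brion localisation theorem is the wrong tool here, and the resulting map is in general not injective. Your fallback, expanding classes as polynomials in the equivariant parameters and setting them to zero, assumes that $\LogCH(\Mpt_{\Lambda|\Xi}(X)/T)$ is free over $\CH^*_T(\mathrm{pt})$. Nothing in the setup gives you that.

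What the paper proves instead is that $s_+s_-$ is a non-zero-divisor in $\LogCH(\Mpt_{\Lambda|\Xi}(X)/T)$, and it inverts only $s_+s_-$. Injectivity of $\LogCH(\Mpt_{\Lambda|\Xi}(X)/T)\to\LogCH(\Mpt_{\Lambda|\Xi}(X)/T)\bigl[\frac{1}{s_+s_-}\bigr]$ is the entire content of the second formula. This is checked locally, in two steps:
\begin{enumerate}
\item For fixed $\Xi$ the functions $s_\pm$ are pulled back $T$-equivariantly from $X$. Hence the hypothesis that $T$ surjects onto the inertia of $\theta$ passes to $s\colon\Mpt_{\Lambda|\Xi}(X)\to\theta$.
\item As in the model computation before the theorem, $c_1(\ca O_{M/G}(f))=g(f)t\in\CH(B\G_m)=\Q[t]$ with $g(f)\neq 0$. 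So $s_\pm$ acts locally like a nonzero multiple of an equivariant parameter, and hence injectively.
\end{enumerate}
Your reading of ``suitable'' as ``nonzero weight in the normal direction'' is the right hypothesis. It has to be used to prove that multiplication by $s_+s_-$ is injective on the unlocalised group, not to prove invertibility after a coarser localisation. The point you single out as the main obstacle, that $s_\pm$ are only piecewise linear, is mild by comparison: the $T$-action lifts to every log blowup, and on a suitable one $s_\pm$ become strict.
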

In the special case where $X = \P^1$, this calculation is the starting point of \cite{Spelier2025SplittingFormulaLogDR}, where the non-separating gluing pullback $\gl^* \LogDR_g(A)$ of the logarithmic double ramification cycle is computed explicitly.

Finally, in the separating case, we show that one can find explicit formulas in Chow groups of blowups. The proof combines techniques from the birational invariance result, with the gluing statement.

\begin{theorem}[\ref{thm:loggluingseparating}]
There is a log alteration $\hat{X} \to X$ for which we have \begin{align*}\gl^! [\Mptm_\Lambda(X)]^\vir &= \sum_{\Xi \in \gl^* \Lambda} \sum_{i=1}^{m_\Xi} c^{\Xi;i}_+ c^{\Xi;i}_- \pi_{\Xi;i,*}\Delta_{E_{\Xi;i}}^! [\Mpt_{\Xi;i}]^{\vir} \\ 
&= \sum_{\Xi \in \gl^* \Lambda} \sum_{i=1}^{m_\Xi} c^{\Xi;i}_+c^{\Xi;i}_- \pi_{\Xi;i,*}\Delta_{E_{\Xi;i}}^! \sum_{\Xi' \in \pi^* (\Xi;i)} \pi_{\Xi',*} [\Mptm_{\Xi'}(\hat X)]^\vir.\end{align*}
inside $\CH_*(\gl^*\Mptm_\Lambda(X))$. Here $\pi_{\Xi;i}, \pi_{\Xi'}$ are certain log blowups defined in \ref{sec:glue:logsep}, and $E_{\Xi;i}$ are strata inside $\hat{X}$, and $c_\pm^{\Xi;i} \in \Z_{>0}$.
\end{theorem}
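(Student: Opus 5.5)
The plan is to start from the separating case of \ref{thm:loggluingclean}, which gives
\[s_+s_-\gl^![\Mptm_\Lambda(X)]^\vir=\sum_{\Xi\in\gl^*\Lambda}c_+^\Xi c_-^\Xi\Delta_X^![\Mpt_\Xi(X)]^\vir,\]
and to remove the factor $s_+s_-$ by passing to a log alteration $\hat X\to X$. The functions $s_\pm$ are, for fixed $\Xi$, pulled back from piecewise linear functions on the tropicalisation $X^t$. These functions measure the distance from the tip of the glued leg to the boundary of its cone. I will choose a subdivision of $X^t$, followed by a root construction, that makes $s_+$ and $s_-$ linear on each cone and also makes the relevant tip loci into unions of cones; this gives a log alteration $\hat X\to X$. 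After this subdivision, multiplication by $s_\pm$ becomes intersection with boundary divisors. The locus where the two glued tips coincide then decomposes into finitely many strata $E_{\Xi;i}\subset\hat X$, indexed by the cones that the common tip can occupy. For each such stratum, $\pi_{\Xi;i}$ is the corresponding log blowup of the gluing locus, namely the pullback of the subdivision along the evaluation maps.

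The core of the argument is tropical, and I plan to prove a refinement of \ref{thm:tropicalgluingsep}. Because we are in the separating case, the tropical moduli space splits as a product, and the tropical virtual class is the product of the homological piecewise polynomials on the two factors. The two tip positions are then independent, subject only to agreeing on the glued edge. Restricting to a cone $\sigma_i$ of $\hat X^t$ whose interior contains the common tip, I expect the piecewise polynomial on $\gl^*\Mptm_\Lambda$ to equal $c^{\Xi;i}_+c^{\Xi;i}_-$ times the pullback of the diagonal class of $E_{\Xi;i}$. Here $c^{\Xi;i}_\pm$ are the lattice indices coming from the root construction, which is why they are positive integers. In other words, over $\hat X$ the factor $s_+s_-$ is absorbed into passing from $\Delta_X$ to the diagonal of the stratum $E_{\Xi;i}$, of smaller dimension. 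Summing over cones $i$ recovers the full class, since the strata stratify the possible tip locations. I then lift this from the tropical level to Chow groups using the logarithmic Gysin pullbacks of \ref{sec:log_vir_pull}, compatibility of virtual pullback with the log blowups $\pi_{\Xi;i}$, and the fact that pushforward along a log blowup preserves log Chow classes. Together these yield the first equality in $\CH_*$, and not only in $\LogCH$.

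For the second equality, I apply \ref{thm:log_bir_inv} to $\hat X\to X$ factor by factor. This expresses $[\Mpt_{\Xi;i}]^\vir$, the class of maps whose glued tips lie in $E_{\Xi;i}$, as $\sum_{\Xi'\in\pi^*(\Xi;i)}\pi_{\Xi',*}[\Mptm_{\Xi'}(\hat X)]^\vir$. The finiteness of the sum and the balancing compatibility are supplied by that theorem, but the version needed here is the one for a log alteration, with root stacks included, rather than a plain blowup. I will check that the tropical balancing comparison in its proof goes through for this alteration.

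I expect the main obstacle to be the tropical identity in the second paragraph. Concretely, one must show that on each cone of the subdivision the product of tip-distance functions $s_+s_-$ divides the glued virtual polynomial, and that the quotient is exactly $c_+c_-$ times the diagonal of $E_{\Xi;i}$. My first attempt will be a local computation in coordinates on a single cone. There I will write the tip distance as a linear form and use the explicit polynomial from \ref{subsec:tropvfc} to carry out the division. Cones on which $s_\pm$ vanishes identically will need separate care, since this is where the nilpotence arises.
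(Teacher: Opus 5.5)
Your overall shape (decompose by where the common tip lies, absorb $s_+s_-$ into the passage from $\Delta_X$ to $\Delta_{E}$, then lift) matches the paper. However, the proposal is missing the ingredient that gives this passage a meaning: integrality and saturation of the evaluation maps. The stratum $E_{\Xi;i}$ is not log smooth, so there is no logarithmic Gysin map along its diagonal. Hence $\Delta^!_{E_{\Xi;i}}$ has to be the ordinary Gysin pullback along $\Delta_{\ul E}$. The identity $s_+s_-\Delta^!_{\ul E}=\Delta^!_X$ that you describe as ``absorbing $s_+s_-$'' is an excess intersection statement (\ref{lem:deltaet}, with $s_+s_-$ the top Chern class of the normal bundle of $E=E_+\cap E_-$). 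It needs the square over $E\times E$ to be a fibre square of underlying stacks, and that is what integrality and saturation of $\ev_+\times\ev_-$ provides. Making $s_\pm$ linear and the tip loci unions of cones does not give this. Moreover, \ref{rem:uglycase} shows that no log blowup of $X$ alone can. You must also take log alterations of the moduli spaces $\Mpt^t_{\Xi_\pm}(X^t)$ so that each $\ev_\pm$ becomes integral and saturated, and then refine $\hat X$ to be its own Radon subdivision. The integers $c^{\Xi;i}_\pm$ come from this Radon subdivision, via $c_\pm r_{\pm i}=\ev_\pm^*s_{\pm i}$; they are not root-stack indices. This is also exactly where the separating hypothesis is used: a product of two integral saturated maps is integral and saturated, whereas the pair $(\ev_+,\ev_-)$ on the loop source need not be. So the obstacle you single out is misplaced. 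Conewise division by $s_+s_-$ is automatic tropically, since they are positive on every interior cone; this is why \ref{thm:tropicalgluingsep} already determines $\gl^!$. But the quotient only has meaning as $\Delta^!_E$ of a class that lifts to $\CH_*$ once \ref{lem:deltaet} applies. In the lift you must also check that, on each piece, the obstruction theory of $\Mpt_{\Lambda|\Xi}(X)\to M$ pulled back from $\Delta^{\log}_X$ agrees with the one coming from $\Delta^{\log}_E$ (log cotangent bundles are unchanged by log blowups and passage to strata). This is the one step of the proof of \ref{thm:loggluingclean} that changes, and you do not address it.

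Your summation step also fails as stated. Consider the piece $\Mpt^t_{\Xi;i}$ where the gluing point lies in the $i$th of the $m_\Xi$ segments into which $\hat X^t$ cuts the glued edge. Its virtual class must vanish where the gluing point reaches the ends of that segment. After applying $c_+c_-\Delta^!_{E_{\Xi;i}}$ it becomes $\ell_{+i}\ell_{-i}\prod_j\ell_jr_j$, with $\ell_{\pm i}$ the distances from the gluing point to the two ends of the $i$th segment. This is not the restriction of $\gl^![\Mptmt_\Lambda(X^t)]^\vir=\ell_+\ell_-\prod_j\ell_jr_j$ to that piece, so the conewise identity you expect is false as soon as $m_\Xi>1$. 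The first equality holds only after pushforward, and the argument is the cube-decomposition identity \ref{lem:sumofcubesisbigcube}, not the fact that the strata cover the possible tip positions. For the same reason, the second equality is a statement about each piece class $[\Mpt_{\Xi;i}]^\vir$ separately. Applying \ref{thm:log_bir_inv} factor by factor only decomposes the whole class $[\Mpt_{\Xi_\pm}(X)]^\vir$ after pushforward and cannot isolate the contribution of each $i$, so you again need the local cube identity on each piece.
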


This also leads to a relation between log Gromov--Witten classes (see \ref{def:logGWinvariants}). So far we have worked with Chow groups and intersection theory, but we could just as well have worked with Borel--Moore homology throughout; or one can simply apply the cycle class map to both sides of the above theorems. Note that a Gysin map for Borel--Moore homology associated to any perfect obstruction theory is constructed in \cite{KapranovVasserot}. For the next result we assume the existence of Kunneth decompositions the diagonals of strata; this will occasionally hold in Chow, but much more often in Borel--Moore homology (which in this setting coincides with singular homology).

\begin{theorem}[\ref{thm:loggluingkunneth}]
For each $E_{\Xi;i}$ write the K\"unneth decomposition of its diagonal as $\Delta_{E_{\Xi;i}} = \sum_{k} \delta_{\Xi;i;k+} \boxtimes \delta_{\Xi;i;k;-}$. Then we have the equality
\begin{multline*}
    \gl^! \LogGW(X;\Lambda, \gamma) =\\  \sum_{\Xi \in \gl^* \Lambda} \sum_{i=1}^{m_\Xi}  c_+^{\Xi;i} c_-^{\Xi;i} \sum_k\sum_{\Xi' \in \pi^*(\Xi;i)} \LogGW(\hat{X};\Xi_+', \gamma_+ \cup \delta_{\Xi;i;k+}) \boxtimes \\\LogGW(\hat{X};\Xi_-', \gamma_- \cup \delta_{\Xi;i;k-}).
\end{multline*}
\end{theorem}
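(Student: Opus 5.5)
The plan is to deduce this directly from \ref{thm:loggluingseparating}, by capping both sides with the evaluation classes and pushing forward. Log Gromov--Witten classes are defined in \ref{def:logGWinvariants} as pushforwards, to the appropriate moduli of stable curves, of the virtual class capped with $\prod_i \ev_i^*\gamma_i$. Applying this to the separating gluing square, $\gl^!\LogGW(X;\Lambda,\gamma)$ is the pushforward of $\prod_i\ev_i^*\gamma_i\cap \gl^![\Mptm_\Lambda(X)]^\vir$. This uses two compatibilities: refined Gysin pullback $\gl^!$ commutes with proper pushforward, and it commutes with capping by cohomology classes pulled back from $X$. Both hold in (log) Chow or Borel--Moore homology.

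First, substitute the second line of \ref{thm:loggluingseparating}. Since the evaluation maps at the non-glued markings factor through the blowups $\pi_{\Xi;i}$ and $\pi_{\Xi'}$, the projection formula lets me move $\prod\ev_i^*\gamma_i$ inside the pushforwards, where it becomes the pullback of $\gamma$ along $\hat X\to X$. The $\Xi'$-spaces are products $\Mptm_{\Xi'_+}(\hat X)\times\Mptm_{\Xi'_-}(\hat X)$, and their virtual classes split as exterior products. This is the separating case: the tropical virtual class is a product of the two sides' homological piecewise polynomials, and the obstruction theory is a direct sum. The cohomology classes $\gamma$ also split as $\gamma_+\boxtimes\gamma_-$ according to which side each marking lies on.

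Second, handle $\Delta_{E_{\Xi;i}}^!$. The glued pair of markings evaluates into $E_{\Xi;i}\times E_{\Xi;i}$. Since $E_{\Xi;i}$ is smooth (a stratum of the log smooth $\hat X$), $\Delta^!$ applied to a class $\alpha$ on a space over $E\times E$ equals $\ev^*[\Delta_E]\cap\alpha$, interpreting $[\Delta_E]$ via Poincaré duality. With the assumed Künneth decomposition $[\Delta_E]=\sum_k\delta_{k+}\boxtimes\delta_{k-}$, this becomes $\sum_k (\ev_+^*\delta_{k+}\cap-)\boxtimes(\ev_-^*\delta_{k-}\cap-)$. Pushing everything forward to $\Mbar_{g_+,n_++1}\times\Mbar_{g_-,n_-+1}$, the cross term factors by the Künneth formula for exterior products under proper pushforward. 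The result is precisely $\LogGW(\hat X;\Xi'_+,\gamma_+\cup\delta_{\Xi;i;k+})\boxtimes\LogGW(\hat X;\Xi'_-,\gamma_-\cup\delta_{\Xi;i;k-})$. The integers $c_\pm^{\Xi;i}$ simply pass through.

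The main obstacle is the bookkeeping of the evaluation maps. I need the glued markings of $\Mptm_{\Xi'}(\hat X)$ to genuinely factor through $E_{\Xi;i}$, compatibly with $\pi_{\Xi;i}$, so that $\Delta^!_{E}$ can be written as capping with the pulled-back diagonal class. I also need the pushforwards $\pi_{\Xi;i,*}$, $\pi_{\Xi',*}$ to commute with pushforward to the stack of curves, which is fine since all are proper. I would verify this factorisation directly from the construction in \ref{sec:glue:logsep}, using the lifted evaluation maps of pierced curves to strata.
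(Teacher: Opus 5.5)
Your proposal is correct and follows the paper's own argument. The paper deduces this statement from \ref{thm:loggluingseparating} by capping with the evaluation classes and pushing forward. It then uses that, for the smooth (orbifold) stratum $E_{\Xi;i}$, the pullback $\Delta^!_{E_{\Xi;i}}$ is intersection with the diagonal class, which splits via the assumed K\"unneth decomposition. You spell out the projection-formula and exterior-product bookkeeping that the paper leaves implicit. Two small corrections: the classes live on $\Mpt^{\st}_{g_\pm,n_\pm+1}$ rather than $\Mbar$, and smoothness of $E_{\Xi;i}$ comes from $\hat X$ being simplicial, not merely log smooth.
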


We give an example of this theorem in \ref{sec:example_calculation}, where we compute the gluing pullback of a $g = 2, n = 4$ log Gromov--Witten invariant of $X = \P^2$ along a separating gluing map. We choose discrete data $\Lambda = (g = 2, n = 4, \beta = kH, (a_i)_{i=1}^4)$ with contact orders $a = ((k,0,0),(0,k,0),(0,0,k),(0,0,0))$ and insertions
\[
\gamma=\bigl([\mathrm{pt}]_{D_0},\,[\mathrm{pt}]_{D_1},\,1,\,[\mathrm{pt}]_{\bb P^2}\bigr).
\]
This is a classical logarithmic Gromov--Witten invariant, but evaluating the formula for $\gl^! \LogGW(X;\Lambda, \gamma)$ requires computing pierced invariants. We arrive at the following result.
\begin{proposition}[\ref{prop:theexample}]
The degree of the dimension $0$ class $\gl^! \LogGW(X;\Lambda,\gamma)$ is
\[
	\frac{-k^3}{24}\binom{k+2}{5}.
\]
\end{proposition}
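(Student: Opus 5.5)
The plan is to evaluate the right-hand side of \ref{thm:loggluingkunneth} for the given data $\Lambda$ and insertions $\gamma$, so the computation of $\gl^!\LogGW(X;\Lambda,\gamma)$ becomes a finite sum of products of lower-genus pierced invariants. I take the separating gluing into a genus-$1$ side carrying some of the four legs and a genus-$1$ side carrying the rest; I will fix the splitting that makes the answer nonzero, namely $\{1,2\}$ on one side and $\{3,4\}$ on the other. The first step is to list the compatible discrete data $\Xi\in\gl^*\Lambda$. These are splittings $k=k_++k_-$ of the curve class together with an integral contact order $u\in\Z^3/\Z(1,1,1)$ on the glued edge, chosen so that the balancing condition holds on each side. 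The contact orders of the legs on each side force $u$: with legs $1,2$ on the $+$ side, balancing gives $u_+=(k_+-k,\,k_+-k,\,k_+)$ up to the diagonal shift, and $u_-=-u_+$. This is a genuinely negative contact order, which is why pierced invariants enter.

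Next I identify the stratum $E_{\Xi;i}$ of the alteration $\hat X$ containing the tip of the glued leg, the constants $c_\pm^{\Xi;i}$, and the Künneth decomposition of its diagonal. Since $u$ lies in the interior of a two-dimensional cone of the fan of $\P^2$ (or of a blowup), I expect $E$ to be a torus-fixed point or a toric divisor of $\hat X$. For a point the decomposition is $\Delta=[\mathrm{pt}]\boxtimes[\mathrm{pt}]$, and for a $\P^1$ it is $[\mathrm{pt}]\boxtimes1+1\boxtimes[\mathrm{pt}]$. The constants $c_\pm$ I would read off as lattice indices of $u_\pm$ relative to the cone.

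The third step is to compute each factor $\LogGW(\hat X;\Xi'_\pm,\gamma_\pm\cup\delta_\pm)$ by a dimension count. Most of the factors vanish for degree reasons. I expect the surviving contributions to be:
\begin{itemize}
\item a genus-$1$ invariant with two point conditions on boundary divisors;
\item a genus-$1$ invariant with one interior point condition.
\end{itemize}
Each of these should reduce to $\lambda_1$-type or Hodge integrals together with a lattice-path or binomial count. This is where the factors $\binom{k+2}{5}$ and $k^3/24$ should come from: $1/24$ is $\int_{\Mbar_{1,1}}\lambda_1$, and $k^3$ should be the product of the $c_\pm$ and a multiplicity. The final sum over $k_++k_-=k$ then collapses via a Vandermonde/hockey-stick identity to $\binom{k+2}{5}$.

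The main obstacle is computing the pierced invariants with negative contact order on the $\pm$ sides. The pierced virtual class involves the homological piecewise polynomial of \ref{subsec:tropvfc}, and this is not classical. For that computation I would first use \ref{thm:log_bir_inv} to reduce to the toric model, and then use the torus-equivariant formula \ref{thm:loggluingequiv} or localization. The aim is to reduce everything to genus-$1$ stable maps of multiple covers of torus-invariant lines, where the $\lambda_1$ integral produces the $-1/24$ together with the sign.
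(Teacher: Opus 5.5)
Your skeleton (apply \ref{thm:loggluingkunneth}, list $\gl^*\Lambda$, find $E$, $c_\pm$ and the surviving K\"unneth term, then evaluate two genus-$1$ factors) matches the paper's, but it breaks at the first step.

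\textbf{The splitting.} The gluing map in the statement is $\Mpt_{1,\{p_1,q_+\}}\times\Mpt_{1,\{p_2,p_3,p_4,q_-\}}\to\Mpt_{2,4}$, not your $\{1,2\}\,|\,\{3,4\}$ splitting. For your splitting the pullback is in fact zero. The fact you miss is that contact orders are vectors in $\Z^3$, one entry per divisor, and must lie in $\sigma^\gp$ for a cone $\sigma$ of $X^t$. Since $D_0\cap D_1\cap D_2=\emptyset$, at most two entries of the glued contact order can be nonzero. Passing to $\Z^3/\Z(1,1,1)$ hides exactly this: there your $u_+$ no longer even depends on $k_+$. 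With $u_+=(k_+-k,k_+-k,k_+)$, the constraint forces $k_+\in\{0,k\}$.
\begin{itemize}
\item For $k_+=0$ the $+$ side is contracted to $D_0\cap D_1$, so $\ev_1^*[\mathrm{pt}]_{D_0}=0$.
\item For $k_+=k$ the $-$ side is contracted into $D_2$, so $\ev_4^*[\mathrm{pt}]_{\P^2}=0$.
\end{itemize}
So there is no sum over $k_++k_-=k$ and no hockey-stick identity. For the correct splitting the same support argument kills $\beta_+=kH$. It leaves a single term: $\beta_+=0$, glued contact order $-ke_0$, $m_\Xi=1$, $E=D_0\cong\P^1$, and $c_+c_-=k$. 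Only the K\"unneth term putting $[\mathrm{pt}]_{D_0}$ on $q_-$ survives the dimension count. The degree is therefore $k$ times the product of the degrees of $\LogGW(X;\Xi_+,([\mathrm{pt}]_{D_0},1))$ and $\LogGW(X;\Xi_-,\gamma_-)$.

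\textbf{The two factors.} Your route would not produce them either. The negative-contact factor $\Xi_+=(g=1,n=2,\beta=0,\{ke_0,-ke_0\})$ has curve class $0$. There are no multiple covers of invariant lines to localise to, and \ref{thm:loggluingequiv} is a splitting formula, not a way to evaluate a fixed genus-$1$ invariant. The missing input is the identification $\Mpt_{\Xi_+}(X)\cong\LogDRL_1(k,-k)\times D_0$, with virtual class $-\frac1k\LogDR_1(k,-k)\lambda_1\times[D_0]$. The $\frac1k$ comes from $r_+=T/k$ in the tropical class $\ell_1\ell_+r_+$. Then $\DR_1(k,-k)=k^2\psi_1-\lambda_1$ and $\int\psi_1\lambda_1=\frac1{24}$ give $-\frac{k}{24}$.

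The factor $\binom{k+2}{5}$ does not come from splitting the curve class. It comes from the classical genus-$1$, degree-$k$ factor, with contact orders $ke_1,ke_2,0,ke_0$ and insertions $[\mathrm{pt}]_{D_1},1,[\mathrm{pt}]_{\P^2},[\mathrm{pt}]_{D_0}$. This is computed by writing the interior point condition as the pulled-back piecewise polynomial $\phi_0\phi_1$ on the log double ramification locus. That polynomial is supported only on the edge joining the two boundary-point legs, in tropical types indexed by $a+b+c=k$ with $a,b,c>0$. Each such type contributes $ab\cdot\frac{c}{k}\cdot k^2$, the last factor counting lifts of torsion points. The total is $k\sum_{a+b+c=k}abc=k\binom{k+2}{5}$.

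Thus $k^3$ is $c_+c_-=k$ times one factor of $k$ from each of the two invariants, not ``the $c_\pm$ and a multiplicity''.
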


\subsection{Comparison to previous constructions}
In \ref{sec:punctured_comparison}, we construct a map from our space of pierced maps to the space of punctured maps from \cite{Abramovich2020Punctured-logar} and prove that it is proper. In fact, the space of pierced maps is a closed substack of a log blowup of the space of punctured maps (with additional log structure -- see \ref{thm:closedinblowup}).

In \ref{subsec:comppunctinvariants}, we compare our pierced Gromov--Witten invariants to the \emph{refined punctured invariants} constructed in \cite{BNR}. Recall that our tropical virtual class is defined by a certain homological piecewise-polynomial function, leading to an algebraic virtual class $[\Mpt(X)]^\vir$. In this section, we write down a different homological piecewise-polynomial function, producing a different virtual class which we denote $[\Mpt(X)]^\prodvfc$, and show in \ref{prop:comparisonbnr} that $[\Mpt(X)]^\prodvfc$ is equal (up to certain integer factors) to the class constructed in \cite{BNR}, explaining how to realise their class in our framework. Note that $[\Mpt(X)]^\prodvfc$ does not satisfy birational invariance, nor the loop and splitting theorems. On the other hand, $[\Mpt(X)]^\prodvfc$ can be obtained from $[\Mpt(X)]^\vir$ by capping with a suitable class (see \ref{prop:trop_prod_vir_divisibility}), hence $[\Mpt(X)]^\vir$ refines the refined virtual classes of \cite{BNR}.

We remark that if all contact orders are non-negative then the refined punctured virtual class of \cite{BNR} and our virtual classes $[\Mpt(X)]^\vir$ and $[\Mpt(X)]^\prodvfc$ all coincide, and recover the classical logarithmic Gromov--Witten invariants with non-negative tangencies of \cite{Abramovich2014Stable-logaritmic-maps-II,Gross2013Logarithmic-gro}.

Splitting for punctured Gromov--Witten invariants is discussed in \cite[\S 5]{Abramovich2020Punctured-logar} and in \cite{Wu_splitting}. However, in their theory no glueing maps are available, so it is not possible to prove analogues of our splitting theorem. Instead, they study essentially the punctured analogue of the right-hand square in the diagram \ref{dia:glueing} (where evaluation maps take values in an appropriate evaluation stack), making the punctured analogue of the fibre product `$D$' very explicit (including on the virtual level). This is very useful for understanding how the invariants of $X$ relate to the invariants of its strata, but to us do not appear applicable to the problem of relating high- and low-genus invariants. 

In the case of a smooth pair $(X,D)$, the failure of the loop axiom for the invariants of \cite{BNR} (equivalently, by
\ref{prop:comparisonbnr}, for the class $[\Mpt(X)]^\prodvfc$) has been studied
from the orbifold side by You \cite{You2021Gromov-Witten-i}. The
relative invariants with negative contact orders of
\cite{fan2020structures,Fan2019Higher-genus-re} are extracted from the orbifold
Gromov--Witten invariants of the root stacks $X_{D,r}$ for $r$ large: a marking of
\emph{small} age $k/r$ (with $k$ bounded) corresponds to positive contact order
$k$, and a marking of \emph{large} age $k/r$ (with $r - k$ bounded) to negative
contact order $-(r-k)$. Fan, Wu and You show that the resulting theory is a
partial cohomological field theory, satisfying the splitting axiom but not the
loop axiom. You constructs a modified loop axiom in which an explicit correction term built from
mid-age invariants is added to the usual right hand side.

\subsection{Acknowledgements}

We are grateful to Dan Abramovich, Luca Battistella, Navid Nabijou, Dhruv Ranganathan, and Samuel Johnston for helpful discussions during the preparation of this paper.
We are particularly grateful to Sam Molcho for sharing a preliminary version of his ongoing work on logarithmic intersection theory, and to Dhruv Ranganathan for many helpful comments on a preliminary version of this paper.

Research of L.H. was supported by grant VI.Vidi.193.006 of the Dutch Research Council (NWO). D.H. was funded by the European Union (ERC, EAGL, 101169685) and by grant VI.Vidi.193.006 of the Dutch Research Council (NWO). Views and opinions expressed are however those of the authors only and do not necessarily reflect those of the European Union or the European Research Council. Neither the European Union nor the granting authority can be held responsible for them. 

AI (Claude Opus, ChatGPT) was used to generate the tikz code for some figures and for proof-reading. It found citations to the literature in \ref{lem:Psimapstackypointcase} in \ref{sec:push_hpp} to replace a human-written argument, but everything in the current proof was written by the authors. In \ref{sec:example_calculation}, it searched small $g,n$ to find cases amenable for computation and computed the example for $k=3$.

\section{Background and conventions}
\subsection{Log structures}
We work with log structures in the sense of Fontaine-Illusie-Kato; see \cite{Ogus2018Lectures-on-log}. In particular a log scheme $X$ carries a sheaf of monoids $\M_X$ with a map $\alpha\colon \M_X \to \ca O_X$, and the quotient $\M_X/\ca O_X^\times$ is denoted $\ghost_X$. We work always with integral saturated log schemes. If $C/S$ is a pierced curve (see \ref{def:piercedcurve}) we will assume that $S$ is fs (i.e.\ admits local charts by integral \emph{finitely generated} monoids), but we do not assume this for $C$.

\subsection{Logarithmic targets}
\label{sec:log_targets}

The target for our Gromov--Witten theory will be an algebraic stack $X$ with log structure, log smooth over the spectrum of a fixed ground field $k$ with trivial log structure. 
We spell out our hypotheses on $X$ for each section, but the reader may take $X$ to be a smooth, log smooth, projective, geometrically connected log scheme.

We fix once and for all a tropicalisation $X \to X^t$ of $X$: this is a strict map to an Artin fan or equivalently to a cone complex, and is automatically smooth since $X$ is log smooth. We do \emph{not} allow $X^t$ to be a cone space or cone stack; in the case where $X$ has normal crossings, this corresponds to them being strict normal crossings. This is not a fundamental restriction, but does simplify some definitions. If we want to emphasise the Artin fan we write $\ca A_X$, if we want to emphasise the cone complex we write $\Sigma_X$. 

In \ref{sec:punctured_comparison,subsec:comppunctinvariants,sec:bir_inv,sec:gluing} we assume $X$ is locally of finite type and separated.

From \ref{subsec:tropvfc} onwards, we will assume that $X \to X^t$ is surjective with connected fibres, and that $X$ is quasi-compact. This implies that $X^t$ has finitely many cones.

In \ref{subsec:comppunctinvariants} where we make the comparison to \cite{Abramovich2020Punctured-logar,BNR}, we assume moreover that $X$ is proper and smooth (so normal crossings), as that is the context in which their constructions are given.
We assume in \ref{sec:glue:log} that, if an irreducible curve mapping to $X$ has vanishing class in homology, it is contracted. This is automatic if $X$  $\bb Q$-factorial or projective, since in either case we can find a Cartier divisor on $X$ which meets $C$ at finitely many points (in the $\bb Q$-factorial case we find such a divisor on an affine patch and then take Zariski closure and a suitable positive multiple; in the projective case we use an ample divisor).

\subsection{Logarithmic intersection theory}\label{sec:log_intersection_theory}

If $X$ is an algebraic stack locally of finite type over a field, we write $\CH_*(X)$ for the Chow group as in \cite{Kresch1999Cycle-groups-fo}, but taken always with \emph{rational} coefficients. We write $\CH^*(X)$ for the corresponding Chow cohomology ring as in \cite{BSS-I}. 

If $X$ is an algebraic stack with log structure, locally of finite type over a field, we write $\LogCH_*(X)$ for the colimit of the groups $\CH_*(\tilde X)$ as $\tilde X$ runs over a cofinal system of locally free log alterations of $X$. Beware that this group is sometimes denoted ``$\LogCH^*(X)$'' in the literature. We use $\LogCH^*(X)$ instead to refer to the resulting bivariant theory \cite{Barrott2019Logarithmic-Cho,sammolchointersectiontheorypreprint}. 

We describe the transition maps in this colimit: 
If $\tilde X_1 \to \tilde X_2$ are sufficiently fine locally free log alterations, then there is a pullback square \cite[Appendix B]{drink}
\[
\begin{tikzcd}
    \tilde X_1 \ar[r] \ar[d] & \ca A_{\tilde X_1}\ar[d]\\
    \tilde X_2 \ar[r] & \ca A_{\tilde X_2} 
\end{tikzcd}
\]
for some Artin fans $\ca A_{\tilde X_i}$. 
Local freeness implies that the Artin fans $\ca A_{\tilde X_i}$ are smooth. The map between them is also of DM type because it is locally a log alteration, so it is lci and yields a Gysin pullback $\CH_*(\tilde X_2 ) \to \CH_*(\tilde X_1)$; see \cite{Barrott2019Logarithmic-Cho} and \ref{sec:log_vir_pull}. 

In \ref{sec:log_vir_pull} we define a general notion of log perfect obstruction theories and log virtual pullbacks, which will be used throughout.

\subsection{Cone stacks with boundary}
In this paper we work with tropicalisations of log stacks. For a log smooth stack $Y$ one often takes the tropicalisation to be its Artin fan $\Acal_Y$ \cite{Abramovich2016Skeletons-and-f}, which we equivalently think of as a cone stack $\Sigma_Y$ \cite{Cavalieri2020A-Moduli-Stack} by the equivalence of categories between Artin fans and cone stacks. For more general stacks, such as idealised log smooth stacks (e.g., a stratum in a log smooth stack) the notion of \emph{idealised} Artin fan and cone stack \emph{with boundary}, as defined in \cite[Definitions 41, 46]{RPSS_log_taut} is more useful. These correspond to (unions of) strata in Artin fans/cone stacks.

We first recall an example of classical cone stacks and Artin fans.

\begin{example}
Recall that a classical tropical curve is a metrised graph of genus $g$, with $n$ infinite legs. The moduli space of stable tropical curves forms a cone stack $\Mbar_{g,n}^t$. Explicitly, it is the colimit $\colim_{\Gamma} \R_{\geq 0}^{E(\Gamma)}$. The colimit is taken over all stable decorated graphs of genus $g$ with $n$ markings, with the maps being contractions (including graph automorphisms). This is the tropicalisation of the moduli space of curves $\Mbar_{g,n}$. The Artin fan corresponding to this cone stack is the colimit $\colim_{\Gamma} [\A^1/\G_m]^{E(\Gamma)}$.
\end{example}

We give the definition of cone stacks with boundary and idealised Artin fans from \cite{RPSS_log_taut}. 

\begin{definition}
A \emph{cone stack with boundary} is a pair $(\Sigma,\Delta)$ of a cone stack $\Sigma$ and a subcone stack $\Delta$, the boundary. We write $\Sigma^\circ = \Sigma \setminus \Delta$ for the interior. 

An \emph{idealised Artin fan} is a reduced closed embedding $\Bcal \subset \Acal$ inside an Artin fan.
\end{definition}

\begin{proposition}
The category of cone stacks with boundary is equivalent to the category of idealised Artin fans, by sending a pair $(\Sigma,\Delta)$ to $\Acal_{\Sigma} \setminus \Acal_{\Delta} \subset \Acal_{\Sigma}$.
\end{proposition}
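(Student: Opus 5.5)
The plan is to deduce this from the classical equivalence between cone stacks and Artin fans (Cavalieri--Chan--Ulirsch--Wise), together with its compatibility with subcomplexes. First, fix that equivalence $\Sigma \mapsto \Acal_\Sigma$. The key input is that it identifies subcone stacks $\Delta \subset \Sigma$ (subcomplexes closed under taking faces) with open substacks $\Acal_\Delta \subset \Acal_\Sigma$. The map $\Acal_\Delta \to \Acal_\Sigma$ is an open immersion because it is built from the face inclusions $\Acal_\tau \hookrightarrow \Acal_\sigma$, which are the open immersions $[\A^{n}\times \G_m^{k}/\G_m^{n+k}]\subset[\A^{n+k}/\G_m^{n+k}]$, glued along the colimit. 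Conversely, an open substack of $\Acal_\Sigma$ is a union of strata stable under generisation. Since strata correspond to cones and generisation corresponds to passing to faces, such a union is exactly a subcone stack. So we get an order-preserving bijection between subcone stacks of $\Sigma$ and open substacks of $\Acal_\Sigma$.

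Next, taking complements identifies open substacks with reduced closed substacks. This gives a bijection on objects between pairs $(\Sigma,\Delta)$ and idealised Artin fans $\Bcal=\Acal_\Sigma\setminus\Acal_\Delta\subset\Acal_\Sigma$. To make this essentially surjective, note that every reduced closed substack $\Bcal\subset\Acal$ has open complement, and that complement is a union of strata, since the strata are the locally closed pieces of $\Acal$ and any closed substack is a union of them.

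For morphisms, take a morphism of pairs to be a map $f\colon\Sigma\to\Sigma'$ of cone stacks with $f(\Sigma^\circ)\subset\Sigma'^\circ$, and a morphism of idealised Artin fans to be a map $\Acal\to\Acal'$ carrying $\Bcal$ into $\Bcal'$, as in \cite{RPSS_log_taut}. Full faithfulness of the underlying equivalence reduces the question to a compatibility check. The condition that $\Acal_f$ sends $\Bcal$ into $\Bcal'$ only needs to be checked on strata, because $\Bcal$ is reduced and is the union of the strata it contains. The image of the stratum attached to a cone $\sigma$ is the stratum of the minimal cone of $\Sigma'$ containing $f(\sigma^\circ)$. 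So the condition becomes: whenever $\sigma\not\subset\Delta$, the cone $f(\sigma^\circ)$ meets $\Sigma'^\circ$. This is exactly the condition on maps of cone stacks with boundary.

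The main obstacle is this morphism compatibility in the stacky setting, where cones come with automorphisms and the colimit description is needed. I would handle it by working étale-locally on $\Acal_\Sigma$ with the charts $\Acal_\sigma$. In these charts reducedness lets one test containment pointwise, and the statement reduces to the toric fact that $\A_P\to\A_Q$ maps the torus orbit of a face into the orbit of its image face.
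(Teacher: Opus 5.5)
Your argument is correct. The paper does not prove this statement itself; it imports it from \cite{RPSS_log_taut} (Definitions 41 and 46 there). Your route is the natural way to justify it: you transport the Cavalieri--Chan--Ulirsch--Wise equivalence along the dictionary between face-closed subcone stacks and generisation-closed (hence open) unions of strata, and then check the morphism condition stratum by stratum.

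Two points should be stated explicitly rather than in passing.

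First, every open or closed subset of $|\Acal|$ is a union of strata, but not because strata are locally closed. The reason is that each stratum of an Artin fan is a single point of $|\Acal|$: an Artin cone $[\A_P/T_P]$ has exactly one point for each face. Together with the fact that a reduced closed substack is determined by its support, this gives essential surjectivity.

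Second, to pass from ``$f(\sigma^\circ)$ meets $\Sigma'^\circ$'' to $f(\Sigma^\circ)\subset\Sigma'^\circ$, you need that a linear map of cones sends the relative interior of $\sigma$ into the relative interior of a single face of the target, namely the minimal face containing $f(\sigma)$. This is the same toric fact you invoke for the orbit computation.

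Finally, your choice of morphisms is the right one: interior-preserving on the cone side, and $\Bcal\to\Bcal'$ on the fan side. It is what makes the stratum inclusion $B\G_m\subset[\A^1/\G_m]$ and the cube decompositions of \ref{def:cube} into maps of cone stacks with boundary, as the paper uses them later. A boundary-preserving convention would exclude both.
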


\begin{example}
The cone stack with boundary $(\R_{\geq 0}, 0)$ may be denoted as $\R_{> 0}$. On the Artin fan side, this corresponds to the closed embedding $B\G_m \subset [\A^1/\G_m]$.
\end{example}

The main example of cone stacks with boundary will be the moduli space of tropical pierced maps constructed in \ref{sec:tropicalmaps}. Pierced curves are introduced in more detail in \ref{subsec:pierced_curves}. Tropically, a pierced curve corresponds to a classical tropical curve decorated with non-zero lengths for each leg. As cone stacks with boundary, the moduli space of tropical pierced curves $\Mpt_{g,n}^t$ is $\Mfrak_{g,n}^t \times \R_{>0}^n$.

\subsection{Homological piecewise polynomial functions}\label{sec:hPP}

Classically, the tropicalisation $\Acal_X$ of a log smooth scheme $X$ (e.g., the moduli space of curves) has been a helpful tool for understanding the intersection theory of $X$. Namely, we have a pullback map $\LogCH^*(\Acal_X) \to \LogCH^*(X)$, and $\LogCH^*(\Acal_X)$ is purely combinatorial: it is the ring of piecewise polynomials on $\Sigma_X$, the cone stack corresponding to $\Acal_X$. A similar theory for the homology of cone stacks with boundary is developed in \cite{RPSS_log_taut}.

The key definition is that of homological piecewise polynomials.

\begin{definition}[\protect{\cite[Definition~57]{RPSS_log_taut}}]
\label{def:prrshomPP}
Let $(\Sigma,\Sigma^\circ,\Delta)$ be a cone stack with boundary. We denote the group of (strict) \emph{homological} piecewise polynomials $\PP_*(\Sigma,\Delta)$ as
\begin{align*}
	\sPP_*(\Sigma,\Delta) &= \{f \in \sPP^*(\Sigma) : f|_{\Delta} = 0 \} \subset \sPP^*(\Sigma)\\
	\PP_*(\Sigma,\Delta) &= \{f \in \PP^*(\Sigma) : f|_{\Delta} = 0 \} \subset \PP^*(\Sigma)
\end{align*}
i.e., piecewise polynomials with $\Q$-coefficients vanishing on the boundary $\Delta$. These carry a grading given by the negative degree of the corresponding polynomial.
\end{definition}

This gives exactly a combinatorial description of the homological (log) Chow group of the corresponding idealised Artin fan.

\begin{theorem}[\protect{\cite[Theorem~59]{RPSS_log_taut}, Appendix \oref{sec:push_hpp}}]
\label{thm:prrshomPP}
Let $(\Sigma,\Sigma^\circ,\Delta)$ be a finite cone stack with boundary, and let $\Bcal_{\Sigma^\circ}$ denote the corresponding idealised Artin fan.
We have morphisms of graded groups \begin{align*}\Psi: \sPP_*(\Sigma,\Delta) &\to \CH_*(\Bcal_{\Sigma^\circ}) \\ \Psi: \PP_*(\Sigma,\Delta) &\to \LogCH_*(\Bcal_{\Sigma^\circ}).\end{align*}
The first morphism is injective, and is an isomorphism if $\Sigma$ is locally free; the second morphism is always an isomorphism. 
\end{theorem}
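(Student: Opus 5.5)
We construct $\Psi$ cone by cone, glue along face maps, and then read off the injectivity and surjectivity claims from the same local picture.

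\emph{Single cone.} Let $\sigma$ be a cone with monoid $P$, so $\Acal_\sigma=[\operatorname{Spec} k[P]/T_P]$. A face $\tau\le\sigma$ gives a closed stratum $\Acal_\tau\subset\Acal_\sigma$. If $\sigma$ is smooth, $\Acal_\sigma=[\A^r/\G_m^r]$, and $\CH_*(\Acal_\sigma)=\CH^*(B\G_m^r)$ is a polynomial ring in $r$ variables. This ring is identified with polynomials on $\sigma$: linear coordinates $x_i$ correspond to $c_1$ of the tautological line bundles, i.e.\ to the boundary divisors. A homological polynomial must vanish on the boundary faces in $\Delta$, so it is divisible by the product of the coordinates cutting out those faces. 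Dividing by this product and capping the fundamental class of the open stratum complement gives an element of $\CH_*(\Bcal_{\sigma^\circ})$. Equivalently, $f=\prod_{i\in I}x_i\cdot g$ is sent to $g\cap[\Bcal]$. In the strict case, a piecewise polynomial that is polynomial on each cone is sent to the cycle obtained by summing over maximal cones of $\Sigma^\circ$. Concretely, on a cone $\sigma$ meeting the interior with $\sigma\cap\Delta$ cut out by coordinates $x_I$, we write $f|_\sigma=x_I g_\sigma$ and send it to $g_\sigma\cap[\overline{\Acal_{\sigma}}]$. The closure lies in $\Bcal$.

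\emph{Gluing.} For a general cone stack we reduce to this local case. Piecewise polynomials are a limit over cones along face maps. We check that the local assignments agree on overlaps, i.e.\ that they are compatible with restriction to faces and with automorphisms. Using the excision sequence $\CH_*(\Bcal_\tau)\to\CH_*(\Bcal_\sigma)\to\CH_*(\Bcal_\sigma\setminus\Bcal_\tau)\to 0$, we argue by induction on the number of cones that $\Psi$ is well defined and compatible with the stratification. For a cone stack, automorphisms act on the cone, and the stack quotient gives the invariant part with rational coefficients. This is where the appendix lemma on stacky points comes in: the pushforward from a chart to a quotient by a finite group is an isomorphism on invariants rationally.

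\emph{Injectivity and surjectivity.} In the locally free case, each $\Acal_\sigma$ is a quotient of affine space by a torus. The excision sequences then become short exact by a dimension count: the Chow groups of the strata are free polynomial modules and the boundary maps vanish. Comparing ranks degree by degree with $\sPP_*$ then gives an isomorphism. In the general case, we take a simplicial/unimodular subdivision $\tilde\Sigma\to\Sigma$, which is a locally free log modification. We realise the map $\CH_*(\Bcal_\Sigma)\to\CH_*(\Bcal_{\tilde\Sigma})$ as the inclusion $\sPP_*(\Sigma)\subset\sPP_*(\tilde\Sigma)$, which yields injectivity. For $\LogCH$, we pass to the colimit over subdivisions. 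Since $\PP_*=\colim\sPP_*$ over subdivisions, the second map is a colimit of isomorphisms, hence an isomorphism.

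The main obstacle is identifying the transition maps: the Gysin pullbacks along $\Acal_{\tilde\Sigma}\to\Acal_\Sigma$ on homological Chow of idealised strata must correspond to restricting piecewise polynomials. I would first verify this for a single star subdivision of a smooth cone, reducing to the blowup formula for a weighted projective bundle, and then deduce the general case since subdivisions are dominated by iterated star subdivisions.
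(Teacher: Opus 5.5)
Your argument breaks down in exactly the part of the statement that this paper adds to \cite[Theorem~59]{RPSS_log_taut}: defining $\Psi\colon \sPP_*(\Sigma,\Delta)\to \CH_*(\Bcal_{\Sigma^\circ})$ when $\Sigma$ is not locally free, and proving it is injective. You propose to realise a map $\CH_*(\Bcal_{\Sigma})\to\CH_*(\Bcal_{\tilde\Sigma})$ along a unimodular subdivision and identify it with the inclusion $\sPP_*(\Sigma)\subset\sPP_*(\tilde\Sigma)$. No such map is available. If $\Sigma$ is not locally free, then $\Acal_\Sigma$ is singular and $\Acal_{\tilde\Sigma}\to\Acal_\Sigma$ is in general not lci, so there is no Gysin pullback on homological Chow groups along it. This is precisely why $\LogCH_*$ is formed only over locally free alterations, where both Artin fans are smooth. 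Your proposal also never says what $\Psi$ is on a non-smooth cone. The only natural map goes the other way, and that is how the paper defines $\Psi$: pull $f$ back to a locally free alteration $\tilde\Sigma\to\Sigma$, apply the isomorphism $\sPP_*(\tilde\Sigma,\tilde\Delta)\cong\CH_*(\Bcal_{\tilde\Sigma^\circ})$, and push forward along the proper map $\Bcal_{\tilde\Sigma^\circ}\to\Bcal_{\Sigma^\circ}$. The result is independent of choices by the projection formula. With this definition injectivity is not formal, since proper pushforward can kill classes, so a genuinely new input is needed.

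The paper supplies that input by a localisation argument. It inducts over strata, removing an open point $U=[B\G_m^n/G]$ with closed complement $Z$. The sequence $0\to\CH_*(Z)\to\CH_*(\Bcal)\to\CH_*(U)\to 0$ is short exact; this needs the boundary map from $\CH_*(U,1)$ to vanish, which comes from \cite[Proposition~60]{RPSS_log_taut} and \cite{BSS-II}, not from a dimension count as you suggest. A diagram chase then reduces everything to injectivity of $\Psi_U$ for $U$ the closed point of $[\Acal_P/G]$. For $G$ trivial, set $S=\Sym^*P^\gp\otimes\Q$ and $K=\operatorname{Frac}S$. Then:
\begin{itemize}
\item Edidin--Graham localisation \cite{edidingrahambottlocalization} gives $j_*\colon\CH_*(U)_K\cong\CH_*(\Acal_P)_K$.
\item Brion's equivariant multiplicity \cite{Brion1998} gives $e\, j_*[U]=[\Acal_P]$ with $e\neq 0$, so $f\mapsto f\cap[\Acal_P]$ is injective on $S$.
\item This map equals $\Psi_{\Acal_P}$, by the projection formula and birationality of the alteration.
\item Since $j_*\circ\Psi_U=\Psi_{\Acal_P}$ on $\sPP_*(U)\subset S$, it follows that $\Psi_U$ is injective.
\end{itemize}
A finite group $G$ is handled by taking invariants with $\Q$-coefficients.

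Separately, your cone-by-cone recipe in the locally free case needs repair. Dividing by a product $x_I$ only makes sense when $\sigma\cap\Delta$ is a union of coordinate hyperplanes; for $\sigma=\R_{\ge0}^2$ and $\Delta=\{0\}$ the homological polynomials form the non-principal ideal $(x_1,x_2)$. Also, summing $g_\sigma\cap[\overline{\Acal_\sigma}]$ over maximal cones double counts whenever two maximal cones share an interior face. The clean definition is global: $f\cap[\Acal_\Sigma]$ restricts to $f|_\Delta\cap[\Acal_\Delta]=0$ on the open substack $\Acal_\Delta$. Hence it lifts to $\CH_*(\Bcal_{\Sigma^\circ})$, uniquely by left exactness of excision.
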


\begin{example}
Consider the cone stack with boundary $\R_{>0}^2$ with coordinates $x,y$, corresponding to the idealised Artin fan $\Bcal = B\G_m^2$. Then $\Psi(xy) = [\Bcal]$, and $\Psi(\min(x,y))$ is the fundamental class of the log blowup $\tilde{\Bcal}$. 
This log blowup is the $\P^1$-bundle over $\Bcal$ representing isomorphisms between its two universal line bundles. 
Equivalently, $\tilde{\Bcal}/\Bcal$ is the quotient by $\G_m^2$ of the map $E \to (\pt,\N^2)$, where $E$ is the exceptional divisor in the blowup $\Bl_{(0,0)} \A^2$. This example is worked out in more detail in \cite[Example~62]{RPSS_log_taut}.
\end{example}

\section{The space of logarithmic pierced maps}

\subsection{Pierced curves}
\label{subsec:pierced_curves}
In \cite{Holmes2023LogarithmicCohomologicalFT}, we defined the moduli space of \emph{pointed curves}, and through those defined \emph{pierced} curves. These pierced curves allow for negative tangencies and gluing, and are the main actors in this paper. In this section, we give a direct definition of the moduli space of pierced curves.

\begin{definition}
\label{def:piercedcurve}
A pierced curve is a tuple $(C/S, p_1, \dots, p_n)$ where $C \to S$ is a morphism of log schemes and $p_i\colon S \to C$ are sections, such that 
\begin{enumerate}
\item The underlying schemes give a marked prestable curve (in particular the $p_i$ go through the smooth locus);
\item
On each geometric fibre, at each $p_i$ there is a (necessarily unique)
element $t_i \in \ghost_{C, p_i}$ whose preimage in $\M_{C, p_i}$ maps to a uniformiser in $\ca O_{C, p_i}$; 
\item The induced map $\ghost_S \oplus \bb N \to \ghost_{C, p_i}; (0,1) \mapsto t_i$ becomes an isomorphism after groupification; we call the second projection the \emph{slope} of an element of $\ghost_{C, p_i}^\gp$. The \emph{length} of the marking is $\ell_i \coloneqq p_i^*t_i \in \ghost_S$. 
\item After replacing $\M_{C, t_i}$ by the submonoid of elements of non-negative slope, we have a log curve (and the sections are still log maps). 
\item All elements in $\M_{C, t_i}$ that have negative slope map to $0 \in \ca O_C$;
\item \label{enum:item:maxext} $\ghost_{C, p_i} = \{(a, n)\in \ghost_S \oplus \bb Z : \alpha(a + n\ell_i)= 0 \in \ca O_S/\ca O_S^\times \text{ or } n=0\}$. 
\end{enumerate}

We let $\Mpt_{g,n}$ denote the moduli stack of pierced curves.

\end{definition}

\begin{remark}
For $(C/S,p_1,\cdots,p_n)$ a pierced curve, there is an associated pointed curve $(C^\text{pt}/S)$, with monoid given by elements of non-negative slope along the legs. This shows that the moduli stack of pierced curves is isomorphic to the moduli stack of pointed curves.
In particular, it is an algebraic stack with log structure. For maps to a target log scheme $X$, the story differs: the moduli space of pointed maps is empty if there is a negative tangency (since by definition the monoids of the pointed curves only have elements with non-negative slopes).

Condition (\oref{enum:item:maxext}) is equivalent to saying that $\ghost_{C,p_i}$ is the maximal submonoid of $\ghost_{C^\text{pt},p_i}^\gp$ such that the section $p_i$ extends. 
\end{remark}
\begin{warning}
We warn the reader that a pierced curve $C$ is not fs. It is integral and saturated; however, it is not finitely generated, as the condition $(a,m) = (0,0)$ or $a + m\ell > 0$ gives a non-finitely generated monoid; see \ref{ex:puncturingA1}. 
\end{warning}

\begin{example}
\label{ex:puncturingA1}
Let $S$ be a log point and $C/S$ a pierced curve whose underlying curve is smooth, and with a single marked point $p$. Then $C$ is strict over $S$ away from $p$ and we have 
\[\ghost_{C,p} = \{(a,n) \in \ghost_S \oplus \Z : (a,n) = 0 \text{ or } a + n \ell > 0\}\]
with the map $\ghost_{C,p} \to \ghost_S$ uniquely determined by the image $\ell$ of $(0,1)$. In particular $\ghost_{C,p}$ is never finitely generated unless $\ghost_S = 0$. This is depicted in \ref{fig:piercedghostsheaf}.

\begin{figure}
	\centering
	\begin{tikzpicture}
	\draw (0, 0) ..controls(.3, 1).. (0, 2);
	\filldraw (.15, 1.5) circle (.08);
	\node[right] at (.15, 1.5){$p$};
	\node[left] at (-.5, 1){$C$};
	\filldraw (0, -3) circle (.08);
	\node[right] at (0, -3){$S$};
	\draw[->] (0, -2) -- node[midway,left] {$p_i$} (0, -1);
	\begin{scope}[shift={(3, 0)}]
	\draw[->] (0, .5) to (2, 2);
	\draw[->] (0, .5) to (2, -1);
	\node[right] at (2, 2){$\NN$};
	\node[right] at (2, -1){$\ghost_S$};
	\filldraw (0, .5) circle (.05);
	\fill[blue!20] (0, .5) to (2, 2) to (2, -1) to (0, -1) to (0, .5);
	\draw[->] (0, .5) to (2, 2);
	\draw[->] (0, .5) to (2, -1);
	\draw[dashed] (0, .5) to (0, -1);
	\node[right] at (2, .5){$\ghost_{C, p}$};
	\filldraw (0, .5) circle (.05);
	\draw[->] (1, -1.3) -- node[midway,left] {$p^*$} (1, -2);
	\draw[->] (0, -3) to (2, -3);
	\filldraw (0, -3) circle (.05);
	\node[right] at (2, -3){$\ghost_S$};
	\end{scope}
	\end{tikzpicture}
	\caption{An example of the log structure on a pierced curve $C$ over a standard log point $S$ at a marked point $p \in C$.
	}
	\label{fig:piercedghostsheaf}
\end{figure}
\end{example}

\subsection{Pierced maps}

Let $X$ be an algebraic stack with log structure. We first define explicitly the discrete data associated to a map from a pierced curve $C \to X$. As in the classical case, this will include the genus, the number of markings, and the curve class, but it will also contain possibly negative \emph{contact orders} (tangencies).

\begin{definition}
\label{def:contact_orders}
A \emph{contact order} for $X$ is a pair $(\sigma, a)$ where $\sigma$ is a cone in the tropicalisation $X^t$ and $a \in \sigma^{\gp}(\bb Z)$, such that there is no proper subcone $\tau \subset \sigma$ with $a \in \tau^\gp(\Z) \subset \sigma^\gp(\Z)$. We denote the set of contact orders by $\CO(X)$.
\end{definition}

For $X$ strict normal crossings with boundary divisors $D_1,\dots,D_k$, there is a natural map $\CO(X) \to \Z^k$, recording the contact order to the $i$'th boundary divisor. The preimage of $a \in \Z^k$ is formed by the cones $\sigma$ corresponding to the connected components in $\bigcap_{i : a_i \ne 0} D_i$. In particular, if any intersection of boundary divisors is connected, then $\CO(X) \to \Z^k$ is injective.

\begin{remark}
The condition in \ref{def:contact_orders} that no proper subcone $\tau$ should exist means for example that if we want to constrain a point $p_i$ to lie in one of the $D_j$ then we can only do so by requiring some non-zero contact order there; we cannot simply require the point to land there and allow the contact order to be arbitrary. This restriction is necessary in order for our virtual classes constructed in \ref{subsec:tropvfc,sec:piercedlogGW} to be well-defined; more precisely, for the piecewise polynomial functions defining the tropical virtual classes to vanish on the boundary.

Of course, one can add in this constraint by multiplying with the insertion $\ev_i^* D_j$.
\end{remark}

\begin{definition}
\label{def:discrete-data}
\emph{Discrete data} for $X$ is a tuple 
 \[\Lambda = (g,n,\beta, (\sigma_i, a_i)_{1 \le i \le n})\] 
where $g$ and $n$ are non-negative integers, $\beta\colon \on{Pic}(X) \to \bb Z$ is an algebraic group homomorphism, and each $(\sigma_i, a_i)$ is a contact order.  We denote the set of discrete data by $\Disc(X)$.
\end{definition}

\begin{definition}\label{def:has_disc_data}
Let $X$ be an algebraic log stack and $\Lambda = (g,n,\beta,(\sigma_i, a_i)_{1 \le i \le n})$ discrete data. Let $(C/S,p_1,\dots,p_n) \in \Mpt_{g,n}$ be a log pierced curve of genus $g$, and $f: C \to X$ a map. We say $f$ has discrete data $\Lambda$ if both of the following conditions hold:
\begin{enumerate}
	\item the map $\on{deg} \circ f^*\colon \on{Pic}(X) \to \bb Z$ is equal to $\beta$;
	\item $f(p_i)$ lies in the closure of the stratum associated to $\sigma_i$, and the composite
\begin{equation}
\ghost_{X, f(p_i)} \xrightarrow{f} \ghost_{C, p_i} \xrightarrow{\mathrm{slope}} \bb Z
\end{equation}
is the given element $a_i$ of $\sigma_i^{\gp}(\bb Z)$. 
\end{enumerate}
\end{definition}

\begin{remark}
For $X$ a smooth scheme, through the embedding $\on{NS}(X) \to H^2(X,\Z)$ obtained from the exponential sequence and the duality between $H^2(X,\Z)$ and $H_2(X,\Z)$, this definition of the curve class $\beta$ is (up to torsion in $H_2(X,\Z)$) the same as the usual definition of $f_*[C] = \beta \in H_2(X,\Z)$. The definition we give has the advantage of being easily compatible with tropicalisation; in fact, in \ref{subsec:balancing} the usual balancing equations for log maps will be framed purely in terms of compatibility of the degree maps $\on{Pic} X \to \bb Z$ and $\on{Pic} X^t \to \bb Z$.
\end{remark}

\begin{definition}
Let $X$ be an algebraic stack with log structure, and let $\Lambda$ be discrete data on $X$. Write
$
	\Mpt_\Lambda(X)
$
for the moduli space parametrising tuples
\[
(C/S,p_1,\dots,p_n, f\colon C \to X) 
\]
where $(C/S, p_1, \dots, p_n)$ is a log pierced curve, and $f$ has discrete data $\Lambda$, and where $\ul{f}: \ul{C} \to \ul{X}$ is stable.
\end{definition}

\begin{definition}
\label{def:evaluationmaps}
Denote $(C,(p_1,\dots,p_n),f: C \to X)/\Mpt_\Lambda(X)$ the universal log pierced map. For $i=1,\dots,n$, let $\ev_i = f \circ p_i : \Mpt_\Lambda(X) \to X$ denote the $i$-th evaluation map.
\end{definition}

\begin{remark}
Unlike in the punctured case, the evaluation maps are logarithmic maps, as the sections of the pierced curve are logarithmic. This difference is one of the key improvements which will later allow for gluing log stable maps and for birational invariance. 
\end{remark}

\begin{figure}
\centering
\begin{tikzpicture}[scale=.8]

            \pgfmathsetmacro{\ss}{3.6};

            \pgfmathsetmacro{\httrop}{-2.9};

            \pgfmathsetmacro{\dx}{6};

            \foreach \k in {0,.5,1}{

            \pgfmathsetmacro{\tad}{\k*12};
            \pgfmathsetmacro{\mul}{\k*\k};
            \pgfmathsetmacro{\mint}{min(\ss * .5, sqrt(\ss*.5/(\k*\k+0.001)))};

            \node[circle,fill=red,inner sep=0pt, minimum width=4pt] at (\tad,0) {};

            \node at (\tad,0) [above,red] {$p$};

            \node at (\tad+1,0) [below] {$D$};

            \node at (\tad-.8,\k*\k*.84+.1*\k) [above,red] {$C$};

            \draw[line width = .8] (-\ss*.5 + \tad,0) -- ++(\ss,0);

            \draw[color=red,parametric, domain={-\mint:\mint}, samples=100, line width = .95] plot function{t + \tad,\k*\k*t*t};

            \draw[->] (-\ss*.5 + \tad,\httrop) -- ++(\ss,0);

            \node[circle,fill,inner sep=0pt, minimum width=3pt] at (-\ss*.5 + \tad,\httrop) {};
			
			}

            \pgfmathsetmacro{\ell}{1.5};

			\draw[->, red, line width  = .8] (-\ss*.5 + 1,\httrop+.3) -- ++(\ell,0);
			\draw[->, red, line width  = .8] (-\ss*.5 + \dx,\httrop+.3) -- ++(\ell,0);
			\draw[->, red, line width  = .8] (-\ss*.5 + 2*\dx,\httrop+.3) -- ++(\ell,0);

			\node[circle,fill=red,inner sep=0pt, minimum width=3pt] at (-\ss*.5 + 1,\httrop+.3) {};
			\node[circle,fill=red,inner sep=0pt, minimum width=3pt] at (-\ss*.5 + \dx,\httrop+.3) {};
			\node[circle,fill=red,inner sep=0pt, minimum width=3pt] at (-\ss*.5 + 2*\dx,\httrop+.3) {};

			\node at (0,2) {$t = 0$};
			\node at (\dx,2) {$t = \frac14$};
			\node at (2*\dx,2) {$t = 1$};

            \draw[very thin,color=gray, opacity=.3] (-1.8,-1.8) grid (1.8,1.8);

            \draw[very thin,color=gray, opacity=.3] (-1.8+\dx,-1.8) grid (1.8+\dx,1.8);

            \draw[very thin,color=gray, opacity=.3] (-1.8+2*\dx,-1.8) grid (1.8+2*\dx,1.8);

        \end{tikzpicture}  
  \caption{A depiction of the degeneration $x \mapsto tx^k$ from \ref{ex:piercedmapdegen}, and its tropicalisation. For $t \ne 0$ the tropicalisation is always given by the PL function mapping the vertex of the dual graph to the vertex in $\Sigma_X$, and with slope $k$ on the unique leg. For $t = 0$ the entire curve lands inside the divisor $D$. Tropically, this implies the vertex of the dual graph maps to the interior of the ray. The slope on the unique leg is still $k$.}
  \label{fig:piercedmapdegen}
\end{figure}

\begin{example}
\label{ex:piercedmapdegen}
This example and its tropicalisation are pictured in \ref{fig:piercedmapdegen}.
We consider the pierced map that on the level of underlying schemes is given by the following diagram
\[\begin{tikzcd}
C = \P_S^1 \ar[r,"f"] \ar[d] 		& X = \P^1 \times \P^1		\\
S = \A_t^1 &
\end{tikzcd}\]
where $f$ sends $(x,t)$ to $(x,tx^k)$ for a fixed $k \in \N$, and $C/S$ has a unique section $p$, mapping $S$ to $0$. We give $X$ the log structure given by the divisor $D = \P^1 \times 0$. We let $\alpha \in \ghost_X(X)$ denote the PL function corresponding to $D$. We give $S$ the log structure pulled back from ${s = 0} \subset \A_{t,s}^2$, and let $\ol{t},\ol{s} \in \ghost_S(S)$ denote the corresponding PL functions. Note that $\ol{t}$ vanishes outside of $0 \in S$. We let $C/S$ be the unique pierced curve with length $\ell = \ol{s} \in \ghost_S$. We give $f$ a log structure by sending $\alpha$ to $\ol{t} + k \ol{s}$. 

Equivalently, $f^\trop$ is given on the leg $[0,\ell]$ by $x \mapsto \ol{t} + k x$. The evaluation map $\ev: S \to X$ is constant $(0,0)$ on the level of underlying schemes, and tropically is given by \begin{align*}\ev^\trop: \R_{\geq 0} \cdot \ol{s} \oplus \R_{\geq 0} \cdot \ol{t} &\to \R_{\geq 0} \cdot \alpha \\ (\ol{s},\ol{t}) &\mapsto \ol{t} + k \ol{s}.\end{align*}

We can read off from $f^\trop$ that the tangency of $p$ to $D$ in this family is constant $k$. For $t \ne 0$ this can be seen from the algebraic picture; for $t = 0$ this is encoded in the logarithmic structure.
\end{example}

\begin{figure}
\centering
\begin{tikzpicture}[scale=.8]

           \draw[->] (0,0) -- ++(7,0);
           \pgfmathsetmacro{\vx}{3}
           \draw[->, red, line width  = .8] (\vx,.3) -- node[midway,above] {$\ell_3$} ++(2,0);
           \draw[->, red, line width  = .8] (\vx,.3) -- node[pos=0.75,right] {$\ell_2$} ++(0,2);
           \draw[->, red, line width  = .8] (\vx,.3) -- node[midway,above] {$\ell_1$} ++(-1.3,0);

           \node[circle,fill=red,inner sep=0pt, minimum width=3pt] at (\vx,.3) {};

           \node[circle,fill,inner sep=0pt, minimum width=3pt] at (0,0) {};

           \draw [decorate, decoration = {calligraphic brace, mirror}, line width = .6] (0.05,-0.2) --  node[midway,below] {$t$} (1.65,-0.2);

           \draw [decorate, decoration = {calligraphic brace, mirror}, line width = .6] (1.75,-0.2) --  node[midway,below] {$2\ell_1$} (\vx-.05,-0.2);

           \draw [decorate, decoration = {calligraphic brace, mirror}, line width = .6] (\vx+.05,-0.2) --  node[midway,below] {$2\ell_3$} (\vx+1.95,-0.2);

        \end{tikzpicture}  
  \caption{The tropicalisation of the map $C \to \P^1$, where $C$ is the universal curve over $\Mpt_{\Lambda}(\P^1)$ described in \ref{ex:piercedmapnegative}. The contact orders (slopes on the legs) are $(-2,0,2)$. The lengths of the line segments are given in red for the curve, and in black for $\Sigma_X$.}
  \label{fig:piercedmapnegative}
\end{figure}

\begin{example}
\label{ex:piercedmapnegative}
Let $X = \P^1$ and consider the discrete data $\Lambda$ with $g = 0, n = 3, \beta = 0$ and contact orders $(\R_{\geq 0}, -2),(0,0),(\R_{\geq 0},2)$. Then $\Mpt_\Lambda(X)$ is a point with log structure of rank $4$. This rank $4$ monoid is $\N \cdot t \oplus \bigoplus_{i=1}^3 \N \cdot \ell_i$, with the tropical curve and tropical map pictured in \ref{fig:piercedmapnegative}. We write $\ul{(C,p_1,p_2,p_3)} = (\P_x^1,0,1,\infty)$ for the universal curve $C/\Mpt_\Lambda(X)$. Recall that a map $T\to \P^1$ is the same as an element of $M_{T}^\gp$ that is locally comparable to $1$. Under this identification, the universal map $f: C \to \P^1$ is given by $t \cdot \ell_1^2 \cdot x^{-2}$. The evaluation map $\ev_i$ is given by $t \cdot \ell_1^2 \cdot \ell_i^{a_i}$ where $a = (-2,0,2)$.
\end{example}

We have defined $\Mpt_\Lambda(X)$ as a category fibred in groupoids over $\LogSch$. It remains to show that it is actually a proper algebraic stack with log structure. It is possible to prove this directly using a similar argument as in the punctured case \cite{Abramovich2020Punctured-logar}. In the next section, we will show our moduli space has a proper, representable map to the space of punctured maps $\Mcal_\Lambda^\punc(X)$ \cite{Abramovich2020Punctured-logar}, from which properness and representability will follow directly.

\subsection{Comparison to punctured maps}\label{sec:punctured_comparison}

We now compare to \cite{Abramovich2020Punctured-logar}. 
A family $(C/S, p_1,\cdots,p_n, f : C \to X)$ of pierced maps factors through a unique pre-stable map from a punctured curve 
\begin{equation}\label{eqn:pierpunfactn}
	C \to \check C \to X,
\end{equation}
where the characteristic monoid of $\check C$ at marked points is the image of
\[f^* \bar \M_X \oplus \bar \M_S \oplus \NN \longrightarrow \g{\bar \M}_S \oplus \ZZ.\]

\begin{definition}	
Define a morphism
\[\Phi: \Mpt_{\Lambda}(X) \to \Mcal_{\Lambda}^\punc(X)\]
by sending pierced maps to the factorisation
\[(C/S, p_i, f : C \to X) \mapsto (\check C/S, \check C \to X)\]
from \ref{eqn:pierpunfactn}. 
\end{definition}

This map forgets the data of the logarithmic section, but there is a commutative square
\begin{equation}
\label{eq:mpiertompundiag}
\begin{tikzcd}
\Mpt_\Lambda(X) \ar[r] \ar[d]     &\Mcal_\Lambda^\punc(X) \ar[d, "\Phi"]    \\
\scr Q^n \ar[r]     &\ul{B\GG_m^n}
\end{tikzcd}
\end{equation}
Here $\scr Q$ is the algebraic stack with log structure parametrising positive elements of $\ghost$, with underlying algebraic stack $\ul{B\G_m}$. The first vertical map sends $(C/S,p_1,\cdots,p_n,f)$ to the vector of nonzero sections $(\ell_1,\dots,\ell_n) \in \ghost_{S,>0}^n$, and the second vertical map sends $(C/S,p_1,\cdots,p_n,f)$ to the cotangent line bundle $\bb L_i$ at $p_i$. The commutativity follows from \cite[Lemma~4.3]{Holmes2023LogarithmicCohomologicalFT}.

\begin{definition}
\label{def:mpuncq}
Write ${{\Mcal}^\punc_\Lambda(X)}_\scr Q$ for the pullback $\Mcal^\punc_\Lambda(X) \times_{B\GG_m^n} \scr Q^n$, and write \[\Phi_\scr Q : \Mpt_\Lambda(X) \to {\Mcal}_\Lambda^\punc(X)_\scr Q\] for the map induced by \ref{eq:mpiertompundiag}.
\end{definition}

On $S$-points the map $\Phi_\scr Q: \Mpt_\Lambda(X) \to \Mcal^\punc_\Lambda(X)_\scr Q$ is given by
\[
(C/S,p_i: S \to C, f: C \to X) \mapsto (\check C/S,p_i: S \to C, f: \check C \to X).
\]
On both sides, $p_1,\dots,p_n$ are sections of log schemes. 
The map $\Phi_{\scr Q}$ is a log monomorphism.

\begin{theorem}
\label{thm:closedinblowup}
The map $\Phi_\scr Q: \Mpt_\Lambda(X) \to \Mcal_\Lambda^\punc(X)_\scr Q$ factors as a closed immersion followed by a log blowup.
\end{theorem}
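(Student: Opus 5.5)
The plan is to compare the two moduli problems after a suitable modification of the log structure on the base. A point of $\Mcal^\punc_\Lambda(X)_\scr Q$ over $S$ is a punctured map $(\check C/S, p_i, f\colon \check C \to X)$ together with elements $\ell_i \in \ghost_{S,>0}$ lifting the cotangent line bundles. Lifting this to a pierced map amounts to two things. First, we must extend the monoid $\ghost_{\check C,p_i}$ to the maximal monoid of \ref{def:piercedcurve}(\oref{enum:item:maxext}). Second, the section $p_i$ must become a log morphism. The latter requires that every element $(a,n)$ of the punctured monoid, in particular each $f^\flat(m)$ for $m \in \ghost_{X,f(p_i)}$, satisfies $a + n\ell_i \in \ghost_S$. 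I would proceed in three steps.

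\emph{Step 1 (tropical condition).} For $m \in \ghost_{X,f(p_i)}$, write $f^\flat(m) = (a_m, u_i(m))$, where $u_i(m)$ is the slope determined by the contact order $a_i$. Then the pierced extension exists if and only if each $a_m + u_i(m)\ell_i$ lies in $\ghost_S$, rather than merely in $\ghost_S^\gp$. These are the values of $f^\trop$ at the tips of the legs. Since $\ghost_{X}$ is finitely generated locally, this is a finite set of conditions of the form ``$\phi \ge 0$'' for piecewise linear functions $\phi = a_m + u_i(m)\ell_i$ on the tropicalisation of the base.

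\emph{Step 2 (log blowup).} For each such $\phi$, consider the log modification of $\Mcal^\punc_\Lambda(X)_\scr Q$ that makes $\phi$ comparable to $0$, that is, the subdivision of the tropical base along the hyperplanes $\phi = 0$. By \cite[Lemma]{Ogus2018Lectures-on-log}-style arguments, a log scheme $S$ maps to this blowup if and only if each $\phi$ is locally either $\ge 0$ or $\le 0$ in $\ghost_S$. The locus $\phi \geq 0$ is a union of cones of the subdivision, so it corresponds to an open substack $U$ of the blowup. This gives us the factorisation: $\Phi_\scr Q$ maps into this open part of the log blowup.

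\emph{Step 3 (closed condition).} Condition (5) of \ref{def:piercedcurve} is a vanishing condition: elements of negative slope must map to $0$ in $\ca O$. Concretely, for each $m$ with $u_i(m) < 0$, we need $\alpha(a_m + u_i(m)\ell_i)$, and more generally $\alpha$ of the relevant elements, to vanish. This cuts out a closed substack, namely the idealised locus where the corresponding monomials vanish. I then need to check that on $U$ with these vanishings imposed, the pierced extension is canonical and satisfies conditions (4) and (6). The resulting map is then an isomorphism onto this closed locus, using that $\Phi_\scr Q$ is a log monomorphism to get uniqueness.

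The main obstacle is Step 2. There are two issues. First, I must identify the locus $\phi \geq 0$, which is a priori only an open (or locally closed) part of the log blowup, with a closed substack of the whole blowup. The expected fix is that on the cones where some $\phi < 0$, the punctured map cannot exist at all, because the puncturing condition of \cite{Abramovich2020Punctured-logar} already forces $\alpha$ to vanish there, so these loci are empty. Second, I must make sure the finitely many subdivisions glue globally; here the finiteness of $X^t$ and the fixed contact orders should suffice.
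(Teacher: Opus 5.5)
Your route is the paper's. For each negative-slope element $(a,n)$ of $\ghost_{\check C,p_i}$ (equivalently $f^\flat(m)$ for generators $m$ of $\ghost_{X,f(p_i)}$), a lift requires $\phi=a+n\ell_i\in\ghost_S$ with $\alpha(\phi)=0$. So one log-blows-up to make each $\phi$ comparable with $0$, and then imposes $\alpha(\phi)=0$. This vanishing on the base comes from condition (6) of \ref{def:piercedcurve}, equivalently from $p_i$ being a log map; condition (5) is already part of the puncturing condition.

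The gap is in the step you single out as the main obstacle: the fix you propose is false. The region $\phi<0$ of the blowup is not empty, because nothing in a punctured map ties the leg length $\ell_i$ to the position of the vertex; $\ell_i$ is free data coming from $\scr Q$. Already in \ref{ex:piercedmapnegative}, $\Mcal^\punc_\Lambda(X)_\scr Q$ has characteristic monoid $\N\alpha\oplus\bigoplus_i\N\ell_i$ and $\phi=\alpha-2\ell_1$. The blowup is a (possibly weighted) projective line whose two boundary points are the loci $\phi>0$ and $\phi<0$. The second point carries a perfectly good pulled-back punctured map, and the ACGS puncturing condition imposes nothing there. In particular the open locus $\{\phi\ge 0\}$, which is the complement of that point, is not closed, so it cannot be identified with a closed substack.

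What you need instead, and what the paper's one-line argument uses, is that $Z_\phi:=V(\alpha(\phi))\subset U_+:=\{\phi\ge 0\}$ is already closed in the whole blowup $B$. No emptiness statement is needed:
\begin{itemize}
\item Comparability gives an open cover $B=U_+\cup U_-$ with $U_-=\{-\phi\ge 0\}$.
\item On $U_+\cap U_-$, sharpness of $\ghost$ forces $\phi=0$. So any lift of $\phi$ is a unit there, and $\alpha(\phi)$ is invertible.
\item Hence $Z_\phi\cap U_-=\emptyset$, and $B\setminus Z_\phi=(U_+\setminus Z_\phi)\cup U_-$ is open.
\end{itemize}
Thus the vanishing condition itself excludes the region $\phi\le 0$. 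Intersecting the $Z_\phi$ over the finitely many generators gives the required closed substack. In the example, $Z_\phi$ is exactly the boundary point where $\phi>0$, which recovers the pierced monoid $\N(\alpha-2\ell_1)\oplus\bigoplus_i\N\ell_i$.
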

\begin{proof}
An element $(\check C/S,p_i: S \to C, f: \check C \to X)$ of $\Mcal^\punc(X)_\scr Q$ lies in the image of $\Phi_{\scr Q}$ if and only if $\ghost_{\check C,p_i} \subset \ghost_{C,p_i}$ as subsets of $\ghost^\gp_{C}$ for all $i$. For $m \in \ghost_{\check C,p_i}$, if $m$ has non-negative slope then $m \in \ghost_{C,p_i}$. If $m = (a,n)$ with $a \in \ghost_S, n \in \Z_{< 0}$ then we have $m \in \ghost_{C,p_i}$ if and only if $\alpha(a + n\ell) = 0$. This is a closed immersion inside a log blowup, where the log blowup is the log blowup making $a + n \ell$ and $0$ comparable, and then $\alpha(a + n\ell) = 0$ cuts out a closed condition. 
\end{proof}

In the strict normal crossing case, we can write this down even more explicitly using some of the tropical language of \ref{sec:tropicalmaps} and some language from \cite{BNR}. We will do so in \ref{subsec:comppunctinvariants}, in order to compare pierced invariants and punctured invariants.

\begin{example}
If there are no negative tangencies, then $\Mpt_\Lambda(X) = \Mcal^\punc_\Lambda(X)_{\scr Q}$.
\end{example}

\begin{example}[continues \ref{ex:piercedmapnegative}]
For $X = \P^1$ and $g = 0, n = 3, \beta = 0$ and contact orders given by $(\R_{\geq 0},-2),(0,0),(\R_{\geq 0},2)$ all three spaces
\[\Mpt_\Lambda(X) \to \Mcal^{\punc}_\Lambda(X)_{\scr Q} \to \Mcal^{\punc}_\Lambda(X)\]
have a point as underlying algebraic stack. The maps on the monoids are given by
\[
	\N \cdot t \oplus \bigoplus_{i=1}^3 \N \cdot \ell_i \leftarrow  \N \cdot \alpha \oplus \bigoplus_{i=1}^3 \N \cdot \ell_i \leftarrow \N \cdot \alpha
\]
where $\alpha$ is mapped to $t + 2 \ell_i$.
\end{example}

As a corollary, we obtain the following proposition.

\begin{proposition}
\label{prop:mpieralgebraicproper}
The moduli space $\Mpt_{\Lambda}(X)$ is an algebraic stack with log structure. It is proper if $X$ is log smooth and projective.
\end{proposition}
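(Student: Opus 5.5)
The plan is to deduce both statements from \ref{thm:closedinblowup} together with the known algebraicity and properness of the punctured moduli space $\Mcal^\punc_\Lambda(X)$ from \cite{Abramovich2020Punctured-logar}. First, $\Mcal^\punc_\Lambda(X)$ is an algebraic stack with log structure, and it is proper when $X$ is log smooth and projective; this is the main theorem on punctured maps in \cite{Abramovich2020Punctured-logar}. One check is needed here: the discrete data $\Lambda$ of \ref{def:discrete-data} (curve class as a degree map $\on{Pic}(X)\to\Z$, contact orders as in \ref{def:contact_orders}) must cut out a union of connected components of their space, hence an open and closed substack. This holds because the degree and the contact orders are locally constant in families.

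Next, $\scr Q$ has underlying stack $\ul{B\G_m}$, and the map $\scr Q\to\ul{B\G_m}$ is strict-free on underlying stacks; only the log structure is changed. The stack $\scr Q$ parametrises a section of $\ghost^{>0}$, and on underlying stacks this is the identity of $B\G_m$ equipped with the log structure $\G_m$-torsor realising $\ell$. I will need to double-check that this description is correct. Granting it, the base change ${\Mcal}^\punc_\Lambda(X)_\scr Q$ is an algebraic stack with log structure whose underlying stack agrees with that of $\Mcal^\punc_\Lambda(X)$, so it is still proper. The positivity condition $\ell_i>0$ is a condition on the log structure rather than on the underlying points, so it only affects the log structure.

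Then I apply \ref{thm:closedinblowup}, which factors $\Phi_\scr Q$ as a closed immersion into a log blowup of ${\Mcal}^\punc_\Lambda(X)_\scr Q$. Log blowups of fs log algebraic stacks are again algebraic, and they are representable and proper over their source. I also need that the log blowup used there is a genuine finite-type log blowup: one makes the finitely many elements $a+n\ell_i$ comparable to $0$, and these are finitely many because only finitely many $a$ are relevant. They are the images of generators of $f^*\ghost_X$, which is finitely generated because $X$ is fs. A closed substack of an algebraic stack is algebraic, and a closed immersion is proper. Composing, $\Mpt_\Lambda(X)$ is algebraic with log structure and proper over a proper stack, hence proper.

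The main obstacle is the finiteness and representability just described, not properness itself. In the proof of \ref{thm:closedinblowup}, the condition ranges over all $m=(a,n)\in\ghost_{\check C,p_i}$ with $n<0$. I would reduce this to the finitely many generators $m$ of $\ghost_{\check C,p_i}$, which are generated by $f^*\ghost_X$, $\ghost_S$ and $\N$. It suffices to impose the condition on the images of generators of $\ghost_{X,f(p_i)}$ of negative slope: sums with elements of $\ghost_S$ or of positive slope preserve the property of being comparable to $0$ with vanishing $\alpha$. This gives a finite log blowup locally, and the local pieces glue. Finally, I would check that the resulting object agrees with $\Mpt_\Lambda(X)$ as a fibred category, that is, that the log monomorphism $\Phi_\scr Q$ identifies $\Mpt_\Lambda(X)$ with this closed substack of the blowup and not merely maps into it.
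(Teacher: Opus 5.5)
Your proposal is correct and follows the paper's own route. The paper derives this proposition as a corollary of \ref{thm:closedinblowup}: $\Mpt_\Lambda(X)$ is a closed substack of a log blowup of $\Mcal^\punc_\Lambda(X)_{\scr Q}$, whose underlying stack is that of $\Mcal^\punc_\Lambda(X)$, and algebraicity and properness then come from \cite{Abramovich2020Punctured-logar}. Your extra checks (that the log blowup is of finite type because it suffices to use the negative-slope generators of $f^*\ghost_X$, and that base change to $\scr Q^n$ leaves underlying stacks unchanged) make explicit details the paper leaves implicit.
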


We will see that something stronger holds: the map $\Phi$ is an isomorphism on underlying stacks. We will show this using the theory of minimal objects by Gillam \cite{Gillam2012Logarithmic-sta}. We remark that in the strict normal crossing case, this will also follow from \ref{eq:bnrcomparison}.

We determine the minimal objects of $\Mpt_\Lambda(X)$, and show the map $\Phi: \Mpt_\Lambda(X) \to \Mcal^\punc_\Lambda(X)$ induces a bijection on minimal objects.

\begin{construction}
\label{constr:minmonoid}
Let $(C/S,p_1,\dots,p_n: S \to C, f: C \to X) \in \Mpt_{\Lambda}(X)$ where $\ul{S}$ is a geometric point. Let $P = f^* \ghost_X$. Let $V$ be the set of vertices and $E$ the set of edges. For $h$ a half edge, write $a_h\colon P_h \to \Z$ for the slope along $h$. If $h$ is an internal half-edge, which we think of as a directed edge $e_h = (h, i(h))$ from a vertex $v_h$ to a vertex $v_{i(h)}$, we write $\phi_h\colon P_{e_h} \to P_{v_h}$ for the generisation map.
For a leg $i$ incident to a vertex $v$ we let $\phi_i\colon P_{p_i} \to P_v$ denote the generisation map. Consider the monoid \[\ghost = \prod_{v \in V} P_v \times \prod_{e \in E} \bb N \times \prod_{i=1}^n \bb N \times \prod_{i=1}^n P_{p_i}\] where a general element is written \[((\alpha_v)_{v \in V},(\ell_e)_{e \in E}, (\ell_i)_{i=1}^n, (t_i)_{i=1}^n);\] we think of an element of one of the $P_v$ as an element of $\ghost$ by setting all the other entries to $0$, and similarly for the other terms. Let $R$ be the subgroup of $\ghost^\gp$ generated by elements
\[
-\phi_h(m) + \phi_{i(h)}(m) - a_e(m) \ell_e
\]
for an oriented edge $e = (h, i(h))$ and $m \in P_e$, and by elements
\[
\phi_i(m) + a_{h_i}(m) \ell_i - t_i(m)
\]
for $i \in \{1,\dots,n\}$ and $m \in P_{p_i}$. 
We define \[\ghost_{S,\bas} \coloneqq \ghost/R,\] the quotient of $\ghost$ by the equivalence relation induced by $R$ in the category of fs monoids. We call this the \emph{basic monoid}. Note that there is a natural map $\ghost_{S,\bas} \to \ghost_{S}$.
\end{construction}

\begin{definition}
Let $(C/S,p_1,\dots,p_n\colon S \to C, f\colon C \to X) \in \Mpt_{\Lambda}(X)$ be a pierced map. We say $C$ is \emph{basic} if at every strict geometric point $s$ of $S$, the natural map $\ghost_{s,\bas} \to \ghost_s$ from \ref{constr:minmonoid} is an isomorphism.
\end{definition}

\begin{example}[continues \ref{ex:piercedmapnegative}]
In this case we have $\ghost = \N \cdot \alpha \times \prod_{i=1}^3 \N \cdot \ell_i \times \prod_{i=1}^3 \N \cdot t_i$. The quotient is $\ghost_\bas = \N \cdot t_1 \times \prod_{i=1}^3 \N \cdot \ell_i$, with the quotient map sending $\alpha,t_2$ to $t_1 + 2\ell_1$ and $t_3$ to $t_1 + 2\ell_1 + 2\ell_3$.
\end{example}

One can check the following standard proposition.
\begin{proposition}
\label{prop:minimaleqbasic}
Let $(C/S,p_1,\dots,p_n\colon S \to C, f\colon C \to X) \in \Mpt_{\Lambda}(X)$ be a pierced map. It is minimal in the sense of \cite{Gillam2012Logarithmic-sta} if and only if it is basic.
\end{proposition}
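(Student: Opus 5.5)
The plan is to follow the standard Gillam/Gross--Siebert argument: first show that basic objects satisfy the universal property of minimal objects, then show that every object pulls back from a basic one.

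Fix a pierced map $(C/S,p_i,f)$ over a geometric point $\ul S$. The starting observation is that the log structure on $C$ is recovered from the tropical data: the generic-point values $\alpha_v = f^\flat|_v \in \ghost_S$, the edge lengths $\ell_e$, the leg lengths $\ell_i$, and the values $t_i$ of $f^\flat$ at the markings, which is a map $P_{p_i} \to \ghost_{C,p_i} \subset \ghost_S^\gp\oplus\Z$ composed with the projection to $\ghost_S^\gp$. I would make the following points in order.

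\begin{enumerate}
\item The assignment sending generators of $\ghost$ to these elements of $\ghost_S$ respects the relations in $R$. The edge relation is the usual continuity/slope condition for log maps at nodes. The leg relation $\phi_i(m) + a_{h_i}(m)\ell_i = t_i(m)$ expresses that $f^\flat$ along the leg is affine of slope $a$ and evaluates at the tip $p_i$ to $p_i^*f^\flat$. This gives the natural map $\ghost_{S,\bas}\to\ghost_S$.
\item Conversely, given a map of fs monoids $\ghost_{S,\bas}\to Q$ and a strict-over-$\ul S$ log point with ghost monoid $Q$, I would build the pierced curve and map over it by pushing out. The log structure of $S$ along $\ghost_{S,\bas}\to\ghost_S$ descends along the basic monoid in the usual way, using the torsor description $\M_S = \ghost_S\times_{\ghost_S}\ldots$. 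Uniqueness of the pullback description then identifies morphisms in $\Mpt_\Lambda(X)$ with target $(C/S,\ldots)$ with factorisations of $\ghost_{S,\bas}\to \ghost_S$. This is exactly Gillam's criterion for minimality, and it gives ``basic $\Rightarrow$ minimal''.
\item For ``minimal $\Rightarrow$ basic'', minimality means the object is initial among those it maps to over the same underlying object. Since a basic one exists with a map $\ghost_{S,\bas}\to\ghost_S$ (built by step 2 applied to $Q=\ghost_S$ in reverse), a minimal object must agree with it, forcing the map to be an isomorphism.
\item Openness of basicness and compatibility with generisation follow by checking that the construction commutes with the generisation maps $\ghost_{s}\to\ghost_{s'}$, as in the punctured case. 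There, the contraction of edges adds an $\ell_e$ relation, and the legs persist.
\end{enumerate}

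The main obstacle is point 2 at the markings. There $\ghost_{C,p_i}$ is not finitely generated, and is cut out by condition~(\oref{enum:item:maxext}), which involves the ideal-like vanishing condition $\alpha(a+n\ell_i)=0$ rather than just monoid data. I have to check that after changing base from $\ghost_{S,\bas}$ to $Q$, the maximal-extension monoid built from $Q$ receives $f^\flat$. In other words, every $t_i(m) - n\ell_i$ with $n$ exceeding the slope must map to $0$ in $\ca O$. I would handle this by noting that this vanishing is exactly where $f^\flat$ lands for elements of negative slope (condition (5)), and that these are images of elements of $P_{p_i}$. So the condition is automatic for the elements actually needed, and condition~(\oref{enum:item:maxext}) is preserved by pullback along $\ghost_{S,\bas}\to Q$ because $\alpha$ factors through $Q$.
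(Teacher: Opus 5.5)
Your outline is the standard Gillam/Gross--Siebert ``minimal equals basic'' argument, which is what the paper intends: it calls the proposition standard and gives no proof. You also resolve the only pierced-specific point correctly: at a marking one only needs $\alpha$ to kill the image of $t_i(m)$ when $a_{h_i}(m)<0$, and this transports along $\ghost_{S,\bas}\to\ghost_S$ because the structure maps are compatible.

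Two slips to fix. First, $t_i(m)$ is $p_i^*f^\flat(m)$, i.e.\ the image of $\phi_i(m)$ plus $a_{h_i}(m)\ell_i$, not the projection of $f^\flat(m)$ to $\ghost_S^\gp$: in the paper's coordinates that projection is generisation to the vertex, and with it the leg relation would fail. Second, minimal objects are \emph{terminal} among objects with the same underlying data (everything maps uniquely to them), not initial.
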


From this, the isomorphism on underlying stacks follows.

\begin{proposition}
The map $\Phi: \Mpt_{\Lambda}(X) \to \Mcal^\punc_{\Lambda}(X)$ is an isomorphism on underlying stacks.
\end{proposition}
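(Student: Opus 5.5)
The plan is to use Gillam's criterion: a map of log stacks inducing an equivalence on minimal objects induces an isomorphism of the underlying stacks, since the underlying stack of a log moduli problem is its substack of minimal objects. By \ref{prop:minimaleqbasic}, the minimal objects of $\Mpt_\Lambda(X)$ are the basic ones. The corresponding statement for $\Mcal^\punc_\Lambda(X)$, that minimal objects are the basic ones with the basic monoid of \cite{Abramovich2020Punctured-logar}, is known. So it suffices to show two things: that $\Phi$ sends basic pierced maps to basic punctured maps, and that every basic punctured map over a scheme $\ul S$ lifts uniquely, up to unique isomorphism, to a basic pierced map over the same underlying scheme.

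The first step is to compare basic monoids at a geometric point. The punctured basic monoid is $\prod_v P_v \times \prod_e \N$ modulo the edge relations, with no leg lengths. The monoid $\ghost_{S,\bas}$ of \ref{constr:minmonoid} adds generators $\ell_i$ and $t_i(m)$, and the leg relations express each $t_i(m)$ as $\phi_i(m) + a_{h_i}(m)\ell_i$. After eliminating the $t_i$, one should therefore get $\ghost_{S,\bas} = Q^\punc_\bas \oplus \N^n$, where the free summand is generated by the $\ell_i$. I need to check that the fs saturation does not interfere, and that the puncturing conditions (the image of $P_{p_i}$ in $Q^\punc \oplus \Z\ell_i$) impose no further relation. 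This identification matches the extra data $\scr Q^n$ in \ref{def:mpuncq}: the factor $\N^n$ is exactly the log structure of $\scr Q^n$ over $\ul{B\G_m^n}$, which has trivial underlying effect.

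Next comes lifting and uniqueness. Start from a basic punctured map $(\check C/S, f)$ with $S$ carrying the basic log structure. Enlarge $\M_S$ by $\N^n$, taking the log structure of the cotangent torsors $\bb L_i$, so that the $\ell_i$ are defined. Then build $C$ by replacing $\ghost_{\check C,p_i}$ with the maximal monoid of condition (\oref{enum:item:maxext}), and check that $f$ still factors. The key point is that $\ghost_{\check C,p_i} \subset \ghost_{C,p_i}$. For an element $(a,n)$ with $n<0$, this requires $\alpha(a+n\ell_i)=0$. In the basic monoid, $a + n\ell_i$ is never comparable to $0$ unless it actually lies in $\ghost_S$, because $\ell_i$ is a free generator. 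Since $\ell_i$ maps to $0$ under $\alpha$, any element involving $\ell_i$ with positive coefficient also maps to $0$. The elements $(a,n)$ that actually occur are images of $P_{p_i}$, namely $\phi_i(m) + a(m)\ell_i$, and these lie in the monoid. Uniqueness follows because the underlying curve, the sections, and the map are unchanged, and the pierced log structure is determined by the $\ell_i$. By \cite[Lemma~4.3]{Holmes2023LogarithmicCohomologicalFT}, the $\ell_i$ are determined by the torsors $\bb L_i$. Since the lift is basic by the monoid computation above, $\Phi$ restricts to an equivalence of minimal-object stacks over schemes.

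I expect the main obstacle to be the monoid computation in the first step. Specifically, I must show that adjoining the $\ell_i$ and saturating yields precisely $Q^\punc_\bas\oplus\N^n$, with no extra torsion, and no extra relations coming from the pierced condition (\oref{enum:item:maxext}), which might force $\alpha(a+n\ell)=0$ to cut out a proper closed subset. I would first handle this pointwise, using that $\ell_i$ is free in the basic monoid, so that the closed condition from the proof of \ref{thm:closedinblowup} holds automatically. This is also consistent with \ref{thm:closedinblowup}: the closed immersion there is surjective onto the minimal locus, and the log blowup is an isomorphism on underlying stacks at minimal points.
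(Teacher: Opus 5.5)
Your overall strategy (Gillam's descent lemma, reduction to minimal objects, and the harmless $\scr Q^n$ step) is the same as the paper's. The gap is that the monoid identification at the core of your argument is false, and both your claim that basic objects go to basic objects and your construction of the lift depend on it.

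Write $\ghost_S = Q^{\punc}_{\bas} \oplus \N^n$ for the basic monoid of a minimal object of $\Mcal^\punc_\Lambda(X)_{\scr Q}$. The leg relations of \ref{constr:minmonoid} set $t_i(m) = \phi_i(m) + a_{h_i}(m)\ell_i$ in the groupification. When $a_{h_i}(m) < 0$ this element is \emph{not} in $\ghost_S$, precisely because $\ell_i$ is a free generator of $\ghost_S$. Your sentence ``these lie in the monoid'' contradicts the freeness you invoke one line earlier.

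So eliminating the $t_i$ does not give $\ghost_S$. The pierced basic monoid is $\ghost_{S'} = \ghost_S[\{t_i(m)\}] \subset \ghost_S^{\gp}$: it has the same groupification but is in general strictly larger. The paper's running example \ref{ex:piercedmapnegative} already shows this. There the punctured basic monoid with the lengths adjoined is $\N\alpha \oplus \N^3$, whereas the pierced one is $\N t_1 \oplus \N^3$ with $\alpha = t_1 + 2\ell_1$.

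This has two consequences.
\begin{enumerate}
\item $\Phi_{\scr Q}$ does not send basic objects to basic objects. If it did, it would be an isomorphism of log stacks rather than only of underlying stacks, which the example rules out.
\item Your lift over $\ul S$ with log structure $\M_S$ does not exist. The negative-slope elements $(\phi_i(m), a_{h_i}(m))$ of $\ghost_{\check C, p_i}$ have $p_i^*$-image $\phi_i(m) + a_{h_i}(m)\ell_i \notin \ghost_S$. So the section $p_i$ cannot extend over them, and $f$ does not factor through the pierced curve you build.
\end{enumerate}

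The missing idea is that the minimal lift lives over an \emph{enlarged} log structure on the same underlying scheme. Given a minimal $C/S$ in $\Mcal^\punc_\Lambda(X)_{\scr Q}$, any minimal pierced $C'/S'$ over it has $\ghost_{S'} = \ghost_S[\{t_i(m)\}]$, which is determined by $\ghost_S$. Condition (\oref{enum:item:maxext}) of \ref{def:piercedcurve} forces each new element $t_i(m)$ to map to $0 \in \ca O_S$. Hence $\M_S \to \ca O_S$ extends uniquely to $\M_{S'} \to \ca O_S$, sending $\M_{S'} \setminus \M_S$ to $0$, with $\ul{S'} = \ul S$; the pierced log structure on $\ul C$ is then also unique. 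This unique minimal lift is what Gillam's descent lemma requires. The change of base monoid is exactly why the conclusion holds only on underlying stacks.
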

\begin{proof}
Clearly the map $\Mcal^\punc_{\Lambda}(X)_{\scr Q} \to \Mcal^\punc_{\Lambda}(X)$ is an isomorphism on underlying stacks, so it remains to show the same for $\Mpt_\Lambda(X) \to \Mcal^\punc_{\Lambda}(X)_{\scr Q}$. By \cite[Descent Lemma]{Gillam2012Logarithmic-sta} this is the case if and only if minimal objects in $\Mcal^\punc_{\Lambda}(X)_{\scr Q}$ have a unique lift to minimal objects in $\Mpt_\Lambda(X)$.

Let $C/S$ be a minimal object in $\Mcal^\punc_{\Lambda}(X)_{\scr Q}$, and let $C'/S' \in \Mpt_\Lambda(X)$ be a minimal object over it. We use the notation of \ref{constr:minmonoid}. By the description of minimal monoids in \cite[Proposition~2.32]{Abramovich2020Punctured-logar} and by \ref{constr:minmonoid}, we have
\begin{align*}
	\ghost_S &= \left(\prod_{v \in V} P_v \times \prod_{e \in E} \bb N \times \prod_{i=1}^n \bb N\right)/Q \\
	\ghost_{S'} &= \left(\prod_{v \in V} P_v \times \prod_{e \in E} \bb N \times \prod_{i=1}^n \bb N \times \prod_{i=1}^n P_{p_i}\right)/R.
\end{align*}
where $R$ is as in \ref{constr:minmonoid} and \[Q = R \cap \left(\prod_{v \in V} P_v \times \prod_{e \in E} \bb N \times \prod_{i=1}^n \bb N\right).\]
We see we have $\ghost_{S'}^\gp = \ghost_S^\gp$; the newly added variables $t_i(m) \in \ghost_{S'}$ for $1 \leq i \leq n, m\in P_{p_i}$ are equal to $\phi_i(m) + a_{h_i}(m) \ell_i \in \ghost_S^\gp$, and in fact we obtain the explicit description \[\ghost_{S'} = \ghost_S[\{t_i(m) | 1 \leq i \leq n, m \in P_{p_i}\} \subset \ghost_S^\gp].\]

We have seen that $\ghost_{S'}$ is uniquely determined by $\ghost_S$. Next, we show the map $\M_{S} \to \Ocal_S$ extends uniquely to a map $\M_{S'} \to \Ocal_S$. From the definition of a pierced curve (\ref{def:piercedcurve}(\oref{enum:item:maxext})), we see that these newly added elements $t_i(m)$ must be sent to $0$ in $\Ocal_S/\Ocal_S^\times$, and hence the maps $\ghost_{S'} \to \Ocal_S, \M_S \to \Ocal_S$ have a unique lift $\M_{S'} \to \Ocal_S$, sending $\M_{S'} \setminus \M_S$ to $0$.

Similarly we see $\ul{C}$ carries a unique log structure $C$ that makes $(C/S,p_1,\dots,p_n,f)$ a pierced map. 
\end{proof}

\section{The space of tropical pierced maps}
\label{sec:tropicalmaps}

\subsection{Tropical curves and maps to a cone stack}

Let $\sigma$ be a cone with dual monoid $M$. A tropical pierced curve over $\sigma$ is a graph with edge and leg lengths in $M_{>0}$, and with vertices decorated by genera. More formally, we make the following definition. 
\begin{definition}
An $n$-marked \emph{tropical pierced curve} over $\sigma$ is a tuple $(V,H,r,\iota,\ell,g)$ where $V$ and $H$ are finite sets with $V$ non-empty (the \emph{vertices} and \emph{half-edges}), $r\colon H \to V$ is a map, $\iota$ is a fixed-point-free involution, and $g\colon V \to \bb Z_{\ge 0}$, and $\ell\colon H \to M_{>0}$ satisfies $\ell(h) = \ell(\iota(h))$ for all $h \in H$. An \emph{edge} of $\Gamma$ is a pair of half-edges interchanged by $\iota$ with both halves sent by $r$ to elements of $V$; a \emph{leg} is a pair of half-edges interchanged by $\iota$ where exactly one of the halves $h$ has $r(h) \notin V$; the \emph{tip} of the leg is the image of that $h$ under $r$. We require the following: 
\begin{enumerate}
\item
For every $1 \le i \le n$ there is exactly one half edge $h$ with $r(h) = i$;
\item the corresponding topological graph is connected. 
\end{enumerate}
The \emph{genus} of the tropical curve is the sum of the first Betti number of the graph with the sum of the genera $g(v)$ of the vertices. 
\end{definition}
This differs from the unpierced theory, where only edges have lengths and legs do not.

To a tropical pierced curve $\Gamma$ is naturally associated a cone complex, essentially by taking the `cone over' $\Gamma$. Let us be a bit more precise. For each vertex and leg-tip we take a copy of $\sigma$. To an edge or leg of length $m \in M_{>0}$ corresponds the cone over $\sigma$ given by taking duals of 
\begin{equation}
 M \to \{(a, b) \in M^2 : m \mid a-b\}. 
\end{equation}
Face maps are duals of the projections from the edge/leg monoid to the vertex/tip monoid. 

A map from a tropical pierced curve $\Gamma$ to a cone stack $T$ is just a map from the cone over $\Gamma$ to $T$. An example is illustrated in \ref{fig:piercedmapnegative}. 

\begin{definition}
Given a cone complex $T$, we define the CSWB $\Mpt^t(T)$ first by saying that the underlying cone stack is the fibred category over $\cat{RPC}$ whose objects over a cone $\sigma$ are pairs of a tropical pierced curve $\Gamma$ over $\sigma$ and a map of cone stacks $\Gamma \to T$. The boundary is then the locus where either some leg has length $0$, or the tip of a leg meets a proper face of the minimal cone containing the image of the leg.
\end{definition}
	
In the special case where the base monoid $M$ is $\bb R_{\ge 0}$ and the boundary is $0$, these metric graphs really look like metric graphs, and the maps really look like maps from a graph to a cone. We encourage the reader to keep this case in mind for geometric intuition, see again \ref{fig:piercedmapnegative}. 

\begin{definition}
Given a log stack $X$, which we always assume comes together with a strict map $X \to X^t$ to a cone stack, we define the CSWB $\Mpt^t(X, X^t)$ whose objects over a cone $\sigma$ are tuples 
\begin{equation}
(\Gamma, f\colon \Gamma \to X^t, \beth\colon V \to \mathsf{Eff}_1(X) \subset \on{Hom}(\on{Pic}(X), \bb Z))
\end{equation}
of a tropical curve $\Gamma$ over $\sigma$, a tropical map $\Gamma \to X^t$, and an assignment of an effective curve class on $X$ to each vertex of $\Gamma$. 
\end{definition}

\subsection{Slopes and balancing}
\label{subsec:balancing}

There is a natural map 
\begin{equation}
\{(a, b) \in M^2 : m \mid a-b\}^\gp \to \bb Z; (a, b) \mapsto \frac{a-b}{m}. 
\end{equation}
Given a half-edge $h$ of $\Gamma$ of length $m$, we call the map $M_h \to \bb Z$ defined by this formula the \emph{slope}. Given a map $\Gamma \to T$ and a PL function on $T$, we define the slope of the PL function along a half-edge in $\Gamma$ by pulling back and then applying the slope map just defined. 

\begin{definition}\label{def:balanced}
A point $(\Gamma, f, \beth)$ of $\Mpt^t(X, X^t)$ is \emph{balanced} at a vertex $v$ if the triangle
\begin{equation}
\begin{tikzcd}
  PL(X^t) \arrow[d] \arrow[r, "\Sigma_v"] & \bb Z \\
\on{Pic}(X) \ar[ur, swap, "\beth(v)"] & 
\end{tikzcd}
\end{equation}
commutes, where the map $\Sigma_v$ takes the sum of outgoing slopes at $v$. It is \emph{balanced} if it is so at all vertices. 
\end{definition}

\begin{lemma}
Let $(\Gamma, f, \beth)$ be a point of $\Mpt^t(X, X^t)$. If it is realisable by a pierced map $(C,f) \in \Mpt(X)$, then it is balanced.
\end{lemma}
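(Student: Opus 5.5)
The plan is to reduce balancing at a vertex $v$ to a degree computation for a line bundle on the irreducible component $C_v$ of the special fibre. Since balancing is a pointwise condition on the tropicalisation, we may base change to a geometric point $s \in S$. There the tropical curve $\Gamma$ is the dual graph of $C_s$ with its edge and leg lengths, $f^{\trop}$ is induced by $f^\flat\colon f^*\ghost_X \to \ghost_C$, and $\beth(v)$ is the class $\deg \circ (f|_{C_v})^*\colon \on{Pic}(X)\to\Z$.

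Fix a PL function $\alpha \in PL(X^t)$, i.e.\ a global section of $\ghost_{X^t}^\gp$; pulled back along the strict map $X\to X^t$ it is a section of $\ghost_X^\gp$. Its image in $\on{Pic}(X)$ is the $\OO_X^\times$-torsor $\OO_X(-\alpha)$ (or $\OO_X(\alpha)$, depending on the sign convention fixed by the vertical arrow; we choose the convention compatible with outgoing slopes). We need to show that
\[
\deg\big(f^*\OO_X(-\alpha)|_{C_v}\big) \;=\; \sum_{h \ni v} \mathrm{slope}_h(f^*\alpha),
\]
up to that sign. By functoriality of the torsor construction, $f^*\OO_X(-\alpha)\cong \OO_C(-f^\flat\alpha)$, where $f^\flat\alpha\in\Gamma(C,\ghost_C^\gp)$.

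The next step is to compute the degree of $\OO_{C}(-\beta)|_{C_v}$ for a section $\beta$ of $\ghost_C^\gp$, which is the standard local computation for log curves (as in Gross--Siebert or \cite{Abramovich2020Punctured-logar}). On the generic locus of $C_v$, $\beta$ is the constant $\beta_v \in \ghost_S^\gp$, and the torsor $\OO(-\beta_v)$ pulled back from $S$ is trivial on the fibre over $s$. At a node $q$ joining $C_v$ along half-edge $h$, $\ghost_{C,q}$ is generated over $\ghost_S$ by $x,y$ with $x+y=\ell_e$, and $\beta = \beta_v + a_h x$ near $C_v$. So near $q$ the torsor restricted to $C_v$ is $\OO(-a_h q)$ twisted by a trivial torsor. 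At a marking $p_i$ on $C_v$ we use \ref{def:piercedcurve}: $\ghost^\gp_{C,p_i}=\ghost^\gp_S\oplus\Z t_i$ and $t_i$ lifts to a uniformiser, so $\beta = \beta_v + a_i t_i$ and the restriction is again $\OO(-a_i p_i)$. This holds even when $a_i<0$, because the groupified torsor is still defined and the uniformiser statement is all that is used. Gluing the local trivialisations gives $\OO_C(-\beta)|_{C_v}\cong\OO_{C_v}(-\sum_h a_h q_h)$ up to a trivial factor, whose degree is $-\sum_h a_h$.

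The remaining work is to check that $a_h$ agrees with the tropical slope of \ref{subsec:balancing}. Via $(a,b)\mapsto (a-b)/m$, the slope along a leg or edge of length $m$ is exactly the coefficient of $t_i$ (respectively $x$), so this agreement follows from the description of the cone over $\Gamma$. I expect the main obstacle to be bookkeeping rather than substance. One issue is signs, namely the sign in the identification $PL(X^t)\to\on{Pic}(X)$ versus outgoing slopes. The other is verifying the pierced-marking contribution, since it is the new feature compared to \cite{Abramovich2020Punctured-logar}. For the latter I would rely on conditions (2) and (3) of \ref{def:piercedcurve}, which are precisely what makes the local computation at $p_i$ identical to that at a node.
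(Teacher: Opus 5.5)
Your proposal is correct and follows essentially the same route as the paper, whose proof is the one-line chain $\beth(v)(\ca O(\alpha)) = \deg \ca O(\alpha)|_{C_v} = \deg \ca O_{C_v}(\alpha|_{C_v}) = \sum_{h \mapsto v} s_h(\alpha)$. Your local computation at nodes and at pierced markings supplies the justification for that last equality, which the paper leaves implicit; negative slopes cause no trouble there, since $t_i$ lifts to a uniformiser and only the groupified torsor is used.
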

\begin{proof}
We fix a vertex $v$ and a PL function $\alpha$ on the star of $\sigma_v$. 
Then 
\begin{equation}
\beth(v)(\ca O(\alpha)) = \deg \ca O(\alpha)|_{C_v} = \deg \ca O_{C_v}(\alpha|_{C_v}) = \sum_{h \mapsto v} s_h(\alpha) = \Sigma_v(\alpha). \qedhere
\end{equation}
\end{proof}

\subsection{Stability}
\begin{definition}
Let $(\Gamma, f, \beth)$ be a point of $\Mpt^t(X, X^t)$. We call it \emph{unstable} at a vertex $v$ if $v$ has genus 0, valence $< 3$, or genus 1 and valence 0, and has vanishing curve class. It is \emph{stable} if there are no unstable vertices. Write 
$\Mpt^{t,st}(X, X^t) \subseteq \Mpt^{t}(X, X^t)$
for the substack of stable maps. 
\end{definition}

Assuming it is also balanced, this implies that the sum of outgoing slopes vanishes for every PL function, which is automatic if the vertex and its two half-edges lie on a straight line within a single cone.

\begin{lemma}
\label{lem:stabilisation}
There is a stabilisation map 
\begin{equation}
\Mpt^t(X, X^t) \to \Mpt^{t}(X, X^t)
\end{equation}
which takes balanced objects to balanced objects in the sense of \ref{def:balanced}. 
\end{lemma}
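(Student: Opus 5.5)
We construct the stabilisation by iteratively contracting unstable vertices, exactly as for ordinary tropical curves, while keeping track of lengths, the map to $X^t$ and the curve classes. The unstable vertices $v$ have genus $0$, $\beth(v)=0$, and either valence $\le 2$ or genus $1$ and valence $0$. Since $\Gamma$ is connected, the only way the latter can occur (or a genus-$0$ vertex of valence $0$) is if $\Gamma$ is a single vertex with no half-edges; stability data then excludes it, and we simply leave it untouched or declare it outside the relevant locus.

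\emph{Valence one.} The vertex $v$ is attached by a single edge $e$ to a vertex $w$. It cannot be attached to a leg alone, since a leg's tip is not a vertex and the vertex set is non-empty with one leg; if $\Gamma$ is a vertex with one leg we again leave it. We delete $v$ and $e$, so $w$ inherits the same genus and class. The cone over the new graph is a subcomplex of the old one, so the map to $X^t$ restricts.

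\emph{Valence two.} The vertex $v$ lies between two half-edges $h_1,h_2$, which are parts of edges or legs. We replace the chain by a single edge or leg of length $\ell(h_1)+\ell(h_2)\in M_{>0}$. The map on cones is defined by concatenation. This is the key point to check: a map to $T$ from the cone over the new edge exists only if the PL functions pulled back along $h_1$ and along the reversed $h_2$ have the same slope, so that the concatenated segment is \emph{linear}.

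This is where balancing enters. At $v$, $\beth(v)=0$, so for every PL function $\alpha$ on the star of $\sigma_v$ the two outgoing slopes sum to zero. Because the balancing condition is imposed for all of $PL(X^t)$ locally, the images of $h_1,h_2$ lie in cones containing $\sigma_v$, and their slope vectors in $\sigma_v^{\gp}$-compatible lattices are negatives of each other. Hence the image is a straight segment inside a single cone, the star of $\sigma_v$ minimal containing both. This makes the concatenated edge a genuine cone map. I expect this step, together with checking functoriality in $\sigma$ (compatibility with face maps and automorphisms, so that the construction descends to the fibred category), to be the main obstacle. It may be necessary to define the map only on the balanced locus, or to note that the statement only claims balanced objects go to balanced objects, which is where the linearity argument is needed.

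\emph{Termination and balancing.} Each contraction strictly decreases the number of vertices, so the process terminates. It is independent of the order of contractions, by the usual confluence argument for contracting disjoint trees and chains. For balancing of the result, consider any surviving vertex $w$. Its curve class is unchanged, since contracted vertices carry class $0$. Its outgoing slopes are unchanged: a contracted chain contributes the slope of its first half-edge, and removing a valence-one tree contributes zero. For the tree case, balancing at the leaf gives slope $0$ along $e$, and inductively the whole tree contributes $0$. Therefore $\Sigma_w$ is unchanged and the triangle in \ref{def:balanced} still commutes.
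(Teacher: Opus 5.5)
Your argument that balancing survives stabilisation is the same as the paper's. Contracting a class-zero bivalent vertex leaves the classes and outgoing slopes at all surviving vertices unchanged. A rational tail hangs off an edge whose slope vanishes by balancing at the leaf, so deleting it does not change $\Sigma_w$.

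Where you go further is the construction of the map itself. The paper cites the stabilisation of tropical pierced curves from \cite{Holmes2023LogarithmicCohomologicalFT} and says it ``naturally extends'' to maps. You build it by hand, and you notice something the paper passes over. Replacing the two half-edges at a bivalent vertex by one edge (or leg) of length $\ell(h_1)+\ell(h_2)$ gives a map from the cone over the new edge only if the two pieces are collinear, have the same integral slope, and lie in one cone of $X^t$. On an unbalanced object the two directions at such a vertex are unconstrained. The bent path then cannot be replaced by a single linear edge, so the map is really only defined on a locus containing the balanced objects. That suffices, since stabilisation is only ever applied to balanced objects (in \ref{sec:bir_inv}).

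The step you should make precise is the deduction of collinearity from balancing, which you assert rather than prove. If $X^t$ is smooth (as assumed in \ref{sec:bir_inv}, and automatic in the SNC setting of \ref{sec:gluing}), use the ray functions $\phi_\rho \in PL(X^t)$. Let $\tau_1,\tau_2 \supseteq \sigma_v$ be the cones containing the two edges, and $u_1,u_2$ their outgoing direction vectors.
\begin{itemize}
\item Take a ray $\rho$ of the star of $\sigma_v$ not contained in $\sigma_v$. Each edge starts in $\sigma_v$, where $\phi_\rho=0$, and $\phi_\rho\ge 0$ everywhere, so both outgoing slopes of $\phi_\rho$ are $\ge 0$.
\item Since $\beth(v)=0$, these two slopes sum to zero, so both vanish.
\item Hence $u_1,u_2\in\sigma_v^{\gp}$, and each edge lies in $\tau_i\cap\sigma_v^{\gp}=\sigma_v$.
\item The functions $\phi_\rho$ with $\rho\subseteq\sigma_v$ then give $u_1=-u_2$.
\end{itemize}
For simplicial $X^t$, use positive multiples of the $\phi_\rho$. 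For a general cone complex, global PL functions need not detect a bend across a wall. There you should define stabilisation on the locus where all unstable bivalent vertices are straight, rather than deduce straightness from balancing.

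Finally, add the single vertex with two legs to your list of degenerate cases, since there is nothing to merge the two legs into.
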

\begin{proof}
The stabilisation map on curves is constructed in \cite{Holmes2023LogarithmicCohomologicalFT}, and naturally extends to maps from them. For the fact that it sends balanced to balanced, there are two cases. For rational bridges it's clear as we remove a vertex and don't change anything else at other vertices. For a rational tail the slope going into it must be 0, hence removing this does not affect balancing at the vertex it is attached to. 
\end{proof}

\subsection{Contact orders and discrete data}

Recall from \ref{def:contact_orders} the definition of a contact order for $T$ viewed as a stack with log structure; concretely, a contact order for $T$ is a pair $(\sigma, a)$ where $\sigma$ is a cone of $T$ and $a \in \sigma^\gp$ is an integral vector, such that $a$ does not lie in $\tau^\gp$ for $\tau$ any proper face of $\sigma$. We define compatibility as in \ref{def:has_disc_data}: 

\begin{definition}
Given an object $(\Gamma, f, \beth)$ of $\Mpt^t(X, X^t)$ over a cone $\sigma$, and $1 \le i \le n$, and a contact order $(\sigma, a)$, we say the $i$th marking of $(\Gamma, f, \beth)$ is \emph{compatible with $(\sigma, a)$} if the tip of the $i$th leg lies in the interior of a cone which contains $\sigma$, and if the slope of $f$ at the $i$th marking is equal to $a$. 
\end{definition}

\begin{remark}
For every $(\Gamma, f, \beth)$ and every $i$ there is a unique contact order such that the $i$th marking is compatible with that contact order. 
\end{remark}

Recall from \ref{def:discrete-data} that a discrete datum for $X \to X^t$ is a tuple 
	\[\Lambda = (g,n,\beta, (\sigma_i, a_i)_{1 \le i \le n})\] 
   where $g$ and $n$ are non-negative integers, $\beta\colon \on{Pic}(X) \to \bb Z$ is a group homomorphism, and each $(\sigma_i, a_i)$ is a contact order.  

\begin{definition}
We say a point $(\Gamma, f, \beth)$ of $\Mpt^t(X, X^t)$ is \emph{compatible} with the discrete data if it has genus $g$ and $n$ markings, if $\beta = \sum_v \beth(v)$, and if for each $i$, the $i$th marking is compatible with $(\sigma_i, a_i)$. We write 
\begin{equation}
\Mpt^t_\Lambda(X, X^t)
\end{equation}
for the set of objects $(\Gamma, f, \beth)$ of $\Mpt^t(X, X^t)$ which are compatible with $\Lambda$. 
\end{definition}

\subsection{Tropical virtual fundamental class}
\label{subsec:tropvfc}

In the presence of negative tangencies, the space $\Mpt^t_\Lambda(X,X^t)$ need not be smooth, nor even equidimensional. Nevertheless, its logarithmic Chow group is well understood in terms of homological piecewise polynomials \cite{RPSS_log_taut} (see \ref{sec:hPP} for a recap).

To define the tropical virtual fundamental class, we will need two kinds of piecewise linear functions: the length $\ell_i$ of a leg, and a piecewise linear function $r_i$, which is roughly the maximal length by which the leg $i$ can be extended before it would reach a wall. We now give a formal definition of $r_i$ not assuming strict normal crossing, and afterwards give a more explicit description and an example in the strict normal crossing case.

\begin{figure}
    \centering 
    \begin{tikzpicture}[scale=1.2]
    \draw[->] (0,0) -- (2.5,0);
    \draw[->] (0,0) -- (0,2.5);
    \fill (2,0) circle (2pt);
    \draw[->, thick] (2,0) -- (0.56,1.44);
    \draw[dashed] (.56, 1.44) to (0, 2);
    \draw (0.56,0) -- (0.56,-0.08);
    \node[below] at (0.56,-0.08) {$r_1$};
    \draw (0,1.44) -- (-0.08,1.44);
    \node[left] at (-0.08,1.44) {$\ell$};
    \draw (2,0) -- (2,-0.08);
    \node[below] at (2,-0.08) {$r_1+\ell$};
    \node[right] at (2.5, 0){$D_1$};
    \node[left] at (0, 2.5){$D_2$};
    \begin{scope}[shift = {(5, 0)}]
        \fill[blue!20] (0, 0) rectangle (2, 2);
        \draw[->, dashed] (0,0) -- (2.1,0);
        \draw[->, dashed] (0,0) -- (0,2.1);
        \node[below] at (2, 0){$r_1$};
        \node[left] at (0, 2){$\ell$};
    \end{scope}
\end{tikzpicture}
\caption{The spaces in \ref{ex:simpletropicalex}. 
Left: The tropical type in $\R_{\geq 0}^2$. The lengths $\ell, r_1$ are the length of the edge in the target and the remaining edge length until the dashed line meets the boundary edge. The contact order with $D_2$ is positive, so the minimum of \ref{lem:riminrij} is $r_1 = r_{11}$. 
Right: The tropical moduli space of this type is the CSWB $\R_{> 0}^2$.}
\label{fig:simpletropicalex}
\end{figure}

\begin{construction}\label{const:ri}
	Fix an object $(\Gamma, f, \beth)$ of $\Mpt_\Lambda^t(X, X^t)$ over a cone $\tau$ and fix $1 \le i \le n$. We will define a PP function $r_i$ on $\tau$ (together with a minimal subdivision $\tilde \tau$ of $\tau$ on which $r_i$ is linear), which measures how far the $i$th leg can be extended before reaching the boundary of the star of $\sigma_i$ (its image cone). 
    
    First, if $a_i \in \sigma_i \subseteq \sigma_i^\gp$ then we set $r_i = 1$, a sPP function of degree 0. Now suppose that $a_i \notin \sigma_i$. Then $r_i$ will be PL on $\tau$ and is defined as follows. Let $\sigma'$ be the smallest cone containing $\sigma_i$ and which contains the image of $\tau$ under the map given by restricting $f$ to the tip of the $i$th leg. 
    
    Then there is a unique map $F\colon \tau \times \bb R_{\ge 0} \to {\sigma'}^\gp$ which on $\tau \times \{0\}$ coincides with the restriction of $f$ to the vertex to which the $i$th leg is attached, and whose restriction to the graph of $\ell_i$ inside $\tau \times \bb R_{\ge 0}$ coincides with the restriction of $f$ to the tip of the $i$th leg. Note that $F$ sends $(0, 1) \in \tau \times \RR_{\geq 0}$ to $a_i \in {\sigma'}^\gp$. 
    
    Let $\delta_{\sigma'}$ be the boundary complex of $\sigma'$, then the pullback of $\delta_{\sigma'}$ along $F$ is the graph of a PL function on $\tau$, which we define to be  $r_i + \ell_i$, 
    and the projection of this pullback to $\tau$ gives a canonical (minimal) subdivision on which $r_i$ becomes strict. 

    Define $r_i \coloneqq \max\{x : F(t,x) \in \sigma'\}-  \ell_i$, and the projection of the pullback of the boundary complex of $\sigma'$ along $F$ to $\tau$ gives a canonical (minimal) subdivision on which $r_i$ becomes strict. 

	The function $r_i$ and subdivisions $\tilde \tau$ assemble to a global PP function $r_i$ on $\Mpt_\Lambda^t(X, X^t)$, together with a global subdivision 
	\begin{equation}\label{eqn:multiplerisubdivs}
	\Mpt_\Lambda^{t,i}(X, X^t) \to \Mpt_\Lambda^{t}(X, X^t)	
	\end{equation}
	on which it is linear. 
\end{construction}

\begin{example}\label{ex:simpletropicalex}
    Let $X = \P^2$ with boundary divisor given by two of the invariant divisors $D = D_1 + D_2$ instead of the full toric boundary as in \ref{fig:simpletropicalex}. Its tropicalization is then $X^t = \R_{\geq 0}^2$. Consider the tropical moduli space $\Mpt^t_{\Lambda}(X, X^t)$ of genus-zero maps of degree one with one marked point of contact order $1$ along edge $D_2$ and $-1$ along $D_1$, ignoring balancing for simplicity. 

    The function $r_1$ recording the length by which one can extend the edge before meeting the boundary is already piecewise linear here. The map on schemes must send the source curve into $D_1$ with image a line, so it is an isomorphism to $D_1$. The remaining data in a stable map consists of the pair of lengths $\ell$ and $r_1$, both of which must be positive. This characterizes the tropical moduli space $\Mpt^t_\Lambda(X, X^t)$ as the CSWB $\R_{>0}^2$. 
\end{example}

\begin{definition}
\label{def:Mmu}
Write 
\[\Mptmt_\Lambda(X, X^t) \to \Mpt^t_\Lambda(X, X^t)\]
for the log blowup given by superimposing all the subdivisions \ref{eqn:multiplerisubdivs} as $i$ varies from $1$ to $n$. 
\end{definition}

In the SNC case we can express $r_i$ in terms of simpler functions $r_{i,j}$, which are also important for the comparison to \cite{BNR}.

\begin{definition}
\label{def:rij}
Suppose that $X$ is SNC with boundary divisor $D = \cup_{1 \le j \le k} D_j$. 
Let $1 \leq i \leq n$ and $1 \leq j \leq k$, with $a_{ij}$ the corresponding component of the contact order. If $a_{ij} \ge 0$ we set $r_{ij} = 1$, a PP function of degree $0$. On the other hand, suppose $a_{ij} <0$. Fix an object $(\Gamma, f, \beth)$ of $\Mpt_\Lambda^t(X, X^t)$ over a cone $\tau$, and let $F\colon \tau \times \bb R_{\ge 0} \to \bb R^k$ be the unique map whose restriction to $\tau \times \{0\}$ coincides with the restriction of $f$ to the vertex to which the $i$th leg is attached, and whose restriction to the graph of $\ell_i$ inside $\tau \times \bb R_{\ge 0}$ coincides with the restriction of $f$ to the tip of the $i$th leg. Then the pullback of the $j$th coordinate hyperplane inside $\bb Q^k$ is the graph of a linear function on $\tau$ which we define to be $r_{ij} + \ell_i$. 
\end{definition}
The value $r_{ij}$ can be interpreted as the maximum distance leg $i$ can lengthen by before it crosses wall $j$.
\begin{lemma}\label{lem:riminrij}
In the SNC case, given $1 \le i \le n$ we have 
\begin{equation}
r_i = \min_j r_{ij}
\end{equation}
where we interpret the minimum of a PL function $r$ and the degree-0 PP function $1$ to be $r$. 
\end{lemma}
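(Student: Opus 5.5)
The plan is to work cone by cone: fix an object $(\Gamma,f,\beth)$ over a cone $\tau$, fix $i$, and compare the two definitions pointwise on $\tau$. Since both sides are defined fibrewise and glue along face maps, pointwise equality on every $\tau$ is enough. In the SNC case $X^t$ is a subcomplex of $\R^k_{\ge 0}$. The cone $\sigma'$ in \ref{const:ri} is a coordinate cone $\R_{\ge0}^{J}$ for some $J\subseteq\{1,\dots,k\}$, and it contains $\sigma_i = \R_{\ge 0}^{\{j : a_{ij}\neq 0\}}$. The map $F$ of \ref{const:ri} and the map $F$ of \ref{def:rij} are the same affine-linear map $F(t,x)=f(v)(t)+x a_i$: both are determined by agreeing with $f$ at the vertex and at the tip, and the second is just the first composed with $\sigma'^{\gp}\subset\R^k$.

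Next I would fix a point $t\in\tau$ and describe the set $\{x\ge 0 : F(t,x)\in\sigma'\}$ directly. A point of $\sigma'^{\gp}$ lies in $\sigma'$ if and only if all its coordinates $j\in J$ are non-negative. The $j$th coordinate of $F(t,x)$ is $f(v)_j(t)+x a_{ij}$, and it behaves in one of two ways.
\begin{itemize}
\item If $a_{ij}\ge 0$, this coordinate is non-decreasing in $x$ and non-negative at $x=0$, so it imposes no upper bound.
\item If $a_{ij}<0$, it is non-negative exactly for $x\le -f(v)_j(t)/a_{ij}$. By \ref{def:rij} this threshold is $r_{ij}+\ell_i$, since that is where the $j$th coordinate hyperplane is crossed.
\end{itemize}
Coordinates with $j\notin J$ need no attention: $a_{ij}\neq 0$ forces $j\in J$, and coordinates with $a_{ij}=0$ are constant in $x$.

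Combining these, $\max\{x : F(t,x)\in\sigma'\}=\min_{j : a_{ij}<0}(r_{ij}+\ell_i)$. Subtracting $\ell_i$ gives $r_i=\min_{j: a_{ij}<0} r_{ij}$, and the $j$ with $a_{ij}\ge0$ contribute $r_{ij}=1$, which the stated convention ignores. In the remaining case $a_i\in\sigma_i$, every $a_{ij}\ge 0$, so both sides equal $1$ by definition, which matches the first clause of \ref{const:ri}.

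The main subtlety I expect is the identification of the boundary of $\sigma'$ inside the region swept by $F$. In \ref{const:ri}, $r_i+\ell_i$ is the first parameter at which $F$ hits $\delta_{\sigma'}$. I must check that this first hit occurs on a facet $\{y_j=0\}$ with $a_{ij}<0$, and not on a facet with $a_{ij}\ge 0$ where $f(v)_j(t)=0$. Such a facet is not left as $x$ increases, so it never bounds the maximum, and the max-definition in \ref{const:ri} is therefore the right one to compare with. Finally, the subdivision of $\tau$ on which $\min_j r_{ij}$ becomes linear coincides with the pullback subdivision, since both are cut out by the loci where the competing linear functions $r_{ij}$ agree.
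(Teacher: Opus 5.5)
Your proof is correct. The paper states this lemma without proof, treating it as immediate from \ref{const:ri} and \ref{def:rij}, and your computation is the verification it leaves to the reader: membership of $F(t,x)$ in $\sigma'=\R_{\ge0}^J$ means $f(v)_j(t)+x a_{ij}\ge 0$ for all $j\in J$, only the indices with $a_{ij}<0$ bound $x$ from above, and each such bound is exactly $r_{ij}+\ell_i$.

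Two small points are worth tidying.

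\begin{itemize}
\item $X^t$ is not literally a subcomplex of $\R^k_{\ge0}$ when some $\bigcap_{j\in J}D_j$ is disconnected. This does not affect the argument, because only the single cone $\sigma'$ is involved and it maps isomorphically onto a coordinate orthant.
\item The inequality $f(v)_j(t)\ge 0$, which you use at $x=0$, holds because every point of $X^t$ has non-negative coordinates. The fact that $f(v)(t)$ lies in ${\sigma'}^{\gp}$ holds because both the tip and $a_i$ lie there.
\end{itemize}
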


\begin{example}
Suppose that $X^t = \bb R_{\ge 0}^2$, $n=2$, and we have contact orders $a_1 = (1,1)$ and $a_2 = (-1,-1)$. We can define a map $\tau = \bb R_{\ge 0}\Span{x,y,\ell_1, \ell_2} \to \Mpt^t(X^t)$ by sending a point $(x,y,\ell_1, \ell_2)$ to the graph where leg $i$ has length $\ell_i$, and the tip of leg 2 has coordinates $(x,y)$ (see \ref{fig:cone_r_i}). Then we have 
\begin{equation}
\begin{split}
&r_{11} = r_{12} = r_1 = 1\\
&r_{21} = x, r_{22} = y , r_2 = \min(x,y), 
\end{split}
\end{equation}
in particular $r_2$ is not strict. 
\end{example}

\begin{figure}
\begin{tikzpicture}
  \draw[->] (0,0) -- (10,0);
  \draw[->] (0,0) -- (0,8);
  
  \coordinate (dot) at (6,4);
  \coordinate (arrow1tip) at ($(dot) + (2.828,2.828)$);
  \coordinate (arrow2tip) at ($(dot) + (-1.414,-1.414)$);
  
  \fill (dot) circle (2pt);
  
  \draw[->, thick] (dot) -- (arrow2tip);
  
  \draw[->, thick] (dot) -- (arrow1tip);
  
  \node at ($(dot) + (1.7,1.414)$) {1};
  \node at ($(dot) + (-1,-0.707)$) {2};
  
  \draw[dashed, gray] (arrow2tip) -- (arrow2tip |- 0,0);
  \draw[dashed, gray] (arrow2tip) -- (0,0 |- arrow2tip);
  \node[below] at (arrow2tip |- 0,0) {$x$};
  \node[left] at (0,0 |- arrow2tip) {$y$};
  
  \draw[dashed, gray] (dot) -- (dot |- 0,0);
  \draw[dashed, gray] (dot) -- (0,0 |- dot);
  \node[below] at (dot |- 0,0) {$x + \ell_2$};
  \node[left] at (0,0 |- dot) {$y + \ell_2$};
  
  \draw[dashed, gray] (arrow1tip) -- (arrow1tip |- 0,0);
  \draw[dashed, gray] (arrow1tip) -- (0,0 |- arrow1tip);
  \node[below] at (arrow1tip |- 0,0) {$x + \ell_1 + \ell_2$};
  \node[left] at (0,0 |- arrow1tip) {$y + \ell_1 + \ell_2$};
\end{tikzpicture}
\caption{A cone of $\Mpt^t(X^t)$}
\label{fig:cone_r_i}
\end{figure}

\begin{definition}\label{def:cpct_notation}
We define $\Mpt^t_\Lambda(X^t)$ to be the cone complex
\begin{equation}
\Mpt^t_\Lambda(X^t) \coloneqq \Mpt^{t, \on{bal}, \st}_\Lambda(X, X^t)
\end{equation}
of stable, balanced objects (as in \ref{def:balanced}). Let $\Mptmt_\Lambda(X^t)$ be the analogous subdivision
\begin{equation}
\Mptmt_\Lambda(X^t) \coloneqq \Mpt^{\mu, t, \on{bal}, \st}_\Lambda(X, X^t) \longrightarrow \Mpt^t_\Lambda(X^t). 
\end{equation}
from \ref{def:Mmu}. 

We write $\Mpt_\Lambda(X^t)$ and $\Mptm_\Lambda(X^t)$ for the pullbacks $\Mpt \times_{\Mpt^t} \Mpt_\Lambda^t(X^t)$ and $\Mpt \times_{\Mpt^t} \Mptmt_\Lambda(X^t)$.
\end{definition}

\begin{definition}
\label{def:vfc}
We define the virtual fundamental classes in $\LogCH_*(\Mpt^t_\Lambda(X^t))$:
\begin{align*}
	[\Mpt^t_\Lambda(X^t)]^\vir &= \prod_i \ell_i r_i ,\\
	[\Mpt^t_\Lambda(X^t)]^\prodvfc &= \prod_i \left(\ell_i \prod_{j : a_{ij} < 0} r_{ij}\right)
\end{align*}
where we identify homological piecewise polynomials with $\LogCH_*(\Mpt^t_\Lambda(X^t))$ via the isomorphism $\Psi$ from \ref{thm:prrshomPP}. The class $[\Mpt^t_\Lambda(X^t)]^\prodvfc$ requires $X$ to be SNC so that the $r_{i,j}$ are defined.
\end{definition}

We see that for each $i$ we have that $r_i$ divides $\prod_{j : a_{ij} < 0} r_{ij}$, and hence we obtain the following relation between these classes.

\begin{proposition}\label{prop:trop_prod_vir_divisibility}
Assume $X$ is SNC. There is a piecewise polynomial $\kappa$ such that \[[\Mpt_\Lambda^t(X^t)]^\prodvfc = \kappa \cdot [\Mpt_\Lambda^t(X^t)]^\vir.\]
\end{proposition}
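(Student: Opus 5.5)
The plan is to construct $\kappa$ factor by factor, one leg $i$ at a time, and then take the product. Both classes are products over $i$ of homological piecewise polynomials, namely $\ell_i r_i$ and $\ell_i \prod_{j : a_{ij}<0} r_{ij}$. The module structure of $\PP_*$ over $\PP^*$, i.e.\ multiplication of a piecewise polynomial vanishing on the boundary by an arbitrary piecewise polynomial, is compatible with $\Psi$ of \ref{thm:prrshomPP}. It therefore suffices to find, for each $i$, a piecewise polynomial $\kappa_i$ on $\Mpt^t_\Lambda(X^t)$ with
\[
\prod_{j : a_{ij}<0} r_{ij} = \kappa_i \, r_i
\]
as piecewise polynomials. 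Setting $\kappa = \prod_i \kappa_i$ then gives
\[
[\Mpt_\Lambda^t(X^t)]^\prodvfc = \prod_i \ell_i \kappa_i r_i = \kappa \cdot [\Mpt_\Lambda^t(X^t)]^\vir.
\]
Since the homological class of $\prod_i \ell_i r_i$ is the same polynomial viewed in $\PP_*$, the identity holds on the level of $\PP_*$ and hence in $\LogCH_*$.

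For a fixed $i$ there are two cases.
\begin{itemize}
\item If $a_i \in \sigma_i$, then no coordinate $a_{ij}$ is negative (in the SNC case $\sigma_i$ is an orthant face). Both sides equal $1$, and we take $\kappa_i = 1$.
\item Otherwise the set $J_i = \{j : a_{ij}<0\}$ is nonempty. By \ref{lem:riminrij} we have $r_i = \min_{j \in J_i} r_{ij}$, since the $j$ with $a_{ij} \ge 0$ contribute the factor $1$, which is ignored in the minimum.
\end{itemize}

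In the second case I would define $\kappa_i$ on the subdivision $\Mptmt_\Lambda(X^t)$ of \ref{def:Mmu}, refined if necessary so that all the $r_{ij}$ are comparable. On each cone of this refinement some $j_0 \in J_i$ realises the minimum. On that cone set
\[
\kappa_i = \prod_{j \in J_i \setminus \{j_0\}} r_{ij}.
\]
Each $r_{ij}$ is linear on the original cones by \ref{def:rij}, so $\kappa_i$ is a polynomial on every cone of the subdivision, and $\kappa_i r_i = \prod_{j \in J_i} r_{ij}$ there.

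The main point to check is that these local choices glue to a continuous piecewise polynomial. Concretely, $\kappa_i$ is the $(|J_i|-1)$-st elementary symmetric expression obtained by removing the minimum from the product. This is the standard continuous piecewise polynomial $\prod_j r_{ij}/\min_j r_{ij}$: on a wall where $r_{ij_0} = r_{ij_1}$ both formulas agree. One must also check that $\kappa_i$ is compatible with the face maps and automorphisms of the cone stack. This is a naturality statement, and it holds because the $r_{ij}$ are defined intrinsically by \ref{def:rij}. Finally, since piecewise polynomials on a subdivision define classes in log Chow via \ref{thm:prrshomPP}, we are free to work on the refinement.
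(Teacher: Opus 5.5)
Your proposal is correct and is the same argument as the paper's. The paper's whole proof is the observation that $r_i=\min_{j} r_{ij}$ divides $\prod_{j : a_{ij}<0} r_{ij}$. Your $\kappa=\prod_i\bigl(\prod_{j\in J_i} r_{ij}/\min_{j\in J_i} r_{ij}\bigr)$, together with the check that the cone-wise formulas agree across walls, is exactly what that divisibility amounts to.

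One small wording point: calling $\kappa_i$ an ``elementary symmetric expression'' is a misnomer, since $e_{|J_i|-1}$ would sum over all omitted indices. The formula you actually use is the right one.
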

    
Both these PP functions are easily seen to vanish on the boundary of the CSWB. 

\begin{remark}
    The class $[\Mpt^t_\Lambda(X^t)]^\vir$ can be seen as living in $\CH(\Mptmt_\Lambda(X^t))$, and the class $[\Mpt^t_\Lambda(X^t)]^\prodvfc$ can be seen as living in $\CH(\Mpt^t_\Lambda(X^t))$, but we place them both in $\LogCH_*(\Mpt^t_\Lambda(X^t))$ to lighten the notation. 
\end{remark}

\section{Pierced logarithmic Gromov--Witten invariants}
\label{sec:piercedlogGW}

In this section we construct pierced logarithmic Gromov--Witten invariants, using the work on perfect obstruction theories in \cite[Section~4.1]{Abramovich2020Punctured-logar}. We recap some of their constructions. As we will compare our invariants with the invariants both from \cite{Abramovich2020Punctured-logar} and \cite{BNR}, we will discuss the most general version of the construction.

We start with a commutative diagram of algebraic stacks with log structure
\[
\begin{tikzcd}
 & V \arrow[d] \\
 Y \arrow[r] \arrow[d] & W \arrow[d] \\
S \arrow[r] & B
\end{tikzcd}
\]
with $Y/S$ flat and relatively Gorenstein and $V/W$ strict. In the application, we will take $B$ a point with trivial log structure, $Y/S$ the universal curve over $\Mpt_\Lambda(X^t)$, and $V/W$ the map $X/X^t$. Then one can consider the moduli space $M$ of lifts of $Y \to W$ to $Y \to V$, parametrising commutative diagrams
\[
\begin{tikzcd}
Y_T \arrow[rr] \arrow[d] \arrow[dr] & & V \arrow[d] \\
T \arrow[dr] & Y \arrow[r] \arrow[d] & W \arrow[d] \\
  & S \arrow[r] & B.
\end{tikzcd}
\]
The moduli space $M$ admits as universal object the following diagram
\begin{equation}
\label{eq:potabstract}
\begin{tikzcd}
Y_M \arrow[rr, "f"] \arrow[d, "\pi"] \arrow[dr] & & V \arrow[d] \\
M \arrow[dr] & Y \arrow[r] \arrow[d] & W \arrow[d] \\
  & S \arrow[r] & B.
\end{tikzcd}
\end{equation}
Now we define $\E = \Rb \pi_*(\omega_\pi \tensor f^* \LL_{V/W} )$. By functoriality of the cotangent complex and the isomorphism $\LL_{Y_M/Y} \simeq \pi^*\LL_{M/S}$, we have a map 
\[f^* \LL_{V/W} \to \pi^* \LL_{M/S}.\] 
This induces a map $\E \to \LL_{M/S}$ by duality, which by \cite[Proposition~4.2]{Abramovich2020Punctured-logar} is a perfect obstruction theory.

Applying this to $Y/S$ the universal curve over $\Mpt_\Lambda(X^t)$, and $V/W$ the map $X/X^t$ we obtain a diagram
\begin{equation}
\label{eq:pot}
\begin{tikzcd}
Y_M \arrow[rr, "f"] \arrow[d, "\pi"] \arrow[dr] & & X \arrow[d] \\
M \arrow[dr] & Y \arrow[r] \arrow[d] & X^t \arrow[d] \\
  & \Mpt_\Lambda(X^t) \arrow[r] & B
\end{tikzcd}
\end{equation}
and perfect obstruction theory
\[
	\E \to \LL_{M/\Mpt_\Lambda(X^t)}.
\]
We briefly remark that although \cite{Abramovich2020Punctured-logar} deals only with punctured curves, not pierced curves, their theory still applies here, because the map $X/X^t$ is strict and hence this obstruction theory does not depend on the log structure at all.

This $M$ is not exactly $\Mpt_\Lambda(X)$, because the universal map $f: Y_M \to X$ need not have curve class $\beta$ nor be stable. But curve class is a discrete invariant, and stability is an open condition, hence $\Mpt_\Lambda(X) \to M$ is an open inclusion, and by restriction we have the same perfect obstruction theory
\[
	\E \to \LL_{\Mpt_\Lambda(X)/\Mpt_\Lambda(X^t)}
\] 

Note that the map $\Mpt_\Lambda(X^t) \to \Mpt_\Lambda^t(X^t)$ is smooth and strict, as it is the pullback along the smooth map $\Mpt \to \Mpt^t = \Mfrak^\trop \times B\G_m^n$. As in \cite[Section~4.2]{Abramovich2020Punctured-logar}, these are compatible, and we obtain a perfect obstruction theory
\[
	\F \to \LL_{\Mpt_\Lambda(X)/\Mpt^t_\Lambda(X^t)}.
\]
for the map $\trop: \Mpt_\Lambda(X)\to \Mpt^t_\Lambda(X^t)$.

A perfect obstruction theory induces a Gysin pullback on Chow classes. In analogy to the punctured case, one can use this to define a Gromov--Witten correspondence.

\begin{definition}
\label{def:piercedcorrespondence}
Fix discrete data $\Lambda$. We define the \emph{pierced logarithmic Gromov--Witten correspondence} to be the Gysin pullback
\[
	\trop^!: \LogCH_*(\Mpt_\Lambda^t(X^t)) \to \LogCH_*(\Mpt_\Lambda(X))
\]
induced by the perfect obstruction theory $\F$ defined above as in \ref{def:potpullback}.
\end{definition}

We in particular want to pull back the two tropical classes $[\Mptmt_\Lambda(X^t)]^\vir, [\Mpt^t_\Lambda(X^t)]^\prodvfc$ from \ref{def:vfc}. This gives the following definition.

\begin{definition}\label{def:virtual_classes}
We define pierced logarithmic Gromov--Witten classes as the pullback of classes in $\LogCH_*(\Mpt_\Lambda^t(X^t))$ from \ref{def:vfc}: 
\begin{align*}
	[\Mpt_\Lambda(X)]^\vir &= \trop^! [\Mpt^t_\Lambda(X^t)]^\vir,\\
	[\Mpt_\Lambda(X)]^\prodvfc &= \trop^! [\Mpt^t_\Lambda(X^t)]^\prodvfc. 
\end{align*} 
\end{definition}

Again, by \ref{prop:trop_prod_vir_divisibility} these admit a relation. 
\begin{proposition}\label{prop:log_prod_vir_divisibility}
For the piecewise polynomial $\kappa$ in \ref{prop:trop_prod_vir_divisibility} we have \[[\Mpt_\Lambda(X)]^\prodvfc = \kappa \cdot [\Mpt_\Lambda(X)]^\vir.\]
\end{proposition}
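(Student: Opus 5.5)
The statement follows by combining \ref{prop:trop_prod_vir_divisibility} with the compatibility of the virtual pullback $\trop^!$ with capping by piecewise polynomials pulled back from the tropical moduli space. First, I would record the tropical identity. By \ref{def:vfc} and \ref{lem:riminrij}, for each $i$ the function $r_i$ divides $\prod_{j : a_{ij}<0} r_{ij}$ after passing to a subdivision on which all $r_{ij}$ and $r_i$ are linear. On each cone, $r_i = r_{ij_0}$ for the minimising index $j_0$, so the quotient is the product of the remaining $r_{ij}$ on that cone. These local quotients are piecewise polynomial and glue to a global $\kappa_i$, and we set $\kappa = \prod_i \kappa_i$. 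Then $\prod_i \ell_i \prod_{j} r_{ij} = \kappa \cdot \prod_i \ell_i r_i$ as homological piecewise polynomials. Under the isomorphism $\Psi$ of \ref{thm:prrshomPP}, this gives $[\Mpt^t_\Lambda(X^t)]^\prodvfc = \kappa \cap [\Mpt^t_\Lambda(X^t)]^\vir$ in $\LogCH_*(\Mpt^t_\Lambda(X^t))$. This uses the fact that $\Psi$ intertwines multiplication of a homological PP by a cohomological PP with the cap product of the operational class $\kappa \in \LogCH^*(\Mpt^t_\Lambda(X^t))$, which is part of the package of \cite{RPSS_log_taut}.

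Second, I would apply $\trop^!$ to both sides. By \ref{def:virtual_classes}, it then suffices to show
\[\trop^!(\kappa \cap \alpha) = \trop^*\kappa \cap \trop^! \alpha\]
for $\alpha \in \LogCH_*(\Mpt^t_\Lambda(X^t))$. Here $\kappa$ on $\Mpt_\Lambda(X)$ means its pullback along the strict map $\trop$. This is the commutativity of virtual Gysin pullback with operational (bivariant) classes. Because $\kappa$ is a piecewise polynomial, it is a polynomial in first Chern classes of line bundles $\mathcal{O}(\phi)$ for PL functions $\phi$, on a suitable log blowup. On such a blowup, the commutativity is the standard fact that virtual pullbacks commute with Chern class operations (Manolache's functoriality: bivariant classes commute with refined Gysin maps).

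The main obstacle is that everything happens in $\LogCH$, a colimit over log blowups and alterations. Both $\kappa$ and the classes live on a subdivision, such as $\Mptmt_\Lambda(X^t)$ refined further so that the $r_{ij}$ are linear. So I must check that the log virtual pullback of \ref{def:potpullback} is compatible with the transition maps of the colimit. That is, for a log blowup $\tilde T \to \Mpt^t_\Lambda(X^t)$ and its strict pullback $\tilde M \to \Mpt_\Lambda(X)$, the pullback $\trop^!$ is computed by the pulled-back obstruction theory on $\tilde M \to \tilde T$. The plan is to reduce to a fixed sufficiently fine log blowup where all relevant PL functions are strict, and there use the definition of $\trop^!$ via the intrinsic normal cone pulled back from the strict square. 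On that blowup, the commutation with $c_1(\mathcal{O}(\phi))$ is classical. Compatibility across refinements then follows from the commutation of Gysin pullbacks along the lci maps of Artin fans with virtual pullback, again by Manolache's functoriality.
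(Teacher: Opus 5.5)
Your proposal is correct and is essentially the argument the paper intends: the paper gives no separate proof beyond ``Again, by \ref{prop:trop_prod_vir_divisibility}''. That is, it applies $\trop^!$ from \ref{def:virtual_classes} to the tropical identity and uses that $\trop^!$ commutes with capping by piecewise polynomials pulled back from $\Mpt^t_\Lambda(X^t)$. The compatibility across log alterations that you flag as the main obstacle is already supplied by \ref{thm:loggysinlogchowpullback}, after which the commutation with first Chern classes on a sufficiently fine level is the classical one, exactly as you describe.
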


Additionally, we will sometimes use the following virtual fundamental class in Chow homology of a particular model.
\begin{definition}
We let $\Mptm_\Lambda(X)$ denote the subdivision $\Mpt_\Lambda(X) \times_{\Mpt^t_\Lambda(X^t)} \Mptmt_\Lambda(X^t)$.
\end{definition}

\begin{definition}
We write
\begin{align*}
[\Mptm_\Lambda(X)]^\prodvfc = \trop^! [\Mpt^t_\Lambda(X^t)]^\prodvfc
\end{align*}
for the virtual fundamental class in $\CH_*(\Mptm_\Lambda(X))$.
\end{definition}
Note that this makes sense by \ref{sec:push_hpp}, and induces the class $[\Mpt_\Lambda(X)]^\prodvfc$ in log Chow homology. We will clearly distinguish whether we mean the virtual fundamental class in the log Chow group or in the Chow group. For the most part we will use the log Chow group, but for comparisons we will sometimes use the Chow group.

One can obtain pierced logarithmic Gromov--Witten invariants by capping with classes pulled back from $X$ along evaluation maps, and pushing forward to the moduli space of stable log curves.

\begin{definition}\label{def:logGWinvariants}
Fix $\Lambda$ discrete data and $\gamma = (\gamma_i)_{i=1}^n$ with $\gamma_i \in \CH^*(X_{\sigma_i})$, where $X_{\sigma_i}$ denotes the stratum corresponding to the $i$'th contact order. Then we define the log Gromov--Witten class
\[
\LogGW(X;\Lambda,\gamma) = \pi_*\left([\Mpt_\Lambda(X)]^\vir \cap \prod_{i=1}^n \ev_i^* \gamma_i \right)
\]
in $\LogCH_*(\Mpt_{g,n}^\st)$, where $\pi: \Mpt_\Lambda(X) \to \Mpt_{g,n}^\st$. One can make the same definition of log Gromov--Witten classes in Borel--Moore homology. 
\end{definition}

In the case where $X$ is snc, then $X_{\sigma_i} = \bigcup_{j: a_{ij \ne 0}} D_j$.

\begin{example}
\label{ex:tropspacenonnegtangencies}
In the case with all non-negative tangencies the space $\Mpt^t_\Lambda(X^t)$ is integral of dimension $-n$; the condition that the endpoints of a leg must be sent to a positive element of the corresponding cone is empty. More precisely, as a conestack with boundary, we have $\Mpt^t_\Lambda(X^t) = \Mbar^{t}_\Lambda(X^t) \times \R_{>0}^n$, where $\Mbar^{t}_\Lambda(X^t)$ is the classical space of stable balanced tropical maps to $X$ (a cone stack with empty boundary), and $\R_{>0}^n$ corresponds to the leg lengths (still required to be positive). The corresponding idealised Artin fan is thus an Artin fan times $B\G_m^n$.

Then the $\vir$ and $\prodvfc$ classes are just the fundamental class $\trop^![\Mpt^t_\Lambda(X^t)]$, and this recovers the classical notion of logarithmic Gromov--Witten invariants with non-negative tangencies \cite{Abramovich2014Stable-logaritmic-maps-II,Gross2013Logarithmic-gro}. 
\end{example}

\subsection{Comparison to punctured invariants}
\label{subsec:comppunctinvariants}

Now take $X$ to be additionally separated, as in \ref{sec:punctured_comparison}. 
We compare pierced logarithmic Gromov--Witten invariants to the punctured logarithmic Gromov--Witten invariants from \cite{Abramovich2020Punctured-logar,BNR}. The paper \cite{Abramovich2020Punctured-logar} defines a relative perfect obstruction theory on \[\Mcal^\punc_{\Lambda}(X) \xrightarrow{\trop} \Mcal^{\punc,t}_\Lambda(X^t),\] and the paper \cite{BNR} uses this obstruction theory to define \emph{refined punctured} log Gromov--Witten invariants.

For the remainder of this section we assume, as \cite{BNR}, that $X$ is strict normal crossings. We fix some notation, rephrasing some of \cite{BNR} in our language. Let $\Lambda$ be discrete data for $X$. Let $k_P$ denote the total number of puncturings in $\Lambda$, i.e.\ pairs $(p_i,D_j)$ of markings and boundary divisors with $a_{ij} < 0$. For each such pair $(i,j)$ they define a \emph{puncturing offset} $t_{ij}: \Mcal^{\punc,t}_\Lambda(X^t) \to \R_{>0}$, sending a punctured map $f: \Gamma \to X^t$ to the distance between the vertex incident to leg $i$ and the wall corresponding to $D_j$.

In the language of (homological) piecewise polynomials, each $t_{ij}$ is a piecewise linear function on $\Mcal^{\punc,t}_\Lambda(X^t)$. Then we have an even more explicit description of the map $\pi^t$, given by the following tropical version of \ref{thm:closedinblowup}.

\begin{proposition}\label{prop:bnrcomparisongeom}
We have a fibre square
\begin{equation}
\label{eq:bnrcomparison}
\begin{tikzcd}
	\Mpt^t_\Lambda(X^t) \ar[r, "f"] \ar[d,"\pi^t"] & \prod_{i,j : a_{ij} < 0} \R_{>0} \cdot s_{ij} \times \prod_{i=1}^n \R_{>0} \cdot \ell_i \ar[d,"\phi"]  \\
	\Mcal^{\punc,t}_\Lambda(X^t) \ar[r, "g"] & \prod_{i,j : a_{ij} < 0} \R_{>0} \cdot t_{ij}
\end{tikzcd}
\end{equation}
where $\phi$ is given by $t_{ij} = s_{ij} - a_{ij} \ell_i$.
\end{proposition}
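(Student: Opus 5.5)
Both sides of \ref{eq:bnrcomparison} parametrise the same combinatorial data, so the plan is to describe the fibre product explicitly, cone by cone, and check that it agrees with $\Mpt^t_\Lambda(X^t)$. First I will make the maps precise. The map $\pi^t$ is the tropical shadow of $\Phi$ from \ref{thm:closedinblowup}: it forgets the leg lengths and the positions of the leg tips, keeping the underlying punctured tropical map. For $a_{ij}<0$, set $s_{ij} := \ell_i + r_{ij}$ as in \ref{def:rij}; this is the $j$th coordinate of the image of the tip of leg $i$, i.e.\ the distance from the tip to the wall $D_j$. Since $f$ has slope $a_{ij}$ along leg $i$, the $j$th coordinate of the vertex is $s_{ij} - a_{ij}\ell_i$. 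This coordinate is exactly the puncturing offset $t_{ij}\circ\pi^t$, so the square commutes.

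Next I will identify $\Mpt^t_\Lambda(X^t)$ with the fibre product, working over a cone $\tau$ of $\Mcal^{\punc,t}_\Lambda(X^t)$. Given a punctured tropical map and positive values $\ell_i, s_{ij}$ with $s_{ij} - a_{ij}\ell_i = t_{ij}$, I will build a pierced tropical map. Since legs have a fixed slope $a_i$, the length $\ell_i$ determines a tip position $f(v_i)+\ell_i a_i$ in $\sigma'^{\gp}$. The tip lies in the cone exactly when its $j$th coordinates are $\ge 0$; coordinates with $a_{ij}\ge 0$ are automatic, and for $a_{ij}<0$ the coordinate equals $s_{ij}$. This gives a pierced tropical map, and it lies in the interior precisely when $\ell_i>0$ and $s_{ij}>0$ for every puncturing pair, because the contact-order condition from \ref{def:contact_orders} says the tip must avoid proper faces of $\sigma_i$. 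Conversely, a pierced tropical map determines its punctured map (truncate the legs) together with $\ell_i$ and $s_{ij}$. Balancing and stability are conditions on vertices only and are unaffected by the legs, so they match on both sides. Both constructions are natural in $\tau$ and compatible with face maps, so they glue to an isomorphism of cone stacks with boundary. On the lattice level, the monoid of the pierced cone is generated over the punctured one by the $\ell_i$ and $s_{ij}$ subject to $t_{ij} = s_{ij} - a_{ij}\ell_i$. This is exactly the fibre product of monoids; compare \ref{constr:minmonoid}, where $t_i(m) = \phi_i(m) + a_{h_i}(m)\ell_i$.

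The main obstacle is the fibre product itself. Because $\phi$ is not a face-preserving map of cones, the fibre product in cone stacks with boundary is not simply the naive set-theoretic one: one must check that the result is saturated and integral. One must also check that the boundary of $\Mpt^t_\Lambda(X^t)$ (some $\ell_i=0$, or a tip meeting a proper face) pulls back to the union of the boundaries. My first approach is to compute locally on a cone $\tau$ with dual monoid $M$. I will show that the fibre product cone is $\{(x,\ell,s)\in\tau\times\R_{\ge0}^{n}\times\R_{\ge0}^{k_P} : s_{ij} = t_{ij}(x)+a_{ij}\ell_i\}$. Since $\phi$ is surjective with kernel a linear subspace, the integral points of this cone coincide with the integral lattice points, which settles saturation. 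I will then observe that these are exactly the tip-position conditions described above.
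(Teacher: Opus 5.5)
Your argument works once one formula is corrected. The error is in how you define $s_{ij}$.

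\textbf{The error.} By \ref{def:rij}, $F(x,u)=f(v)+u\,a_i$, so the $j$th coordinate hyperplane pulls back to $u=t_{ij}/|a_{ij}|$. Thus $r_{ij}+\ell_i=t_{ij}/|a_{ij}|$ is the leg parameter at which the \emph{extended} leg would hit $D_j$. That is a rescaled vertex coordinate, not the tip coordinate. In \ref{fig:simpletropicalex}, where $|a_{ij}|=1$, the vertex sits at $r_1+\ell$ and the tip at $r_1$. The tip's $j$th coordinate is
$t_{ij}+a_{ij}\ell_i=|a_{ij}|(r_{ij}+\ell_i)-|a_{ij}|\ell_i=-a_{ij}r_{ij}$,
which is what the paper records right after the statement.

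With your literal definition $s_{ij}:=\ell_i+r_{ij}$ two things fail:
\begin{itemize}
\item $s_{ij}-a_{ij}\ell_i=t_{ij}/|a_{ij}|+|a_{ij}|\ell_i\neq t_{ij}$, so the square does not commute.
\item Positivity of $s_{ij}$ would only say $t_{ij}>0$, so your identification of the interior also breaks.
\end{itemize}
Your verbal description (``the $j$th coordinate of the tip'') is correct, and your later formula $s_{ij}=t_{ij}(x)+a_{ij}\ell_i$ is too. So set $s_{ij}:=t_{ij}+a_{ij}\ell_i=-a_{ij}r_{ij}$ throughout. This normalisation matters: it is exactly the source of the factor $\prod|a_{ij}|$ in \ref{prop:comparisonbnr}.

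\textbf{The rest of the argument.} With that fix, everything else you say is right, and it matches what the paper has in mind. The paper writes out no separate proof; it presents the square as the tropical version of \ref{thm:closedinblowup}. There, the log blowup makes $t_{ij}+a_{ij}\ell_i$ comparable with $0$. The closed condition $\alpha(t_{ij}+a_{ij}\ell_i)=0$ then keeps the half where it is $\ge 0$, with interior where it is $>0$. That is precisely your cone $\{(x,\ell)\in\tau\times\R_{\ge0}^n : t_{ij}(x)+a_{ij}\ell_i\ge 0\}$. Its lattice is that of $\tau\times\R_{\ge 0}^n$, because $s$ is determined integrally by $(x,\ell)$. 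The interiors match because $t_{ij}=s_{ij}+|a_{ij}|\ell_i$.

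\textbf{Two small corrections.}
\begin{itemize}
\item On the monoid side, the pierced monoid is the saturated pushout (amalgamated sum) of the dual monoids, not a fibre product of monoids.
\item The integrality and saturation worry you raise does not arise for cone stacks. Fibre products there are computed conewise with fibre-product lattices, as your last paragraph effectively shows.
\end{itemize}
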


We furthermore note that $s_{ij}$ is exactly $-a_{ij} r_{ij}$ where $r_{ij}$ is the piecewise linear function defined in \ref{def:rij}.

Now we will construct virtual pullbacks between the (tropical) pierced spaces and the (tropical) punctured spaces. First, on the level of (tropical) moduli spaces we obtain the following commutative diagram
\begin{equation}
\label{eq:commdiagpiercedpunc}
\begin{tikzcd}
\Mpt_{\Lambda}(X) \arrow[r, "\pi"] \arrow[d, "\trop"] & \Mcal^\punc_{\Lambda}(X) \arrow[d, "\trop"] \\
\Mpt^t_\Lambda(X^t) \arrow[r, "\pi^t"] & \Mcal^{\punc,t}_\Lambda(X^t)
\end{tikzcd}
\end{equation}

The vertical maps have perfect obstruction theories $\trop^!$ by \ref{def:piercedcorrespondence} and \cite[Section~4.1]{Abramovich2020Punctured-logar}. We explain how to give the horizontal maps $\pi,\pi^t$ a perfect obstruction theory. We can refine the commutative diagram to
\begin{equation}
\label{eq:refinedcommdiagpiercedpunc}
\begin{tikzcd}
\Mpt_{\Lambda}(X) \arrow[d, "\trop_1"] \arrow[r, "\pi"] & \Mcal^\punc_{\Lambda}(X) \arrow[d, "\trop_1"] \\
\Mpt_\Lambda(X)^t \arrow[d, "\trop_2"] \arrow[r, "\pi'"] & \Mcal^{\punc}_\Lambda(X^t) \arrow[d, "\trop_2"]\\
\Mpt_{\Lambda}^t(X^t) \arrow[r, "\pi^t"] & \Mcal^{\punc,t}_\Lambda(X^t)
\end{tikzcd}
\end{equation}
where the top square is a pullback diagram. 

Again, the vertical maps have obstruction theories as noted above for the pierced spaces, and in \cite[Section~4.1]{Abramovich2020Punctured-logar} for the punctured moduli spaces. Further, the map $\pi^t$ is a pullback of the map $\phi$ from \ref{eq:commdiagpiercedpunc}, and naturally obtain a perfect obstruction theory. To put an obstruction theory on $\pi'$, we use that the second square is a pullback of the square
\begin{equation}
\label{eq:secondsquarepiercedpunc}
\begin{tikzcd}
\Mpt_{g,n} \prod_{i,j : a_{ij} < 0} \R_{>0} \cdot s_{ij} \arrow[r, "\pi"] \arrow[d, "\trop"] & \Mfrak_{g,n} \prod_{i,j : a_{ij} < 0} \R_{>0} \cdot t_{ij} \arrow[d, "\trop"] \\
\Mpt_{g,n}^t \times \prod_{i,j : a_{ij} < 0} \R_{>0} \cdot s_{ij} \arrow[r, "\pi^t"] & \Mfrak_{g,n}^{t} \times \prod_{i,j : a_{ij} < 0} \R_{>0} \cdot t_{ij}.
\end{tikzcd}
\end{equation}
We have seen this for the two vertical arrows, and in \ref{prop:bnrcomparisongeom} for the bottom horizontal arrow, so it follows for the top horizontal arrow. All maps in this square are lci, and induce virtual pullbacks on $\trop_2, \pi', \pi^t$. As the first square is also a pullback square, we also obtain a virtual pullback for $\pi$.

\begin{proposition}
\label{prop:comparisonacgs}
The commutative diagram \ref{eq:commdiagpiercedpunc} induces a commutative diagram
\[
\begin{tikzcd}
\LogCH_*(\Mpt_{\Lambda}(X)) & \arrow[l, "\pi^!", swap] \LogCH_*(\Mcal^\punc_{\Lambda}(X)) \\
\LogCH_*(\Mpt^t_\Lambda(X^t)) \arrow[u, "\trop^!"]  & \arrow[l, "\pi^{t,!}"] \LogCH_*(\Mcal^{\punc,t}_\Lambda(X^t)) \arrow[u,"\trop^!", swap]
\end{tikzcd}
\]
of virtual pullbacks.
\end{proposition}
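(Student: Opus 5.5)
The commutativity will come from factoring the square \ref{eq:commdiagpiercedpunc} through the refined diagram \ref{eq:refinedcommdiagpiercedpunc} and checking commutativity of virtual pullbacks square by square, using functoriality of virtual pullbacks. I will first reduce to Chow groups of suitable log blowups, so that every map is represented by an honest morphism of algebraic stacks with an honest obstruction theory. Each map in the statement is compatible with log alterations: $\pi^t$ and the tropicalisations are strict or log-étale after base change. It therefore suffices to prove, for a cofinal system of subdivisions of $\Mcal^{\punc,t}_\Lambda(X^t)$ pulled back to all four corners, that $\pi^!\circ\trop^! = \trop^!\circ\pi^{t,!}$ on ordinary Chow groups. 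The transition maps of \ref{sec:log_intersection_theory} are Gysin pullbacks along lci maps of Artin fans, and these commute with virtual pullbacks.

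\textbf{Step 1 (top square).} The upper square of \ref{eq:refinedcommdiagpiercedpunc} is cartesian. The obstruction theory for $\trop_1$ on the pierced side is, by construction in \ref{sec:piercedlogGW}, $\Rb\pi_*(\omega\otimes f^*\LL_{X/X^t})$. This depends only on the strict map $X\to X^t$ and the underlying curve, which are identified on the two sides because $\Phi$ is an isomorphism on underlying stacks. Hence the pierced obstruction theory is the pullback of the punctured one of \cite{Abramovich2020Punctured-logar}. The obstruction theory on $\pi$ was itself defined as the base change of that on $\pi'$. Commutativity of virtual pullbacks for a cartesian square in which the two parallel pairs of obstruction theories are pulled back from each other (Manolache's commutativity, or its log analogue from \ref{sec:log_vir_pull}) then gives $\trop_1^!\pi'^! = \pi^!\trop_1^!$.

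\textbf{Step 2 (bottom square).} The lower square is the pullback of \ref{eq:secondsquarepiercedpunc}. In that square all maps are lci between smooth stacks (or products with idealised Artin fans $B\G_m^k$), so their Gysin pullbacks are refined lci pullbacks. Lci pullbacks commute in any commutative square of lci maps by Fulton–Kresch functoriality, $f^!g^! = (g\circ f)^!$ with both composites agreeing. Pulling back along the strict smooth map to $\Mpt_{g,n}$, which affects none of the pullbacks, gives $\trop_2^!\pi^{t,!} = \pi'^!\trop_2^!$. Finally, functoriality for composites gives $\trop^! = \trop_1^!\trop_2^!$ on both sides. This uses the compatibility of the obstruction theory for $\trop_1$ with the smooth strict map $\trop_2$, as in \cite[Section~4.2]{Abramovich2020Punctured-logar}. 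Stacking the two squares then yields the claim.

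\textbf{Main obstacle.} I expect the hard step to be Step 1 in the logarithmic setting. The maps $\pi$ and $\pi^t$ are not strict: they are closed immersions into log blowups, by \ref{thm:closedinblowup} and \ref{prop:bnrcomparisongeom}. So one must check that the log virtual pullbacks of \ref{sec:log_vir_pull} commute after passing to compatible log modifications. Moreover, the obstruction theories must be shown to remain compatible with the extra log structure $\scr Q^n$ and the new elements $t_i(m)$. My first attempt is to use that $X\to X^t$ is strict, so that $\F$ is insensitive to all log structure, and then to apply the fibre-square compatibility after pulling everything back to a common subdivision.
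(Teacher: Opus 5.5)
Your proposal is correct and follows the paper's proof. You split along the refined diagram \ref{eq:refinedcommdiagpiercedpunc}, handle the top square by commutativity of virtual pullbacks in a cartesian square with base-changed obstruction theories, and handle the bottom square by viewing it as a pullback of the lci square \ref{eq:secondsquarepiercedpunc} together with functoriality of lci pullbacks. The non-strictness you flag as the main obstacle is already handled by the log Gysin pullback machinery of \ref{sec:log_vir_pull} (\ref{thm:loggysinlogchowpullback} and \ref{prop:potpullbackfunctorial}), so the paper invokes that directly rather than making your separate reduction to a cofinal system of subdivisions.
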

\begin{proof}
It suffices to show that in both squares in \ref{eq:refinedcommdiagpiercedpunc} the virtual pullbacks commute. For the first square, this is just commuting of virtual pullbacks, as $\pi^!$ is a pullback of $\pi{',!}$. For the second square, we simply use it is a pullback of the square \ref{eq:secondsquarepiercedpunc}. All virtual pullbacks are given by lci pullbacks of the morphisms in \ref{eq:secondsquarepiercedpunc}, and hence we get commutativity by \ref{prop:potpullbackfunctorial}.
\end{proof}

Now we can additionally compare our virtual fundamental classes with the refined virtual fundamental class of $\Mcal^\punc_{\Lambda}(X)$ constructed by Battistella, Nabijou and Ranganathan in \cite{BNR}. With the comparisons between the moduli spaces and the obstruction theories above, it remains to rewrite their definitions in the language of homological piecewise polynomials.

\begin{remark}
We caution that we use different notation from \cite{BNR}. The moduli space $\Mcal^\punc_{\Lambda}(X)$ is there referred to as $\text{Punct}_\Lambda(X|D)$, and the tropical moduli space $\Mcal^{\punc,t}_\Lambda(X^t)$ is there simply called $V(T)$, where $T$ is a tropicalisation of $X$.
\end{remark}

\begin{proposition}
\label{prop:comparisonbnr}
Let $[\Mcal^\punc_{\Lambda}(X)]^\mathsf{ref}$ denote the refined virtual fundamental class of $\Mcal^\punc_{\Lambda}(X)$ defined in \cite[Definition~1.9]{BNR}. Then we have
\begin{align*}
    \pi^! [\Mcal^\punc_{\Lambda}(X)]^\mathsf{ref} &= [\Mpt_{\Lambda}(X)]^\prodvfc \prod_{i, j : a_{ij} < 0} |a_{ij}|\\
    &= \kappa\cdot  [\Mpt_{\Lambda}(X)]^\vir \prod_{i, j : a_{ij} < 0} |a_{ij}|.
\end{align*}
where $\kappa$ is the piecewise polynomial from \ref{prop:trop_prod_vir_divisibility}.
\end{proposition}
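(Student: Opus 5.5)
The plan is to reduce the statement to a purely tropical identity and then transport it along the virtual pullbacks using \ref{prop:comparisonacgs}. Recall from \cite[Definition~1.9]{BNR} that the refined class is $[\Mcal^\punc_\Lambda(X)]^\mathsf{ref} = \trop^![\Mcal^{\punc,t}_\Lambda(X^t)]^\mathsf{ref}$. Here the tropical refined class is, in the language of \ref{thm:prrshomPP}, the homological piecewise polynomial $\prod_{i,j: a_{ij}<0} t_{ij}$. Up to checking conventions (possibly a factor accounting for the $B\G_m^n$ of cotangent lines), this is the class cut out by the puncturing offsets. I would first verify this identification by unwinding their construction: they intersect with the pullback of the origin under $g$, equivalently take the product of the PL functions $t_{ij}$ vanishing on the boundary.

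By \ref{prop:comparisonacgs} we then have
\[
\pi^![\Mcal^\punc_\Lambda(X)]^\mathsf{ref} = \trop^!\,\pi^{t,!}\Big[\prod_{i,j:a_{ij}<0} t_{ij}\Big],
\]
so it suffices to compute $\pi^{t,!}\prod t_{ij}$ in $\LogCH_*(\Mpt^t_\Lambda(X^t))$. Since $\pi^t$ is the pullback of $\phi$ in the fibre square \ref{eq:bnrcomparison}, and virtual pullback commutes with pullback of piecewise polynomials along $g$ and $f$, this reduces to computing $\phi^!$ on the product of coordinate functions $\prod t_{ij}$ on $\prod\R_{>0}t_{ij}$. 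Now $\phi$ is a linear map of smooth cones with boundary. Its pullback is the fundamental class of the source, namely $\prod s_{ij}\prod \ell_i$ as a homological PP, multiplied by the lattice index of $\phi$. The map $(s,\ell)\mapsto s_{ij}-a_{ij}\ell_i$ is surjective over $\Z$, so this index is $1$, but one must track where the boundary goes. The restriction to the $\R_{>0}$ locus shows the pulled-back class is $\prod_i \ell_i\prod_{i,j} s_{ij}$. Substituting $s_{ij} = -a_{ij} r_{ij} = |a_{ij}|\,r_{ij}$ gives $\prod_i\big(\ell_i\prod_{j:a_{ij}<0} r_{ij}\big)\prod |a_{ij}|$, which is $[\Mpt^t_\Lambda(X^t)]^\prodvfc\prod|a_{ij}|$ by \ref{def:vfc}. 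Applying $\trop^!$ gives the first equality, and the second equality follows immediately from \ref{prop:log_prod_vir_divisibility}.

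I expect the main obstacle to be the middle step: justifying that the lci pullback $\phi^!$ of the homological PP $\prod t_{ij}$ is the homological PP $\prod\ell_i\prod s_{ij}$. This must hold compatibly with the isomorphism $\Psi$ and with the extra $\ell_i$ directions coming from $\Mpt\to\Mfrak$ in \ref{eq:secondsquarepiercedpunc}. I would first try to establish a general lemma that, for a strict smooth (here, a linear projection plus a $B\G_m^n$ factor) morphism of CSWBs, $\Psi$ intertwines pullback of PP functions with Gysin pullback. Once that holds, the rest is bookkeeping of the $|a_{ij}|$ factors, and in particular checking whether BNR's normalisation already divides by these multiplicities.
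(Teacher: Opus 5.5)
Your proposal is correct and follows essentially the same route as the paper: identify BNR's tropical refined class $i^![\Acal]$ with the homological piecewise polynomial $\prod t_{ij}$, compute $\pi^{t,!}\prod t_{ij} = f^!\phi^!\prod t_{ij} = \prod s_{ij}\prod_i \ell_i$ via the fibre square \ref{eq:bnrcomparison}, substitute $s_{ij} = |a_{ij}|\,r_{ij}$, and pass to the log level using \ref{prop:comparisonacgs} and \ref{prop:log_prod_vir_divisibility}. The normalisation worries you flag do not arise: the paper reads off $\prod t_{ij} = [\Mcal^{\punc,t}_\Lambda(X^t)]^{\mathsf{ref}}$ directly from the fibre square \ref{eq:bnrrefinedvfc}, with no extra factor, and BNR's class does not divide by the $|a_{ij}|$.
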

\begin{proof}
We recall how the refined virtual fundamental class $[\Mcal^\punc_{\Lambda}(X)]^\mathsf{ref}$ is defined in the language used in this paper. Let $\Acal$ denote the cone stack of the cone stack with boundary $\Mcal^{\punc,t}_\Lambda(X^t)$. It is observed in \cite[Section~1.3]{BNR} that there is a fibre square
\begin{equation}
\label{eq:bnrrefinedvfc}
\begin{tikzcd}
		\Mcal^{\punc,t}_\Lambda(X^t) \ar[r] \ar[d] & \Acal \ar[d]  \\
	\R_{>0}^{k_P} \ar[r,"i"] & \R_{\geq 0}^{k_P}
\end{tikzcd}
\end{equation}
of cone stacks with boundary, where the vertical maps are given by the $t_{ij}$. They then define $[\Mcal^{\punc,t}_\Lambda(X^t)]^\mathsf{ref} = i^![\Acal]$, and $[\Mcal^\punc_{\Lambda}(X)]^\mathsf{ref} = \trop^{\punc,!} [\Mcal^{\punc,t}_\Lambda(X^t)]^\mathsf{ref}$.

In the language of (homological) piecewise polynomials, each $t_{ij}$ is a piecewise linear function on $\Mcal^{\punc,t}_\Lambda(X^t)$. The fibre square \ref{eq:bnrrefinedvfc} shows that the product $\prod t_{ij}$ is a homological piecewise polynomial, and in fact under the isomorphism $\PP_*(\cdot) = \LogCH_*(\cdot)$ from \ref{thm:prrshomPP} we see $\prod t_{ij} = [\Mcal^{\punc,t}_\Lambda(X^t)]^\mathsf{ref}$, where the product is taken over $(i,j)$ with $a_{ij} < 0$.

Write $\pi: \Mpt^t_\Lambda(X^t) \to \Mcal^{\punc,t}_\Lambda(X^t)$ for the forgetful map, and consider the fibre square \ref{eq:bnrcomparison}. Note that $[\Mcal^{\punc,t}_\Lambda(X^t)]^\mathsf{ref} = g^! \prod t_{ij}$, and hence
\begin{align*}
	\pi^{t,!} [\Mcal^{\punc,t}_\Lambda(X^t)]^\mathsf{ref} &= f^! \phi^! \prod t_{ij} \\
	&= f^! \prod_{ij} s_{ij} \prod_i \ell_i \\
	&= \prod_{ij} s_{ij} \prod_i \ell_i	\\
	&= \prod_{ij} r_{ij} \prod_i \ell_i \prod_{i, j : a_{ij} < 0} |a_{ij}|	\\
	&= [\Mpt_\Lambda^t(X^t)]^\prodvfc \prod_{i, j : a_{ij} < 0} |a_{ij}|.
\end{align*}
The proposition follows immediately from \ref{prop:comparisonacgs} and \ref{prop:log_prod_vir_divisibility}.
\end{proof}

\section{Birational invariance}
\label{sec:bir_inv}

The goal of this section is to establish birational invariance for pieced Gromov--Witten invariants along log blowups of log smooth targets. We will first establish the result on a tropical level, and then lift this to the logarithmic level.

\subsection{Tropical birational invariance}\label{sec:trop_bir_inv}
Let $\tilde X^t \to X^t$ be a subdivision, with induced map $h\colon \tilde X \coloneqq X \times_{X^t} \tilde X^t \to X$. Composition gives a natural map
\begin{equation}
	\Mpt^{t}(\tilde X, \tilde X^t) \to \Mpt^{t}(X, X^t), (\Gamma, f , \beth) \mapsto (\Gamma, h \circ f, \beth \circ h^*)
\end{equation}
and there is a corresponding map 
\begin{equation}
\Mptmt(\tilde X, \tilde X^t) \to \Mptmt(X, X^t)
\end{equation}lying over this. Both these maps take balanced objects to balanced objects; by \ref{lem:stabilisation} it is enough to check this for the map $\Mpt^{t}(\tilde X, \tilde X^t) \to \Mpt^{t}(X, X^t)$, and every sPL function on $X$ pulls back to an sPL function on $\tilde X$, so balancing for $\tilde X$ implies balancing for $X$. 
Composing with stabilisation gives a map
\begin{equation}
	\Mpt^{\mu, t, bal, st}(\tilde X, \tilde X^t) \to \Mpt^{\mu, t, bal, st}(X, X^t). 
\end{equation}

\begin{definition}\label{def:compatible_disc_data}
	Suppose we are given discrete data $\Lambda$ for $X$ and $\tilde \Lambda$ for $\tilde X$. We say these are \emph{compatible} if the genera and number of markings are equal, if $\tilde \beta |_{\on{Pic}(X)} = \beta$, and if for every marking $p_i$ the image cone $\tilde \sigma_i$ maps into $\sigma_i$, if 
	$\tilde \sigma_i^\gp$ contains $a_i$, and if $a_i$ is equal to $\tilde a_i$. 
\end{definition}

Let $\Lambda$ and $\tilde \Lambda$ be compatible discrete data for $X$ and $\tilde X$. Recalling from \ref{def:cpct_notation} that we set $\Mptmt_\Lambda(X) = \Mpt^{t, \on{bal}, \st, \mu}_\Lambda(X, X^t)$, we have a natural map 
\begin{equation}
\Mptmt_{\tilde\Lambda}(\tilde X) \to \Mptmt_\Lambda(X). 
\end{equation}

We remark that this is generally not a subdivision; the map on tropicalisations is generally not surjective. Our main tropical birational invariance result follows; the log version is \ref{thm:log_bir_inv}. 

\begin{theorem}\label{thm:tropical_birat_inv}
Suppose that $X \to X^t$ is smooth with connected fibres, and that both $X^t$ and $\tilde X^t$ are smooth. 
Let $\pi$ be the natural map of CSWB
\begin{equation}
\pi \colon \bigsqcup_{\tilde \Lambda \in \pi^*\Lambda} \Mptmt_{\tilde \Lambda}(\tilde X) \to \Mptmt_\Lambda(X), 
\end{equation}
where $\pi^*\Lambda$ is the set of all discrete data for $\tilde X$ that are balanced and compatible with $\Lambda$. 
Then $\pi^*\Lambda$ is finite, $\pi$ is a pushable map, and 
\begin{equation}
\pi_*\left[\bigsqcup_{\tilde \Lambda \in \pi^*\Lambda} \Mptmt_{\tilde \Lambda}(\tilde X)\right]^\vir = \left[\Mptmt_\Lambda(X)\right]^\vir. 
\end{equation}
\end{theorem}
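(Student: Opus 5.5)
The plan is to reduce the statement to a local identity of homological piecewise polynomials, checked leg by leg, and then use \ref{thm:prrshomPP} together with the pushforward of homological piecewise polynomials from \ref{sec:push_hpp}.

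\emph{Step 1: finiteness of $\pi^*\Lambda$.} Since $X \to X^t$ is smooth with connected fibres and $\tilde X = X \times_{X^t} \tilde X^t$, I expect $\on{Pic}(\tilde X)$ to be generated by $\on{Pic}(X)$ together with the line bundles $\ca O(\alpha)$ of PL functions $\alpha$ on $\tilde X^t$. Compatibility fixes $\tilde\beta|_{\on{Pic}(X)} = \beta$. On the other generators, I would sum the balancing condition of \ref{def:balanced} over all vertices. The contributions of internal edges cancel, so $\tilde\beta(\ca O(\alpha))$ is determined by the slopes of $\alpha$ along the legs. Those slopes depend only on $\tilde a_i = a_i$ and the tip cones $\tilde\sigma_i$. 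Each $\tilde\sigma_i$ ranges over the finitely many cones of the subdivision of $\sigma_i$ (or of its star) whose span contains $a_i$ minimally. Hence there are only finitely many possible $\tilde\Lambda$.

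\emph{Step 2: geometry of $\pi$.} A tropical map $\Gamma \to \tilde X^t$ composes to a map $\Gamma \to X^t$. Conversely, a map $\Gamma \to X^t$ lifts to $\tilde X^t$ after refining the base cone and inserting bivalent vertices where the image crosses walls; stabilisation (\ref{lem:stabilisation}) then removes these vertices. From this I would conclude three things. First, $\pi$ is injective on the open locus where no vertex lies on a new wall. Second, $\pi$ is a pushable map in the sense of \ref{sec:push_hpp}, because each of its cone maps is linear and the fibres are finite up to the subdivisions $\mu$. Third, the only genuinely new choice is which cone of $\tilde X^t$ the tip of each leg lands in, and this is exactly the datum $\tilde\sigma_i$ recorded in $\tilde\Lambda$.

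\emph{Step 3: the local identity.} Both classes are products $\prod_i \ell_i r_i$ over legs, and the subdivision only changes the data attached to each leg. So I would fix a cone $\tau$ of $\Mptmt_\Lambda(X)$ and work one leg $i$ at a time, proving the pushforward formula for a single factor $\ell_i r_i$ and multiplying the results. Downstairs, the tip of leg $i$ moves along the segment from the attaching vertex in direction $a_i$, and total length $\ell_i + r_i$ remains before the segment exits $\sigma'$. The walls of $\tilde X^t$ cut this segment into finitely many pieces. Each piece corresponds to a cone $\tilde\sigma_i$, and on it $\tilde r_i$ is the distance from the tip to the next wall.

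The claim to prove is that the pushforward of $\sum_{\tilde\Lambda} \ell_i\tilde r_i$ equals $\ell_i r_i$. Data $\tilde\Lambda$ whose tip cone has positive codimension inside $\sigma'$ contribute via pushforward from lower-dimensional cones of $\Mptmt_{\tilde\Lambda}(\tilde X)$, where the tip is pinned to a wall. The contributions with top-dimensional tip cones are just restrictions. I would verify the sum cone by cone using the explicit pushforward formula for homological PPs: after a refinement making everything linear, it should reduce to telescoping the lengths of the pieces of the segment.

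\emph{Main obstacle.} I expect Step 3 to be the hard part. The difficulty is the correct multiplicities when a tip lies on a wall of $\tilde X^t$ that is not a cone of $X^t$. There the map of cones has nontrivial lattice index, and the pushforward of a homological PP from a lower-dimensional cone picks up the corresponding factor. Smoothness of $X^t$ and $\tilde X^t$ should make these indices $1$, or at least make them cancel with the slope normalisation in $r_i$. I would first test the identity on a single smooth star subdivision of $\R_{\ge 0}^2$ with one negative leg, and then argue that every subdivision of smooth fans is a composite of such star subdivisions locally. This last reduction is itself delicate.
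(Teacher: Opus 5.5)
\textbf{Verdict: genuine gap.} Your Step 1 is fine; it is essentially the paper's argument (the pushout of $\on{Pic}(X)$ and $PL(\tilde X^t)$ over $PL(X^t)$, plus global balancing). The gap is Step 3, which carries the whole content of the theorem. You do not prove it, and the local picture you describe is not the right one.

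Suppose the extended $i$th leg meets the walls of $\tilde X^t$ at distances $0=c_0<c_1<\dots<c_m=\ell_i+r_i$ from its vertex. When the tip lies in the $j$th piece, the lifted curve has a vertex inserted at distance $c_{j-1}$. The new leg therefore has length $\ell_i-c_{j-1}$, and the class is $(\ell_i-c_{j-1})(c_j-\ell_i)$.
\begin{itemize}
\item This is not $\ell_i\tilde r_i$, which does not even vanish on the boundary $\ell_i=c_{j-1}$.
\item It is not a restriction of $\ell_i r_i$ either.
\item Configurations with the tip on a transversally crossed wall are boundary points, so there are no ``pinned tip'' contributions.
\end{itemize}
The identity you actually need is that the function equal to $(\ell_i-c_{j-1})(c_j-\ell_i)$ on the $j$th piece pushes forward to $\ell_i(c_m-\ell_i)=\ell_i r_i$. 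This is false pointwise, since the left side vanishes at every $c_j$, so it cannot come from telescoping. It is a genuine Chow-level statement about pushforward along a subdivision.

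Multiplying one-leg identities also needs justification, because $\Mptmt_\Lambda(X)$ is a \emph{fibred} product of the leg intervals over a base. Your fallback does not close the gap either. Testing one star subdivision of $\R_{\ge0}^2$ and then factoring an arbitrary smooth subdivision into star subdivisions requires a factorisation that is not available in general. Only weak factorisation is known, and it involves blow-downs, across which your identity does not obviously transport. A single star subdivision also does not model a leg crossing several walls.

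The missing idea is to make the piece lengths independent coordinates on a universal model.
\begin{itemize}
\item Forget leg lengths to get a cone stack $\bb M$. Every interior cone of $\Mptmt_\Lambda(X)$ is then the cube $\tau_{L_1,\dots,L_n}$ over a cone $\tau$ of $\bb M$, with $L_i=\ell_i+r_i$.
\item Refine $\bb M$ so that the wall-crossing pattern is constant and $L_i=\sum_j L_{i,j}$ with each $L_{i,j}$ linear.
\item Over a cone $\sigma$ of the refinement, $\pi$ is a lattice refinement of the pullback along $\sigma\to U=\bigoplus_{i,j}\R_{\ge0}k_{i,j}$ of the universal decomposition $W=\prod_i\bigsqcup_j U_{k_{i,j}}\to V=\prod_i U_{\sum_j k_{i,j}}$.
\item The two classes are pulled back from $p=\prod_i b_i(\sum_j k_{i,j}-b_i)$ and from $q$, which equals $b(k_{i,j}-b)$ on the $j$th piece.
\end{itemize}
On $V$ the product is an honest product, so you reduce to a single leg. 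Since $p_1$ and $q_1$ have degree $2$, it suffices to compare them on $2$-dimensional cones. Off the boundary these lie over rays of $U_1$, where only one $k_{1,j}$ is nonzero and $W_1\to V_1$ is an isomorphism. Push--pull compatibility, via injectivity of $\Psi$ and Kresch, then transports $\pi_*q=p$ back to the moduli spaces.

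This removes your main obstacle. Lattice indices such as the $\tfrac12$ in \ref{ex:birational} arise only from the pullback along $\sigma\to U$ and never enter the identity, and no factorisation of subdivisions is needed. The same local description also gives the pushability of $\pi$, which your Step 2 only asserts.
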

We remark that this theorem is not true if we replace the classes $[...]^\vir$ with the classes $[...]^\prodvfc$ (see \ref{def:virtual_classes}), giving one source of motivation for the definition of the former. For the remainder of this section we maintain the assumption that $X \to X^t$ is smooth with connected fibres, and that both $X^t$ and $\tilde X^t$ are smooth. 

Before the proof (which will occupy the remainder of this section), we show in a simple example how the birational invariance manifests. 

\begin{example}
\label{ex:birational}
Let $X$ be the blowup of $\bb P^2$ in a point, and let the boundary consist of irreducible divisors $D_1$, $D_2$ where $D_1$ is the exceptional divisor, and $D_2$ is the strict transform of a line through the centre of the blowup. Then $X^t = \R_{\geq 0}^2$, and we define discrete data $\Lambda$ by setting $g = 0$, $n = 1$, $\beta$ to be the class of the exceptional divisor, and contact order $(-1,1)$. 

The moduli space $M \coloneqq \Mptmt_{\Lambda}(X^t)$ is given by $\R_{> 0} r \oplus \R_{>0} \ell$, with the corresponding tropical curve given by a single vertex at $(\ell+r,0)$, with a leg with endpoint $(r,\ell)$, pictured in the first part of \ref{fig:birat_eg}. On this space the tropical virtual fundamental class is given by the homological piecewise polynomial $r \ell$.

Now let $\pi: \tilde{X}^t\to X^t$ be the subdivision in the ray $(1,1)$ (the blow up in the intersection point $D_1 \cap D_2$). Then there are two lifts of the discrete data $\Lambda_1, \Lambda_2$, and two corresponding tropical moduli spaces are $M_1 = \R_{> 0} r \oplus \R_{> 0} (\ell - r)/2$ and $M_2 = \R_{> 0} (r-\ell) \oplus \R_{ > 0 } \ell$, pictured in \ref{fig:birat_eg}. Note that in $\Lambda_1$ the contact order to the rays generated by $(0,1),(1,1)$ is $(2,-1)$, and in $\Lambda_2$, the contact order to the rays generated by $(1,1),(1,0)$ is $(1,-2)$. Hence the virtual fundamental class of $M_1$ is $r(\ell-r)/2$, where the $2$ comes from the change of integral structure, and the virtual fundamental class of $M_2$ is $\ell(r-\ell)/2$, where the $2$ comes from the contact order $-2$ with respect to the ray $(1,0)$ whose dual  cuts out the wall $\langle (1,1)\rangle$ being crossed.

All in all, we have $\pi_* [M_1]^\vir = \pi_* [M_2]^\vir = \frac12 [M]^\vir$, and hence $\pi_* [M_1]^\vir + \pi_* [M_2]^\vir = [M]^\vir$.
\end{example}

\begin{figure}[htbp]
\centering
\begin{subfigure}[b]{0.3\textwidth}
\centering
\begin{tikzpicture}[scale=1.2]
    \draw[->] (0,0) -- (2.5,0);
    \draw[->] (0,0) -- (0,2.5);
    \fill (2,0) circle (2pt);
    \draw[->, thick] (2,0) -- (0.56,1.44);
    \draw (0.56,0) -- (0.56,-0.08);
    \node[below] at (0.56,-0.08) {$r$};
    \draw (0,1.44) -- (-0.08,1.44);
    \node[left] at (-0.08,1.44) {$\ell$};
    \draw (2,0) -- (2,-0.08);
    \node[below] at (2,-0.08) {$r+\ell$};
\end{tikzpicture}
\caption*{$M$}
\end{subfigure}
\hfill
\begin{subfigure}[b]{0.3\textwidth}
\centering
\begin{tikzpicture}[scale=1.2]
    \draw[->] (0,0) -- (2.5,0);
    \draw[->] (0,0) -- (0,2.5);
    \draw[gray] (0,0) -- (2.2,2.2);
    \fill (2,0) circle (2pt);
    \fill (1,1) circle (2pt);
    \draw[->, thick] (2,0) -- (0.56,1.44);
    \draw (0.56,0) -- (0.56,-0.08);
    \node[below] at (0.56,-0.08) {$r$};
    \draw (0,1.44) -- (-0.08,1.44);
    \node[left] at (-0.08,1.44) {$\ell$};
    \draw (2,0) -- (2,-0.08);
    \node[below] at (2,-0.08) {$r+\ell$};
    \draw (1,0) -- (1,-0.08);
    \node[below] at (1,-0.08) {$\frac{r+\ell}{2}$};
    \draw (0,1) -- (-0.08,1);
    \node[left] at (-0.08,1) {$\frac{r+\ell}{2}$};
\end{tikzpicture}
\caption*{$M_1$}
\end{subfigure}
\hfill
\begin{subfigure}[b]{0.3\textwidth}
\centering
\begin{tikzpicture}[scale=1.2]
    \draw[->] (0,0) -- (2.5,0);
    \draw[->] (0,0) -- (0,2.5);
    \draw[gray] (0,0) -- (2.2,2.2);
    \fill (2,0) circle (2pt);
    \draw[->, thick] (2,0) -- (1.4,0.6);
    \draw (1.4,0) -- (1.4,-0.08);
    \node[below] at (1.4,-0.08) {$r$};
    \draw (0,0.6) -- (-0.08,0.6);
    \node[left] at (-0.08,0.6) {$\ell$};
    \draw (2,0) -- (2,-0.08);
    \node[below] at (2,-0.08) {$r+\ell$};
\end{tikzpicture}
\caption*{$M_2$}
\end{subfigure}
\caption{Tropical curves corresponding to points in the moduli spaces $M$, $M_1$, $M_2$. The leg lengths are $\ell$, $(\ell-r)/2$, and $\ell$ respectively, where in the latter case we require $\ell < r$, i.e.\ $r - \ell >0$. }
\label{fig:birat_eg}
\end{figure}

We begin with proving the finiteness of $\pi^*\Lambda$; first a lemma. 
\begin{lemma}\label{lem:pushout_pic}The natural commutative diagram
\begin{equation}
\begin{tikzcd}
  \on{Pic}(\tilde X) \ar[dr, phantom, very near start, "\ulcorner"]   & \on{Pic}(X) \ar[l]\\
PL(\tilde X^t)\ar[u]& PL(X^t)\ar[u]\ar[l]
\end{tikzcd}
\end{equation}
is a pushout diagram. 
\end{lemma}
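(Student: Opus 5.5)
We aim to show that $\on{Pic}(\tilde X)$ is generated by $\on{Pic}(X)$ and $PL(\tilde X^t)$, and that the only relations are those coming from $PL(X^t)$. The tool throughout is the structure of $h\colon \tilde X = X\times_{X^t}\tilde X^t \to X$. Since $\tilde X^t \to X^t$ is a subdivision, $h$ is a log blowup, and hence proper and birational. Since $X\to X^t$ is smooth with connected fibres, so is $\tilde X \to \tilde X^t$. Throughout we use that $X^t,\tilde X^t$ are smooth, so line bundles on the Artin fans correspond to (integral) PL functions and, on the smooth side, $\on{Pic}$ agrees with the class group.

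\emph{Step 1: generation.} Since $X$ and $\tilde X$ are smooth, $\on{Pic} = \on{Cl}$. The map $h$ is an isomorphism away from the exceptional locus. That locus is a union of the divisors $\tilde D_\rho$ corresponding to rays $\rho$ of $\tilde X^t$ that are not rays of $X^t$. Each $\tilde D_\rho$ is irreducible, being the preimage of a stratum of the Artin fan under a smooth map with connected fibres. The excision sequence
\[\bigoplus_{\rho \text{ new}} \Z[\tilde D_\rho] \to \on{Cl}(\tilde X) \to \on{Cl}(\tilde X\setminus \bigcup \tilde D_\rho) = \on{Cl}(X \setminus h(\text{exc})) = \on{Cl}(X)\]
then applies; the last equality holds because the centre has codimension $\ge 2$. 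The pullback $h^*$ splits the last map. Hence $\on{Pic}(\tilde X) = h^*\on{Pic}(X) + \sum_{\rho \text{ new}}\Z\,\ca O(\tilde D_\rho)$. Each $\ca O(\tilde D_\rho)$ is the line bundle of the PL function on $\tilde X^t$ that is the dual basis element of $\rho$ (Courant function), so the map $\on{Pic}(X)\oplus PL(\tilde X^t)\to \on{Pic}(\tilde X)$ is surjective.

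\emph{Step 2: relations.} Suppose $h^*L \otimes \ca O(\phi) \cong \ca O$ with $L\in \on{Pic}(X)$ and $\phi \in PL(\tilde X^t)$. We must show that there is $\psi\in PL(X^t)$ with $\phi - \psi$ mapping to zero in $PL(\tilde X^t)$ modulo the kernel, and $L \cong \ca O(-\psi)$. (More precisely, the pair $(L,\phi)$ must lie in the image of $PL(X^t)$ under the antidiagonal, up to the kernel of $PL(\tilde X^t)\to\on{Pic}(\tilde X)$, which is the kernel of the pushout map.)

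The plan is to push forward. Write $\phi = \sum_\rho c_\rho \chi_\rho$ in Courant functions. Let $\psi$ be the unique PL function on $X^t$ agreeing with $\phi$ on the rays of $X^t$; it exists because $X^t$ is smooth, so every cone is simplicial unimodular. Replacing $\phi$ by $\phi - \psi$ and $L$ by $L\otimes \ca O(\psi)$, we may assume $\phi$ is supported on new rays, so $\ca O(\phi) = \ca O(\sum_{\rho \text{ new}} c_\rho \tilde D_\rho)$. Apply $h_*$ on class groups: $h_*$ kills exceptional divisors and $h_*h^* = \mathrm{id}$, giving $L \cong \ca O$ in $\on{Cl}(X) = \on{Pic}(X)$. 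Then $\ca O(\sum c_\rho \tilde D_\rho)\cong \ca O$, so $\sum c_\rho \tilde D_\rho$ is principal, equal to $\on{div}(u)$ for a rational function $u$ regular and invertible away from the exceptional locus. Since that locus lies over codimension $\ge 2$ in $X$, the function $u$ extends to a unit on $X$ by Hartogs/normality. Hence $\on{div}(h^*u)=0$ and all $c_\rho = 0$. So after the modification the relation is trivial, which is exactly the pushout property.

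The main obstacle is Step 2's bookkeeping: identifying precisely the kernel of $PL(\tilde X^t)\to \on{Pic}(\tilde X)$ (linear functions, i.e. $M$-parts, which already come from $PL(X^t)$ since $M$ is shared), and handling connectedness and irreducibility of the $\tilde D_\rho$ when $X\to X^t$ is not surjective. For the latter I would use the running hypothesis of this section that $X\to X^t$ has connected fibres, and argue that an empty $\tilde D_\rho$ contributes a Courant function that is trivial on both sides.
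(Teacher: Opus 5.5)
Your proof is correct and follows the same route as the paper. The paper's proof cites three facts: the exact sequence $0\to\bigoplus_i\Z E_i\to\on{Pic}(\tilde X)\to\on{Pic}(X)\to 0$, the bijection between exceptional components and new rays of $\tilde X^t$ (via connected fibres), and the identification of $PL(X^t)$ and $PL(\tilde X^t)$ with free groups on rays. Your Step~1 (excision) and Step~2 (reduce to $\phi$ supported on new rays, then apply $h_*$ and normality of $X$) prove exactly the surjectivity and the injectivity of that sequence, written as generators and relations.

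Step~2, run with arbitrary $L$ (including $L\cong\ca O$), already shows that the whole kernel of $\on{Pic}(X)\oplus PL(\tilde X^t)\to\on{Pic}(\tilde X)$ is the antidiagonal image of $PL(X^t)$. So your ``up to the kernel'' caveat and the worry about $M$-parts are unnecessary.

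Your closing remark is wrong, however: an empty $\tilde D_\rho$ is \emph{not} harmless. The pushout modulo $\on{Pic}(X)$ is $PL(\tilde X^t)/PL(X^t)$, which is free on the new rays. So $\chi_\rho$ would be a nonzero element of the pushout mapping to zero in $\on{Pic}(\tilde X)$, and the lemma would fail. For example, take $X=\A^2\setminus\{0\}$ with its toric boundary and subdivide along the ray $(1,1)$. Then $\tilde X\cong X$ and $\on{Pic}(\tilde X)=0$, but the pushout is $\Z$.

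The nonemptiness of each $\tilde D_\rho$ that Step~2 needs (to conclude $c_\rho=0$) must come from the paper's standing hypothesis that $X\to X^t$ is surjective. This passes to $\tilde X\to\tilde X^t$ by base change, and the paper's proof uses it implicitly when asserting the bijection between exceptional divisors and new rays.
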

 Without the assumption that $X \to X^t$ has connected fibres this is false. The assumption that $X$ is smooth can likely be weakened. 
\begin{proof}
Identifying Weil and Cartier divisors on $X$ and $\tilde X$, we have an exact sequence
\begin{equation}
0 \to \bigoplus_{1 \le i \le n} \bb Z E_i \to \on{Pic}(\tilde X) \to \on{Pic}(X) \to 0
\end{equation}
where $E_1, \dots, E_n$ are the irreducible components of codimension 1 of the exceptional locus of $\tilde X \to X$ (i.e.\ the locus where the map fails to be an isomorphism). Since $X \to X^t$ has connected fibres, the $E_i$ correspond bijectively to the rays in $\tilde X^t$ which map to higher-dimensional cones in $X^t$. Since both $X^t$ and $\tilde X^t$ are smooth, their rings of PL functions are naturally identified with the free abelian groups on the rays. 
\end{proof}

\begin{lemma}\label{lem:finite_disc_data}
	Fix balanced discrete data $\Lambda$ on $X^t$. Then there are only finitely many balanced discrete data $\tilde \Lambda$ on $\tilde X^t$ compatible with $\Lambda$; in other words, the set $\pi^*\Lambda$ from \ref{thm:tropical_birat_inv} is finite. 
\end{lemma}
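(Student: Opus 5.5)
The idea is to show that, once the finitely many choices of cones are made, a compatible balanced datum $\tilde\Lambda$ is completely determined by $\Lambda$. Write $\tilde\Lambda = (\tilde g, \tilde n, \tilde\beta, (\tilde\sigma_i, \tilde a_i)_i)$. By \ref{def:compatible_disc_data} we have $\tilde g = g$, $\tilde n = n$ and $\tilde a_i = a_i$ for every $i$. Moreover each $\tilde\sigma_i$ is a cone of $\tilde X^t$ mapping into $\sigma_i$ with $a_i \in \tilde\sigma_i^\gp$. Since $X$ is quasi-compact, $X^t$ has finitely many cones. A subdivision of a finite cone complex is again finite, so there are only finitely many possible tuples $(\tilde\sigma_i)_i$. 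It therefore suffices to fix such a tuple and show that at most one $\tilde\beta$ gives a balanced compatible datum.

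\textbf{Step 1: $\tilde\beta$ is pinned down on $PL(\tilde X^t)$.} I would use global balancing. Take a balanced tropical map of type $\tilde\Lambda$ (this is what I take ``balanced discrete data'' to mean, or else a consequence of it), and a PL function $\phi$ on $\tilde X^t$. Sum the vertex balancing conditions of \ref{def:balanced} over all vertices. The left side becomes $\sum_v \beth(v)(\ca O(\phi)) = \tilde\beta(\ca O(\phi))$. On the right, each internal edge contributes the slopes of $\phi$ along its two half-edges, which are negatives of each other, so these cancel. Only the legs remain, and the slope of $\phi$ along leg $i$ is $\phi|_{\tilde\sigma_i}$ evaluated on $a_i$, since the leg has slope vector $a_i$ inside $\tilde\sigma_i^\gp$. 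Hence
\[\tilde\beta(\ca O(\phi)) = \sum_{i=1}^n \phi|_{\tilde\sigma_i}(a_i),\]
and this depends only on the fixed data.

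\textbf{Step 2: extend to all of $\on{Pic}(\tilde X)$.} On $\on{Pic}(X)$ we have $\tilde\beta = \beta$ by compatibility. By \ref{lem:pushout_pic}, $\on{Pic}(\tilde X)$ is the pushout of $\on{Pic}(X)$ and $PL(\tilde X^t)$ over $PL(X^t)$. In particular, $\on{Pic}(\tilde X)$ is generated by the images of $\on{Pic}(X)$ and $PL(\tilde X^t)$. A homomorphism $\on{Pic}(\tilde X)\to \Z$ is therefore determined by its restrictions to these two groups, and both restrictions are fixed by Step 1 and compatibility. So $\tilde\beta$ is unique, which proves that $\pi^*\Lambda$ is finite.

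The main point needing care is Step 1: checking that the slope of $\phi$ along a leg really is $\phi|_{\tilde\sigma_i}(a_i)$, and that $\phi$ restricts to a linear function on the cone containing the leg. If instead the tip lies in a larger cone containing $\tilde\sigma_i$, one still uses the linear extension on the star, with $a_i \in \tilde\sigma_i^\gp$. Note that we never need consistency of $\tilde\beta$ with $\beta$ on the common image of $PL(X^t)$ for this upper bound. Uniqueness suffices; existence is not required.
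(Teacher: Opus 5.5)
Your proof is correct and follows the paper's route: $g$, $n$ and the $\tilde a_i$ are fixed, there are finitely many choices for the cones $\tilde\sigma_i$, and $\tilde\beta$ is determined through the pushout of \ref{lem:pushout_pic}, by balancing on $PL(\tilde X^t)$ and by compatibility on $\on{Pic}(X)$. Your Step 1 spells out the global balancing identity (edge slopes cancelling, leaving only the leg contributions) that the paper's one-line appeal to ``balanced data'' leaves implicit.
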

\begin{proof}
	The genera and number of markings are determined. There are finitely many choices for each $\tilde \sigma_i$. After any such choice the $\tilde a_i$ are uniquely determined. For balanced data the global curve class is determined by the rest of the data by \ref{lem:pushout_pic}. 
\end{proof}

\begin{remark}\label{rem:hPP_push}
Let $f\colon X \to Y$ be a map of CSWB. If $f$ is a proper map, of relative log dimension 0 and of relative DM type, then a pushforward map of hPPs is constructed in \cite[Prop 67]{RPSS_log_taut}, and a formula is given in \cite[Prop 76]{RPSS_log_taut}. This map is compatible with the proper pushforward on the Chow group. We call such maps $f$ \emph{pushable}. We note that subdivisions, root stacks, and stratum inclusions are pushable, and compositions of pushable maps are pushable. 
\end{remark}

\subsubsection{Maps of cubes}
To prove \ref{thm:tropical_birat_inv} we will first require a more precise local description of the map we are pushing along, as a map of cubes. 
\begin{definition}
\label{def:cube}
Let $\sigma$ be a cone and $f\in \sigma^\vee$ be a non-negative linear function on $\sigma$, or $\infty$. Define the cone 
\begin{equation}
\sigma_f = \{(s, a) \in \sigma \times \bb Q_{\ge 0} : a \le f(s)\}, 
\end{equation}
with its forgetful map to $\sigma$. 
The dual of $\sigma_f$ is the sharpening of
\begin{equation}
\left( \sigma^\vee \oplus \bb N\right)  \langle (f, -1) \rangle \subseteq \sigma^\vee \oplus \bb Z; 
\end{equation}
if $f$ is non-zero then this monoid is already sharp.

For $\Sigma$ a cone stack and $f$ a non-negative strict piecewise linear function on $\Sigma$, or $\infty$, we define $\Sigma_f$ as the gluing of $\sigma_f$ for $\sigma \in \Sigma$.
\end{definition}
The cone stack $\Sigma_f$ comes with a projection map to $\Sigma$. We can think of it as an `interval of length $f$ over $\Sigma$;' in particular the fibre over a point $x \in \Sigma$ is the interval $[0, f(x)]$. From this perspective, given non-negative piecewise linear functions $f_1, \dots, f_n$ on $\Sigma$ we can think of the fibre product 
\begin{equation}
\Sigma_{f_1, \dots, f_n} \coloneqq \Sigma_{f_1} \times_\Sigma \Sigma_{f_2} \times_\Sigma \cdots \times_\Sigma \Sigma_{f_n}
\end{equation}
as an $n$-dimensional cube over $\Sigma$. 

Suppose now that we write each $f_i$ as a sum $f_i = f_{i, 1} + \cdots + f_{i, m_i}$ of non-negative linear functions. Then each interval $\Sigma_{f_i}$ is naturally the union of the $\Sigma_{f_{i, j}}$; more precisely there is a surjective map of cone complexes
\begin{equation}
\Sigma_{f_{i, 1}} \sqcup \dots \sqcup \Sigma_{f_{i, m_i}} \to \Sigma_{f_i}. 
\end{equation}
Similarly the $n$-cube $\Sigma_{f_1, \dots, f_n}$ is naturally a union of smaller $n$-cubes $\Sigma_{f_{1, j_1}, \dots, f_{n, j_n}}$; more precisely there is a surjective map of cone stacks
\begin{equation}\label{eq:n_cube_as_union}
\bigsqcup_{(j_1, \dots, j_n): 1 \le j_i \le m_i} \Sigma_{f_{1, j_1}, \dots, f_{n, j_n}} \to \Sigma_{f_1, \dots, f_n}; 
\end{equation}
we call such a map a \emph{decomposition} of the cube $\Sigma_{f_1, \dots, f_n}$. All of these $n$-cubes can naturally be equipped with the structure of CSWB, where we declare the boundary of the interval $\Sigma_f$ to be the endpoints of the interval: $\{(s, a) \in \Sigma \times \bb Q_{\ge 0} : a = 0 \text{ or } a = f(s)\}$, and we declare the interior of an $n$-cube to be the product of the interiors of its factors. One then quickly checks that a decomposition \ref{eq:n_cube_as_union} is a map of CSWB, and moreover is a pushable map. 

\subsubsection{Locally modelling as maps of cubes}

We will construct a moduli space $\bb M$ together with a map $f\colon \Mptmt_\Lambda(X) \to \bb M$ such that $f$ is locally modelled on $n$-cubes $\sigma_{f_1, \dots, f_n} \to \sigma$, and $$\pi\colon \bigsqcup_{\tilde \Lambda \in \pi^*\Lambda} \Mptmt_{\tilde \Lambda}(\tilde X) \to \Mptmt_\Lambda(X)$$ is locally modelled on decompositions of the cubes as in \ref{eq:n_cube_as_union}. 

Simply put, $\bb M$ is the analogue of the moduli space $\Mptmt_\Lambda(X)$ but where we forget the lengths of legs; the map $\Mptmt_\Lambda(X) \to \bb M$ is then given as $\sigma_{L_1, \dots, L_n} \to \sigma$ where $L_i$ is the distance in the direction of the $i$th leg to the boundary of its image cone; see \ref{const:ri}. Notice that stability, balancing and the `$\mu$' subdivision are all independent of the leg lengths, so this makes sense. Alternatively we can see $\bb M$ as the subdivision of the punctured tropical moduli space of \cite{Abramovich2020Punctured-logar} where we make each PL `distance to the boundary' function $r_i$ of \ref{const:ri} strict. The cone stack $\bb M$ is taken to have empty boundary. 

Note that the distance $L_i$ may be infinite, in the case where all contact orders are non-negative. This case is already treated in \cite{Abramovich2018Birational-inva}, and so below we focus mainly on the case of finite lengths, not making explicit when some terms may be infinite if $L_i$ is so.

Given a cone $\tau'$ in $\Mptmt_\Lambda(X)$ with image $\tau$ in $\bb M$, either $\tau'$ is completely contained in the boundary (in which case any homological PP vanishes on $\tau'$ by definition), or is equal to the $n$-cube $\tau_{L_1, \dots, L_n}$ described in the previous paragraph (since the cube is the fibre over $\tau$, and every proper face of the cube lies in the boundary). To prove \ref{thm:tropical_birat_inv} we are thus reduced to proving an equality of  hPP functions on the $n$-cube over some cone $\tau$ of $\bb M$. 

Now we define a map $\tilde{\bb M}\to \bb M$ as the coarsest subdivision where the distances to proper faces (in the sense of \ref{const:ri}) in $\tilde X$ are strict, and in which the combinatorial pattern of which edges cross which walls of $\tilde X^t$ are constant on each cone.

We fix a cone $\tau$ in $\bb M$ and a cone $\sigma$ in $\tilde{\bb M}$ mapping to it. Let $L_1, \dots, L_n$ be linear functions on $\sigma$ giving the distances to the boundary of the image cone in $X^t$, so that the pullback of $\sigma$ along $\Mptmt_\Lambda(X) \to \bb M$ is $\sigma_{L_1, \dots, L_n}$.

In other words, inside the diagram of spaces 
\begin{equation}\label{diagram_spaces}
\begin{tikzcd}
  &   \bigsqcup_{\tilde \Lambda} \Mptmt_{\tilde \Lambda}(\tilde X) \ar[dl, "\pi"] \ar[d, "f"]\\
\Mptmt_\Lambda(X) \ar[d]  & \Mptmt_\Lambda(X) \times_\bb M \tilde {\bb M} \ar[d]\ar[l]\\
\bb M & \ar[l] \tilde{\bb M}
\end{tikzcd}
\end{equation}
we have cones 

\begin{equation}
\begin{tikzcd}
  &   f^{-1}\sigma_{L_1, \dots, L_n} \ar[dl] \ar[d]\\
\tau_{L_1, \dots, L_n} \ar[d]  & \sigma_{L_1, \dots, L_n} \ar[d]\ar[l]\\
\tau & \ar[l] \sigma
\end{tikzcd}
\end{equation}
where $\sigma \to \tau$ is the inclusion of a sub-cone, and the source of each vertical arrow is the pullback of its target under the maps in \ref{diagram_spaces}.

A key observation is that the map $f^{-1}\sigma_{L_1, \dots, L_n} \to  \sigma_{L_1, \dots, L_n}$ has a special form; it is a decomposition of cubes, after taking a lattice refinement of the source. More precisely, for each $i$ write $L_i = L_{i, 1} + \cdots + L_{i, m_i}$ with $L_{i, j}>0$ where the $L_{i,j}$ are the lengths of the components that the $i$th leg is cut into by the subdivision $\tilde X^t \to X^t$. Then the map $f^{-1}\sigma_{L_1, \dots, L_n} \to  \sigma_{L_1, \dots, L_n}$ is a lattice refinement of the induced decomposition of cubes as in \ref{eq:n_cube_as_union}.

Now let $U = \bigoplus_{i=1}^n \bigoplus_{j=1}^{m_i} \R_{\geq 0} \cdot k_{i,j}$, admitting a natural map from $\sigma$ where each point is sent to the corresponding collection of $L_{i,j}$. For each $1 \le i \le n$ let $k_i = \sum_j k_{i,j}$, define a cube $V = U_{k_1, \dots, k_n}$, with decomposition $W = \bigsqcup_{(j_1, \dots, j_n): 1 \le j_i \le m_i} U_{k_{1, j_1}, \dots, k_{n, j_n}} \to V$. 

Then we can extend the above diagram to one where all small squares are pullbacks: 
\begin{equation}
\begin{tikzcd}
  &   f^{-1}\sigma_{L_1, \dots, L_n} \ar[dl] \ar[d] \ar[r]& W\ar[d]\\
\tau_{L_1, \dots, L_n} \ar[d]  & \sigma_{L_1, \dots, L_n} \ar[d]\ar[l] \ar[r]& V\ar[d]\\
\tau & \ar[l] \sigma \ar[r]& U
\end{tikzcd}
\end{equation}

Let $U_i = \bigoplus_{j=1}^{m_i} \R_{\geq 0} \cdot k_{i,j}$, so that $U = \prod_i U_i$, and let $V_i$ be the interval of length $\sum_j k_{i,j}$ over $U_i$, so that $V = \prod_i V_i$. On each $V_i$ we define an hPP function $p_i = b(\sum_j k_{i,j} - b)$ where $b$ is the coordinate on the fibre of $V_i \to U_i$. Then $p\coloneqq \prod_i p_i$ is an hPP function on $V$. From the definition of the virtual fundamental class we see
\begin{lemma}
The pullback of $p$ to $\sigma_{L_1, \dots, L_n}$ is equal to the pullback of the class $[\Mpt^t_\Lambda(X^t)]^\vir$. 
\end{lemma}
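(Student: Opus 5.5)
The plan is to compare the two homological piecewise polynomials factor by factor, since both sides are products over the legs $i$. By \ref{def:vfc} the class $[\Mpt^t_\Lambda(X^t)]^\vir$ is the hPP $\prod_i \ell_i r_i$. The function $p$ pulled back from $V$ is $\prod_i p_i$, where $p_i = b_i(\sum_j k_{i,j} - b_i)$. It therefore suffices to show that for each $i$ the pullback of $b_i$ to $\sigma_{L_1,\dots,L_n}$ is $\ell_i$, and that the pullback of $\sum_j k_{i,j} - b_i$ is $r_i$.

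First I would make the map $\sigma_{L_1,\dots,L_n} \to V$ explicit. On the base $\sigma$, the coordinate $k_{i,j}$ pulls back to $L_{i,j}$, so $\sum_j k_{i,j}$ pulls back to $L_i$. On the fibre, the cube $\sigma_{L_1,\dots,L_n}$ is by construction the fibre product of the intervals $\sigma_{L_i}$, and the fibre coordinate of $\sigma_{L_i}$ is the length $\ell_i$ of the $i$th leg. This is because $\Mptmt_\Lambda(X) \to \bb M$ forgets exactly the leg lengths, and the leg length of a point over $s \in \sigma$ ranges over $[0, L_i(s)]$. The map to $V_i$ is the one compatible with the pullback square, so it sends this fibre coordinate to $b_i$; hence $b_i$ pulls back to $\ell_i$.

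Second, I would identify $L_i - \ell_i$ with $r_i$. By \ref{const:ri}, $r_i + \ell_i$ is the parameter $x$ at which the extended leg $F(t,x)$ meets the boundary of $\sigma'$; equivalently, it is the total distance from the attaching vertex to the boundary of the image cone in the direction $a_i$. Since $\bb M$ was defined so that $L_i$ is exactly this distance function, made strict on $\bb M$ via the $\mu$ subdivision, we get $r_i = L_i - \ell_i$ on the cube. This is the pullback of $\sum_j k_{i,j} - b_i$.

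Two cases need separate care. In the case $a_i \in \sigma_i$ we have $L_i = \infty$ and $r_i = 1$ by convention; there I would take $p_i = b_i$, reading the convention for infinite intervals consistently with \cite{Abramovich2018Birational-inva}. On cubes entirely contained in the boundary, both sides vanish as hPPs, so there is nothing to check.

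The main obstacle is bookkeeping of integral structures: I must check that the fibre coordinate on $\sigma_{L_i}$ really is $\ell_i$ and not a multiple of it, so that no lattice index appears. This holds because the cone $\sigma_f$ of \ref{def:cube} has dual generated with $(f,-1)$ and the coordinate $a$ is integral; I would verify this directly on dual monoids.
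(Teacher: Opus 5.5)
Your proposal is correct and follows the paper's own argument, which the paper states as immediate from the definitions: on the cube the fibre coordinate $b$ of $V_i$ pulls back to the leg length $\ell_i$, and $\sum_j k_{i,j}$ pulls back to $L_i = \ell_i + r_i$ by \ref{const:ri}, so $p = \prod_i b\bigl(\sum_j k_{i,j} - b\bigr)$ pulls back to $\prod_i \ell_i r_i$. Your extra care with legs of non-negative contact order (where $L_i = \infty$ and $p_i$ should be read as $b$) and with the integrality of the fibre coordinate spells out points the paper leaves implicit.
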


Now let $U_{i,j} = \R_{\geq 0} \langle k_{i,j} \rangle$, with $W_{i,j}$ the interval of length $k_{i,j}$, so that 
\begin{equation}
W_i \coloneqq \bigsqcup_{1 \le j \le m_i} W_{i, j}
\end{equation}
is a lattice refinement of a decomposition of $V_i$, and the decomposition $W \to V$ is given by
\begin{equation}
W = \prod_i W_i. 
\end{equation}
Define an hPP function $q_{i,j} = b(k_{i,j} - b)$ on $W_{i,j}$, which assemble to an hPP function $q_i$ on the decomposition $W_i$. Define $q = \prod_i q_i$ on $W$. Again we have
\begin{lemma}
The pullback of $q$ to $f^{-1}\sigma_{L_1, \dots, L_n}$ is equal to the pullback of the class $[\Mpt^t_{\tilde \Lambda}(\tilde X^t)]^\vir$. 
\end{lemma}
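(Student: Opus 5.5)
The plan is to compare the two homological piecewise polynomials factor by factor. Since $q=\prod_i q_i$, the class $[\Mpt^t_{\tilde \Lambda}(\tilde X^t)]^\vir=\prod_i \tilde\ell_i\tilde r_i$, and $W\to V$ together with its pullback along $\sigma_{L_1,\dots,L_n}\to V$ are compatible with the product decompositions, it suffices to treat a single leg $i$. Concretely, on the piece of $f^{-1}\sigma_{L_1,\dots,L_n}$ lying over the sub-interval $W_{i,j}$, we must identify $\tilde\ell_i$ with the fibre coordinate $b$ and $\tilde r_i$ with $k_{i,j}-b$, both pulled back.

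The first step is to identify the lifted discrete data on this piece. The tip of leg $i$ lies in the segment of the leg cut out by the $j$th cone $\tilde\sigma$ of $\tilde X^t$ crossed by the leg; this cone is the image cone $\tilde\sigma_i$, and the contact order is $\tilde a_i=a_i$, written in the lattice $\tilde\sigma_i^\gp$. Next compute $\tilde\ell_i$ and $\tilde r_i$ using \ref{const:ri} for $\tilde X$. The leg length itself does not depend on the target, so $\tilde\ell_i=\ell_i$, and under the pullback to the cube this is measured from the vertex. The affine coordinate along $W_{i,j}$, however, is the distance from the wall where the leg enters $\tilde\sigma_i$. We therefore set $b$ to be that offset and observe that $\tilde r_i=k_{i,j}-b$, since the remaining distance to the boundary of $\tilde\sigma_i$ along direction $a_i$ is exactly the length of this segment minus the part already traversed.

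The difference between $\tilde\ell_i$ and $b$ is a linear function pulled back from $U$. This discrepancy is handled by the change of coordinates built into the lattice refinement. Each piece $W_{i,j}$ is a lattice refinement of an interval whose natural coordinate is the tip position. The leg-length coordinate $\ell_i$ restricted to this piece equals $(L_{i,1}+\dots+L_{i,j-1})+b$, and the homological PP $\tilde\ell_i$ vanishes on the boundary of the CSWB for $\tilde\Lambda$. Since the CSWB structure of $\Mpt^t_{\tilde\Lambda}(\tilde X^t)$ has boundary exactly where the tip meets a proper face of $\tilde\sigma_i$, that is, at $b=0$ or $b=k_{i,j}$, the hPP $\tilde\ell_i\tilde r_i$ and $b(k_{i,j}-b)$ agree up to terms vanishing on the boundary. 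Each of them is the canonical degree-two representative with these two boundary zeros, so they coincide as elements of $\PP_*$. The step I expect to be the main obstacle is precisely this identification: one has to check, using \ref{thm:prrshomPP} and the definition of $\PP_*(\Sigma,\Delta)$, that the correct normalisation of $b$ is the one given by the lattice $\tilde\sigma_i^\gp$ and not the lattice of $\sigma_i^\gp$. The scaling factors (such as the $\tfrac12$ in \ref{ex:birational}) must then be absorbed into the lattice refinement $W_{i,j}\to$ interval. I would verify this normalisation first in the two-dimensional example, then in general by a primitive-vector computation along $a_i$.

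Finally, the product over $i$ gives the result. The minimal subdivision ($\mu$) on the $\tilde X$ side is the one on which each $\tilde r_i$ is strict, and it agrees with the cone structure of $W$ pulled back. Hence the equality holds on each cone and glues to the stated equality of pullbacks.
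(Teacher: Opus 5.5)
The step that fails is your identification $\tilde\ell_i=\ell_i$. A tropical map to $\tilde X^t$ must send each leg cone into a single cone of $\tilde X^t$. So every wall of $\tilde X^t$ that the original leg crosses becomes a bivalent vertex of the $\tilde X$-tropical curve. This vertex is stable, because balancing forces non-zero curve class there. Leg $i$ of that curve is therefore only the final segment, from the $(j-1)$st wall to the tip. On the piece over $W_{i,j}$ this gives
\[
\tilde\ell_i=\ell_i-(L_{i,1}+\dots+L_{i,j-1}),
\]
not $\ell_i$. The paper's \ref{ex:birational} shows this concretely: on $M_1$ the leg has length $(\ell-r)/2$, not $\ell$.

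With the correct value, $\tilde\ell_i$ is exactly the pullback of the fibre coordinate $b$ of $W_{i,j}$. Here $W_{i,j}$ sits inside $V_i$ translated by $k_{i,1}+\dots+k_{i,j-1}$, with $b$ on $V_i$ pulling back to $\ell_i$. Likewise $\tilde r_i$, the distance from the tip to the next wall along $a_i$, is the pullback of $k_{i,j}-b$. Hence $q_{i,j}$ pulls back to $\tilde\ell_i\tilde r_i$ on the nose, and taking the product over $i$ gives the lemma. That direct identification is all the paper's ``again we have'' amounts to.

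The repair you propose cannot work.
\begin{itemize}
\item $\PP_*(\Sigma,\Delta)$ is a subgroup of $\PP^*(\Sigma)$, namely the functions vanishing on $\Delta$. It is not a quotient, so two functions agreeing ``up to terms vanishing on the boundary'' are not thereby equal.
\item With your $\tilde\ell_i=\ell_i$, the product $\ell_i\tilde r_i=(L_{i,1}+\dots+L_{i,j-1}+b)(k_{i,j}-b)$ differs from $b(k_{i,j}-b)$ by $(L_{i,1}+\dots+L_{i,j-1})(k_{i,j}-b)\neq 0$. For $j\geq 2$ it does not even vanish at $b=0$, so it would not be a homological PP on that piece at all.
\item Your own claim that the boundary sits at $b=0$, which is where the leg length vanishes, already presupposes $\tilde\ell_i=b$.
\item The lattice normalisation you single out as the main obstacle is not an issue for this lemma. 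The functions $\ell_i$, $\tilde\ell_i$, $\tilde r_i$ and the $L_{i,j}$ are all measured in the same parameter $x$ of the map $F(t,x)$ in \ref{const:ri}. Pulling back PP functions is insensitive to lattice refinement; the refinement only matters for the pushforward in \ref{lem:sumofcubesisbigcube}.
\end{itemize}
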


Compatibility of pushforward and pullback for Kresch's Chow groups is proven in \cite{Kresch1999Cycle-groups-fo}. The pushforward on hPPs is compatible with that on Chow, and the map from hPPs to Chow is injective by \ref{sec:push_hpp}, hence push-pull compatibility also holds for hPPs. Using this, \ref{thm:tropical_birat_inv} follows from the next lemma. 
\begin{lemma}
\label{lem:sumofcubesisbigcube}
The hPP $q$ on $W$ pushes forward to $p$ on $V$. 
\end{lemma}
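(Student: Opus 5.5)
Everything is a product over $i$: $W=\prod_i W_i$, $V=\prod_i V_i$, $q=\prod_i q_i$ and $p=\prod_i p_i$. Pushforward of hPPs along a product of pushable maps is the product of the pushforwards, by the explicit formula of \cite[Prop 76]{RPSS_log_taut}, or by compatibility with proper pushforward on Chow and the Künneth-type behaviour of exterior products. So I first reduce to the case $n=1$: show that the hPP $q_1$ on $W_1=\bigsqcup_j W_{1,j}$ pushes forward to $p_1=b(\sum_j k_{1,j}-b)$ on $V_1$. I drop the index $i=1$ and write $K=\sum_j k_j$.

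Next I describe the map $W_j\to V$ in coordinates. The cone $U=\bigoplus_j\R_{\ge 0}k_j$ is simplicial, and $W_j=U_{k_j}$ has coordinates $(k_1,\dots,k_m,b_j)$ with $0\le b_j\le k_j$. The map to $V=U_K$ is $b=k_1+\dots+k_{j-1}+b_j$. This is injective on cones and a linear isomorphism onto the subcone $\{k_1+\dots+k_{j-1}\le b\le k_1+\dots+k_j\}$ of $V$, up to lattice index, which is $1$ here since the map is unimodular. So the images of the $W_j$ give a subdivision of $V$. Pushforward along a subdivision sends a collection of polynomials that agree on common faces to the glued piecewise polynomial, and pushforward along the disjoint union is the sum. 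The pushforward of $q$ is therefore the piecewise polynomial on $V$ that equals $(b-k_{<j})(k_{\le j}-b)$ on the $j$th chamber and is extended by zero.

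The claimed equality with $p=b(K-b)$ is false as an equality of piecewise polynomials, since $p$ is a single polynomial. It must therefore be read in the hPP sense, that is, in $\LogCH_*$ (or $\CH_*$ of the idealised Artin fan). The key point is that the $q_j$ live on $W_j$, whose interior excludes the walls $b=k_{\le j}$. Each $W_j$ is a CSWB with boundary at its endpoints, so the inclusion $W_j\to V$ is not a subdivision but a map onto a locally closed stratum-union. Its pushforward therefore includes the Gysin-type correction of \cite[Prop 76]{RPSS_log_taut} for passing from boundary to interior.

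The cleanest route is to compute in $\CH_*$. On the interval $V$ over $U$, the class of $b(K-b)$ is the class of the open interior, since $b$ and $K-b$ cut out the two endpoints. Likewise $q_j$ is the fundamental class of the open piece $W_j^\circ$, the interval with both endpoints removed. As cycles, $\sum_j[W_j^\circ]$ and $[V^\circ]$ differ only on the interior division points $b=k_{\le j}$, which have codimension one. The work is to show that this discrepancy vanishes in $\CH_*(\Bcal_{V^\circ})$. It does, because each internal wall point lies in the interior of $V$ and is cut out by the PL function $b-k_{\le j}$. Whether it is counted in the $j$th or the $(j{+}1)$st chamber does not change the class in $\CH_*$ of $\Bcal_{V^\circ}$, because the locally closed pieces glue to the whole stratification of $V^\circ$, and fundamental classes are additive over stratifications only up to lower-dimensional strata, which are zero in the top degree.

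I expect this degree and boundary bookkeeping to be the main obstacle: getting the precise form of the hPP pushforward for maps whose source interior contains points mapping to walls. I would verify it first in the case $m=2$ over $U=\R_{\ge0}^2$, by computing $\Psi$ explicitly as in the example following \ref{thm:prrshomPP}, and then argue by induction on $m$, splitting off one piece at a time.
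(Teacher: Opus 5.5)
Your reduction to $n=1$ is fine and agrees with the paper. So is your description of $W_j\to V$ as the inclusion of the chambers $k_{<j}\le b\le k_{\le j}$ of a unimodular subdivision $\tilde V\to V$.

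The gap is in how you compute the pushforward. You take the pushforward along $\tilde V\to V$ of the chamberwise function $Q$ (equal to $(b-k_{<j})(k_{\le j}-b)$ on the $j$th chamber) to be $Q$ itself, viewed as a function on $|V|$. That is not the hPP pushforward of \cite[Prop 67]{RPSS_log_taut} used here, which is compatible with proper pushforward on $\CH_*$:
\begin{itemize}
\item it lands in strict hPPs on the single cone $V$, i.e.\ in polynomials, and $Q$ is not a polynomial;
\item it has a large kernel: it kills classes supported on strata of cones of $\tilde V$ whose relative interiors lie in the interior of larger cones of $V$, such as the walls.
\end{itemize}
Viewing $Q$ as a function on $|V|$ is the pullback identification $\PP_*(\tilde V)=\PP_*(V)$, not pushforward. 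So your conclusion that the lemma fails as an equality of piecewise polynomials is mistaken. Passing to $\LogCH_*$ could not rescue it either, since $\Psi\colon\PP_*\to\LogCH_*$ is an isomorphism. The lemma is a genuine equality $\pi_*q=p$ in $\sPP_*(V)$.

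Your fallback fundamental-class argument does not close the gap. Its decisive step (the wall discrepancy vanishes because of how wall points are counted, and lower-dimensional strata vanish in top degree) is not an argument, and it reverses the cone--stratum dictionary. For $m=2$, the wall $\langle (e_1,1),(e_2,0)\rangle$ gives a stratum of $\Acal_{\tilde V}$ of the same dimension $-2$ as the strata of $\tau_j=\langle (e_j,0),(e_j,1)\rangle$. On that stratum $\pi^*p$ has multiplicity one while $Q$ vanishes. The discrepancy disappears only because this stratum is contracted by $\Acal_{\tilde V}\to\Acal_V$, and you never use that.

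The missing idea is locality. Pushforward commutes with restriction to a face $\tau$ of $V$, which is flat pullback to the open Artin cone $\Acal_\tau$. A homogeneous quadratic on the pointed cone $V$ is determined by its restrictions to the $2$-dimensional faces. To see this, induct on the dimension of faces: a quadratic vanishing on all facets of a pointed cone of dimension at least $3$ is divisible by at least three distinct linear forms, hence is zero. The $2$-dimensional faces of $V$ are of two kinds:
\begin{itemize}
\item faces in $\{b=0\}$ or $\{b=K\}$, where both hPPs vanish;
\item the faces $\tau_j$ over the rays $\R_{\ge0}e_j$ of $U$. Over such a ray only $W_j$ contributes non-degenerately, it maps isomorphically onto $\tau_j$, and there $q_j=b(k_j-b)=p$.
\end{itemize}
This is the paper's proof. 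In your own language the correct version reads as follows. $\Psi(p)=[\Bcal_{V^\circ}]$, whose top-dimensional components are the closures of the strata of the $\tau_j$. Each $\Psi(q_j)=[\Bcal_{W_j^\circ}]$ maps birationally onto the $j$th component. The walls never enter.
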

\begin{proof}
Both $p$ and $q$ are products over $i$, and pushforward is compatible with (non-fibred) product, so we reduce to the case $n=1$. Since $p_1$ and $q_1$ are functions of degree 2, it suffices to check the equality on 2-dimensional cones of $U_1$. Both $p_1$ and $q_1$ vanish on any 2-dimensional cone that does not lie over a ray in $U_1$, so we reduce to checking this equality over a ray of $U_1$. But over any ray of $U_1$ only one of the $k_{1, j}$ is non-zero, and over such a ray $V_1 = W_1$ and $p_1 = q_1$. 
\end{proof}

\subsection{Logarithmic birational invariance}
Here we lift the tropical birational invariance statement to the logarithmic setting.

As before, we let $\tilde{X}/X$ be a subdivision, corresponding to the cartesian square
\begin{equation}
\label{eq:blowupsquare}
\begin{tikzcd}
\tilde{X} \arrow[r] \arrow[d] & \tilde{X}^t \arrow[d] \\
X \arrow[r] & X^t
\end{tikzcd}
\end{equation}
where $X \to X^t$ is strict and smooth with connected geometric fibres, and $X^t$ and $\tilde X^t$ are smooth. 
The diagram \ref{eq:potabstract} applied with $V/W$ any of the four morphisms above induces relative perfect obstruction theories for each of the morphisms
\[
\begin{tikzcd}
\Mpt_{\tilde{\Lambda}}(\tilde{X}) \arrow[r,"p_1"] \arrow[d,"f"] & \Mpt^t_{\tilde{\Lambda}}(\tilde{X}) \arrow[d,"g"] \\
\Mpt_{\Lambda}(X) \arrow[r,"q_1"] & \Mpt^t_\Lambda(X)
\end{tikzcd}
\]
As the square \ref{eq:blowupsquare} is cartesian, the square on moduli spaces is both cartesian and ``virtually cartesian'', in the sense that $\E_{p_1} = g^* \E_{q_1}, \E_f = q_1^* \E_g$. By \ref{thm:push_pull_compatibility} this means in particular that $f_* p_1^! = q_1^! g_*$.

\begin{theorem}\label{thm:log_bir_inv}
\begin{equation}
\sum_{\tilde \Lambda}f_*[\Mpt_{\tilde{\Lambda}}(\tilde{X})]^\vir = [\Mpt_{\Lambda}(X)]^\vir. 
\end{equation}
Here the (finite) sum is over balanced discrete data $\tilde \Lambda$ compatible with $\Lambda$. 
\end{theorem}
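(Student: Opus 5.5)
The argument reduces directly to the tropical statement, \ref{thm:tropical_birat_inv}, via push--pull compatibility of virtual pullbacks. By \ref{def:virtual_classes}, $[\Mpt_{\tilde\Lambda}(\tilde X)]^\vir = p_1^![\Mpt^t_{\tilde\Lambda}(\tilde X^t)]^\vir$ and $[\Mpt_\Lambda(X)]^\vir = q_1^![\Mpt^t_\Lambda(X^t)]^\vir$. The square relating $\Mpt_{\tilde\Lambda}(\tilde X)$, $\Mpt_\Lambda(X)$ and their tropical counterparts was just observed to be cartesian and virtually cartesian, with $\E_{p_1} = g^*\E_{q_1}$. \ref{thm:push_pull_compatibility} then gives $f_*p_1^! = q_1^! g_*$, so the left-hand side becomes
\[
\sum_{\tilde\Lambda} f_* p_1^! [\Mpt^t_{\tilde\Lambda}(\tilde X^t)]^\vir = q_1^!\Big(\sum_{\tilde\Lambda} g_*[\Mpt^t_{\tilde\Lambda}(\tilde X^t)]^\vir\Big).
\]
The claim is thereby reduced to the identity $\sum_{\tilde\Lambda} g_*[\Mpt^t_{\tilde\Lambda}(\tilde X^t)]^\vir = [\Mpt^t_\Lambda(X^t)]^\vir$ in $\LogCH_*(\Mpt^t_\Lambda(X^t))$, and this is exactly \ref{thm:tropical_birat_inv}. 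Finiteness of the sum is \ref{lem:finite_disc_data}.

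Before invoking these results, two technical points need care. First, the tropical theorem is stated for the $\mu$-subdivisions $\Mptmt$, whereas the virtual classes live in log Chow groups of $\Mpt^t$. Since log Chow groups are colimits over log modifications, I will pass to $\Mptmt$, and further to a common refinement on which $g$ is pushable in the sense of \ref{rem:hPP_push}. I will then pull the logarithmic square back along that refinement, which remains cartesian and virtually cartesian because the refinement is strict-smooth-compatible log étale. Push--pull is applied there, and the result is transported back along the transition maps of the colimit; this is compatible because pushforward along log modifications preserves fundamental classes. Second, the map $g$ on tropical spaces is not a subdivision but only pushable. I will therefore check that the pushforward of hPPs from \ref{rem:hPP_push} agrees with the pushforward in Chow homology. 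This is asserted in \ref{rem:hPP_push} and \ref{sec:push_hpp}, so the hPP identity of \ref{thm:tropical_birat_inv} translates into an identity of classes through $\Psi$ of \ref{thm:prrshomPP}.

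One more point must be verified: the moduli spaces $\Mpt_{\tilde\Lambda}(\tilde X)$ for the various compatible $\tilde\Lambda$ must together form exactly the fibre product $\Mpt_\Lambda(X)\times_{\Mpt^t_\Lambda(X^t)}\bigsqcup_{\tilde\Lambda}\Mpt^t_{\tilde\Lambda}(\tilde X^t)$. Lifting a map to $X$ to a map to $\tilde X = X\times_{X^t}\tilde X^t$ is the same as lifting the tropical map, and curve class and contact order decorations match by \ref{lem:pushout_pic}. Unbalanced tropical lifts are not realisable, so restricting to balanced $\tilde\Lambda$ loses nothing.

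I expect the main obstacle to be this fibre-product identification, together with checking that stabilisation (which can contract components after composing with $\tilde X\to X$) does not spoil cartesianness. My first approach would be to work with the unstabilised spaces $\Mpt^{t,\mathrm{bal}}$, where the square is genuinely cartesian. Stabilisation would then be handled by \ref{lem:stabilisation}, using the fact that contracted components carry no virtual class contribution, exactly as in Abramovich--Wise.
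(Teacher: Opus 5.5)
Your proposal is correct and follows essentially the same route as the paper: both reduce via \ref{thm:push_pull_compatibility} to \ref{thm:tropical_birat_inv}. The only organisational difference is that the paper factors tropicalisation as $q_2\circ q_1$ through $\Mpt_\Lambda(X,X^t)$ and applies push--pull compatibility only to the first (strict, virtually cartesian) square, where the obstruction theory $\E$ actually lives. It handles the second square by noting that $q_2$ is smooth and the square is cartesian, so $g_*p_2^! = q_2^! h_*$. The fibre-product and stabilisation issues you flag are simply asserted there (``both squares are cartesian''), so your extra care goes beyond the paper rather than filling a gap in your own argument.
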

\begin{proof}
Consider the diagram
\[
\begin{tikzcd}
\bigsqcup_{\tilde \Lambda}\Mpt_{\tilde{\Lambda}}(\tilde{X}) \arrow[r,"p_1"] \arrow[d,"f"] & \bigsqcup_{\tilde \Lambda}\Mpt_{\tilde{\Lambda}}(\tilde{X},\tilde{X}^t) \arrow[d,"g"] \arrow[r,"p_2"] & \bigsqcup_{\tilde \Lambda}\Mpt_{\tilde{\Lambda}}^t(\tilde{X}) \arrow[d,"h"] \\
\Mpt_{\Lambda(X)} \arrow[r,"q_1"] & \Mpt_\Lambda(X,X^t) \arrow[r,"q_2"] & \Mpt_\Lambda^t(X,X^t)
\end{tikzcd}
\]
where both squares are cartesian. 
Recall that by definition 
\begin{equation}
[\Mpt_{\Lambda}(X)]^\vir = q_1^! q_2^! [\Mpt_\Lambda^t(X)]^\vir
\end{equation}
and 
\begin{equation}
[\Mpt_{\tilde{\Lambda}}(\tilde{X})]^\vir = p_1^!p_2^![\Mpt_{\tilde{\Lambda}}^t(\tilde{X})]^\vir. 
\end{equation}
By \ref{thm:tropical_birat_inv} we know that 
\begin{equation}
h_*\left[\bigsqcup_{\tilde \Lambda}\Mpt_{\tilde{\Lambda}}^t(\tilde{X})\right]^\vir = [\Mpt_\Lambda^t(X)]^\vir, 
\end{equation}
so given the observation above that, by \ref{thm:push_pull_compatibility}, we have $f_* p_1^! = q_1^! g_*$, it suffices to show that $g_*p_2^! = q_2^!h_*$. This follows from the fact that $q_2$ is smooth and the right hand square in the above diagram is cartesian.
\end{proof}

\section{Gluing formula}
\label{sec:gluing}

In this section we will give a recursive formula for the pullback of pierced logarithmic Gromov--Witten invariants along gluing maps. In this section we assume that the target $X$ is a proper smooth log smooth SNC log scheme.

\begin{situation}\label{sit:gl}
We fix discrete data $\Lambda$ with genus $g$ and $n$ marked points. We choose a gluing map $\gl$, either the separating gluing map
\[
	\gl: \Mpt_{g_1,n_1 + 1} \times \Mpt_{g_2,n_2+1} \to \Mpt_{g,n}
\]
or the non-separating gluing map
\[
	\gl: \Mpt_{g-1,n+2} \to \Mpt_{g,n}
\]
\end{situation}
We will give several formulas for the pullback along $\gl$ of the virtual fundamental class of $\Mpt_{\Lambda}(X)$. At the end of this subsection we briefly explain and point to the various tropical and logarithmic theorems we prove.

\begin{remark}
In \cite{Spelier2025SplittingFormulaLogDR} the non-separating gluing pullback is computed for the logarithmic double ramification cycle, roughly giving a loop axiom in this situation. 
\end{remark}

The key geometric input for the recursive formula for classical Gromov--Witten invariants in the separating case is the following fibre diagram
\[
	\begin{tikzcd}[column sep = tiny]
		& {\Mbar_{g,n}(X;\beta)}\ar[dl] & D  \ar[l]\ar[rr]\ar[dl]\ar[d]&& \bigsqcup_{\beta_1 + \beta_2 = \beta} {\Mbar_{g_1,n_1+1}(X;\beta_1) \times \Mbar_{g_2,n_2+1}(X;\beta_2)} \ar[d]\\
		\Mfrak & {\Mfrak_1 \times \Mfrak_2}\ar[l, "\gl"] & X\ar[rr, "\Delta_X"] && {X \times X}
	\end{tikzcd}
\]
and similarly for the non-separating case.

This diagram follows directly from the coproduct universal property for the glued curve. Note that the disjoint union has only finitely many non-zero terms, as $\beta_1,\beta_2$ both need to be effective (apply the N\'eron-Severi Theorem). Classically, one then has access to the Gysin pullbacks $\gl^!$ and $\Delta_X^!$, and one can prove the equality \begin{equation}\label{eq:classical}\gl^![\Mbar_{g,n}(X;\beta)]^\vir = \sum_{\beta_1 + \beta_2 = \beta} \Delta_X^! {[\Mbar_{g_1,n_1+1}(X;\beta_1)]^\vir \times [\Mbar_{g_2,n_2+1}(X;\beta_2)]^\vir}\end{equation} for the separating gluing map and \begin{equation}\label{eq:classicalnonsep}\gl^![\Mbar_{g,n}(X;\beta)]^\vir = \Delta_X^! {[\Mbar_{g-1,n+2}(X)]^\vir}\end{equation} for the non-separating gluing map.

Logarithmically, using the universal property of the gluing of pierced log curves we construct a similar diagram of log fibre squares in \ref{sec:glue:discdata}.
\begin{equation}\label{eqn:loggluing}
	\begin{tikzcd}[column sep = tiny]
		& {\Mpt_{\Lambda}(X)}\ar[dl] & D  \ar[l]\ar[rr]\ar[dl]\ar[d]&& \bigsqcup_{(\Xi_+,\Xi_-) \in \gl^*\Lambda} {\Mpt_{\Xi_+}(X) \times \Mpt_{\Xi_-}(X)} \ar[d]\\
		\Mpt & {\Mpt_1 \times \Mpt_2}\ar[l, "\gl"] & E\ar[rr, "\Delta_X"] && {X \times X}
	\end{tikzcd}
\end{equation}
and similarly in the non-separating case. Here $\gl^* \Lambda$ is a finite set of discrete data, defined in \ref{sec:glue:discdata}.

There are three main differences in this logarithmic diagram, and hence in our proof strategy.
\begin{enumerate}
	\item The splitting of the discrete data is slightly different; $\gl^* \Lambda$ (defined below in \ref{def:pullbackdiscretedata}) parametrises possible ways of splitting the curve class \emph{and} assigning tangency orders. In the separating case the splitting of the curve class will determine the tangency orders through balancing. In the non-separating case there are many possible splittings for the contact order. We treat this in \ref{sec:glue:discdata}.
	\item The diagram \ref{eqn:loggluing} does not capture the whole story; we need to take into account the corresponding diagrams on the level of tropical moduli spaces $\Mbar_{\Lambda^t}(X^t)$ and $\Mbar_{\Lambda^t}^t(X^t)$. We write down the full diagram we need in \ref{sec:glue:diagram}.
	\item The right hand side fiber square in \ref{eqn:loggluing} is a fiber square of log stacks, and not necessarily a fiber square on the level of underlying algebraic stacks. This means that even defining $\Delta_X^!$ is not trivial.
\end{enumerate}

We present several solutions that give formulas in different cases: work purely tropically in \ref{sec:glue:trop}, lift this to a logarithmic formula in \ref{sec:glue:log}, give a refined formula in the equivariant setting in \ref{subsec:equiv}, and in the separating gluing case work with explicit log modifications in \ref{subsubs:sepcasetrop,sec:glue:logsep} to circumvent issue (3) above.

\subsection{Splitting of the discrete data}
\label{sec:glue:discdata}

In this section we describe the possible discrete data appearing in $\gl^* \Mpt_\Lambda(X)$. In order to treat the separating case and non-separating case the same, we write $\Mpt_\Gamma$ for the source of the gluing map, either $\Mpt_{g_1,n_1+1} \times \Mpt_{g_2,n_2+1}$ or $\Mpt_{g-1,n+2}$.

The gluing maps of \ref{sit:gl} descend naturally to the tropical moduli spaces. The space
$\gl^* \Mpt_{\Lambda}^t(X^t)$ admits a map
\[
	s_0: \gl^* \Mpt_{\Lambda}^t(X^t) \to \Z^k,
\]
recording the slope of the glued edge (oriented from the first marking to the second marking). This map is locally constant, and hence we have
\[
	\gl^* \Mpt_{\Lambda}^t(X^t) = \sqcup_{b \in \Z^k} s_0^{-1}(b).
\]
as cone stacks with boundary. For $b \in \Z^k$, we consider the intersection of boundary divisors $\bigcap_{i: b_i \ne 0} D_i$. Because $X$ is strict normal crossings this intersection is a disjoint union of smooth strata corresponding to cones $\sigma$ of $X^t$. Hence, keeping track of which of these smooth strata we land in, we obtain a locally constant refinement of $s_0$ mapping to the set of contact orders $\CO(X)$ from \ref{def:contact_orders}
\[
	s_1: \gl^* \Mpt_{\Lambda}^t(X^t) \to \CO(X).
\]

Recall that $\Disc(X)$ is the set of possible discrete data, from \ref{def:discrete-data}, collecting the genus, number of markings, curve class, and contact orders.

\begin{definition}
\label{def:pullbackdiscretedata}
Take $\gl$ the separating gluing map. We define the map
\[
s : \gl^* \Mpt_{\Lambda}(X) \to \Disc(X) \times \Disc(X)
\]
by sending an element $(C,(p_1,\dots,p_n),f)$ that is the gluing of $(C_1,(p_1,\dots,p_{n_1},q_1),f_1)$ and $(C_2,(p_{n_1+1},\dots,p_n,q_2),f_2)$ to $(\Xi_+,\Xi_-)$ with $\Xi_i$ the discrete data associated to $f_i$. Explicitly, we have that the contact order at the marking $q_i$ is given by $(\sigma,(-1)^{i+1} b)$ where $(\sigma,b) =  s_1(\trop(C,(p_1,\dots,p_n),f))$.

Take $\gl$ the non-separating gluing map. We define the map
\[
s : \gl^* \Mpt_{\Lambda}(X) \to \Disc(X)
\]
by sending an element $(C,(p_1,\dots,p_n),f)$ that is the gluing of $(C_0,(p_1,\dots,p_n,q_1,q_2),f_0)$ to $\Xi$, the discrete data associated to $f_0$. Explicitly, we have that the contact order at the marking $q_i$ is given by $(\sigma,(-1)^{i+1} b)$ where $(\sigma,b) =  s_1(\trop(C,(p_1,\dots,p_n),f))$.

In both cases, we denote by $\gl^* \Lambda$ the image of $s$. For $\Xi \in \gl^* \Lambda$ (either a pair of discrete data, or discrete data), write
\[
	\Mpt_{\Lambda|\Xi}(X)
\]
for $s^{-1}(\Xi)$.

If $\gl$ is the separating gluing map, given $\Xi = (\Xi_+,\Xi_-) \in \gl^*\Lambda$ we write $\Mpt_{\Xi}(X)$ for $\Mpt_{\Xi_+}(X) \times \Mpt_{\Xi_-}(X)$.
\end{definition}

In the same way, we define the corresponding moduli spaces $\Mpt_{\Lambda|\Xi}(X^t)$ and $\Mpt_{\Lambda|\Xi}^t(X^t)$ and $\Mpt_{\Xi}(X)$ and $\Mpt_\Xi^t(X^t)$.

These sets of discrete data $\gl^* \Lambda$ look very different for the separating and the non-separating gluing map, as we now describe.

\begin{remark}
\label{rem:sepglLambda}
In the case of the separating gluing map, the balancing condition and the splitting of the curve class $\beta$ into $\beta_+ + \beta_-$ will uniquely determine the tangency vector $b \in \Z^k$ of the glued edge with each boundary divisor. Hence an element $(\Xi_+,\Xi_-) \in \gl^* \Lambda$ is uniquely determined by the splitting into $\beta_++\beta_-$ and the choice of a connected component of $\bigcap_{i: b_i \ne 0} D_i$.
\end{remark}

For the non-separating case, we present the following example.
\begin{example}
Take $X = \P^1$ with log structure given by $0 \in \P^1$, and $\Lambda$ given by $g = 1, n = 2, \beta = 0$, with the two tangencies to $0$ being $(3,-3)$. Then for the non-separating gluing map $\Mpt_{0,4}\to \Mpt_{1,2}$ we have that $\gl^* \Lambda$ has elements $\Xi_{b}$ for $-2 \leq b \leq 2$ with $\Xi_b$ having the four tangencies to $0$ being $(3,-3,b,-b)$.
\end{example}

\begin{proposition}
\label{prop:pullbackdisjointunion}
The pullback $\gl^* \Mpt_\Lambda(X)$ splits as a disjoint union
\[
	\bigsqcup_{\Xi \in \gl^* \Lambda} \Mpt_{\Lambda|\Xi}(X)
\]
\end{proposition}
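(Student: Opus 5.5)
The plan is to show that the map $s$ of \ref{def:pullbackdiscretedata} is locally constant on $\gl^*\Mpt_\Lambda(X)$ and takes only finitely many values. Then $\gl^*\Mpt_\Lambda(X)$ is the disjoint union of the open and closed substacks $s^{-1}(\Xi)=\Mpt_{\Lambda|\Xi}(X)$, and $\Xi$ ranges over the image $\gl^*\Lambda$.

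\textbf{Step 1: the contact order of the glued edge.} The contact-order component of $s$ factors through the tropicalisation $\trop\colon \gl^*\Mpt_\Lambda(X)\to \gl^*\Mpt^t_\Lambda(X^t)$ followed by $s_1$. Locally constant functions on a cone stack with boundary pull back to locally constant functions along $\trop$. So it suffices to see that $s_1$ is locally constant, which I would check in three parts.
\begin{enumerate}
\item The slope $b$ of the glued edge along a PL function is preserved under face maps, since generisation maps on $\ghost$ commute with the slope map. Hence $s_0$ is constant on each cone and compatible with specialisation, and so is locally constant.
\item The connected component of $\bigcap_{b_i\neq 0}D_i$ is recorded by the minimal cone $\sigma$ containing $b$ in its span, viewed inside the image cone of the edge.
\item Specialisation replaces the image cone by a larger cone containing it. Because $X$ is SNC and $X^t$ is a genuine cone complex (not a cone stack), the face $\sigma$ is canonically identified across specialisations.
\end{enumerate}

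\textbf{Step 2: the remaining discrete data.} Genus and markings are fixed by the choice of $\gl$, so only the curve classes are left.
\begin{itemize}
\item In the non-separating case, $\beta$ is determined by $\Lambda$.
\item In the separating case, $\beta_\pm$ is the degree of $f^*L$ on the two parts of the normalised curve. This is locally constant in flat families, by local constancy of degrees on each connected component of the partial normalisation along the glued node, which is itself a flat family.
\end{itemize}
Effectivity of $\beta_\pm$ then makes the set of values finite, as in the classical argument quoted above, using properness of $X$ and the N\'eron--Severi theorem.

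\textbf{Step 3: finiteness of the slopes.} Finiteness of the possible slopes $b$ is the main obstacle I expect.
\begin{itemize}
\item In the separating case, $b$ is determined by balancing together with the splitting of $\beta$ (\ref{rem:sepglLambda}), so finiteness follows from Step 2.
\item In the non-separating case, I would use that $\gl^*\Mpt_\Lambda(X)$ is quasi-compact, since $\Mpt_\Lambda(X)$ is proper by \ref{prop:mpieralgebraicproper}. A locally constant function on a quasi-compact stack takes finitely many values.
\end{itemize}
Alternatively, one could bound the slope combinatorially: the tropical moduli space $\Mpt^t_\Lambda(X^t)$ has finitely many cones modulo the relevant data, because $X^t$ has finitely many cones and balancing bounds all slopes.

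Combining Steps 1–3 gives the decomposition as open and closed substacks.
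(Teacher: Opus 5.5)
Your proposal is correct and follows essentially the paper's proof: show that $s$ is locally constant, using local constancy of $s_1$ on the tropical side for the contact order of the glued edge, and local constancy of the degrees on the two components of the partial normalisation in the separating case (in the non-separating case the class is fixed by $\Lambda$). Your Step 3 on finiteness is not needed for the splitting itself; the paper records finiteness separately, right afterwards, as a consequence of properness, by the same quasi-compactness argument you give.
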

\begin{proof}
It suffices to show the function $s$ is locally constant on $\gl^* \Mpt_\Lambda(X)$. We know $s_1$ is locally constant on $\gl^* \Mpt_{\Lambda}^t(X^t)$. In the non-separating case $s$ is uniquely determined by $s_1$, as the curve class of $\Xi$ is always the same as the curve class of $\Lambda$. In the separating case, we know $f_{i,*} [C_i]$ is locally constant, hence the statement follows. 
\end{proof}

With this notation, we obtain the following diagram
\begin{equation}\label{eqn:loggluing2}
	\begin{tikzcd}[column sep = tiny]
		& {\Mpt_{\Lambda}(X)}\ar[dl] & \bigsqcup_{\Xi \in \gl^*\Lambda} \Mpt_{\Lambda|\Xi}(X) \ar[l]\ar[rr]\ar[dl]\ar[d]&& \bigsqcup_{\Xi \in \gl^*\Lambda} \Mpt_{\Xi}(X) \ar[d]\\
		\Mpt_{g,n} & \Mpt_\Gamma \ar[l, "\gl"] & X\ar[rr, "\Delta_X"] && {X \times X}
	\end{tikzcd}
\end{equation}

By properness of $\Mpt_{\Lambda}(X)$, we immediately obtain the following finiteness result.
\begin{proposition}
The set $\gl^* \Lambda$ is finite.
\end{proposition}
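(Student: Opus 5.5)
The plan is to deduce finiteness from quasi-compactness of $\gl^*\Mpt_\Lambda(X)$ together with the decomposition of \ref{prop:pullbackdisjointunion}. By \ref{prop:mpieralgebraicproper}, $\Mpt_\Lambda(X)$ is proper, and in particular quasi-compact; recall that in this section $X$ is assumed proper, smooth and log smooth. We would like the pullback $\gl^*\Mpt_\Lambda(X) = \Mpt_\Lambda(X)\times_{\Mpt_{g,n}}\Mpt_\Gamma$ to be quasi-compact as well, or at least to have only finitely many connected components. The map $\gl\colon \Mpt_\Gamma\to\Mpt_{g,n}$ should be finite on underlying stacks: on underlying curves it is the classical clutching map, which is finite, and the extra log data at the glued node is determined by the data on the two sides. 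Granting this, $\gl^*\Mpt_\Lambda(X)\to\Mpt_\Lambda(X)$ is finite, hence proper, and its source is quasi-compact.

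Next I would invoke \ref{prop:pullbackdisjointunion}, which writes $\gl^*\Mpt_\Lambda(X)$ as the disjoint union of the open and closed substacks $\Mpt_{\Lambda|\Xi}(X)$, where $\Xi$ ranges over $\gl^*\Lambda$. By definition $\gl^*\Lambda$ is the image of $s$, so every piece $\Mpt_{\Lambda|\Xi}(X)$ is non-empty. A quasi-compact stack cannot be a disjoint union of infinitely many non-empty open pieces, so $\gl^*\Lambda$ must be finite.

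The main point to check is the finiteness, or at least the quasi-compactness, of the gluing map on underlying stacks, where the log structure is involved. If this is awkward to verify, there is a fallback that avoids it: bound $\gl^*\Lambda$ directly. In the separating case, \ref{rem:sepglLambda} reduces the problem to finitely many splittings $\beta=\beta_++\beta_-$ into effective classes, which holds by the Néron--Severi argument already quoted, and to finitely many connected components of strata. In the non-separating case the curve class is fixed, and one needs the slopes $b$ of the glued edge to be bounded. These slopes are locally constant functions on the quasi-compact space $\Mpt_\Lambda(X)$, via $s_1$, so they again take only finitely many values.
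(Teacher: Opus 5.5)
Your main argument is correct and is the paper's own: properness, hence quasi-compactness, of $\Mpt_\Lambda(X)$, combined with the decomposition of \ref{prop:pullbackdisjointunion} into non-empty clopen pieces indexed by $\gl^*\Lambda$. The point you flag is harmless, because the square $\boxtimes_1$ is cartesian on underlying stacks and the underlying map of $\gl$ is the classical finite clutching map of prestable curves; the paper's constructive variant in the non-separating case instead bounds $|b_i|$ explicitly via balancing, which is needed anyway in your fallback, since $s_1$ is defined on $\gl^*\Mpt^t_\Lambda(X^t)$ rather than on $\Mpt_\Lambda(X)$.
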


This argument by properness is not constructive. However, we can make the argument constructive. In the separating case, the description in \ref{rem:sepglLambda} explicitly shows $\gl^* \Lambda$ is finite. In the non-separating case, we can obtain an explicit upper bound on the new contact orders in $\Xi$, by using the balancing equation to show the tropical moduli space $\Mpt_{\Lambda|\Xi}^t(X^t)$ vanishes for large contact orders.
\begin{proposition}
Let $\gl$ be the non-separating gluing map, and take $\Xi \in \gl^* \Lambda$. Write $a_1,\dots,a_n,b,-b \in \Z^k$ for the tangencies of the $n+2$ points against the $k$ boundary divisors.
The moduli space $\Mpt_{\Lambda|\Xi}^t(X^t)$ is empty if there is an $i \in \{1,\dots,k\}$ such that \[|b_i| > |D_i \cdot \beta| + \sum_{j: a_{ji} > 0} |a_{ji}|.\]
\end{proposition}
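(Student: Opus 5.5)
The plan is to get the bound from the balancing condition of \ref{def:balanced}, applied to the PL function $\alpha_i$ on $X^t$ that corresponds to $D_i$, summed over a suitable set of vertices so that the internal contributions cancel. Fix a point of $\Mpt^t_{\Lambda|\Xi}(X^t)$ over a cone $\sigma$. By restricting to a generic point of its relative interior we may assume the base is $\R_{\ge 0}$, so that $\Gamma$ is an honest metric graph with finite legs. The glued curve has an edge $e$ of slope $b_i$ with respect to $\alpha_i$. Write $\phi = \alpha_i\circ f$; it is a non-negative PL function on $\Gamma$, including the legs up to their tips. For each vertex $v$, balancing gives
\[\textstyle\sum_{h\mapsto v} s_h(\alpha_i) = \beth(v)(\ca O(D_i)) = D_i\cdot\beth(v),\]
up to a global sign convention, which I fix once and for all.

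\textbf{Step 1 (cut along a level set).} For a generic value $c>0$, let $U=\{v : \phi(v)>c\}$. Sum the balancing identity over $v\in U$. The two halves of an internal edge with both ends in $U$ contribute opposite slopes, so those contributions cancel. What remains is:
\begin{itemize}
\item the slopes of edges and legs that leave $U$ by crossing the level $c$; all of these are negative, since $\phi$ decreases there, and each has absolute value at least $1$;
\item the full slopes $a_{ji}$ of the legs whose whole segment stays above $c$.
\end{itemize}
Crossing legs contribute only non-positive amounts. Writing $\mathrm{cr}(c)$ for the sum of the absolute values of the crossing slopes, this gives
\[\mathrm{cr}(c)\le \sum_{j:a_{ji}>0} a_{ji} - \sum_{v\in U} D_i\cdot\beth(v).\]

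\textbf{Step 2 (bound the vertex classes).} Take $c$ smaller than every positive value of $\phi$ at vertices. Then the complement of $U$ consists of vertices with $\phi(v)=0$, that is, vertices whose curve component is not contained in $D_i$. For these, $D_i\cdot\beth(v)\ge 0$, because the components are effective and meet $D_i$ properly; the hypothesis of \ref{sec:log_targets} on contracted curves is used here. Hence
\[\sum_{v\in U} D_i\cdot\beth(v)=D_i\cdot\beta-\sum_{v\notin U}D_i\cdot\beth(v)\ge -|D_i\cdot\beta|,\]
and so $\mathrm{cr}(c)\le |D_i\cdot\beta| + \sum_{j:a_{ji}>0}|a_{ji}|$.

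\textbf{Step 3 (make $e$ cross).} It remains to choose the level $c$, for a suitable sublevel or superlevel set, so that $e$ is among the crossing segments. This would give $|b_i|\le \mathrm{cr}(c)$ and hence the stated bound, so that a violation forces $\Mpt^t_{\Lambda|\Xi}(X^t)$ to be empty.

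This choice is the step I expect to be the main obstacle. If $e$ reaches the zero level of $\phi$, the small $c$ of Step 2 works. If $e$ lies entirely inside $\{\phi>0\}$, I would instead cut at a generic level $c$ in the range of $\phi$ along $e$. I would then redo the estimate of Step 2 for the vertices below level $c$, using the complementary sum and the fact that the total over all vertices is $D_i\cdot\beta$. If that fails because of the uncontrolled self-intersection of $D_i$ on vertices mapping into $D_i$, I would try a different function: replace $\alpha_i$ by $\min(\alpha_i,c)$, which is PL on a subdivision, and apply the same cancellation argument to it.
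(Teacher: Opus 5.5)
\textbf{Step 2 is where the argument breaks.} Your Step 1 is correct: summing the balancing identity for $\phi=\alpha_i\circ f$ over $U=\{\phi>c\}$ gives $\mathrm{cr}(c)\le \sum_{a_{ji}>0}a_{ji}-\sum_{v\in U}D_i\cdot\beth(v)$. Step 3 is not the real difficulty either. If $b_i\neq 0$, take $c$ generic in the interior of $\phi(e)$; then $e$ is a crossing edge.

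In Step 2, you know $D_i\cdot\beth(v)\ge 0$ for $v\notin U$. That is true, but the reason is balancing at a vertex where $\phi$ attains its minimum $0$, not that a tropical $\beth(v)$ is represented by a curve meeting $D_i$ properly. This fact gives
\[\textstyle\sum_{v\in U}D_i\cdot\beth(v)=D_i\cdot\beta-\sum_{v\notin U}D_i\cdot\beth(v)\le D_i\cdot\beta,\]
which is an upper bound. Step 1 needs a \emph{lower} bound, and the inequality $\ge -|D_i\cdot\beta|$ does not follow.

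For $c>0$, the vertices in $U$ are those whose components lie inside $D_i$. For them $D_i\cdot\beth(v)$ is the degree of the normal bundle of $D_i$. That degree can be negative, and it is controlled neither by $D_i\cdot\beta$ nor by the contact orders. This is exactly the obstacle you anticipate, and your proposed fixes do not remove it. Moving $c$ only changes which such vertices lie in $U$. The function $\min(\alpha_i,c)$ is not a conical PL function on $X^t$, so balancing says nothing about it.

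\textbf{The missing bound genuinely fails for a general SNC target.} Let $X=\Bl_p\mathbb{P}^2$ with boundary the exceptional curve $E$, so $X^t=\R_{\ge 0}$. Let $\Lambda$ have $g=1$, $n=1$, $\beta=2H$ and $a_1=0$. Take $\Gamma$ with:
\begin{itemize}
\item a genus-$0$ vertex $w$ at $0$, carrying the leg, with $\beth(w)=2H-2E$;
\item a genus-$0$ vertex $v$ at height $t>0$ with $\beth(v)=2E$;
\item two edges from $w$ to $v$, each of slope $1$.
\end{itemize}
This is stable and balanced: the outgoing slopes sum to $2=(2H-2E)\cdot E$ at $w$ and to $-2=2E\cdot E$ at $v$. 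It is the tropical type of a double cover of the strict transform of a line through $p$, attached at two points to a double cover of $E$. Cutting either edge gives a point of $\Mpt^t_{\Lambda|\Xi}(X^t)$ with $|b_1|=1$, while $|E\cdot\beta|+\sum_{a_{j1}>0}a_{j1}=0$.

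So an extra hypothesis is needed, and your Step 1 alone proves the case where $D_i\cdot\beth(v)\ge 0$ for all vertices, for example when $D_i$ is nef. That case covers the $\mathbb{P}^1$ setting of the lemma the paper cites, where $D_i$ is a point. There, Step 1 with $c$ generic along $e$ gives $|b_i|\le\sum_{a_{ji}>0}a_{ji}$ directly, with no need for the $|D_i\cdot\beta|$ term. For a general target you can only bound $-D_i\cdot\sum_{v\in U}\beth(v)$ by a constant depending on $\beta$, using that there are finitely many effective $\gamma$ with $\beta-\gamma$ effective. That still gives finiteness of $\gl^*\Lambda$, but not the stated explicit inequality.
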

\begin{proof}
The proof is the same as the proof of \cite[Lemma~4.15]{Spelier2025SplittingFormulaLogDR}.
\end{proof}

\subsection{The gluing diagram}
\label{sec:glue:diagram}

In this section we refine the diagram \ref{eqn:loggluing}, by including the moduli spaces $\Mpt_{\Lambda}(X^t)$ and $\Mpt_{\Lambda}^t(X^t)$. We fix $\Xi \in \gl^* \Lambda$.

\begin{definition}
Let $M$ denote the fibre product
\[
	\begin{tikzcd}
		M \ar[d]\ar[r]& \ar[d] \Mpt_\Xi(X) \\
		\Mpt_{\Lambda|\Xi}(X^t) \ar[r] & \Mpt_\Xi(X^t) \\
	\end{tikzcd}
\]
\end{definition}
This is the locus of maps to $X$ with discrete data $\Xi$ together with a gluing of the tropicalisation of the maps. Again, if $\gl$ is the separating gluing map, then $\Xi$ is a pair $(\Xi_+,\Xi_-)$ of discrete data, and $\Mpt_\Xi(X)$ is shorthand for $\Mpt_{\Xi_+}(X) \times \Mpt_{\Xi_-}(X)$.

With this definition, we can construct \ref{eqn:biggluing}. 

\begin{figure}
    \centering
    \begin{tikzcd}[column sep=tiny]
	&\Mpt^t&\Mpt_\Gamma^t \ar[l, swap, "\gl_\trop"]& X^t \ar[rr, "\Delta_{X^t}"] && X^t \times X^t &&\\
	&&\Mpt^{t}_{\Lambda}(X^t) \ar[ul] & \Mpt^{t}_{\Lambda|\Xi}(X^t) \ar[u]\ar[rr] \ar[l]\ar[ul] \arrow[ull, phantom, "\boxtimes_1"] \arrow[urr, phantom, "\square_2"]&& \Mpt^{t}_\Xi(X^t)\ar[u]&&\\
	&& {\Mpt_\Lambda(X^t)} \ar[u, "s"] & \Mpt_{\Lambda|\Xi}(X^t) \ar[l]\ar[rr]\ar[u, "s_\Gamma"] \arrow[urr, phantom, "\boxtimes_3"]
	&& \Mpt_\Xi(X^t) \ar[u, "s_\Xi"]&&\\
	&&&&M\ar[ul] \ar[dr]\arrow[ur, phantom, "\boxtimes_4"]&&&\\
	&& \Mpt_\Lambda(X)\ar[uu, "P"]\ar[dl] & \Mpt_{\Lambda|\Xi}(X)  \ar[ur]\ar[l]\ar[uu]\ar[rr]\ar[dl]\ar[d] \arrow[uul, phantom, "\boxtimes_1"]\arrow[dll, phantom, "\boxtimes_1"]\arrow[drr, phantom, "\square_5"]&& \Mpt_\Xi(X) \ar[uu, "P_{\Xi}"]\ar[d]&&\\
	&\Mpt & {\Mpt_\Gamma}\ar[l, "\gl"] & X\ar[rr, "\Delta_X"] && {X \times X}&&
\end{tikzcd}
    \caption{The gluing diagram. The left side corresponds to splitting a curve by its tropical type and the right side corresponds to gluing a pair of curves at marked points with the same image in $X$. }
    \label{eqn:biggluing}
\end{figure}

\begin{proposition}
After taking disjoint unions over contact orders $\Xi$ in the right and middle columns, all the marked squares in \ref{eqn:biggluing} are fibre squares, and all the fibre squares marked $\boxtimes$ are fibre squares on the level of underlying algebraic stacks. 
\end{proposition}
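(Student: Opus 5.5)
We treat the marked squares one at a time, working throughout with the disjoint unions over $\Xi \in \gl^*\Lambda$, so that \ref{prop:pullbackdisjointunion} and its analogues for $X^t$ and the tropical spaces identify $\gl^*$ of each moduli space with the union of the $\Lambda|\Xi$-pieces. The organising principle is the universal property of gluing pierced curves. A pierced curve over $\Mpt_\Gamma$ glues to a pierced curve over $\Mpt$, and the glued edge length is the sum of the two leg lengths. Giving a log map from the glued curve is the same as giving log maps from the pieces whose values at the two glued markings agree. This works because the markings of pierced curves are honest log sections, so the evaluations $\ev_q$ are log maps. The same statement holds for maps to $X^t$, and for tropical maps to $X^t$ with the cone over the glued tropical curve.

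For the squares $\boxtimes_1$ we argue as follows. By definition $\Mpt_{\Lambda|\Xi}(X)$, $\Mpt_{\Lambda|\Xi}(X^t)$ and $\Mpt^t_{\Lambda|\Xi}(X^t)$ are the pieces of $\gl^*\Mpt_\Lambda(\cdot)$, the pullback along $\gl\colon\Mpt_\Gamma\to\Mpt$ (respectively $\gl_\trop$). So the outer left squares are log fibre squares by construction. The inner left square, comparing $X$ with $X^t$, is cartesian by pasting: both $\Mpt_{\Lambda|\Xi}(X)$ and $\Mpt_{\Lambda|\Xi}(X^t)$ are pulled back from $\Mpt_\Lambda(\cdot)$ along the same map to $\Mpt$. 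To see that these are fibre squares on underlying stacks, we use that $\gl$ is strict. The log structure of $\Mpt_\Gamma$ is pulled back from $\Mpt$: the edge length $\ell_e$ is identified with $\ell_{q_1}+\ell_{q_2}$, and the extra leg-length generators are the characteristic data on the source. More precisely, we check that the characteristic monoids of $\Mpt_\Gamma$ and $\gl^*\Mpt$ agree, using the description of pierced-curve monoids in \ref{def:piercedcurve}. A log fibre product along a strict map is computed on underlying stacks, which gives the $\boxtimes$ property. If $\gl$ is not literally strict, we instead check that it is integral and saturated, which also suffices.

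Next consider $\square_2$ and $\square_5$, the diagonal squares. These express that a pair of maps from the pieces glues if and only if the evaluations at $q_1,q_2$ agree, both tropically in $X^t$ and in $X$. This follows from the coproduct universal property described in the first paragraph. Contact orders are opposite at $q_\pm$ by \ref{def:pullbackdiscretedata}, and slopes match. These squares are only log fibre squares, because $\Delta_X$ is not strict in general. This matches the absence of a $\boxtimes$ marking and issue (3) in the list above.

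For $\boxtimes_3$ and $\boxtimes_4$ we use the strictness of the vertical maps $s_\Gamma$ and $s_\Xi$. They are pullbacks of $\Mpt\to\Mpt^t$ (respectively $\Mpt_\Gamma\to\Mpt^t_\Gamma$), which is strict and smooth. Hence $\boxtimes_3$ is a pullback of strict maps and is cartesian on underlying stacks, once we know it is a log fibre square. The log-cartesian property of $\boxtimes_3$ follows from $\boxtimes_1$ together with $\gl^*\Mpt^t\times_{\Mpt^t}\Mpt=\Mpt_\Gamma$ and pasting of cartesian squares. The square $\boxtimes_4$ is the defining square of $M$. Its underlying property holds because $P_\Xi$ is the map $\Mpt_\Xi(X)\to\Mpt_\Xi(X^t)$ induced by the strict map $X\to X^t$, so $P_\Xi$ is strict. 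We also check the commutativity of the triangle through $M$, namely the map $\Mpt_{\Lambda|\Xi}(X)\to M$; it comes from the universal property.

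I expect the main obstacle to be the first step: the strictness, or integrality, of $\gl$ on pierced curves, and the identification of the characteristic monoid at the glued node with the pushout of the two pierced monoids at $q_1,q_2$. This relies on condition (\oref{enum:item:maxext}) in \ref{def:piercedcurve}. I would try first to verify it stalkwise, using the explicit monoid of \ref{ex:puncturingA1}.
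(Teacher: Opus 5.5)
\textbf{First gap: $\gl$ is not strict, and this is exactly the property the $\boxtimes_1$ squares need.} At a point of $\Mpt_\Gamma$, the two glued leg lengths $\ell_+,\ell_-$ are independent generators of $\ghost_{\Mpt_\Gamma}$. By contrast, $\gl^{-1}\ghost_{\Mpt}$ contributes only the edge length $\ell_e$, with $\ell_e\mapsto \ell_++\ell_-$. The rank therefore goes up by one, so the characteristic monoids of $\Mpt_\Gamma$ and $\gl^*\Mpt$ do not agree; your own mention of ``extra leg-length generators'' already says this.

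Everything then rests on your fallback ``$\gl$ is integral and saturated''. That is the right criterion, but you give no argument for it. The verification you propose, identifying the node monoid as a pushout via the maximality condition in \ref{def:piercedcurve}, is about the curve. Integrality and saturation of $\gl$ are about the map of base monoids.

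The missing idea is the factorisation the paper uses:
\begin{itemize}
\item First, a strict closed immersion: the base change of $\R_{>0}^2\to\R_{\ge0}^2$, i.e.\ of $B\G_m^2\hookrightarrow[\A^2/\G_m^2]$.
\item Then the base change of $(\pt,\N)^2\to(\pt,\N)$, given on monoids by $\N\to\N^2$, $1\mapsto(1,1)$ (the monoid map of $xy=t$). This map is integral and saturated.
\end{itemize}
The same holds for $\gl_\trop$. Both properties are stable under composition and base change, so fs fibre products along $\gl$ and $\gl_\trop$ are computed on underlying stacks.

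You cannot avoid this by using the other leg of these squares: $\Mpt_\Lambda(X)\to\Mpt$ and $\Mpt^t_\Lambda(X^t)\to\Mpt^t$ are not strict, and in general not integral. The one exception is the square comparing $X$ with $X^t$, where strictness of $\Mpt_\Lambda(X)\to\Mpt_\Lambda(X^t)$ would also suffice.

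\textbf{Second gap: the identity you use for $\boxtimes_3$ is false.} You derive log-cartesianness of $\boxtimes_3$ from $\Mpt^t_\Gamma\times_{\Mpt^t}\Mpt=\Mpt_\Gamma$, which says the unmarked square of \ref{eqn:biggluing} is cartesian. The paper remarks right after this proposition that it is not. You can check this directly: $\Mpt\to\Mpt^t$ is strict and smooth of relative dimension $3g-3+2n$, while $\Mpt_\Gamma\to\Mpt^t_\Gamma$ has relative dimension $3g-2+2n$ in both the separating and non-separating cases. So $\Mpt_\Gamma$ is not the base change.

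The conclusion still holds, by a pasting that uses only commutativity of that square. By definition $\Mpt_\Lambda(X^t)=\Mpt\times_{\Mpt^t}\Mpt^t_\Lambda(X^t)$ and $\Mpt_\Xi(X^t)=\Mpt_\Gamma\times_{\Mpt^t_\Gamma}\Mpt^t_\Xi(X^t)$. Hence both $\Mpt_{\Lambda|\Xi}(X^t)$ and $\Mpt_\Xi(X^t)\times_{\Mpt^t_\Xi(X^t)}\Mpt^t_{\Lambda|\Xi}(X^t)$ are the $\Xi$-part of $\Mpt_\Gamma\times_{\Mpt^t}\Mpt^t_\Lambda(X^t)$. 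The paper instead cites the universal property of tropical gluing.

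Your remaining steps agree with the paper:
\begin{itemize}
\item $\square_2$ and $\square_5$ follow from the gluing universal property;
\item $\boxtimes_4$ is a fibre square by definition;
\item the underlying-stack claim for $\boxtimes_3$ and $\boxtimes_4$ follows from strictness of $s_\Xi$ and $P_\Xi$.
\end{itemize}
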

\begin{proof}
The squares marked $\boxtimes_1, \square_2, \boxtimes_3$ and $\square_5$ are fibre squares exactly by the universal property of gluing (tropical) curves \cite[Theorem~5.10,Theorem~B.12]{Holmes2023LogarithmicCohomologicalFT}. The square marked $\boxtimes_4$ is a fibre square by definition.

A log fibre product $A \times_C B$ is equal to the fibre product of underlying algebraic stacks if at least one of the maps $A \to C, B \to C$ is integral and saturated. In particular, this is the case if at least one of the maps is strict.

For the squares marked $\boxtimes_1$, we note that the (tropical) gluing map is the composition of a strict closed embedding, and (the tropicalisation of) the integral and saturated map $(\pt,\N)^2 \to (\pt,\N)$. For the squares marked $\boxtimes_3$ and $\boxtimes_4$, both the maps $s_1 \times s_2$ and $P_1 \times P_2$ are strict.
\end{proof}

\begin{remark}
The unmarked square in \ref{eqn:biggluing} is \emph{not} a fibre square. This is because the diagram
\[
\begin{tikzcd}
\mathbb{M}_\Gamma \arrow[r, "\gl"] \arrow[d] & \mathbb{M} \arrow[d] \\
\mathbb{M}_\Gamma^\trop \arrow[r, "\gl_\trop", swap] & \mathbb{M}^\trop
\end{tikzcd}
\]
is not a fibre square. However, all these maps are log lci, and hence give a pullback on log Chow groups by \ref{def:loglcipullback}.
\end{remark}

\begin{remark}\label{rk:not_fibre}
The two squares $\square_2,\square_5$ in \ref{eqn:biggluing} are \emph{not} necessarily a fibre square on the level of underlying algebraic stacks. In the separating case, $\square_2$ is a scheme-theoretic fibre square if the two evaluation maps are integral and saturated. They are integral if the log structure on $X$ is valuative (equivalently, the boundary is smooth), but even then are often not saturated. This introduces the usual multiplicities in the classical degeneration formula \cite{Li2002A-degeneration-}.

In the non-separating case, $\square_2$ is almost never a scheme-theoretic fibre square (unless all tangencies at the two nodes are $0$), even up to root stacks.
\end{remark}

\subsection{Tropical gluing}
\label{sec:glue:trop}
In this section we prove a tropical gluing formula, writing $\gl^! [\Mpt_{\Lambda}^t(X^t)]^\vir$ in terms of pullbacks of smaller virtual fundamental classes. We do this for both the separating and non-separating gluing map.

Recall that classically, one has the formula
\[\gl^![\Mbar_{g,n}(X;\beta)]^\vir = \sum_{\beta_1 + \beta_2 = \beta} \Delta_X^! [\Mbar_{g_1,n_1+1}(X;\beta_1)]^\vir \boxtimes [\Mbar_{g_2,n_2+1}(X;\beta_2)]^\vir,\] and similarly for the non-separating pullback.

We first state a tropical analogue of this theorem, involving two Gysin pullbacks. We define these Gysin pullbacks in \ref{sec:log_vir_pull} and prove the theorem at the end of this section. For the separating pullback and $\Xi = (\Xi_+,\Xi_-) \in \gl^* \Lambda$, we write \[[\Mpt_{\Xi}^t(X^t)]^\vir = [\Mpt_{\Xi_+}^t(X^t)]^\vir \boxtimes [\Mpt_{\Xi_-}^t(X^t)]^\vir.\]

\begin{theorem}
\label{thm:tropicalgluingsep}
We have equalities in $\LogCH_*(\bigsqcup_{\Xi \in \gl^* \Lambda} \Mpt_{\Lambda|\Xi}^t(X^t))$: 
\[s_+ s_- \gl^! [\Mpt_{\Lambda}^t(X^t)]^\vir = \sum_{\Xi \in \gl^* \Lambda} c_+^{\Xi} c_-^{\Xi} \Delta_{X^t}^! [\Mpt_{\Xi}^t(X^t)]^\vir\]
where $s_{\pm}$ are everywhere positive piecewise polynomial functions, and $c_\pm^{\Xi}$ are positive integers, both defined below. Furthermore, this equality determines $\gl^! [\Mpt_{\Lambda}^t(X^t)]^\vir$ uniquely.
\end{theorem}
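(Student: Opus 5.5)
The plan is to reduce everything to a local computation with homological piecewise polynomials on cones, in the spirit of the proof of \ref{thm:tropical_birat_inv}. By \ref{thm:prrshomPP} both sides live in $\PP_*$ of the cone stack with boundary $\bigsqcup_\Xi \Mpt^t_{\Lambda|\Xi}(X^t)$, and that group injects into log Chow. So it suffices to identify each side with an explicit hPP function and compare them cone by cone.

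\textbf{Step 1: the left-hand side.} The gluing map $\gl_\trop$ is log lci: it is a strict closed embedding composed with (the tropicalisation of) $(\pt,\N)^2\to(\pt,\N)$, which sends the two leg lengths $\ell_+,\ell_-$ to the edge length $\ell_e=\ell_++\ell_-$. I would first compute $\gl^!$ of the hPP $\prod_i \ell_i r_i$ using the square $\boxtimes_1$. The factors attached to the original $n$ legs pull back unchanged, so
\[
\gl^![\Mpt^t_\Lambda(X^t)]^\vir = \prod_i \ell_i r_i ,
\]
viewed as an hPP on $\Mpt^t_{\Lambda|\Xi}(X^t)$. It does not vanish on the new boundary, because the two new legs carry no factors $\ell_\pm$.

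\textbf{Step 2: the right-hand side.} The class $[\Mpt^t_\Xi(X^t)]^\vir$ contains the extra factors $\ell_+r_+\ell_-r_-$ for the two glued legs, where $r_\pm$ are the functions of \ref{const:ri} attached to the half-edges $q_1,q_2$. I would then compute $\Delta_{X^t}^!$ via the square $\square_2$. Pulling back along the diagonal of $X^t$ imposes the condition that the two tips coincide. In the fibre over the glued edge this is a codimension-$\dim X^t$ condition, but its failure to be transverse is where the constants enter. Concretely, I expect the following.
\begin{itemize}
\item $\Delta^!$ of the product class equals $\prod_i \ell_i r_i$ multiplied by a factor relating $\ell_+\ell_- r_+ r_-$ to the diagonal.
\item Along the edge, the tip of leg $+$ at parameter $\ell_+$ lies at distance $r_+$ from the wall in one direction, and similarly for $-$. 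One then gets $r_+=\ell_-+(\text{distance } s_-)$, with $s_\pm$ defined as the distances from the \emph{vertices} at the two ends of the edge to the boundary of the edge's image cone, measured in the direction of slope $\pm b$.
\item The constants $c_\pm^\Xi$ are the lattice indices (the gcd of $b$ in $\sigma^\gp$) that appear when $\ell_\pm$ is converted into lattice distance along slope $\pm b$. This is the same mechanism as the factor $2$ in \ref{ex:birational}.
\end{itemize}
I would set $s_\pm$ to be these functions, or $1$ when $\pm b\in\sigma$, exactly as for $r_i$. They are positive on the interior, which gives the claimed positivity.

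\textbf{Step 3: cone-by-cone comparison.} Each cone is a cube over a base cone, as in the proof of \ref{lem:sumofcubesisbigcube}, with the edge split by the diagonal pullback into the two legs. I would check the identity
\[
s_+s_-\prod_i\ell_ir_i=c_+c_-\,\Delta^!\bigl(\ell_+r_+\ell_-r_-\prod_i\ell_ir_i\bigr)
\]
by pushing the computation down to a single edge interval of length $\ell_e$. There, $\Delta^!$ of $\ell_+(\ell_e-\ell_+)$-type products reduces, just as in \ref{lem:sumofcubesisbigcube}, to a one-variable computation over a ray.

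\textbf{Step 4: uniqueness.} The final sentence of the theorem follows because $s_+s_-$ is a nonzero-divisor on $\PP_*$. Piecewise polynomials on a cone complex form a subring of a product of polynomial rings, and $s_\pm$ is nonzero on every interior cone, so multiplication by $s_+s_-$ is injective on functions vanishing on the boundary.

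The main obstacle is Step 2. Defining $\Delta_{X^t}^!$ for the non-fibre square $\square_2$, especially in the non-separating case, requires the log Gysin pullback theory, and the multiplicities $c_\pm$ have to be extracted correctly from saturation defects. I would first work out the separating case with $X^t=\R_{\ge0}$ explicitly, then use product structure to handle $\R_{\ge0}^k$ and general cones.
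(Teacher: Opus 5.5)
\textbf{There is a genuine gap, and it starts in Step 1.} The tropical gluing map factors in two stages. First comes a base change of the log smooth map $\R_{\ge 0}^2\to\R_{\ge0}$, $(\ell_+,\ell_-)\mapsto \ell_++\ell_-$. Then comes a base change of the strict codimension-$2$ regular immersion $\R_{>0}^2\to\R_{\ge0}^2$. Gysin pullback along the latter is multiplication by $\ell_+\ell_-$, just as the class of $B\G_m^2$ is the hPP $xy$. Hence
\[\gl^![\Mpt^t_\Lambda(X^t)]^\vir=\ell_+\ell_-\prod_i\ell_ir_i,\]
not $\prod_i\ell_ir_i$. You yourself note that $\prod_i\ell_ir_i$ does not vanish on the new boundary $\{\ell_\pm=0\}$. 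That is the symptom: such a function is not an element of $\PP_*$ of the glued space at all.

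On the other side, $\Delta^!_{X^t}$ acts on hPPs simply by pulling back functions. It therefore cannot strip off any of the factors $\ell_+\ell_-r_+r_-$, as your Step 2 anticipates. The right-hand side is $c_+c_-\,\ell_+\ell_-r_+r_-\prod_i\ell_ir_i$, which has degree two more than your $s_+s_-\prod_i\ell_ir_i$.

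\textbf{The second gap is the choice of $s_\pm$.} Your $s_\pm$ are vertex-to-wall distances, satisfying $r_+=\ell_-+s_-$ as you state. Once $\gl^!$ is corrected, the identity you actually need on the glued locus is $s_+s_-=c_+c_-\,r_+r_-$, and this fails for your functions. Already for $X^t=\R_{\ge0}$ and $b=-1$, let $t$ be the position of $v_-$. Then $r_+=t+\ell_-$ and $r_-=1$, while your $s_-=t$.

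The missing idea is the Radon subdivision of $X^t$ at $b$, \ref{prop:radonsubdiv}. The function $r_+=\max\{\lambda:\ev_++\lambda b\in\sigma'\}$ depends only on the position of the common tip, and similarly for $r_-$ with $-b$. Each is a positive rational multiple of the pullback along $\ev_\pm$ of the PL function of one of the two Radon rays. Calling these ray functions $s_\pm$ gives $c_\pm r_\pm=\ev_\pm^*s_\pm$ with $c_\pm\in\Z_{>0}$. In the example above, $s$ is the coordinate on $\R_{\ge0}$, so $\ev^*s=t+\ell_-=r_+$.

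With this definition, both sides are literally the same hPP $c_+c_-r_+r_-\ell_+\ell_-\prod_i\ell_ir_i$ on each component, and the proof is a one-line computation. Several consequences follow:
\begin{itemize}
\item No pushforward along the edge interval is involved. The cube decomposition in your Step 3 is what the paper uses later for \ref{thm:tropicalgluingseparating}, not here.
\item The constants $c_\pm$ do not come from saturation defects of $\square_2$. They come from comparing $r_\pm$ with the primitive Radon-ray functions.
\item Defining $s_\pm$ on $X^t$ is what makes them pulled back from $X$, which is needed for the lift to \ref{thm:loggluingclean}.
\end{itemize}
Your uniqueness argument in Step 4 is fine.
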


We now fix a $\Xi \in \gl^* \Lambda$, and look at the clopen component $\Mpt_{\Lambda|\Xi}^t(X^t)$ of $\gl^* \Mpt_{\Lambda}^t(X^t)$.

We write $b \in \Z^k$ for the vector of tangencies of the first marking we are gluing (equivalently, the slopes along the glued edge, oriented from the first glued marking to the second glued marking).

On $\Mpt_{\Xi}$ we write $\ell_+, \ell_-$ for the lengths of the two markings we are gluing. Similarly, we write $r_\pm$ for the piecewise linear functions defined in \ref{const:ri} at these two markings, and $\ev_\pm$ for the evaluation map corresponding to these two markings.

\begin{remark}
\label{rem:tangencyallpositive}
In the case where the tangency vector $b$ is all non-negative, then $r_+$ is the constant function $1$ instead of a piecewise linear function, and similarly for $r_-$. For simplicity, we assume in the following that $r_\pm$ are actually piecewise linear. 
\end{remark}

We write $\tilde{X}^t$ for a Radon subdivision at $b$, defined in \ref{sec:radonsubdiv}. The main property important for us is that the strict piecewise linear functions $r_\pm$ on $\Mptmt_{\Xi_i}(X^t)$ as defined in \ref{const:ri} are pullbacks along the evaluation maps of piecewise linear functions on $X$ that are strict on $\tilde{X}$. In particular, there are $c_\pm^{\Xi} \in \Z_{>0}$ such that $c_\pm^{\Xi} r_\pm = \ev_{\pm}^* s_\pm$ where $s_\pm$ are the characteristic functions of the two Radon rays $\rho_\pm$, corresponding to the negative resp.\ positive parts of $b$.

\begin{proof}[Proof of \protect{\ref{thm:tropicalgluingsep}}]
By \ref{prop:pullbackdisjointunion}, we can fix a single $\Xi \in \gl^* \Lambda$. Recall that $\Mpt_{\Lambda|\Xi}^t(X^t)$ denotes the component of $\gl^! \Mpt^t_\Lambda(X^t)$ whose discrete data on the partially normalised curve is $\Xi$. Then we need to prove that the virtual fundamental class on $\Mpt_{\Lambda|\Xi}^t(X^t)$ induced by restricting $\gl^! [\Mpt_{\Lambda}^t(X^t)]^\vir$ to $\Mpt_{\Lambda|\Xi}^t(X^t)$ equals the corresponding term on the right hand side.

By \ref{thm:prrshomPP}, we can interpret both terms as homological piecewise polynomials. We first compute the gluing pullback. We can factor the gluing map as
\[
	\gl: \Mpt_{g_1,n_1+1}^t \times \Mpt_{g_2,n_2+1}^t \xrightarrow{\gl_0} T \xrightarrow{\gl_1} \Mpt_{g,n}^t 
\]
(and similarly in the non-separating case) where $\gl_1$ is the base change along the log smooth map $\R_{\geq 0}^2 \to \R_{\geq 0}$ and $\gl_0$ is the base change along the strict codimension $2$ regular immersion $\R_{>0}^2 \to \R_{\geq 0}^2$. Using \ref{prop:potpullbackfunctorial} we see $\gl^!$ is multiplication by $\ell_+\ell_-$, the lengths of the two glued legs.

Recall that $[\Mpt_{\Lambda}^t(X^t)]^\vir = \prod_{i=1}^n \ell_i r_i$. Then we obtain
\[
	\gl^! [\Mpt_{\Lambda}^t(X^t)]^\vir = \ell_{+} \ell_{-} \prod_{i=1}^n \ell_i r_i
\]
restricted to this component.
On the right hand side, we get $$\Delta_{X^t}^!( r_{+}r_{-}\ell_{n+1}\ell_{n+2} \cdot \prod_{i=1}^n \ell_i r_i).$$ The function $\Delta_{X^t}^!$ is given by pullback of functions, and the result follows immediately from the equality $c_\pm^{\Xi} r_\pm = \ev_{\pm}^* s_\pm$.

The fact that $\gl^! [\Mpt_{\Lambda}^t(X^t)]^\vir$ is uniquely determined by the equality follows from $s_+ s_-$ being everywhere positive.
\end{proof}

\subsubsection{Tropical gluing in the separating case}
\label{subsubs:sepcasetrop}
The extra factor $s_+ s_-$ on the left hand side is roughly because we needed a reduced pullback along the map $\Delta_{X^t}$, instead of $\Delta_{X^t}^!$. We explain this now.

The Radon subdivision is simplicial by \ref{prop:radonsubdiv}, and hence we can write $\tilde{X}$ for the corresponding smooth toroidal orbifold. We write $E_\pm \subset \tilde{X}$ for the divisors corresponding to the two Radon rays in $\tilde{X}^t$. These correspond to the negative part of the tangency and the positive part of the tangency respectively. We write $E = E_{\Xi} =  E_+ \cap E_-$.

Now the two markings we glue are already forced by their contact orders to lie in (a blow down of) $E^t$. Then the pullback $\Delta_{X^t}^!$ forces the two points to additionally be equal; intersection-theoretically, by intersecting with the zero section of the tangent complex of $X^t$. But if they're already known to lie in $E^t$, then we already know that we end up inside the tangent complex of $E^t$ sitting inside of the tangent complex of $X^t$, and hence we need only to intersect with the zero section of the tangent complex of $E^t$. Hence we would like to substitute $\Delta_{X^t}^!$ with $\Delta_{E^t}^!$.

There is a problem here: $E$ is smooth but fails to be \emph{log} smooth, and hence there is no known procedure for defining $\Delta_{E}^!$ as a bivariant operator on logarithmic intersection theory. Tropically, one could \emph{define} $\Delta_{E}^!$ as \[\frac{1}{s_+s_-} \Delta_{X^t}^!: \PP_*( \Mpt_{\Xi}^t(X^t) ) \to \PP_*(\Mpt_{\Lambda|\Xi}^t(X^t))[s_\pm^{-1}];\]
then one would arrive at the following version of \ref{thm:tropicalgluingsep}:
\[\gl^! [\Mpt_{\Lambda}^t(X^t)]^\vir = \sum_{\Xi \in \gl^* \Lambda} c_+^{\Xi} c_-^{\Xi} \Delta_{E_{\Xi}^t}^! [\Mpt_{\Xi}^t(X^t)]^\vir.\]

One can lift this to the logarithmic world in the equivariant setting of \ref{subsec:equiv}, where dividing by $s_+ s_-$ is still allowed. However, outside of this setting, $s_+, s_-$ pull back to nilpotent elements in $\LogCH^*(\M_\Lambda(X))$. We will now work, for the separating gluing map, towards a tropical gluing formula that will always lift to the logarithmic setting. The key to this is the following lemma.

\begin{lemma}
\label{lem:deltaet}
Consider a blowup $\hat{\Mpt}^t = \hat{\Mpt}_+^t \times \hat{\Mpt}^t_-$ of $\Mpt_\Xi^t(X^t) = \Mpt_{\Xi_+}^t(X^t) \times \Mpt_{\Xi_-}^t(X^t)$ such that the evaluation maps $\ev_\pm$ induce integral and saturated maps $\ev_\pm: \Mpt_\pm^t \to E^t$.

Consider the diagram of logarithmic fibre squares
\[
\begin{tikzcd}
 X^t \ar[r]  & \ar[d] X^t\times X^t  \\
 E^t \ar[r] \ar[u] & \ar[u] E^t \times E^t \\
 \hat{\Mpt}^t \times_{E^t \times E^t} E^t \ar[r, "\delta"] \ar[u] & \hat{\Mpt}^t \ar[u,"\ev_+ \times \ev_-"]
\end{tikzcd}
\]
Then the bottom square is a scheme-theoretic fibre square, and we have $s_+ s_- \Delta_{\ul{E^t}}^! = \Delta_{X^t}^!$.
\end{lemma}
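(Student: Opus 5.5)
The proof has two parts: the scheme-theoretic claim about the bottom square, and the identity $s_+s_-\Delta_{\ul{E^t}}^! = \Delta_{X^t}^!$, obtained by factoring $\Delta_{X^t}$ through $E^t\times E^t$ and applying an excess intersection formula.

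For the first claim, use the criterion from the proof of the fibre-square proposition for \ref{eqn:biggluing}: a log fibre product agrees with the fibre product of underlying stacks as soon as one of the two maps is integral and saturated. By hypothesis $\ev_+$ and $\ev_-$ are integral and saturated onto $E^t$. Products of integral and saturated maps are again integral and saturated, so $\ev_+\times\ev_-\colon \hat{\Mpt}^t\to E^t\times E^t$ is integral and saturated. Hence the bottom square is a fibre square on underlying stacks. This is also what makes $\Delta_{\ul{E^t}}^!$ meaningful: it is the ordinary Gysin pullback along the diagonal of the underlying smooth stack $\ul{E^t}$, formed in this honest fibre square.

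For the identity, factor the diagonal of $X^t$, restricted to the locus where both markings already land in $E^t$, as
\[
E^t \xrightarrow{\Delta_{E^t}} E^t\times E^t \xrightarrow{j\times j} X^t\times X^t ,
\]
and compare it with $E^t\xrightarrow{j}X^t\xrightarrow{\Delta_{X^t}}X^t\times X^t$. The top square of the diagram is then a square of regular embeddings, with $E^t\times E^t\hookrightarrow X^t\times X^t$ of codimension $4$ and $E^t\hookrightarrow X^t$ of codimension $2$, since $E=E_+\cap E_-$ is a transverse intersection of two divisors. The excess bundle is
\[
N_{E^t\times E^t/X^t\times X^t}\big|_{E^t}\big/N_{E^t/X^t} \;\cong\; N_{E^t/X^t} \;=\; \mathcal O(E_+)\oplus\mathcal O(E_-)\big|_{E^t}.
\]
Its top Chern class is $c_1(\mathcal O(E_+))\,c_1(\mathcal O(E_-))$. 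Tropically these Chern classes are the piecewise linear functions $s_+$ and $s_-$, the characteristic functions of the Radon rays, so the excess class is $s_+s_-$. The excess intersection formula, together with functoriality of Gysin pullbacks (\ref{prop:potpullbackfunctorial}), then gives $\Delta_{X^t}^! = s_+s_-\,\Delta_{\ul{E^t}}^!$ on classes supported on $\hat{\Mpt}^t$ mapping to $E^t\times E^t$.

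To avoid working with normal bundles on Artin fans directly, I would run this computation on homological piecewise polynomials via \ref{thm:prrshomPP}. There $\Delta_{X^t}^!$ is pullback of functions, and $\Delta_{\ul{E^t}}^!$ is pullback along the strict square, so the identity reduces to checking that $s_\pm$ restricted to $E^t$ is the self-intersection class of $E_\pm$.

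The main obstacle is the excess step itself. The squares are log fibre squares, and $E^t$ is not log smooth, so the identification of $\Delta_{X^t}^!$ (defined in \ref{sec:log_vir_pull} via log lci pullback) with an ordinary Gysin map composed with the excess class is not formal. The hard part will be showing that the log Gysin pullback along $\Delta_{X^t}$ factors through the strict pullback to $E^t\times E^t$, and that the resulting correction is exactly $c_{\mathrm{top}}$ of the normal bundle of $E^t$, with no extra multiplicities. The integral and saturated hypothesis on $\ev_\pm$ is what rules out saturation multiplicities of the kind described in \ref{rk:not_fibre}.
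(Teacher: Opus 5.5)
Your proposal is correct and follows essentially the same route as the paper. The product splitting makes $\ev_+\times\ev_-$ integral and saturated, so the bottom square is a scheme-theoretic fibre square; the excess intersection formula with excess bundle $N_{E^t/X^t}$, whose top Chern class is $s_+s_-$ because $E^t$ is the codimension-$2$ stratum cut out by $s_\pm$, then gives $\Delta_{X^t}^! = s_+s_-\Delta_{\ul{E^t}}^!$. Your closing remarks on why the excess step is not purely formal in the logarithmic setting go beyond what the paper spells out, but they do not change the argument.
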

\begin{proof}
Each of $\ev_+$ and $\ev_-$ is integral and saturated. Since $\hat{\Mpt}^t$ splits as a product, so is the map $\ev_+ \times \ev_-$.\footnote{This is what fails for the non-separating gluing map: both evaluation maps can be integral and saturated, but that does not mean the tuple $(\ev_+, \ev_-)$ is integral and saturated.}

Then the excess intersection formula gives that $c_{\text{top}}(N) \Delta_{\ul{E^t}}^! = \Delta_{X^t}^!$ where $N$ is the normal bundle of $E^t$ in $X^t$. But $E^t$ is the codimension $2$ strict regular embedding cut out by $s_\pm > 0$, hence $c_{\text{top}}(N) = s_+ s_-$.
\end{proof}

This immediately gives us the following corollary of \ref{thm:tropicalgluingsep}.
\begin{corollary}
Let $\gl$ denote the separating gluing map. Assume we have integral and saturated evaluation maps $\ev_{\pm}: \Mpt_{\Xi}^t(X^t) \to E_\Xi$ for all $\Xi \in \gl^* \Lambda$. Then we have
\[
\gl^! [\Mpt_{\Lambda}^t(X^t)]^\vir = \sum_{\Xi \in \gl^* \Lambda} c_+^{\Xi} c_-^{\Xi} \Delta_{\ul{E_\Xi^t}}^! [\Mpt_{\Xi}^t(X^t)]^\vir
\]
\end{corollary}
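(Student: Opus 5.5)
The plan is to combine \ref{thm:tropicalgluingsep} with \ref{lem:deltaet}, working one component at a time. By \ref{prop:pullbackdisjointunion} both sides split over $\Xi \in \gl^*\Lambda$, so we fix a single $\Xi = (\Xi_+,\Xi_-)$ and work on the clopen piece $\Mpt^t_{\Lambda|\Xi}(X^t)$. Here \ref{thm:tropicalgluingsep} gives
\[
s_+ s_- \gl^![\Mpt^t_\Lambda(X^t)]^\vir = c_+^\Xi c_-^\Xi \Delta_{X^t}^![\Mpt^t_\Xi(X^t)]^\vir .
\]
The hypothesis says $\ev_\pm$ land in $E_\Xi$ integrally and saturatedly. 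This lets us apply \ref{lem:deltaet} with the trivial blowup $\hat{\Mpt}^t = \Mpt^t_\Xi(X^t)$, or with any product blowup, since log Chow is insensitive to passing to a product blowup. The lemma then rewrites the right hand side as
\[
c_+^\Xi c_-^\Xi\, s_+ s_-\, \Delta_{\ul{E^t_\Xi}}^![\Mpt^t_\Xi(X^t)]^\vir .
\]

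Before comparing the two sides, two compatibility checks are needed. First, the functions $s_\pm$ appearing in \ref{lem:deltaet} (normal-bundle Chern roots, i.e.\ the characteristic functions of the Radon rays pulled back along $\ev_\pm$) must be the same as the $s_\pm$ in \ref{thm:tropicalgluingsep}. Second, the operator $\Delta_{\ul{E^t}}^!$ must commute with capping by $s_+s_-$. Both follow because $s_\pm$ are pulled back from $X^t$, and Gysin pullbacks commute with cap products by pulled-back piecewise polynomials.

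With these checks in place, both sides equal $s_+s_-$ times a class, and it remains to cancel $s_+s_-$. I will argue in the homological piecewise polynomial model of \ref{thm:prrshomPP}. The group $\PP_*(\Mpt^t_{\Lambda|\Xi}(X^t))$ embeds into piecewise polynomials, which form a torsion-free module over polynomials on each cone. Multiplication by an everywhere positive piecewise polynomial is injective there, since a product of polynomials vanishing identically on a full-dimensional cone forces one factor to vanish. This is exactly the uniqueness statement already asserted in \ref{thm:tropicalgluingsep}. Cancelling $s_+s_-$ yields the corollary after summing over $\Xi$.

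The main obstacle is making sure $\Delta_{\ul{E^t}}^![\Mpt^t_\Xi(X^t)]^\vir$ is genuinely an element of $\PP_*$ of the fibre product, so that cancellation takes place inside a torsion-free group. Concretely, the strict fibre square from \ref{lem:deltaet} must identify the scheme-theoretic fibre product with $\Mpt^t_{\Lambda|\Xi}(X^t)$, at least up to a log modification. I would verify this using the squares $\square_2$ and $\boxtimes_1$ of \ref{eqn:biggluing} together with integrality and saturatedness of $\ev_+\times\ev_-$.
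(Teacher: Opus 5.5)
Your proposal is correct and is essentially the paper's argument: substitute the excess-intersection identity $\Delta_{X^t}^! = s_+s_-\,\Delta_{\ul{E^t}}^!$ from \ref{lem:deltaet} (applied with the trivial product blowup) into \ref{thm:tropicalgluingsep}, then cancel $s_+s_-$, which is legitimate because $s_+s_-$ is positive on the interior and hence a non-zero-divisor on homological piecewise polynomials. The paper treats the corollary as immediate; your extra checks (that the two $s_\pm$ agree, and that integrality and saturatedness identify the scheme-theoretic fibre product with $\Mpt^t_{\Lambda|\Xi}(X^t)$) are the details it leaves implicit.
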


However, we are not always in this case, and in \ref{rem:uglycase} we shall see that blowing up $X$ does not necessarily resolve this. However, we will use \ref{lem:deltaet} to prove a different tropical gluing formula in the separating case. This gluing formula is more explicit: it takes place in $\CH_*\left(\bigsqcup_{\Xi} \Mptmt_{\Lambda|\Xi}(X^t)\right)$ and computes $\gl^! [\Mptmt_{\Lambda}(X^t)]^\vir$ directly, instead of its product with $s_+ s_-$. This formula will lift to a logarithmic gluing formula no matter the setting, but also necessitates more log blowups, to make the evaluation maps integral and saturated.

We keep the notation from before. We again fix $\Xi \in \gl^* \Lambda$. Note that the two evaluation maps $\ev_\pm: \Mpt_{\Xi_i}^t(X^t) \to X^t$ need not be integral and saturated. We can take simplicial log alterations $\pi: \hat{X}^t/X^t$ and $\hat{\Mpt}_{\Xi_i}^t(X^t)/\Mpt_{\Xi_i}^t(X^t)$ such that we have a diagram
\[
\begin{tikzcd}
	 \hat{\Mpt}_{\Xi_i}^t(X^t) \ar[r, "\ev_\pm"] \ar[d] & \ar[d] \hat{X^t} \\
  \Mpt_{\Xi_i}^t(X^t)	\ar[r] & X^t 
\end{tikzcd}
\]
with the map $\ev_\pm$ integral and saturated. By refining $\hat{X}$ further, we may assume $\hat{X}$ is its own Radon subdivision.

\begin{figure}
    \centering
    \begin{tikzpicture}[scale=.8]
        \draw[->] (0, 0) to (0, 5.5);
        \draw[->] (0, 0) to (5.5, 0);
        \draw[->] (0, 0) to (-1, -1);

        \draw[blue, thick] (0, 0) rectangle (2, 3);
        \draw[blue, thick] (0, 3) rectangle (2, 5);
        \draw[blue, thick] (0, 3) to (2, 5);
        \draw[blue, thick, ->] (2, 5) to (2.5, 5.5);
        \draw[blue, thick, ->] (0,5) to (-.5,5.5);
        \draw[blue, thick, ->] (0,3) to (-.5,2.7);
        \draw[blue, thick, ->] (0,0) to (-.5,-.5);
        \draw[blue, thick, ->] (2,3) to (2.5,3);
        \draw[blue, thick, ->] (2,0) to (2.5,-.5);

        \node[left] at (0, 4){$t$};
        \node[below] at (1, 0){$t$};
        \node[right] at (2, 5){$\ell$};
        \node[left] at (0, 1.5){$s$};
        \node[right] at (1.9, 5.95){$(t + 2\ell, t + 2\ell + s)$};

        \node[right] at (1.45, 3.25){\color{red}$v$};
        \filldraw[red] (2,3) circle (3pt);

        \draw[->, dashed] (0, 0) to (5.5, 5.5);
    \end{tikzpicture}
    \caption{The tropical type of \ref{rem:uglycase}. The image of the evaluation map $(t + \ell, t + \ell + s)$ lands in the cone $x \geq 0, y \geq x$. Let $X_1^t \to X^t$ be the subdivision along the dashed diagonal. In the cone of the moduli space $\Mpt_{\Lambda_1}(X_1^t)$ where $v$ lies on the dashed line (i.e., the cone where $s = t$), the evaluation map $(t + \ell, t + \ell + s)$ has image cone spanned by $(1,1),(1,2)$.}
    \label{fig:noblowupfixesevaluationmaps}
\end{figure}

\begin{remark}
\label{rem:uglycase}
One cannot always find a log blowup $\pi: \hat{X}/X$ such that the evaluation map $\Mpt_{\Lambda'}(\hat{X}) \to \hat{X}$ is integral and saturated for every $\Lambda' \in \pi^* \Lambda$.
Endow $X = \P^2$ with its toric log structure and consider the tropical type in $X^t$ depicted in blue in \ref{fig:noblowupfixesevaluationmaps}. This defines a cone $\sigma$ in the moduli space of dimension $8$, namely $2$ plus the number of markings. 
Consider the upper right leg, a marked point with contact order $2$ with each of the $x$- and $y$-axes. The corresponding tropical evaluation map sends such a tropical curve to the point $(t + 2\ell, t + 2\ell + s)$. As $t, \ell, s \geq 0$, this evaluation map lands in the cone $x \geq 0, y \geq x$. In particular, the subcone where the red vertex $v$ lands on the $x$-axis $\R_{> 0} \times (1,0)$ gets sent to the dashed diagonal $\R_{> 0} \times (1,1)$.

To make the evaluation map integral and saturated, one must hence subdivide $X^t$ along the dashed diagonal. Let $X_1 \to X$ be the corresponding blowup. Let $\Lambda_1$ be the discrete data on $X_1$ Given by pulling back this subdivision to the above tropical type. 
Then in the space $\Mpt_{\Lambda_1}(X_1^t)$ one now has the new subcone of $\sigma$ where $v$ lies on the dashed diagonal. Then the image of the evaluation map $(t + \ell, t + \ell + s)$ restricted to this cone is the cone generated by $(1,1),(1,2)$. Hence this subdivision is not enough; we must additionally subdivide in the ray $(1,2)$. Then again, by considering the cone where $v$ lies on the new ray, we see must make a subdivision $X_2 \to X_1$ to add the ray $(1,3)$, and by iterating, $(1,n)$ for any $n \in \N$. Hence no finite blowup will do to make this evaluation map integral and saturated.

\end{remark}

Now we will do the setup in order to prove our theorem \ref{thm:tropicalgluingseparating}. The notation and proof of the formula will be similar to \ref{sec:bir_inv}.

On $\Mpt_{\Lambda|\Xi}^t(X^t)$, we have the length of the glued edge $\ell$, and it is subdivided into two edges of length $\ell_\pm$. We write $\Mpt$ for the (component with discrete data $\Xi$ of) the pullback of $\Mpt_{\Lambda}^t(X^t)$ along the strict gluing map with target $\Mpt_{g,n}$.

Then $\ell$ descends to $\Mpt$, and in fact $\Mpt_{\Lambda|\Xi}^t(X^t) = \Mpt_\ell$, the line segment of length $\ell$ over $\Mpt$, as defined in \ref{def:cube}.

Now the subdivision $\hat{X} \to X$ cuts the edge into parts of length $f_1,\dots,f_m$, and hence induces a decomposition of line segments $\bigsqcup_{i=1}^m \Mpt_{f_i} \to \hat{\Mpt}_\ell$ with $m = m_\Xi$. Likewise, we obtain a decomposition $\bigsqcup_i \Mpt^t_{\Xi;i} \to \hat{\Mpt}^t_{\Xi}$.

Now for each $\Mpt^t_{\Xi;i}$ we have the two Radon rays we denote with $\rho_{\pm i}$, with corresponding PL functions $s_{\pm i}$. We write $E_{\pm i}$ for the corresponding divisors in $\hat{X}$, and $E_i$ for the intersection $E_{+i} \cap E_{-i}$.

\begin{remark}
As in \ref{rem:tangencyallpositive}, in some cases, one of the Radon rays becomes a vertex, and instead of the corresponding PL function $s_{+ i}$ we have the corresponding piecewise constant function $s_{+i} = 1$. For simplicity in exposition, we assume this does not happen, and the $s_{\pm i}$ are actually piecewise linear.
\end{remark}

Now we have the integral and saturated evaluation maps $\ev_{\pm}: \hat{\Mpt}^t_{\Xi} \to \hat{X}$. The $\Mpt^t_{\Xi;i}$ are strict closed substacks of $\hat{\Mpt}^t_\Xi$, and hence the induced maps
\[
\ev_\pm: \Mpt^t_{\Xi;i} \to E_i \times E_i
\]
are integral and saturated. Here we have used that we are working with the separating gluing map: the point being that $\Mpt_{\Xi;i}^t$ is a product over the two separated pieces, and hence if both evaluation maps are integral and saturated, so is their product.

Endow $\Mpt^t_{\Xi;i}$ with the virtual fundamental class 
\[[\Mpt^t_{\Xi;i}]^\vir = r_{+i} r_{-i} \ell_{+i}\ell_{-i} \prod_{j=1}^n \ell_j r_j.\]

Before writing down the formula, we need some final notation. Fix $\Xi \in \gl^* \Lambda, i \in \{1,\dots,m_{\Xi}\}$. Then we let $\pi_{\Xi;i}$ denote the map $\Delta^* \Mpt_{\Xi;i}^t \to \Mpt_{\Lambda|\Xi}^t(X^t)$. Additionally, we let $\pi^*(\Xi;i)$ denote the discrete data on $\hat{X}$ attained by curves in $\Mpt^t_{\Xi;i} \times_{\Mpt_{\Xi}^t(X^t)} \Mpt^t(\hat{X}^t)$; for $\Xi' \in \pi^*(\Xi;i)$ we write $\pi_{\Xi'}: \Mpt^t_{\Xi'}(\hat{X}^t) \to \Mpt_{\Xi;i}^t$.

\begin{theorem}
\label{thm:tropicalgluingseparating}
We have \begin{align*}\gl^! [\Mptmt_\Lambda(X^t)]^\vir &= \sum_{\Xi \in \gl^* \Lambda} \sum_{i=1}^{m_\Xi} c^{\Xi;i}_+ c^{\Xi;i}_- \pi_{\Xi;i,*}\Delta_{E_{\Xi;i}}^! [\Mpt^t_{\Xi;i}]^{\vir} \\ 
&= \sum_{\Xi \in \gl^* \Lambda} \sum_{i=1}^{m_\Xi} c^{\Xi;i}_+c^{\Xi;i}_- \pi_{\Xi;i,*}\Delta_{E_{\Xi;i}}^! \sum_{\Xi' \in \pi^* (\Xi;i)} \pi_{\Xi',*} [\Mptmt_{\Xi'}(\hat X^t)]^\vir.\end{align*}
inside $\CH_*(\gl^*\Mptmt_\Lambda(X^t))$.
\end{theorem}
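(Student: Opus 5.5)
The plan is to fix $\Xi \in \gl^*\Lambda$, which we may do by \ref{prop:pullbackdisjointunion}, and work on the clopen piece $\Mptmt_{\Lambda|\Xi}(X^t)$. Here we use its description as the line segment $\Mpt_\ell$ over $\Mpt$, where $\ell$ is the length of the glued edge. Exactly as in the proof of \ref{thm:tropicalgluingsep}, factoring $\gl = \gl_1 \circ \gl_0$ and applying \ref{prop:potpullbackfunctorial}, the left-hand side becomes the homological piecewise polynomial $\ell_+\ell_-\prod_{j=1}^n \ell_j r_j$ on $\Mpt_\ell$, where $\ell = \ell_+ + \ell_-$ and $\ell_\pm$ is the position of the cut point along the segment.

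First, we pass to the subdivision $\hat X^t$. The edge of length $\ell$ is cut by the walls of $\hat X^t$ into pieces of lengths $f_1,\dots,f_m$. This gives the decomposition of segments $\bigsqcup_i \Mpt_{f_i} \to \hat{\Mpt}_\ell$, which is a pushable map by the discussion after \ref{eq:n_cube_as_union}. The key step is the one-dimensional analogue of \ref{lem:sumofcubesisbigcube}. On the $i$th piece, the cut point sits at distance $b$ from one end of the sub-segment. The glued legs then have lengths $\ell_{\pm i}$, and the remaining distances to the walls bounding that piece are $r_{\pm i}$. We have $\ell_{+i} + r_{+i} = \ell_+ + (\text{distance to the far wall})$ and similarly for $-$, with $r_{+i}+\ell_{+i}$ and $r_{-i}+\ell_{-i}$ chosen so that $r_{+i}\,r_{-i}$ restricted to the fibre coordinate is $b(f_i - b)$ up to the remaining linear factors. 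We then check that the pushforward of $\sum_i r_{+i}r_{-i}\ell_{+i}\ell_{-i}\prod_j\ell_jr_j$ equals $\ell_+\ell_-\prod_j\ell_jr_j$ times the appropriate factor. As in \ref{lem:sumofcubesisbigcube}, this reduces to checking the identity over rays of the base, where only one $f_i$ is nonzero and the two sides agree. Here I expect to need care, since the glued-leg lengths on the left are the full $\ell_\pm$ rather than the sub-segment pieces. I would first try to show directly that the class $r_{+i}r_{-i}\ell_{+i}\ell_{-i}$ restricted to $\Delta^*$ equals $\ell_+\ell_-$ times $s_{+i}s_{-i}$ divided by $c_+^{\Xi;i}c_-^{\Xi;i}$.

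Second, we apply \ref{lem:deltaet} on each piece. Since $\gl$ is separating, $\Mpt^t_{\Xi;i}$ is a product, and the evaluation maps $\ev_\pm\colon \Mpt^t_{\Xi;i} \to E_i$ are integral and saturated. Hence $s_{+i}s_{-i}\Delta^!_{E_i} = \Delta^!_{\hat X^t}$, and the relation $c^{\Xi;i}_\pm r_{\pm i} = \ev_\pm^* s_{\pm i}$ (the Radon property of $\hat X^t$) converts $\Delta^!_{\hat X^t}$ of the class $[\Mpt^t_{\Xi;i}]^\vir$ into $c_+c_-$ times $\ell_{+i}\ell_{-i}\prod_j\ell_jr_j$ on $\Delta^*\Mpt^t_{\Xi;i}$. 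Combining with the first step and pushing forward along $\pi_{\Xi;i}$ yields the first equality. The division by $s_{+i}s_{-i}$ is legitimate here precisely because $\Delta_{E_i}^!$ is an honest Gysin map along a scheme-theoretic fibre square.

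Third, the second equality follows by applying \ref{thm:tropical_birat_inv} to the subdivision $\hat X^t \to X^t$, restricted to the locus $\Mpt^t_{\Xi;i}$. This gives $[\Mpt^t_{\Xi;i}]^\vir = \sum_{\Xi'\in\pi^*(\Xi;i)} \pi_{\Xi',*}[\Mptmt_{\Xi'}(\hat X^t)]^\vir$, since the cube decomposition argument is local over $\bb M$. Inserting this inside $\Delta^!_{E_{\Xi;i}}$ uses only linearity.

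The main obstacle is the first step: matching the multiplicities $c^{\Xi;i}_\pm$ and the lattice refinements in the pushforward along the segment decomposition. The segment $\Mpt_{f_i}$ sits in $\hat\Mpt_\ell$ only after a lattice refinement, and the Radon constants must exactly compensate for this index.
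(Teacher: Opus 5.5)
Your overall architecture matches the paper's: fix $\Xi$, write $\gl^!$ as $\ell_+\ell_-\prod_j\ell_jr_j$ on $\Mpt_\ell$, use \ref{lem:deltaet} on each piece, push forward with the one-dimensional case of \ref{lem:sumofcubesisbigcube}, and get the second equality from the birational-invariance cube argument. However, your central step assigns the sub-segment dependence to the wrong factor. In $[\Mpt^t_{\Xi;i}]^\vir=r_{+i}r_{-i}\ell_{+i}\ell_{-i}\prod_j\ell_jr_j$, the $\ell_{\pm i}$ are not the full glued lengths $\ell_\pm$. They are the lengths of the glued legs as legs of a curve mapping to $\hat X^t$, i.e.\ the distance from each tip to the wall of the $i$th cell on the side of its own vertex. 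On the glued piece, let $b$ be the fibre coordinate measured from the wall nearer the $+$ vertex; then $\ell_{+i}=r_{-i}=b$ and $\ell_{-i}=r_{+i}=f_i-b$.

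This reading is forced. With $\ell_{\pm i}=\ell_\pm$, the class does not vanish on the face where the $+$ tip lies on the wall nearer its vertex (for $i>1$, $\ell_+$ is positive there). So it is not a homological PP on the piece, and it is not what birational invariance on $\hat X^t$ produces in the second equality. Consequently the identity you plan to prove, $r_{+i}r_{-i}\ell_{+i}\ell_{-i}|_{\Delta^*}=\ell_+\ell_-s_{+i}s_{-i}/(c^{\Xi;i}_+c^{\Xi;i}_-)$, is false: the left side is $b^2(f_i-b)^2$ and the right side is $\ell_+\ell_-\,b(f_i-b)$. Your Step 1 also pushes $\sum_i r_{+i}r_{-i}\ell_{+i}\ell_{-i}\prod_j\ell_jr_j$, a class on the unglued pieces, along the decomposition of the glued segment. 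Since $s_{\pm i}$ vary with $i$, there is no common ``appropriate factor'' to extract before the diagonal pullback.

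The correct order is to apply $\Delta^!_{E_{\Xi;i}}$ first. By \ref{lem:deltaet}, $s_{+i}s_{-i}\Delta^!_{E_{\Xi;i}}=\Delta^!_{\hat X^t}$, and the latter is restriction of functions. So the relation $c^{\Xi;i}_\pm r_{\pm i}=\ev_\pm^*s_{\pm i}$ strips off exactly $r_{+i}r_{-i}$, giving $c^{\Xi;i}_+c^{\Xi;i}_-\Delta^!_{E_{\Xi;i}}[\Mpt^t_{\Xi;i}]^\vir=\ell_{+i}\ell_{-i}\prod_j\ell_jr_j=b(f_i-b)\prod_j\ell_jr_j$. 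Note the constants sit on this side; your Step 2 has them inverted. Only then push forward along $\pi_{\Xi;i}$: \ref{lem:sumofcubesisbigcube} with one factor sends $\sum_i b(f_i-b)$ to $b(\ell-b)=\ell_+\ell_-$, which is $\gl^!$ of the virtual class. So the Radon constants $c^{\Xi;i}_\pm$ are cancelled by the excess factor $s_{+i}s_{-i}$, not by lattice indices of the segment decomposition. The obstacle you flag is therefore not the real one; the real issue is which factor carries the dependence on the sub-segment.
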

\begin{proof}
The proof is much the same as that of \ref{thm:tropicalgluingsep}; we highlight where it differs.

We restrict as usual to a single component $\Mpt^t_{\Lambda|\Xi}(X^t)$ of $\gl^* \Mpt^t_{\Lambda}(X^t)$.

We again compute the pullback along the gluing map as
\[
	\gl^! [\Mpt_{\Lambda}^t(X^t)]^\vir = \ell_{+} \ell_{-} \prod_{i=1}^n \ell_i r_i
\]
restricted to this component.

By \ref{lem:deltaet} and \ref{thm:tropicalgluingsep} combined we obtain $c^{\Xi;i}_+c^{\Xi;i}_- \Delta_{E_{\Xi;i}}^! [\Mpt^t_{\Xi;i}]^{\vir} = \ell_{+i}\ell_{-i} \prod_{j=1}^n \ell_j r_j$. By \ref{lem:sumofcubesisbigcube} we compute the pushforward of this to $\Mpt^t_{\Lambda|\Xi}(X^t)$ to be $\ell_{-}\ell_+ \prod_{j=1}^n \ell_j r_j$, finishing the first statement.

The second statement again follows by \ref{lem:sumofcubesisbigcube}.
\end{proof}

\subsection{Logarithmic gluing}
\label{sec:glue:log}

In this section, we lift the tropical theorem \ref{thm:tropicalgluingsep} to a logarithmic statement. We need to assume that every 
map from an irreducible curve to $X$ whose class vanishes is in fact constant. This holds for example if $X$ is projective or $\bb Q$-factorial. 

Just as in the tropical case, for the separating pullback and $\Xi = (\Xi_+,\Xi_-) \in \gl^* \Lambda$, we write \[[\Mpt_{\Xi}(X)]^\vir = [\Mpt_{\Xi_+}(X)]^\vir \boxtimes [\Mpt_{\Xi_-}(X)]^\vir.\]

\begin{theorem}
\label{thm:loggluingclean} 
We have 
\[s_+s_- \gl^! [\Mpt_{\Lambda}(X)]^\vir = \sum_{\Xi \in \gl^* \Lambda} c_1^{\Xi} c_2^{\Xi}\Delta_X^! [\Mpt_{\Xi}(X)]^\vir\]
in $\LogCH_*(\gl^* \Mpt_{\Lambda}(X))$.
\end{theorem}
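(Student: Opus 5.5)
The plan is to deduce \ref{thm:loggluingclean} from the tropical statement \ref{thm:tropicalgluingsep} by pulling everything back along the tropicalisation maps in the gluing diagram \ref{eqn:biggluing}. We need to commute the logarithmic Gysin pullbacks $\gl^!$ and $\Delta_X^!$ with the virtual pullbacks $\trop^!$ (that is, $P^!$ on the left and $P_\Xi^!$ on the right), which are defined by the perfect obstruction theories of \ref{def:piercedcorrespondence}. Concretely, we fix $\Xi \in \gl^*\Lambda$ and use \ref{prop:pullbackdisjointunion} to reduce to a single clopen component $\Mpt_{\Lambda|\Xi}(X)$. By definition $[\Mpt_\Lambda(X)]^\vir = \trop^![\Mpt^t_\Lambda(X^t)]^\vir$. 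Since $s_\pm$ are pulled back from $X$ via $\ev_\pm$, hence from the tropical side, capping with $s_+s_-$ commutes with all the pullbacks. The identity therefore reduces to two commutation statements: (a) $\gl^! \circ \trop^! = \trop_\Gamma^! \circ \gl_\trop^!$ on the left half of the diagram, and (b) $\Delta_X^! \circ \trop_\Xi^! = \trop_{\Lambda|\Xi}^! \circ \Delta_{X^t}^!$ on the right half.

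For (a), the squares $\boxtimes_1$ and $\boxtimes_3$ are fibre squares on underlying stacks. The relative obstruction theory of $\Mpt_\Lambda(X) \to \Mpt_\Lambda(X^t)$ is $\Rb\pi_*(\omega_\pi \otimes f^*\LL_{X/X^t})$. This pulls back to the obstruction theory on $\Mpt_{\Lambda|\Xi}(X)$ relative to $\Mpt_{\Lambda|\Xi}(X^t)$, where the curve is the glued curve. We then compare it with the obstruction theory for the partially normalised curve coming from $\Mpt_\Xi(X)$. The normalisation sequence $0 \to \OO_C \to \nu_*\OO_{\tilde C} \to \OO_{\text{node}} \to 0$, twisted by $f^*T_{X/X^t}$ (which is $f^*T^{\log}_X$ up to the Artin fan), produces an exact triangle. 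Its third term is $\ev^*\LL_{X/X^t}$, i.e.\ the obstruction theory of the diagonal $\Delta_X$ relative to $\Delta_{X^t}$. This is the standard compatibility used in the classical splitting axiom, and it should hold verbatim because $X \to X^t$ is strict and smooth. The functoriality result \ref{prop:potpullbackfunctorial} for virtual pullbacks, applied along the vertical columns, then gives (a). The same triangle, combined with the push–pull compatibility of \ref{thm:push_pull_compatibility} for the square $\boxtimes_4$ and for $M$, gives (b) in the form
\[
\Delta_X^! P_\Xi^! = P_{\Lambda|\Xi}^! \, \Delta_{X^t}^!,
\]
once $\Delta_X^!$ and $\Delta_{X^t}^!$ are interpreted as the log Gysin pullbacks of \ref{sec:log_vir_pull}.

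With (a) and (b) in hand, the proof becomes a chase of classes through the diagram. Applying $P_{\Lambda|\Xi}^!$ to the tropical identity of \ref{thm:tropicalgluingsep} gives
\[
s_+s_-\gl^![\Mpt_\Lambda(X)]^\vir = \sum_\Xi c_+^\Xi c_-^\Xi \, \Delta_X^![\Mpt_\Xi(X)]^\vir .
\]
In the separating case we also use that $[\Mpt_\Xi^t]^\vir$ is an exterior product and that the obstruction theory of a product is the direct sum of the factors' theories. This is where the hypothesis that classless curves are contracted enters: it ensures that the splitting of curve classes matches the balancing data, so that $\gl^*\Lambda$ indexes the components correctly on both sides.

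The main obstacle is (b), together with making sense of $\Delta_X^!$ at all. The square $\square_5$ is a fibre square only logarithmically, not on underlying stacks (\ref{rk:not_fibre}), so classical excess intersection does not apply. The plan is to pass to a sufficiently fine log modification of $\Mpt_\Xi(X)$, compatible with a log blowup of $\Mpt^t_\Xi(X^t)$, in which $\ev_+\times\ev_-$ becomes integral and saturated after pulling back to the appropriate model. There we apply the classical push–pull and functoriality statements, and then check independence of the model in the colimit defining $\LogCH_*$. The check should follow from the compatibility of log Gysin pullbacks with the transition maps. This step has to be carried out entirely within the framework of \ref{sec:log_vir_pull}, and it is where I expect the real technical work to be.
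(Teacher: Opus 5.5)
Your reduction to (a) and (b) has the right shape, and (b) is exactly what the paper ends up proving. But your argument for (b), which you rightly call the heart of the matter, has a genuine gap.

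\textbf{The missing intermediate space.} The normalisation triangle you write down has third term $\ev^*\Omega^{\log}_X$. That is the conormal bundle of the \emph{log} diagonal $\Delta^{\log}\colon X\to X\times_{X^t}X$, not an obstruction theory for $\Delta_X$. Your phrase ``the diagonal $\Delta_X$ relative to $\Delta_{X^t}$'' gestures at the right object, but you never introduce the morphism this triangle belongs to, so it cannot yet be fed into Manolache's theorem or \ref{prop:potpullbackfunctorial}.

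The missing idea is to factor $\Mpt_{\Lambda|\Xi}(X)\to\Mpt_\Xi(X)$ through the fibre product $M=\Mpt_{\Lambda|\Xi}(X^t)\times_{\Mpt_\Xi(X^t)}\Mpt_\Xi(X)$ of $\boxtimes_4$. One then observes that $\Mpt_{\Lambda|\Xi}(X)\to M$ is the base change of $\Delta^{\log}$ along the evaluation maps. Since $\Delta^{\log}$ is strict and regular with conormal bundle $\Omega^{\log}_X$, this square is cartesian on underlying stacks and carries an honest virtual pullback $\Delta^{\log,!}$.

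With $M$ in place, (b) splits into three easy statements:
\begin{enumerate}
\item The base change $p_\Xi^!$ of $P_\Xi^!$ to $M$ commutes with $\Delta_{X^t}^!$, because $\boxtimes_4$ is a strict fibre square.
\item The triangle of \ref{lem:behrendsesconestacks} is precisely a compatibility datum for $\Mpt_{\Lambda|\Xi}(X)\to M\to\Mpt_{\Lambda|\Xi}(X^t)$, giving $P^!=\Delta^{\log,!}\circ p_\Xi^!$.
\item The factorisation $X\to X\times_{X^t}X\to X\times X$, together with $X\times_{X^t}X=(X\times X)\times_{X^t\times X^t}X^t$, gives $\Delta^{\log,!}\circ\Delta_{X^t}^!=\Delta_X^!$.
\end{enumerate}
So the normalisation sequence belongs to (b), not to (a). For (a) you only need three things: $\boxtimes_1$ is cartesian on underlying stacks, so $\gl^!$ commutes with $P^!$; $\Mpt_\Lambda(X^t)\to\Mpt^t_\Lambda(X^t)$ is smooth; and log lci pullbacks are functorial for the non-cartesian square relating $\gl$ and $\gl_\trop$.

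\textbf{Why your proposed substitute fails.} Your plan is to modify $\Mpt_\Xi(X)$ until $\ev_+\times\ev_-$ becomes integral and saturated, and then apply classical push--pull.
\begin{itemize}
\item Modifying only the source cannot in general produce integrality, since integrality requires cones to map onto cones of the target.
\item The paper warns (footnote to \ref{lem:deltaet}) that in the non-separating case $(\ev_+,\ev_-)$ need not be integral and saturated even when each $\ev_\pm$ is.
\item \ref{rem:uglycase} shows that log blowups of $X$ need not fix even a single evaluation map.
\item \ref{thm:push_pull_compatibility} plays no role here, since no proper pushforward occurs in this statement.
\end{itemize}
None of this is needed. $\Delta_X^!$ is just the log Gysin pullback of \ref{sec:log_vir_pull}, defined on fs fibre products via Artin fans. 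Because $\Delta^{\log}$ is strict, the whole failure of $\square_5$ to be cartesian on underlying stacks is absorbed into the tropical pullback $\Delta_{X^t}^!$. Integrality of the evaluation maps matters only for the refined separating formula \ref{thm:loggluingseparating}, where one wants to remove the factor $s_+s_-$.
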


To prove this, we consider \ref{eqn:biggluing}, and in particular the following subdiagram
\begin{equation}
\label{eq:smallbiggluing}
	\begin{tikzcd}
		\Mpt_{\Lambda|\Xi}(X^t)   \ar[rr] & & \Mpt_{\Xi}(X^t)\\
		& \ar[lu] M \ar[rd] \\
		\Mpt_{\Lambda|\Xi}(X) \ar[rr] \ar[uu, "P"]  \ar[ru] & & \Mpt_{\Xi}(X)  \ar[uu,"P_\Xi"]
	\end{tikzcd}
\end{equation}
Recall that the ``square'' with lower left corner $M$ is a fibre square, both logarithmically and on the level of underlying algebraic stacks. We have constructed perfect obstruction theories for the maps $P$ and $P_\Xi$ in \ref{sec:piercedlogGW} and for the horizontal maps we have taken $\Delta_{X^t}^!, \Delta_X^!$. By pullback in the fibre square, we also get perfect obstruction theories for the other maps in the fibre square, and then by definition the virtual pullbacks in the fibre square commute.

The proof of \ref{thm:loggluingclean} will follow from the following four steps:
\begin{enumerate}[label=\textbf{Step \arabic*}]
	\item Pull back the tropical formula \ref{thm:tropicalgluingsep} to an equality in $\CH_*(M)$.
	\item Construct a perfect obstruction theory for the map $\Mpt_{\Lambda|\Xi}(X) \to M$
	\item Show the perfect obstruction theories in the left triangle of \ref{eq:smallbiggluing} commute.
	\item Show the perfect obstruction theories in the bottom triangle of \ref{eq:smallbiggluing} commute.
    
\end{enumerate}

In Step 2, the perfect obstruction theory will turn out to be pulled back from the log diagonal $\Delta^\log: X \to X \times_{X^t} X$. Then in Step 3 we essentially give a relative version of the classical proof in \cite{Behrend1997GromovWitten}. Step 4, after unravelling the definitions, uses as a key idea that the cotangent complex of $\Delta^\log$ is invariant under taking blowups and strata.

\textbf{Step 1:}

We start with the fibre square in \ref{eq:smallbiggluing}. Consider again for a moment the full diagram \ref{eqn:biggluing}. The maps 
\begin{align*}
\Mpt_{\Lambda|\Xi}(X^t) &\to \Mpt_{\Lambda|\Xi}^{t}(X^t) \\ 
\Mpt_{\Xi}(X^t) &\to \Mpt_{\Xi}^{t}(X^t)
\end{align*} 
are base-changes of the tropicalisation maps on prestable curves, hence are smooth, both with cotangent complex $\Omega_{\Mpt_\Gamma}^\log$. Hence the pullbacks in the fibre square\footnote{To make this a fibre square we take a disjoint union over $\Xi$ which is suppressed in \ref{eqn:biggluing} for reasons of readability. } marked $\boxtimes_3$ in \ref{eqn:biggluing} commute. The same holds for the pullbacks in the unlabelled square in \ref{eqn:biggluing}, because they are pullbacks along the log lci maps in the commutative (non-cartesian) square
\[
\begin{tikzcd}
    \Mpt_\Gamma \ar[r] \ar[d] & \Mpt \ar[d]\\
    \Mpt^t_\Gamma \ar[r] & \Mpt^t; 
\end{tikzcd}
\]
functoriality of log lci pullbacks is proven in special cases in \cite{Barrott2019Logarithmic-Cho}, \cite{Molcho2021A-case-study-of}, and work in progress of Molcho; we prove a general version in \ref{thm:loggysinlogchowpullback}.

Then from \ref{thm:tropicalgluingsep} we find \[s_+s_-\gl^! [\Mpt_{\Lambda}(X^t)]^\vir = \sum_{\Xi \in \gl^* \Lambda} c_+^{\Xi} c_-^{\Xi}\Delta_{X^t}^![\Mpt_{\Xi}(X^t)]^\vir.\]
We can further pull this back along the perfect obstruction theory for the map $P_\Xi$ in \ref{eqn:biggluing}, to obtain
\[
	s_+s_-P_\Xi^! \gl^! [\Mpt_{\Lambda}(X^t)]^\vir = \sum_{\Xi \in \gl^* \Lambda} c_+^{\Xi} c_-^{\Xi}\Delta_{X^t}^![\Mpt_{\Xi}(X^t)]^\vir
\]
inside $\CH_*(M)$.

\textbf{Step 2:}

Next we construct a perfect obstruction theory for the map $\Mpt_{\Lambda|\Xi}(X) \to M$ in \ref{eq:smallbiggluing}. Recall that the space $M$ was defined as the fibre product
\[
\begin{tikzcd}
	M \ar[d]\ar[r] \lpbstrict	& \ar[d] \Mpt_{\Xi}(X) \\
	\Mpt_{\Lambda|\Xi}(X^t) \ar[r] & \Mpt_{\Xi}(X^t)  \\
\end{tikzcd}
\]
but we note that it also fits into another fibre square.

\begin{proposition}
The square (where the right vertical map comes from the evaluation maps)
\[
	\begin{tikzcd}
	\Mpt_{\Lambda|\Xi}(X) \ar[r] \ar[d] \lpbstrict & M \ar[d]  \\
	X \ar[r, "\Delta^\log"] & X \times_{X^t} X 
\end{tikzcd}
\]
is a fibre square in the category of log stacks and on the level of underlying algebraic stacks
\end{proposition}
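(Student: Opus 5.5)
We prove the fibre square property by comparing universal properties on $T$-points, and then check the underlying-stack statement by a strictness argument.

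\textbf{Identifying $T$-points of $M$.} By definition $M = \Mpt_{\Lambda|\Xi}(X^t) \times_{\Mpt_\Xi(X^t)} \Mpt_\Xi(X)$. A $T$-point of $M$ is therefore:
\begin{enumerate}
\item a pierced curve $C/T$ of type $\Lambda$ obtained by gluing the pierced curve(s) $C'/T$ along the markings $q_\pm$;
\item a map $C \to X^t$ with discrete data $\Lambda|\Xi$;
\item a lift $f'\colon C' \to X$ of the induced map $C' \to C \to X^t$, with discrete data $\Xi$.
\end{enumerate}
The composites $f'\circ q_+$ and $f'\circ q_-$ are the two evaluation maps $T \to X$. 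Both factor through the same map $T \to C \to X^t$, because $q_+$ and $q_-$ become equal in $C$ and the tropical map is defined on $C$. So $(\ev_+,\ev_-)$ defines a map $M \to X\times_{X^t} X$. This is the right vertical arrow, and the square commutes.

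\textbf{Comparing with the fibre product.} A $T$-point of $M\times_{X\times_{X^t}X} X$ is the data above together with an identification $f'\circ q_+ = f'\circ q_-$ as log maps $T\to X$. We use the universal property of the gluing of pierced log curves, in the form already used for the squares $\boxtimes_1$, $\square_2$, $\square_5$ (\cite[Theorem~5.10]{Holmes2023LogarithmicCohomologicalFT}). It says that a log map $C \to X$ is the same as a log map $C'\to X$ whose restrictions along $q_+$ and $q_-$ agree as log maps $T\to X$. Such a map automatically respects the identification of the log structures at the node.

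Under this correspondence:
\begin{itemize}
\item the glued map $C\to X$ composed with $X\to X^t$ recovers the given map $C\to X^t$, since this holds on $C'$ and $C$ is a pushout;
\item the discrete data is $\Lambda|\Xi$ by \ref{prop:pullbackdisjointunion};
\item stability on both sides coincides.
\end{itemize}
Conversely, a point of $\Mpt_{\Lambda|\Xi}(X)$ gives such data by restriction. The two constructions are mutually inverse, which gives the log fibre square.

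\textbf{Underlying stacks.} It remains to show that the log fibre product agrees with the fibre product of underlying stacks. By the criterion used in the previous proposition, it suffices that one of the two maps is strict (or integral and saturated). Since $X\to X^t$ is strict, both projections $X\times_{X^t}X \to X$ are strict, and so is the log diagonal $\Delta^\log$. Hence the fs log fibre product is computed by the underlying fibre product, with the pulled-back log structure.

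The step I expect to need the most care is exactly this one. In the fs category, the fibre product over $X^t$ may not be the naive one. So we must check that $X\times_{X^t}X$ is formed strictly over $X$: it is $X \times_{X^t} X$ with $X\to X^t$ strict and smooth, so no saturation or integralisation occurs. We also need that the identification of $\ev_\pm$ as log maps adds no extra log data beyond the underlying equality. This is precisely the content of strictness of $\Delta^\log$.
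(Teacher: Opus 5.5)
Your argument is correct and matches the paper's proof in substance. The paper gets the log fibre square by pasting two squares: the front square $\square_5$, which is the gluing universal property you invoke, and the square exhibiting $M \to X\times_{X^t}X$ as a base change of $\Mpt_\Xi(X)\to X\times X$ via $X\times_{X^t}X = (X\times X)\times_{X^t\times X^t}X^t$; your $T$-point comparison is exactly this pasting unwound, and the passage to underlying stacks is, as in your last step, just strictness of $\Delta^\log$.
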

\begin{proof}
This squares fits in the commutative diagram
\[
\begin{tikzcd}
& M \arrow[rd] \arrow[dd] & \\
\Mpt_{\Lambda|\Xi}(X) \arrow[rr] \arrow[dd] \arrow[ru] & & \mathbb{M}_{\Xi}(X) \arrow[dd] \\
& X \times_{X^t} X \arrow[rd] & \\
X \arrow[rr] \arrow[ru] & & X \times X. 
\end{tikzcd}
\]
We see from \ref{eqn:biggluing} that the front square is a log fibre square. The right square is also, using again \ref{eqn:biggluing} and the fact that $X \times_{X^t} X = (X \times X) \times_{X^t \times X^t} X^t$. Hence the left square is a log fibre square. Because the log diagonal $X \to X \times_{X^t} X$ is strict, it is also a fibre square on the level of underlying algebraic stacks.
\end{proof}

In particular, as the map $\Delta^\log: X \to X \times_{X^t} X$ is strict and regular, with conormal bundle $\Omega_X^\log$, we obtain a virtual pullback $\Delta^{\log,!}$ for the map $\Mpt_{\Lambda|\Xi}(X) \to M$.

\textbf{Step 3:}

We review a well-known lemma. Let $C/S$ be a possibly disconnected family of prestable curves over $S$ with up to two connected components in each fibre. Let $x_1, x_2 : S \to C$ be a pair of smooth sections (at least one through each connected component) and $C'$ the pushout along these two sections. Write $x : S \to C'$ for the node created out of $x_1, x_2$. Consider a map $f' : C' \to X$ with $X$ log smooth. Name related maps as in \ref{eqn:partialnormn}. 

\begin{equation}\label{eqn:partialnormn}
	\begin{tikzcd}
		C \ar[r, "\nu"] \ar[dr, "\pi", swap] \ar[rr, bend left, "f"] 		&C' \ar[r, "f'", swap] \ar[d, "\pi'"] 		&X 		\\
		&S
	\end{tikzcd}
\end{equation}

\begin{lemma}\label{lem:behrendsesconestacks}
	Let $E$ be a vector bundle on $C'$ with locally free sheaf of sections $\cal E$. There is a short exact sequence of vector bundle stacks
	\[
		0 \to \WR_{\pi'} E \to \WR_{\pi} \nu^* E \to x^* E \to 0
	\]
	and dual distinguished triangle of complexes 
	\[
		0 \to R \pi'_* \pra{\cal E \otimes \omega_{\pi'}} \to R\pi_* \pra{\nu^* \cal E \otimes \omega_\pi} \to x^* \cal E \overset{+1}{\to}.
	\]
\end{lemma}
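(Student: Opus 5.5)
The plan is to deduce the sequence from the normalisation sequence of sheaves on $C'$ and then apply the standard formalism relating $\WR$ (the vector bundle stack $h^1/h^0$ of $R\pi_*$ of a locally free sheaf) to Serre duality. First, on $C'$ the partial normalisation $\nu$ is finite, and since $x$ is a node obtained by gluing the two smooth sections $x_1, x_2$, there is a short exact sequence of coherent sheaves
\[
0 \to \cal E \to \nu_*\nu^*\cal E \to x_*x^*\cal E \to 0,
\]
where the second map sends a local section to its pullback and the third takes the difference of the values at $x_1$ and $x_2$ (identified via $\nu\circ x_1 = \nu \circ x_2 = x$). I would check exactness étale locally around the node, where $C'$ is $\operatorname{Spec}$ of $\ca O_S[u,v]/(uv)$ and $C$ is the disjoint union of the two axes; away from $x$ the map $\nu$ is an isomorphism. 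Flatness over $S$ of all three terms is clear, so the sequence is stable under base change.

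Next, apply $R\pi'_*$. Since $\nu$ is finite, $R\pi'_*\nu_* = R\pi_*$, and $R\pi'_* x_* = \mathrm{id}$ on $S$. This gives a distinguished triangle $R\pi'_*\cal E \to R\pi_*\nu^*\cal E \to x^*\cal E \to^{+1}$. All three are perfect of amplitude $[0,1]$ (the last in degree $0$), and the long exact cohomology sequence, together with the correspondence between such complexes and vector bundle stacks $h^1/h^0$, yields the short exact sequence $0 \to \WR_{\pi'}E \to \WR_\pi\nu^*E \to x^*E \to 0$ of vector bundle stacks. Here one uses that $x^*E$ is a vector bundle sitting in degree $0$, so it surjects as a quotient stack; exactness on the left follows from the triangle via the standard equivalence between the derived category of two-term complexes and Picard stacks (Deligne).

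Finally, the dual triangle is obtained by Grothendieck--Serre duality for the flat Gorenstein morphisms $\pi$ and $\pi'$: $R\mathcal{H}om(R\pi'_*\cal E, \ca O_S) \simeq R\pi'_*(\cal E^\vee\otimes\omega_{\pi'})[1]$ and similarly for $\pi$. After replacing $E$ by its dual (the statement being for an arbitrary vector bundle), dualising the triangle gives the displayed triangle, with the key compatibility being that $\nu^*\omega_{\pi'} = \omega_\pi(x_1+x_2)$, so that the residue map at the node produces the third term $x^*\cal E$. I expect this duality compatibility step, namely identifying the connecting map with the residue sequence $0\to\nu_*\omega_\pi \to \omega_{\pi'} \to x_*\ca O_S \to 0$ twisted by $\cal E$, to be the main (though standard) obstacle; I would alternatively derive the dual triangle directly from this residue sequence tensored with $\cal E$ and pushed forward, bypassing duality of the first triangle entirely, and only use duality to confirm the two triangles are mutually dual.
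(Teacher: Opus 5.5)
Your route is the paper's own. You push the normalisation sequence $0\to\mathcal{E}\to\nu_*\nu^*\mathcal{E}\to x_*x^*\mathcal{E}\to 0$ forward along $\pi'$, read the result as vector bundle stacks, and pass to the $\omega$-twisted complexes by duality. Your residue-sequence argument is a more explicit version of the paper's one-line ``expression in terms of complexes''.

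However, the step where you say this ``gives the displayed triangle'' fails, because the displayed triangle is mis-oriented. Tensoring your residue sequence $0\to\nu_*\omega_\pi\to\omega_{\pi'}\to x_*\mathcal{O}_S\to 0$ with $\mathcal{E}$ and pushing forward gives
\[
R\pi_*(\nu^*\mathcal{E}\otimes\omega_\pi)\to R\pi'_*(\mathcal{E}\otimes\omega_{\pi'})\to x^*\mathcal{E}\xrightarrow{+1}.
\]
Grothendieck-dualising the pushed-forward normalisation triangle for $\mathcal{E}^\vee$ and rotating gives the same triangle. In both, the first arrow runs from the normalisation to the nodal curve, the reverse of the statement. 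The stated orientation cannot hold at all. Its third term sits in degree $0$, so over a geometric point with $\mathcal{E}=\mathcal{O}$ it would force $H^0(C',\omega_{C'})\hookrightarrow H^0(C,\omega_C)$. That is impossible in the non-separating case, where the dimensions are $g$ and $g-1$. It would also force $H^1(C',\omega_{C'})\twoheadrightarrow H^1(C,\omega_C)$, which is impossible in the separating case, where the dimensions are $1$ and $2$. So what you can prove, and should state, is the reversed triangle above. This does not hurt the paper: it is exactly the orientation used in Step~3 of the log gluing proof, whose top row is $R\pi_*(f^*\Omega^{\log}_X\otimes\omega_\pi)\to R\pi'_*({f'}^*\Omega^{\log}_X\otimes\omega_{\pi'})\to\Omega^{\log}_X$.

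A related imprecision is in your vector bundle stack step. Under the dictionary $F\mapsto h^1/h^0(F)$ for complexes of amplitude $[0,1]$ that you invoke, a vector bundle placed in degree $0$ goes to its classifying stack. So the triangle $R\pi'_*\mathcal{E}\to R\pi_*\nu^*\mathcal{E}\to x^*\mathcal{E}$ yields
\[
0\to\WR_{\pi'}E\to\WR_\pi\nu^*E\to B(x^*E)\to 0,
\]
and the third term of the stated sequence must be read as $B(x^*E)$. This reading matches the long exact sequence $0\to\pi'_*\mathcal{E}\to\pi_*\nu^*\mathcal{E}\to x^*\mathcal{E}\to R^1\pi'_*\mathcal{E}\to R^1\pi_*\nu^*\mathcal{E}\to 0$. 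The total space of $x^*E$ appears instead as a kernel after rotating the triangle: $0\to x^*E\to\WR_{\pi'}E\to\WR_\pi\nu^*E\to 0$. That is the form in which the normal bundle of the log diagonal enters the compatibility datum. Your justification that $x^*E$ ``surjects as a quotient stack'' is correct only in the $B(x^*E)$ reading, so make that explicit.
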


\begin{proof}

	We have a short exact sequence of sheaves 
	\[
	 	0 \to E \to \nu_* \nu^* E \overset{(+-)}{\longrightarrow} x_* x^* E \to 0
	\]
	on $C'$. The first short exact sequence results from taking the Weil restriction $\WR_{\pi'}$ along $\pi'$. The distinguished triangle is the expression of this sequence in terms of complexes of sheaves. 
\end{proof}

\begin{proposition}
Consider the subdiagram of \ref{eqn:biggluing}
\[
	\begin{tikzcd}
		\Mptst_{\Lambda|\Xi}(X^t)   & \\
		& \ar[lu, "p_\Xi", swap] M \\
		\Mptst_{\Lambda|\Xi}(X) \ar[uu, "p"] \ar[ru, "\delta^\log", swap]&
	\end{tikzcd}
\]
where $p$, $p_\Xi$, and $\delta^\log$ have perfect obstruction theories pulled back from $P, P_\Xi,$ and $\Delta^\log$. 
The Gysin pullbacks are compatible 
\[P^! = \Delta^{\log,!} \circ (P_\Xi)^!.\]
\end{proposition}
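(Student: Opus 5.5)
The plan is to reduce the compatibility $P^! = \Delta^{\log,!}\circ (P_\Xi)^!$ to a compatibility of perfect obstruction theories. By the functoriality result for virtual pullbacks (\ref{prop:potpullbackfunctorial}), it suffices to produce a morphism of distinguished triangles
\[
\begin{tikzcd}
\delta^{\log,*}\E_{p_\Xi} \ar[r]\ar[d] & \E_p \ar[r]\ar[d] & \E_{\delta^\log} \ar[d]\ar[r,"+1"] & {} \\
\delta^{\log,*}\LL_{M/\Mpt_{\Lambda|\Xi}(X^t)} \ar[r] & \LL_{\Mpt_{\Lambda|\Xi}(X)/\Mpt_{\Lambda|\Xi}(X^t)} \ar[r] & \LL_{\Mpt_{\Lambda|\Xi}(X)/M} \ar[r,"+1"] & {}
\end{tikzcd}
\]
with the bottom row the transitivity triangle of cotangent complexes. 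Here $\E_p$ and $\E_{p_\Xi}$ are the obstruction theories of \ref{sec:piercedlogGW}, namely $R\pi_*(\omega\otimes f^*\Omega^\log_X)$ on the glued curve $C'$ and on the partially normalised curve $C$, and $\E_{\delta^\log} = x^*\Omega^\log_X[1]$ is the shifted conormal of the strict regular embedding $\Delta^\log$ pulled back along the evaluation at the node. Since $X\to X^t$ is strict, $\LL_{X/X^t}=\Omega^\log_X$, which makes all three of these live on the same footing.

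The top row comes from applying \ref{lem:behrendsesconestacks} with $\mathcal E = f'^*\Omega_X^\log$. This gives the triangle
\[
R\pi'_*(f'^*\Omega^\log_X\otimes\omega_{\pi'}) \to R\pi_*(\nu^*f'^*\Omega^\log_X\otimes\omega_\pi)\to x^*f'^*\Omega^\log_X \to,
\]
which after dualising and rotating is exactly $\delta^{\log,*}\E_{p_\Xi}\to\E_p\to\E_{\delta^\log}$. This is the relative (over the tropical spaces) version of Behrend's argument.

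Next I check that the vertical maps exist and the squares commute. The left and middle vertical maps are the given obstruction theories, each built from $f^*\LL_{X/X^t}\to\pi^*\LL_{-/\text{base}}$ by duality. The right vertical map is the canonical map $\E_{\delta^\log}\to\LL_{\delta^\log}$ pulled back from $\Delta^\log$ via the fibre square of the preceding proposition. Commutativity of the left square follows from naturality of the construction in \ref{eq:potabstract} under pullback along $\nu$, because the map $C\to C'$ lies over the strict map of bases $M\to \Mpt_{\Lambda|\Xi}(X^t)$ and the universal maps agree. For the right square, I would identify the connecting map with restriction to the node. Since $\nu$ identifies $x_1,x_2$, the composite $\E_p\to x^*\Omega^\log_X[1]$ is the difference of the two evaluations. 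This matches the map $\LL_{\Mpt(X)/\Mpt(X^t)}\to\LL_{\Mpt_{\Lambda|\Xi}(X)/M}\cong \ev^*\LL_{\Delta^\log}$ induced by $\ev_+\times\ev_-$, using $\LL_{\Delta^\log} = \Omega^\log_X[1]$.

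I expect the main obstacle to be this last identification, that is, showing the triangle of obstruction theories maps compatibly to the transitivity triangle. One has to work carefully in the derived category and use the log structure at the node; it is not a fibre square of underlying stacks over the tropical base. I would argue it on the level of deformation and obstruction groupoids, as in Behrend's original proof: lifting problems for maps from $C'$ are lifting problems for maps from $C$ together with an agreement at $x_1,x_2$. Because all maps to $X^t$ are fixed and $X\to X^t$ is strict, only the underlying (strict) deformation theory enters. A morphism of triangles of this form then yields the Gysin compatibility by \ref{prop:potpullbackfunctorial}.
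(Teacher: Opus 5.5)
Your proposal is correct and follows the paper's own argument: the paper also applies \ref{lem:behrendsesconestacks} to the partial normalisation $\nu$ to get a triangle of obstruction theories mapping to the transitivity triangle of $\Mpt_{\Lambda|\Xi}(X)\to M\to\Mpt_{\Lambda|\Xi}(X^t)$, and concludes by \cite[Theorem 4.8]{manolache-pull}, which is what \ref{prop:potpullbackfunctorial} amounts to for these strict maps. One small bookkeeping slip, inherited from how the lemma is phrased: the row $\delta^{\log,*}\E_{p_\Xi}\to\E_p\to\E_{\delta^\log}$ comes from applying the lemma to $\mathcal E=f'^*T^\log_X$ and Serre-dualising (equivalently, from the residue sequence $0\to\nu_*\omega_\pi\to\omega_{\pi'}\to x_*\mathcal O\to 0$ tensored with $f'^*\Omega^\log_X$), whereas literally dualising the triangle you wrote for $\mathcal E=f'^*\Omega^\log_X$ would give tangent-type complexes rather than the obstruction theories.
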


\begin{proof}
We can omit the stability condition, as the stable loci are open and pulled back from $\Mpt_{\Lambda|\Xi}(X^t)$. The maps in question are \emph{strict}, and the sequence
\[
	\Mpt_{\Lambda | \Xi}(X) \overset{\delta^\log}{\longrightarrow} M \overset{p_\Xi}{\longrightarrow} \Mpt_{\Lambda|\Xi}(X^t)
\]
results in a distinguished triangle of cotangent complexes 
\[
	\ccx{p_\Xi}|_{\Mpt_{\Lambda | \Xi}(X)} \to \ccx{p} \to \ccx{\delta^\log} \overset{+1}{\to}.
\]
We will endow this sequence with a compatibility datum \cite[Definition 4.5]{manolache-pull}.

Extract the subdiagram of \ref{eqn:biggluing}:
\[
\begin{tikzcd}
	\Mpt_{\Lambda | \Xi}(X) \ar[r, "\delta^\log"] \ar[d] \lpb 		&M \ar[r] \ar[d] \lpb 		&\Mpt_{\Xi}(X) \ar[d] 		\\
	X \ar[r] 		&X \times_{X^t} X \ar[r] \ar[d] \lpbstrict 		&X \times X \ar[d] 		\\
			&X^t \ar[r]		&X^t \times X^t.
\end{tikzcd}
\]

The perfect obstruction theories of $\Mpt_{\Lambda | \Xi}(X), M$ over $\Mpt_{\Lambda | \Xi}(X^t)$ are defined in the usual way: restrict the log tangent bundle of the target $X$ along the universal map and push forward. Compare the universal curves $\cal C, \cal C_1 \sqcup \cal C_2$ over $\Mpt_{\Xi}(X)$ and $\Mpt_{\Lambda | \Xi}(X)$ in the diagram 
\[
\begin{tikzcd}
			&		&		&X 		\\
	\cal C \ar[d] \ar[urrr, bend left=15]		&\cal C|_{\Mpt_{\Lambda | \Xi}(X)} \ar[l] \ar[dr, "\pi'", swap] \ar[urr, bend left = 10, "f'"] 		&\cal C_1 \sqcup \cal C_2|_{\Mpt_{\Lambda | \Xi}(X)} \ar[l, "\nu", swap] \ar[r] \ar[d, "\pi"] \ar[ur, bend left = 5, "f"] 		&\cal C_1 \sqcup \cal C_2 \ar[u] \ar[d] 		\\
	\Mpt_{\Lambda}(X)		&		&\Mpt_{\Lambda | \Xi}(X) \ar[r] \ar[d] \ar[ll] \lpb 		&M \ar[d] 	 		\\
			&		&X \ar[r] 		&X \times_{\af{X}} X
\end{tikzcd}
\]

Apply \ref{lem:behrendsesconestacks} to the triangle made by $\pi, \pi', \nu$ to obtain the top row:
\[
\begin{tikzcd}
	R\pi_* \pra{f^* \lkah{X} \otimes \omega_{\pi}} \ar[r] \ar[d] 		&R \pi'_* \pra{{f'}^* \lkah{X} \otimes \omega_{\pi'}} \ar[r] \ar[d] 		&\lkah{X} \ar[r, "+1"] \ar[d, dashed, "\sim"] 		&\phantom{a} 		\\
	\ccx{p_\Xi}|_{\Mpt_{\Lambda | \Xi}(X)} \ar[r]		&\ccx{p} \ar[r] 		&\ccx{\delta^\log} \ar[r, "+1"]		&\phantom{a}
\end{tikzcd}
\]
The resulting dashed arrow is an isomorphism. This diagram is a compatibility datum for the perfect obstruction theories of $p, p_\Xi,$ and $\delta^\log$, so the resulting pullbacks satisfy $P^! = \Delta^{\log,!} \circ P_\Xi^!$ by \cite[Theorem 4.8]{manolache-pull}. 
\end{proof}

\textbf{Step 4:}

\begin{proposition}
\label{prop:deltadeltadeltaPOT}
Consider the diagram
\[
\begin{tikzcd}
& M \arrow[rd, "\delta_{X^t}"]  & \\
\Mpt_{\Lambda|\Xi}(X) \arrow[rr, "\delta_X"] \arrow[ru, "\delta^\log"] & & \mathbb{M}_{\Xi}(X)  \\	
\end{tikzcd}
\]
The maps $\delta^\log$, $\delta_X$ and $\delta_{X^t}$ have virtual pullbacks $\Delta^{\log,!}$, $\Delta_{X^t}^{!}$, and $\Delta_X^!$ respectively. We have an equality
\[
	\Delta^{\log,!} \circ \Delta_{X^t}^{!} = \Delta_X^!.
\]
\end{proposition}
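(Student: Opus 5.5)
The three maps are base changes of the three diagonal-type maps of the target, so the plan is to reduce the identity to a statement about perfect obstruction theories on $X$. By the diagram in the proof of the fibre square proposition of Step 2, the triangle $\delta_X = \delta_{X^t}\circ\delta^\log$ is the base change of the triangle
\[
X \xrightarrow{\Delta^\log} X\times_{X^t} X \longrightarrow X\times X,
\]
whose composite is $\Delta_X$. The second map is itself the base change of $\Delta_{X^t}\colon X^t \to X^t\times X^t$ along $X\times X \to X^t\times X^t$. The virtual pullbacks $\Delta^{\log,!}$, $\Delta_{X^t}^!$, $\Delta_X^!$ are by definition induced from the (log) lci structures of these three maps.

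So it suffices to produce a compatibility datum, in the sense of \cite[Definition 4.5]{manolache-pull}, for this triangle on the target level. Functoriality of (log) lci/virtual pullbacks in the form of \ref{prop:potpullbackfunctorial} and \ref{thm:loggysinlogchowpullback}, together with \cite[Theorem 4.8]{manolache-pull}, then gives $\Delta^{\log,!}\circ\Delta_{X^t}^! = \Delta_X^!$, and this identity survives base change to the moduli spaces.

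The key computation is the triangle of (log) cotangent complexes. The log cotangent complex of $\Delta_X$ is $\Omega^\log_X[1]$ in the log sense, but $\Delta_X$ is not strict. The strict map $\Delta^\log$ has conormal bundle $\Omega_X^\log$, so $\ccx{\Delta^\log} = \Omega_X^\log[1]$. The map $X\times_{X^t}X \to X\times X$ is strict over $\Delta_{X^t}$, and its log cotangent complex is the pullback of that of $\Delta_{X^t}$. Since $X^t$ is log étale over the point, that pullback vanishes in the log sense. I would therefore use log cotangent complexes (Olsson) throughout, so that the triangle reads
\[
0 \to \Omega^\log_X[1] \xrightarrow{\ \sim\ } \Omega^\log_X[1] \to \cdots
\]
with the middle term identified with the log cotangent complex of $\Delta_X$. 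The compatibility datum is then essentially an isomorphism plus a zero term.

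Finally, I would unwind how $\Delta_{X^t}^!$ and $\Delta_X^!$ are defined as log lci pullbacks in \ref{def:loglcipullback}. These are defined via a factorisation through a log blowup making the relevant square a genuine fibre square, followed by an ordinary Gysin map, and one needs to check that this definition agrees with the virtual pullback from the log obstruction theory. The plan is to pass to a common log modification of $X\times X$ over $X^t\times X^t$ on which the diagonal of $X^t$ becomes strict and lci. On that model all three maps are ordinary lci maps, and their composite is the ordinary Gysin pullback of $\Delta_X$. Functoriality of ordinary Gysin maps \cite{Kresch1999Cycle-groups-fo} then finishes the argument, after checking independence of the chosen model in $\LogCH_*$.

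The main obstacle is this last point. The excess bundle has to be matched correctly: $\Delta_X$ and the composite can differ by Chern classes of $\Omega_{X^t}$-type terms, and these vanish only in the log sense. I would try to show that the log cotangent complex of $\Delta_{X^t}$ is trivial on the modified model, i.e.\ that it is invariant under taking blowups and strata, as anticipated in the Step 4 overview.
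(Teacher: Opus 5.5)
Your argument is correct and is the paper's proof. The paper simply observes that the moduli-level triangle is the fs base change of $X \xrightarrow{\Delta^\log} X\times_{X^t}X \to X\times X$, with the second map a base change of $\Delta_{X^t}$ via $X\times_{X^t}X=(X\times X)\times_{X^t\times X^t}X^t$; functoriality of log virtual pullbacks as in \ref{prop:potpullbackfunctorial}, applied with the compatibility datum you write down, then gives the identity.

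Your last two paragraphs are not needed. By \ref{def:loglcipullback} these pullbacks are, by definition, the log virtual pullbacks of \ref{def:potpullback}, not a separate model-based construction, so there is nothing further to compare and no excess class to match.
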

\begin{proof}
This follows from the commutative diagram
\[
\begin{tikzcd}
& X \times_{X^t} X \arrow[rd] & \\
X \arrow[rr, "\Delta_X"] \arrow[ru, "\Delta^{\log}"] & & X \times X
\end{tikzcd}
\]
and the isomorphism $X \times_{X^t} X = (X \times X) \times_{X^t \times X^t} X^t$. 
\end{proof}

\begin{proof}[Proof of \ref{thm:loggluingclean}]
The theorem now follows immediately from \ref{thm:tropicalgluingsep} and the commutativity of the pullbacks in \ref{eqn:biggluing}.
\end{proof}

\subsection{Equivariant gluing formula}
\label{subsec:equiv}

A disadvantage of \ref{thm:loggluingclean} is that the PL functions $s_\pm$ are nilpotent, so multiplication by them loses information. In this section we describe some situations where we can lift $s_\pm$ to non-nilpotent elements in a suitable equivariant Chow ring. 

We begin with some generalities about lifting classes to equivariant Chow. Let $M$ be an algebraic stack with log structure and $f$ a sPL function on $M$. Let $g\colon \bb G_m \times M \to M$ be an action of $G = \bb G_m$. The following are equivalent: 
\begin{itemize}
    \item A $g$-linearisation of $\ca O_M(f)$, i.e.\ a descent datum for $f$ to $M/G$;
    \item A factorisation of $f\colon M \to \theta$ via $M/G$. 
\end{itemize}

If this happens, then there is an element $c_1(\ca O_{M/G}(f)) \in \CH(M/G)$ which pulls back to $c_1(\ca O_{M}(f)) \in \CH(M)$. This lift $c_1(\ca O_{M/G}(f))$ can retain significantly more information than $c_1(\ca O_{M}(f))$. For example, if $M$ is a point with log structure $P$ and $f \in P_{>0}$ then $c_1(\ca O_{M}(f)) = 0 \in \CH(M) = \bb Q$, on the other hand actions of $G$ on $M$ are classified by monoid maps $g\colon P \to \bb Z$, where the image $g(p)$ of $p \in P$ determines the character via which $\bb G_m$ acts on the fibre over $p$ of $P \oplus k^\times \to P$. The above descent condition is equivalent to the image $g(f) \in \bb Z$ being non-zero, and in this case $c_1(\ca O_{M/G}(f)) = g(f) t \in \CH(B\bb G_m) = \bb Q[t]$, in particular it is not a zero-divisor. 

Log blowups of $M$ are determined by ideals in $\ghost_M$, hence any $\bb G_m$-action on $M$ lifts to any log blowup $\tilde M \to M$. In particular, if $f$ is any (not necessarily strict) PL function on $M$ then we can ask whether it descends to $\tilde M/\bb G_m$ where $\tilde M \to M$ is some log blowup on which $f$ is strict, and if this is the case then $f$ induces an element of $\CH^1(\tilde M/\bb G_m)$, hence an element of $\LogCH(M/\bb G_m)$. 

We return now to the setup of \ref{thm:loggluingclean}, and suppose in addition that we have an action of $T = \bb G_m^2$ on $X$ such that $s_+$ and $s_-$ descend to the quotient $X/T$. 

The natural map $X \to X^t$ factors via $X/T$. The group $T$ acts by composition on spaces of maps to $X$, and we can obtain a variant on the bottom two rows of \ref{eqn:biggluing}: 
\begin{equation}
\label{eqn:gluing_equiv}
\begin{tikzcd}[column sep=tiny]
	&& \Mpt_\Lambda(X)/T\ar[dl] & \Mpt_{\Lambda|\Xi}(X)/T  \ar[l]\ar[rr]\ar[dl]\ar[d]\arrow[dll, phantom, "\boxtimes_1"]\arrow[drr, phantom, "\square_5"]&& \Mpt_\Xi(X)/T \ar[d]&&\\
	&\Mpt & {\Mpt_\Gamma}\ar[l, "\gl"] & X/T\ar[rr, "\Delta_X^T"] && (X \times X)/T&&\\
    &&&X \ar[rr, "\Delta_X"]\ar[u] &&X \times X\ar[u]&&
\end{tikzcd}
\end{equation}
Here the action of $T$ on $X \times X$ is the diagonal one, and in the separating case the same holds for the action on $ \Mpt_{\Xi_+}(X) \times \Mpt_{\Xi_-}(X)$. 

\begin{theorem}
\label{thm:loggluingequiv} 
We have 
\[s_+s_- \gl^! [\Mpt_{\Lambda}(X)/T]^\vir = \sum_{\Xi \in \gl^* \Lambda} c_+^{\Xi} c_-^{\Xi}\Delta_X^{T,!} [\Mpt_{\Xi}(X)/T]^\vir\]
in $\LogCH_*(\Mpt_{\Lambda | \Xi}(X)/T)$, where the $T$ actions are as described in \ref{eqn:gluing_equiv}.

Since $s_+$ and $s_-$ descend to the quotient $X/T$ we have natural maps $s_\pm\colon X/T \to \theta$; suppose moreover that the natural map from $T$ to the inertia of $\theta$ is surjective. Then
\[\gl^! [\Mpt_{\Lambda}(X)]^\vir = Q(\sum_{\Xi \in \gl^* \Lambda} \frac{1}{s_+s_- }c_+^{\Xi} c_-^{\Xi}\Delta_X^{T,!} [\Mpt_{\Xi}(X)/T]^\vir)\]
in $\bigoplus_{\Xi \in \gl^*\Lambda} \LogCH(\Mpt_{\Lambda | \Xi}(X))$, where 
\[Q \colon \LogCH(\Mpt_{\Lambda | \Xi}(X)/T) \to \LogCH(\Mpt_{\Lambda | \Xi}(X))\] 
is the pullback map.
\end{theorem}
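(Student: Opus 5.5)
The proof runs the argument of \ref{thm:loggluingclean} verbatim on the quotient stacks of \ref{eqn:gluing_equiv}, and then inverts $s_+s_-$, using the inertia assumption to make this legitimate.

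For the first equality, we check that every ingredient of the non-equivariant proof is $T$-equivariant. The tropicalisation maps $\Mpt_\Lambda(X)\to\Mpt^t_\Lambda(X^t)$ factor through the quotient by $T$, because $X\to X^t$ factors through $X/T$. The perfect obstruction theory $\F$ is built from $\lkah{X}$, which is $T$-linearised, so it descends to a perfect obstruction theory for $\Mpt_\Lambda(X)/T\to\Mpt^t_\Lambda(X^t)$. This gives $[\Mpt_\Lambda(X)/T]^\vir$ by the same $\trop^!$ applied to the same tropical class. The fibre squares of \ref{eqn:biggluing} stay fibre squares after dividing their top rows by $T$: quotienting by a diagonally acting group commutes with fibre products over $\Mpt_\Gamma$, $X^t$ and so on. The log diagonal $X/T\to (X\times_{X^t}X)/T$ is still strict and regular, with conormal bundle the descended $\lkah{X}$. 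Steps 1--4 then go through unchanged. Step 1 is the tropical formula \ref{thm:tropicalgluingsep} pulled back. Step 3 uses \ref{lem:behrendsesconestacks}, which is equivariant because it comes from an exact sequence of $T$-equivariant sheaves. Step 4 uses $X\times_{X^t}X=(X\times X)\times_{X^t\times X^t}X^t$, now taken modulo $T$. The $s_\pm$ occurring here are the descended functions, i.e.\ the pullbacks along $\ev_\pm$ of $s_\pm\colon X/T\to\theta$.

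For the second equality, we show that in the equivariant theory $s_+s_-$ is a non-zero-divisor, and that it becomes invertible after a localisation compatible with $Q$. Locally on a cone of a suitable log blowup where $s_\pm$ are strict, surjectivity of $T$ onto the inertia of $\theta$ means that the composite $T\to B\G_m$ is surjective on stabilisers. So $c_1(\ca O(s_\pm))$ restricts to a non-zero character class, just as in the log point example above. The plan is to use a filtration by (closed unions of) strata: on each stratum the classes $s_\pm$ act as non-zero elements of $\CH^1(BT)$ plus nilpotent terms. Arguing by induction along the stratification, this shows multiplication by $s_+s_-$ is injective on $\LogCH_*(\Mpt_{\Lambda|\Xi}(X)/T)$, at least after localising at the characters involved. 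It follows that $\gl^![\Mpt_\Lambda(X)/T]^\vir=\frac{1}{s_+s_-}\sum_\Xi c_+^\Xi c_-^\Xi\Delta_X^{T,!}[\Mpt_\Xi(X)/T]^\vir$ in the localised group.

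Finally, we apply $Q$. We need that $Q$ carries $\gl^!$ of the equivariant virtual class to $\gl^![\Mpt_\Lambda(X)]^\vir$, which holds because flat pullback along the $T$-torsor commutes with virtual pullbacks. We also need $Q$ to be defined on the relevant localised classes. I would argue that the right-hand side is a genuine (non-localised) class, since it equals $\gl^!$ of an honest class, so no denominators survive and $Q$ applies.

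The main obstacle is the injectivity/localisation step. Making precise that the stated inertia surjectivity really forces $s_\pm$ to be non-zero-divisors in $\LogCH$ of the quotient requires controlling nilpotent log-boundary contributions. I would try first to handle these by a d\'evissage over the strata of a log blowup on which $s_\pm$ are strict.
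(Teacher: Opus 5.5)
Your proposal follows the paper's proof. The first identity comes from rerunning the proof of \ref{thm:loggluingclean} with the bottom row of the gluing diagram replaced by its $T$-quotient, and the second from inverting $s_+s_-$. The only real content there is that $s_+s_-$ is a non-zero-divisor on $\LogCH_*(\Mpt_{\Lambda|\Xi}(X)/T)$, which the paper, like you, deduces locally from the inertia-surjectivity hypothesis via the $T$-equivariant factorisation of $s_\pm$ through $X$. Two cautions: injectivity only after inverting the characters would not suffice, because $Q$ kills all characters, so the formula makes sense only if the right-hand side has a unique preimage in the un-localised equivariant group; and a stratum-by-stratum d\'evissage needs extra care, because pushforward from a closed stratum in the localisation sequence need not be injective.
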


\begin{proof}
The first displayed equation follows essentially exactly as in the proof of \ref{thm:loggluingclean}, but with the lower part of the diagram \ref{eqn:biggluing} replaced by \ref{eqn:gluing_equiv}. To deduce the second displayed equation, we formally invert (localise at) the classes $s_+s_-$ in $\LogCH(\Mpt_{\Lambda | \Xi}(X))$; the non-trivial content of the second formula is that the localisation map 
\[\LogCH(\Mpt_{\Lambda | \Xi}(X)) \to \LogCH(\Mpt_{\Lambda | \Xi}(X))\left[\frac{1}{s_+s_-}\right]\]
is injective, equivalently that $s_+s_-$ is not a zero-divisor. This can be checked locally; it is enough to check it for $s_{\pm, \Xi}$, which is pulled back from $X$. The condition in the statement of the theorem that $T$ surjects to the inertia of $\theta$ implies that the same holds for the map $s\colon \Mpt_{\Lambda | \Xi}(X) \to \theta$ (since this map factors via $X$ in a $T$-equivariant way), which in turn implies that $s_+s_-$ is not a zero-divisor in $\LogCH(\Mpt_{\Lambda | \Xi}(X))$. 
\end{proof}

\begin{remark}[Extensions and generalisations]
Above we use an action of $T$ on $X$ to induce an action of $T$ on $\Mpt(X)$, and lift $s_\pm$ to equivariant parameters. In the end we only need the action of $T$ on $\Mpt(X)$, and this is sometimes possible without an action of $T$ on $X$. An extreme example is where $X$ is not connected, but the curve class forces all maps to factor via a component of $X$ which carries a $T$ action. More usefully, the curve class and contact orders may force stable maps to $X$ to factor via some subscheme which carries a $T$ action, and if this $T$ action can be lifted to the normal bundle of the subscheme in $X$ it is again possible to lift the above constructions to the equivariant setting. We do not pursue this further at this point. 
\end{remark}

\subsection{The splitting guing map}
\label{sec:glue:logsep}
In this section we lift the tropical gluing statement \ref{thm:tropicalgluingseparating} to the log world. We use the same notation as in \ref{subsubs:sepcasetrop} for the tropical moduli spaces, e.g. $\Mpt_{\Xi;i}^t$, with as usual the logarithmic moduli spaces $\Mpt_{\Xi;i}$ being defined as the pullback $\Mpt_\Xi(X) \times_{\Mpt_\Xi^t(X^t)} \Mpt_{\Xi;i}^t$.

\begin{theorem}
\label{thm:loggluingseparating}
We have \begin{align*}\gl^! [\Mptm_\Lambda(X)]^\vir &= \sum_{\Xi \in \gl^* \Lambda} \sum_{i=1}^{m_\Xi} c^{\Xi;i}_+ c^{\Xi;i}_- \pi_{\Xi;i,*}\Delta_{E_{\Xi;i}}^! [\Mpt_{\Xi;i}]^{\vir} \\ 
&= \sum_{\Xi \in \gl^* \Lambda} \sum_{i=1}^{m_\Xi} c^{\Xi;i}_+c^{\Xi;i}_- \pi_{\Xi;i,*}\Delta_{E_{\Xi;i}}^! \sum_{\Xi' \in \pi^* (\Xi;i)} \pi_{\Xi',*} [\Mptm_{\Xi'}(\hat X)]^\vir.\end{align*}
inside $\CH_*(\gl^*\Mptm_\Lambda(X))$.
\end{theorem}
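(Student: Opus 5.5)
The plan is to lift \ref{thm:tropicalgluingseparating} to the logarithmic setting by the same four-step strategy as in the proof of \ref{thm:loggluingclean}. The difference is that the diagonal $\Delta_{X}$ is replaced by the diagonals $\Delta_{E_{\Xi;i}}$ of the strata $E_{\Xi;i}\subset \hat X$. This replacement is legitimate because on the pieces $\Mpt_{\Xi;i}$ the evaluation maps to $E_{\Xi;i}$ are integral and saturated, which is exactly what the separating hypothesis buys us. Throughout I fix $\Xi \in \gl^*\Lambda$ and $1\le i\le m_\Xi$. I write $\Mpt_{\Lambda|\Xi;i}$ for the pullback of $\Mpt_{\Xi;i}$ along $\Delta_{E_{\Xi;i}}$, so that $\pi_{\Xi;i}$ maps it to $\Mptm_{\Lambda|\Xi}(X)$.

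\textbf{Step 1 (tropical input).} Pull back the tropical identity of \ref{thm:tropicalgluingseparating} along the smooth strict maps $\Mpt_{\Lambda|\Xi}(X^t)\to\Mpt^t_{\Lambda|\Xi}(X^t)$ and $\Mpt_\Xi(X^t)\to \Mpt^t_\Xi(X^t)$, and then along $P_\Xi^!$. The gluing pullback commutes with these maps by functoriality of log lci pullbacks, as in Step 1 of \ref{thm:loggluingclean}. For the right-hand side I use that $\Delta^!_{\ul{E^t}}$ is an honest lci Gysin pullback: by \ref{lem:deltaet} the square $\hat{\Mpt}^t\times_{E^t\times E^t}E^t\to \hat\Mpt^t$ is a scheme-theoretic fibre square. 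Push--pull compatibility (\ref{thm:push_pull_compatibility}) lets me commute $\pi_{\Xi;i,*}$ past $P^!_\Xi$, using that the decomposition maps $\Mpt^t_{\Xi;i}\to \hat\Mpt^t_\Xi\to \Mpt^t_\Xi$ are pushable and that the logarithmic $\Mpt_{\Xi;i}$ are defined as base changes.

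\textbf{Steps 2--4 (obstruction theories).} Next I form the analogue of $M$ with $X$ replaced by $E=E_{\Xi;i}$, namely $M_i=\Mpt^t_{\Lambda|\Xi;i}\times_{\Mpt^t_{\Xi;i}}\Mpt_{\Xi;i}$. I then show that $\Mpt_{\Lambda|\Xi;i}\to M_i$ is the base change of the strict regular embedding $E\to E\times_{E^t}E$; its conormal bundle is $\Omega^{\log}_{\hat X}|_E$, or rather the log cotangent sheaf of $E$ with its induced idealised structure. The Behrend-type compatibility (\ref{lem:behrendsesconestacks}) then gives $P^!=\Delta^{\log,!}\circ P_{\Xi}^!$ on this piece, exactly as in Step 3 of \ref{thm:loggluingclean}. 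The analogue of \ref{prop:deltadeltadeltaPOT} follows from $E\times_{E^t}E=(E\times E)\times_{E^t\times E^t}E^t$. Because $\ev_\pm$ are integral and saturated onto $E$, these fibre products are also fibre products of underlying stacks, so all pullbacks are ordinary Chow-level Gysin maps. This is why the statement lives in $\CH_*$ rather than in $\LogCH_*$.

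\textbf{Second equality.} This is the log version of the second line of the tropical theorem. I apply \ref{thm:log_bir_inv} in the form used in its proof ($f_*p_1^!=q_1^!g_*$) to the log blowup $\hat X\to X$, restricted to the stratum $\Mpt_{\Xi;i}$. Combined with \ref{lem:sumofcubesisbigcube}, this identifies $[\Mpt_{\Xi;i}]^\vir$ with $\sum_{\Xi'}\pi_{\Xi',*}[\Mptm_{\Xi'}(\hat X)]^\vir$. It then remains to commute $\Delta^!_{E}$ with $\pi_{\Xi',*}$, again by push--pull compatibility.

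I expect the main obstacle to be Step 2. There one must check that the obstruction theory relative to the tropical space, which is built from $\Omega^{\log}_X$, agrees after restriction to $E$ with the one built from the diagonal of $E$ relative to $E^t$. This requires that the relevant cotangent complex of $\Delta^{\log}$ is unchanged under the log blowup $\hat X\to X$ and under restriction to the stratum $E$. My first approach is to compare both to $\Omega^{\log}_{\hat X}|_E$ via strictness of $E\to E^t$ and the cartesian square $\hat X=X\times_{X^t}\hat X^t$.
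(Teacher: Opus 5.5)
Your proposal follows essentially the same route as the paper. The paper also reruns the four-step proof of \ref{thm:loggluingclean} on the base-changed pieces $\Mpt_{\Xi;i}$, replaces the diagonal triangle for $X$ by the one for $E=E_{\Xi;i}$ via $E\times_{E^t}E=(E\times E)\times_{E^t\times E^t}E^t$, and gets the second equality from the cube decomposition of \ref{lem:sumofcubesisbigcube}. The step you flag as the main obstacle is exactly what the paper calls the only real content, and it resolves it as you suggest: the log diagonal of $E$ has cotangent complex $\Omega^{\log}_E$ in degree $-1$, and log cotangent bundles are unchanged under $\hat X\to X$ and under restriction to strata, so $\Mpt_{\Lambda|\Xi}(X)\to M$ carries the same obstruction theory as before (the paper keeps Steps 2--3 built from $X$ and makes this identification only in Step 4, which is equivalent to your placement).
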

\begin{proof}
The proof is much the same as the proof of \ref{thm:loggluingclean}. All our moduli spaces can be pulled back to the logarithmic setting, and hence the obstruction theories also pull back. Only step 4 differs. We fix a $\Xi$ and $i$, and consider $E = E_{\Xi;i}$.

In \ref{prop:deltadeltadeltaPOT} we showed the obstruction theories $\Delta_{X^t}^!, \Delta_X^{\log,!}$ and $\Delta_{X}^!$ in the triangle
\[
\begin{tikzcd}
& M \arrow[rd, "\delta_{X^t}"]  & \\
\Mpt_{\Lambda|\Xi}(X) \arrow[rr, "\delta_{X}"] \arrow[ru, "\delta^\log"] & & \Mpt_{\Xi}(X)  \\	
\end{tikzcd}
\]
commute. Here we need to show something very similar. Namely, we pullback the above triangle along the map $\Mpt_{\Xi;i} \to \Mpt_\Xi(X)$. Then we have the obstruction theories $\Delta_X^{\log,!}, \Delta_{\ul{E^t}}^!, \Delta_{\ul{E}}^!$, and need to show that these commute.
This triangle is also a pullback of the commutative diagram
\[
\begin{tikzcd}
& E \times_{E^t} E \arrow[rd] & \\
E \arrow[rr, "\Delta_{E}"] \arrow[ru, "\Delta_E^{\log}"] & & E \times E
\end{tikzcd}
\]
The only content is to show that the obstruction theory $\Delta_{X}^{\log,!}$ is the same as the obstruction theory $\Delta_{E}^{\log,!}$.

The log diagonal of $E$ has log cotangent complex given by the log cotangent bundle of $E$ in degree $-1$. Since log cotangent bundles are invariant under blowup and taking strata, this gives the same perfect obstruction theory $\Delta^{\log,!}$ on $\Mpt_{\Lambda|\Xi}(X) \to M$ as before.
\end{proof}

We now push \ref{thm:loggluingseparating} on virtual fundamental classes forward to a theorem about the logarithmic Gromov--Witten classes of \ref{def:logGWinvariants}. We observe that in the classical setting, for a smooth DM-stack $Y$ the map $\Delta_{Y}^!$ is the same as intersecting with $[Y] \in \CH_*(Y \times Y)$. Under the assumption that $Y$ admits a Chow--Kunneth decomposition, or we are working in Borel--Moore homology where such a decomposition is automatic, this gives us the following theorem.

\begin{theorem}
\label{thm:loggluingkunneth}
Assume each $E_{\Xi;i}$ admits a K\"unneth decomposition $\Delta_{E_{\Xi;i}} = \sum_{k} \delta_{\Xi;i;k+} \boxtimes \delta_{\Xi;i;k;-}$ of its diagonal. Then we have the equality
\begin{multline*}
    \gl^! \LogGW(X;\Lambda, \gamma) =\\  \sum_{\Xi \in \gl^* \Lambda} \sum_{i=1}^{m_\Xi}  c_+^{\Xi;i} c_-^{\Xi;i} \sum_k\sum_{\Xi' \in \pi^*(\Xi;i)} \LogGW(\hat{X};\Xi_+', \gamma_+ \cup \delta_{\Xi;i;k+}) \boxtimes\\ \LogGW(\hat{X};\Xi_-', \gamma_- \cup \delta_{\Xi;i;k-})
\end{multline*}
\end{theorem}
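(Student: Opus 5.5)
The plan is to derive this from \ref{thm:loggluingseparating} by capping with the insertions and pushing forward to the moduli space of pierced curves. Write $\pi\colon \Mpt_\Lambda(X) \to \Mpt_{g,n}^\st$ and $\pi_\Gamma$ for the analogous map from the glued-pieces moduli spaces to $\Mpt_\Gamma^\st = \Mpt^\st_{g_1,n_1+1}\times\Mpt^\st_{g_2,n_2+1}$.

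\textbf{Step 1: move the insertions and the pushforward past $\gl^!$.}
\begin{itemize}
\item First, $\gl^!$ commutes with proper pushforward along $\pi$, since $\gl$ is a log lci morphism and the square $\boxtimes_1$ in \ref{eqn:biggluing} is a fibre square. This uses the push-pull compatibility of \ref{thm:push_pull_compatibility} and the functoriality of log Gysin pullbacks.
\item Second, $\gl^!$ commutes with capping by the classes $\ev_i^*\gamma_i$. These are bivariant classes pulled back from $X$, and bivariant classes commute with refined Gysin pullbacks.
\end{itemize}
Together these give
\[\gl^!\LogGW(X;\Lambda,\gamma)=\pi_{\Gamma,*}\Bigl(\gl^![\Mpt_\Lambda(X)]^\vir\cap\prod_i\ev_i^*\gamma_i\Bigr).\]
Here I use that $[\Mptm_\Lambda(X)]^\vir$ pushes forward to the log Chow class $[\Mpt_\Lambda(X)]^\vir$, with $\mu$ a log blowup. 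I then substitute the first line of \ref{thm:loggluingseparating}.

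\textbf{Step 2: handle each summand $(\Xi,i)$.}
\begin{itemize}
\item The pushforward $\pi_{\Xi;i,*}$ composed with $\pi_\Gamma$ is just the pushforward from $\Delta_{E_{\Xi;i}}^*\Mpt_{\Xi;i}$. The insertions pull back through $\pi_{\Xi;i}$ and commute with $\Delta_{E_{\Xi;i}}^!$, again because they are bivariant.
\item Now $E=E_{\Xi;i}$ is a smooth stratum of $\hat X$, and the gluing legs evaluate into $E$. So $\Delta_E^!$ is the classical refined Gysin map for the regular embedding $E\to E\times E$, applied to $\ev_+\times\ev_-\colon\Mpt_{\Xi;i}\to E\times E$.
\item Pushing forward to a space that forgets the maps, $\Delta_E^!$ becomes capping with $(\ev_+\times\ev_-)^*[\Delta_E]$. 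Inserting the Künneth decomposition gives $\sum_k \ev_+^*\delta_{k+}\cdot\ev_-^*\delta_{k-}$.
\end{itemize}
Because $\Mpt_{\Xi;i}$ is a strict closed substack of a product of a $+$ space and a $-$ space, and each factor carries its own evaluation map, the class splits as an exterior product. It is $\boxtimes$ of the corresponding classes on the two factors.

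\textbf{Step 3: identify the two factors.} This uses the second line of \ref{thm:loggluingseparating}. It rewrites $[\Mpt_{\Xi;i}]^\vir$ as $\sum_{\Xi'}\pi_{\Xi',*}[\Mptm_{\Xi'}(\hat X)]^\vir$. Projection formula moves $\ev_\pm^*\delta$ and $\ev_j^*\gamma_j$ through $\pi_{\Xi',*}$, since the evaluation maps are compatible with the blowdown $\hat X\to X$. Here $\gamma$ is pulled back to the strata of $\hat X$. Because $\Xi'=(\Xi'_+,\Xi'_-)$ and the virtual class is a $\boxtimes$, the pushforward to $\Mpt_\Gamma^\st$ factors as the product of the two log Gromov--Witten classes $\LogGW(\hat X;\Xi'_\pm,\gamma_\pm\cup\delta_{\pm})$ of \ref{def:logGWinvariants}.

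\textbf{Main obstacle.} The hard step is Step 2: justifying that $\Delta_E^!$ can be computed as capping with the pulled-back diagonal class after pushforward. The subtlety is that $\Delta_E^!$ is a refined Gysin map, while $\delta_{k\pm}$ live on $E$. My approach is as follows.
\begin{itemize}
\item Use the standard identity $\Delta_E^!\alpha = [\Delta_E]\cdot\alpha$ in bivariant terms: for smooth $E$, $\Delta_E^!$ equals the Gysin map of the class $[\Delta_E]\in\CH^*(E\times E)$. This holds because $E\times E$ is smooth, so operational and ordinary Chow coincide.
\item Then substitute the Künneth expression, which makes sense in Chow cohomology of $E\times E$ (or in Borel--Moore homology).
\item Finally, check that the insertions $\gamma_\pm$ on the $X$-strata, pulled back to $\hat X$, are compatible with the $\Xi'$-decomposition.
\end{itemize}
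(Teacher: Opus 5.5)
Your proposal is correct and follows essentially the paper's route. The paper derives this theorem, in a brief remark, by capping \ref{thm:loggluingseparating} with the insertions and pushing forward; for the smooth (DM) stratum $E_{\Xi;i}$ it identifies $\Delta_{E_{\Xi;i}}^!$ with intersecting against the diagonal class, which the K\"unneth decomposition then splits into exterior products on the two separated pieces. Your Steps~1--3 simply make explicit the bookkeeping that the paper leaves implicit: commuting $\gl^!$ with the insertions and the pushforward, using the product structure of $\Mpt_{\Xi;i}$ and its virtual class, and applying the projection formula through $\pi_{\Xi',*}$.
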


\appendix

\section{Radon subdivision}
\label{sec:radonsubdiv}
In this section we generalise the star subdivision construction. We quickly recall the definition of the star subdivision of a simplicial cone, and the special properties it has.

\begin{definition}
Let $\sigma$ be a simplicial cone, generated by $v_1,\dots,v_n$. Take $c \in \sigma\setminus 0$, and let $\rho$ be the corresponding ray. The star subdivision $\tilde{\sigma}$ is the subdivision with cones $\tau,\tau + \R_{\geq 0}c$ for each subcone $\tau \subseteq \sigma$ with $ c \notin \tau$. For $c = 0$, we define the star subdivision of $\sigma$ in $c$ to be $\sigma$.
\end{definition}

This satisfies the following property.

\begin{lemma}
\label{lem:starsubdiv}
Let $\sigma$ be a cone, and take $c \in \sigma \setminus 0$. Write $\phi: \sigma \to \R$ for the piecewise linear function $x \mapsto \max \{ \lambda: x - \lambda c \in \sigma\}$. Then $\phi$ is strict piecewise linear on the star subdivision in $c$, and $\phi$ is a multiple of the strict piecewise linear function $\phi_\rho$ corresponding to the ray $\rho$. 
\end{lemma}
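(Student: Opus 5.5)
The plan is to reduce everything to the linear algebra of a simplicial cone. Write $\sigma = \R_{\geq 0}\langle v_1,\dots,v_n\rangle$ and $c = \sum_i c_i v_i$ with $c_i \geq 0$, not all zero. The first step is a closed formula for $\phi$. Writing $x = \sum_i x_i v_i$ with $x_i \geq 0$, the condition $x - \lambda c \in \sigma$ means $x_i - \lambda c_i \geq 0$ for all $i$. This is because the $v_i$ form a basis of $\sigma^\gp$, so membership in $\sigma$ is just nonnegativity of coordinates. Hence
\[
\phi(x) = \min_{i : c_i > 0} \frac{x_i}{c_i},
\]
which is a piecewise linear function on $\sigma$, nonnegative and finite since some $c_i > 0$.

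The second step is to check that $\phi$ is linear on each maximal cone of the star subdivision. Set $I = \{i : c_i > 0\}$. The maximal cones of $\tilde\sigma$ are $\tau_j + \R_{\geq 0} c$, where $\tau_j$ is the facet spanned by all $v_i$ with $i \neq j$, for each $j \in I$; only these facets fail to contain $c$. On such a cone, write $x = y + \mu c$ with $y \in \tau_j$ and $\mu \geq 0$. Then $x_j = \mu c_j$, and for $i \in I$ we have $x_i/c_i = y_i/c_i + \mu \geq \mu$. So the minimum is attained at $i = j$, and $\phi = x_j/c_j$, which is linear on this cone. Along the way I also check that these cones cover $\sigma$: take $j$ attaining the minimum and $\mu = \phi(x)$, and then $y = x - \mu c$ lies in $\tau_j$. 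So $\phi$ is piecewise linear for the subdivision, and hence it is a strict PL function there.

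The third step identifies $\phi$ with a multiple of $\phi_\rho$. On each maximal cone $\tau_j + \R_{\geq 0} c$, the function $\phi$ equals the coordinate $\mu$ of $x$ along $c$. It therefore vanishes on $\tau_j$, which is exactly the union of the faces of $\tilde\sigma$ not containing $\rho$, and it takes the value $1$ at $c$. The function $\phi_\rho$ is characterised in the same way: it is linear on cones of $\tilde\sigma$, vanishes on the cones not containing $\rho$, and takes the value $1$ on the primitive generator $u_\rho$ of $\rho$.

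Writing $c = m u_\rho$ with $m \in \Q_{>0}$ (an integer if $c$ is integral), both $\phi$ and $\phi_\rho$ are determined on each simplicial cone by their values on its rays, so $\phi = \frac{1}{m}\phi_\rho$. Linearity on each cone also forces $\phi(u_\rho) = 1/m$, which confirms the constant.

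The only mildly delicate point is this integrality bookkeeping. It is nonetheless a real subtlety, since it is exactly what produces the constants $c_\pm^\Xi$ used earlier, so I would keep track of $m$ explicitly rather than normalising $c$ to be primitive.
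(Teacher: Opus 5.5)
Your proof is correct and complete. The formula $\phi = \min_{i : c_i > 0} x_i/c_i$, the check that $\phi$ is the $c$-coordinate $\mu$ on each maximal cone $\tau_j + \R_{\geq 0} c$, and the comparison of values on rays giving $\phi = \frac{1}{m}\phi_\rho$ are all right.

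The paper itself states this lemma without proof. Your mechanism (strictness plus vanishing on every ray other than $\rho$ forces a positive multiple of $\phi_\rho$) is exactly what the paper uses in the analogous \ref{prop:radonsubdiv}. Restricting to simplicial $\sigma$ matches the paper, which only defines the star subdivision for simplicial cones.
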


We aim to extend this to the setting where $c \in \sigma^\gp \setminus 0$. We will use this for $c$ the vector of contact orders at a single marking. 

\begin{definition}
\label{def:radonsubdivcone}
Let $\sigma$ be a simplicial cone, generated by $v_1,\dots,v_n$. Take $c \in \sigma^\gp \setminus 0$. Let $\sum_i \lambda_i v_i = c$ be the unique linear relation between $(v_i)_i,c$. Let $R_0,R_1 \in \sigma$ denote the points $R_0 = \sum_{i : \lambda_i > 0} \lambda_i v_i$ and $R_1 = \sum_{i : \lambda_i \leq 0} -\lambda_i v_i$, and let $\rho_0 = \R_{\geq 0} R_0, \rho_1 = \R_{\geq 0} R_1$ denote the \emph{Radon rays}. Let $R$ be the image of $R_0, R_1$ in $\R^n/\R \cdot c$. 

Then the \emph{Radon subdivision} $\tilde{\sigma}$ of $\sigma$ in $c$ is defined to be the pullback of the star subdivision of $\sigma/\R_{\geq 0} c$ in $R$. For $\Sigma \subset \R^n$ a cone complex, we define the Radon subdivision conewise.
\end{definition}

We first remark that for $\pm c \in \sigma$, we have that all the $\lambda_i$ are all of the same sign, and hence $R_0 = c, R_1 = 0$ or $R_0 = 0, R_1 = c$. In this case the Radon subdivision is simply the star subdivision in $\pm c$.

\begin{example}
Take $\sigma = \R_{\geq 0}^3$ and $c = (1,1,-1)$. Then $R_0 = (1,1,0)$ and $R_1 = (0,0,1)$. The Radon subdivision has $2$ maximal cones, $\angle{e_i,R_0,R_1}$ for $i \in \{1,2\}$.
\end{example}

\begin{example}
Take $\sigma = \R_{\geq 0}^4$, and $c = (1,1,-1,-1)$. Then $R_0 = (1,1,0,0), R_1 = (0,0,1,1)$. The Radon subdivision has $4$ maximal cones, each of the form $\angle{e_i,e_j,R_0,R_1}$ for $\{i,j\} \ne \{1,2\},\{3,4\}$.
\end{example}

\begin{proposition}
\label{prop:radonsubdiv}
Let $\sigma$ be a simplicial cone, generated by $v_1,\dots,v_n$. Take $c \in \sigma^\gp \setminus 0$. The Radon subdivision $\tilde{\sigma}$ is simplicial, with rays generated by $v_1,\dots,v_n,R_0,R_1$. Write $\phi_0,\phi_1$ for the piecewise linear functions $x \mapsto \max \{\lambda: x - \lambda c \in \sigma\}$ and $x \mapsto \max \{\lambda: x + \lambda c \in \sigma\}$ (if $c \in \sigma$ or $-c \in \sigma$, take $\phi_1 = 1$ or $\phi_0 = 1$ respectively). Then $\phi_0$ and $\phi_1$ are strict on $\tilde{\sigma}$, and they are a positive multiple of the piecewise linear functions $\phi_{\rho_0}, \phi_{\rho_1}$ respectively.
\end{proposition}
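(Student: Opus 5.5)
We reduce to the classical star subdivision of \ref{lem:starsubdiv} by passing to the quotient $\bar\sigma = \sigma/\R c$, the image of $\sigma$ in $\R^n/\R c$. The key structural observation is that $R_0 - R_1 = c$, so $R_0$ and $R_1$ have the same image $R$ in the quotient. We first check that $\bar\sigma$ is a simplicial cone spanned by the images $\bar v_1,\dots,\bar v_n$ with the single relation $\sum_{\lambda_i>0}\lambda_i \bar v_i = \sum_{\lambda_i\le 0}(-\lambda_i)\bar v_i$. Properly, $\bar\sigma$ is strictly convex (and generated by at most $n$ rays) only when $c\notin\pm\sigma$ is in "general position"; in general $\bar\sigma$ is the image cone, and we describe its faces via the Radon partition of $\{v_i\}$ into $I_0=\{\lambda_i>0\}$ and $I_1=\{\lambda_i\le 0\}$. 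The cones of $\tilde\sigma$ are then the preimages of the cones $\bar\tau$ and $\bar\tau+\R_{\ge0}R$ of the star subdivision. We identify these concretely: a maximal cone is spanned by $R_0$, $R_1$ and a set of $v_i$ omitting at least one index from $I_0$ and at least one from $I_1$ (matching the examples). This gives simpliciality, since $R_0,R_1$ together with such $v_i$ (with $n-2$ of them, one removed from each side) are linearly independent by the uniqueness of the relation. It also shows that the rays are exactly the $v_i$, $R_0$ and $R_1$.

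For strictness, we fix a cone $\kappa=\langle R_0,R_1,v_j : j\in J\rangle$ of $\tilde\sigma$ and compute $\phi_0$ on it. For $x = aR_0 + bR_1 + \sum_J \mu_j v_j$, the condition $x-\lambda c\in\sigma$ becomes $(a-\lambda)R_0 + (b+\lambda)R_1 + \sum\mu_jv_j\in\sigma$. Expanding in the basis $v_i$ and using that some index of $I_0$ is missing from $J$, the binding constraint is $\lambda\le a$ (minimized over the coefficient $\lambda_i$ of a missing $v_i$, which gives exactly $a$ after scaling). Hence $\phi_0 = a$ on $\kappa$, which is linear, and equals (a positive rational multiple of) the coefficient of $R_0$; that coefficient is $\phi_{\rho_0}$ up to the lattice index of $R_0$. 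Symmetrically we get $\phi_1 = b$, which is proportional to $\phi_{\rho_1}$. On cones not containing $R_0$, some index of $I_0$ is absent and the coefficient of $R_0$ is $0$, so $\phi_0$ vanishes there as well, consistent with $\phi_{\rho_0}$. The degenerate cases $\pm c\in\sigma$ reduce directly to \ref{lem:starsubdiv}, as already remarked in the text.

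The main obstacle is making the computation $\max\{\lambda : x-\lambda c\in\sigma\} = a$ rigorous on every cone. This requires exactly the combinatorial description of $\tilde\sigma$ as the cones missing an element of each Radon side, and in particular showing that every point of $\sigma$ admits such a representation. We would prove that description first, by a direct argument: given $x=\sum x_iv_i$, subtract $tR_0$ with $t=\min_{I_0} x_i/\lambda_i$ and $sR_1$ similarly; this is the Radon analogue of the star-subdivision decomposition. We would then check that it agrees with the pullback of the star subdivision from the quotient.
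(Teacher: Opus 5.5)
\textbf{Verdict: correct in substance but by a different route, with one slip in your index partition that needs fixing.}

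\textbf{Comparison with the paper.} The paper never lists the cones of $\tilde\sigma$. It notes that the only non-simplicial faces of the quotient $q(\sigma)$ are those containing $R$, and each of these has exactly one more ray than its dimension, so the star subdivision at $R$ is simplicial. It then shows that $q^{-1}(q(v_i))\cap\sigma$ is either the ray through $v_i$ or $\langle R_0,R_1\rangle$, so preimages of simplicial cones stay simplicial. Finally it reads off $\phi_0,\phi_1$ from their values on rays.

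You instead enumerate the maximal cones as $\langle R_0,R_1,v_j : j\neq p,m\rangle$. On each such cone you show that $\{\lambda : x-\lambda c\in\sigma\}=[-b,a]$, so $\phi_0$ is the $R_0$-coordinate. This costs some extra bookkeeping. You still have to match these cones with the pullback, which amounts to checking that the facets of $q(\sigma)$ avoiding $R$ are exactly $q\langle v_j : j\neq p,m\rangle$. Each is cut out by the functional $-\lambda_m x_p^*+\lambda_p x_m^*$, which kills $c$; from there, the signs of the $p$- and $m$-coefficients show that the preimage of the facet plus $\R_{\ge0}R$ is your cone.

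In return, your route makes explicit something the paper leaves implicit. The paper's final step, reading off $\phi_0,\phi_1$ from their values on rays, presupposes that they are linear on each cone of $\tilde\sigma$. Your computation proves this. It also gives the closed formula $\phi_0(x)=\min_{\lambda_i>0}x_i/\lambda_i$, and it counts the maximal cones, matching both examples.

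\textbf{The slip: the partition must be three-way.} With $I_1=\{\lambda_i\le 0\}$ as you wrote it, you could omit an index $k$ with $\lambda_k=0$ while keeping every $v_j$ with $\lambda_j<0$. Then $R_1$ lies in the span of the retained generators, so your linear-independence claim fails and the cone is degenerate. For example, take $\sigma=\R_{\ge0}^3$ and $c=(1,-1,0)$. The Radon subdivision is trivial, with $\sigma=\langle R_0,R_1,v_3\rangle$, but your recipe also produces $\langle R_0,R_1,v_2\rangle=\langle e_1,e_2\rangle$.

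The fix is to take $p$ from $\{\lambda_i>0\}$ and $m$ from $\{\lambda_i<0\}$, and to keep every $v_k$ with $\lambda_k=0$ in all maximal cones. With that change there are $|\{\lambda_i>0\}|\cdot|\{\lambda_i<0\}|$ maximal cones, and the rest of your argument goes through unchanged.

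A minor point: $q(\sigma)$ is strictly convex whenever $\pm c\notin\sigma$. No general-position hypothesis is needed, and your argument does not use one.
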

\begin{proof}
If $c \in \sigma$ or $-c \in \sigma$, this follows from \ref{lem:starsubdiv}. For simplicity, assume now that $\pm c \not\in \sigma$. Write $W$ for the vector space $\R^n/\R \cdot c$ and $q$ for the quotient map $\R^n \to W$. Note that the image $\tau \coloneqq q(\sigma)$ need not be simplicial, but every non-simplicial cone has contains $R$, and has one more ray than its dimension. Hence the star subdivision $\tilde{\tau}$ of $\tau$ in the point $R$ is simplicial. Now consider the pullback, the Radon subdivision $\tilde{\sigma}$.

We claim that for $i = 1, \dots,n$, the inverse image $q^{-1}(q(v_i)) \cap \sigma$ is either the ray generated by $v_i$, or the cone generated by $R_0, R_1$ if $v_i \in \{R_0,R_1\}$. To prove this claim, without loss of generality assume the inverse image is generated by $v_i$ and some other point $w = v_i + \mu c \in \R^n$ with $\mu \ne 0$. We see we can write $w  = \sum_j \mu_j v_j$ with $\mu_j \geq 0$ and $\mu_i = 0$. The equality $v_i - \sum_{j} \mu_j v_j = -\mu c$ shows $v_i \in \rho_0 \cup \rho_1$, in which case we see $q^{-1}(\R_{\geq 0} R) \cap \sigma = \langle R_0, R_1 \rangle$.

This shows the rays in $\tilde{\sigma}$ are exactly generated by $v_1,\dots,v_n,R_0,R_1$. We see the inverse image of a $d$-dimensional cone $\rho$ in $\tilde{\tau}$ spanned by $A \subset \{q(v_1),\dots,q(v_n),R\}$ is either isomorphic to $\rho$, or to the $(d+1)$-dimensional cone spanned by the $d+1$ rays mapping to $A$. Hence the Radon subdivision $\tilde{\sigma}$ is simplicial. The above claim that $q^{-1}(q(v_i)) \cap \sigma $ is either $v_i$ or $\langle R_0, R_1 \rangle$ also shows that $\phi_0,\phi_1$ are non-zero exactly on the rays $\rho_0,\rho_1$, hence $\phi_0$ and $\phi_1$ are multiples of $\phi_{\rho_0}, \phi_{\rho_1}$ respectively.
\end{proof}

\begin{definition}
\label{def:radonsubdiv}
Let $\Sigma$ be a cone complex. Let $\sigma \in \Sigma$ be a cone, take $c \in \sigma^\gp \setminus 0$ such that for $\tau \subsetneq \sigma$ we have $c \not\in \tau^\gp$. Let $S \subset \Sigma$ denote the star of $\sigma$. Then we define the \emph{Radon subdivision} $\tilde{S}/S$ in $c$ to be on any $\sigma' \supset \sigma$ the Radon subdivision of $\sigma'$ in $c$ as per \ref{def:radonsubdivcone}. We say a subdivision $\tilde{\Sigma}/\Sigma$ is a Radon subdivision if it is simplicial, and furthermore $\tilde{\Sigma}|_S = \tilde{S}$.
\end{definition}

\section{Compatibility of proper pushforward and virtual pullback}\label{sec:push_pull_compatibility}

In this section all algebraic stacks are assumed locally of finite type over a fixed ground field $k$. Recall that for us $\CH_*$ denotes Kresch's Chow groups with $\bb Q$-coefficients; for occasional references to the integral versions (which occur only in this section) we will write $\CH_{\bb Z,*}$. 

In \cite[Theorem 4.1]{manolache-pull} Manolache proves the compatibility of virtual pullback with projective pushforward for a diagram of algebraic stacks; the restriction to projective rather than proper morphisms comes about because, at the time, there was no known pushforward for proper representable morphisms of algebraic stacks. 

Since then, Bae, Schmitt and Skowera have constructed such a pushforward \cite[Appendix B]{BSS-I}. More precisely, let $p\colon X \to Y$ be a proper morphism of Artin stacks.  If $p$ is representable (by algebraic spaces) then there is a pushforward $p_*\colon \CH_{\bb Z, *}(X) \to \CH_{\bb Z, *}(Y)$, and if $p$ is of relatively DM type then there is a pushforward $p_*\colon \CH_*(X) \to \CH_*(Y)$. Moreover, these proper pushforwards are shown \cite[Proposition B.18]{BSS-I} to be compatible with flat pullback and with refined Gysin maps for representable regular local immersions. It is natural to expect that this proper pushforward is compatible with virtual pullback; in this appendix we confirm that expectation.

\begin{theorem}\label{thm:push_pull_compatibility}
Consider a fibre diagram of algebraic stacks 
\begin{equation}
\begin{tikzcd}
  F'' \arrow[d, "q"] \arrow[r, "f''"] \pb & G'' \ar[d, "p"] \\
F' \ar[r, "f'"] \ar[d, "g"] \pb & G' \ar[d, "h"]\\
  F \ar[r, "f"] & G
\end{tikzcd}
\end{equation}
with $f$ of DM type and let $\bb E \to \bb L_{f}$ be a (2-term) POT for $f$. Assume that $F'$ and $F''$ are stratified by global quotients in the sense of \cite{Kresch1999Cycle-groups-fo}. Assume that $p$ is proper. If $p$ is representable then for all $\alpha \in \CH_{\bb Z}(G'')$ we have $f^!p_*\alpha = q_*f^!\alpha$; if $p$ is of relatively DM type then the same holds for all $\alpha \in \CH(G'')$. 
\end{theorem}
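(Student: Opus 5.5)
The plan is to reduce to Manolache's argument \cite[Theorem 4.1]{manolache-pull}, replacing projective pushforward with the proper pushforward of \cite[Appendix B]{BSS-I}.

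Recall the definition of the virtual pullback. Given $\alpha \in \CH(G')$, one first forms the specialisation $\sigma(\alpha) \in \CH(C_{F'/G'})$ to the relative intrinsic normal cone. This cone lies in $h'^{*}\mathfrak E$, where $\mathfrak E = h^1/h^0(\bb E^\vee)$ is the vector bundle stack of the obstruction theory, pulled back to $F'$. One then applies the inverse of flat pullback $0^!_{\mathfrak E}$, i.e.\ Gysin pullback along the zero section of the vector bundle stack. So $f^! = 0^!_{\mathfrak E_{F'}} \circ \sigma_{F'/G'}$, and the same formula defines $f^!$ on $G''$, with $\mathfrak E_{F''} = q^*\mathfrak E_{F'}$. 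It therefore suffices to show two things: that proper pushforward commutes with specialisation to the normal cone, and that it commutes with zero-section Gysin pullback along vector bundle stacks.

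\emph{Step 1: the zero-section step.} Write $\mathfrak E$ locally as a quotient $[E_1/E_0]$ of vector bundles. Then $0^!_{\mathfrak E}$ is the inverse of flat pullback along $\mathfrak E \to F$. Flat pullback commutes with proper pushforward by \cite[Proposition B.18]{BSS-I}, applied to the fibre square $q^*\mathfrak E \to \mathfrak E$ over $q$, so the inverse commutes with pushforward as well. Since $F'$ and $F''$ are stratified by global quotients, the Kresch framework applies and these operations are well defined.

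\emph{Step 2: the specialisation step.} This is done via deformation to the normal cone, which is where I expect the main obstacle. Following Manolache (and \cite{Kresch1999Cycle-groups-fo}), the specialisation map $\sigma$ is built from three operations:
\begin{itemize}
\item flat pullback to $G' \times \bb A^1$ and restriction to $G'\times \bb G_m$;
\item extension over the deformation space $M^\circ_{F'/G'}$;
\item Gysin pullback along the Cartier divisor $\{0\} \hookrightarrow \bb A^1$.
\end{itemize}
Each has a counterpart over $F''$, since the deformation space is functorial and $M^\circ_{F''/G''} = M^\circ_{F'/G'} \times_{G'} G''$. Because $p$ is proper (representable, or of relative DM type), the induced map on deformation spaces is proper of the same kind. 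Each of the three operations commutes with this proper pushforward:
\begin{itemize}
\item flat pullback and open restriction, by \cite[Proposition B.18]{BSS-I};
\item Gysin pullback along the regular (representable) embedding of the special fibre, also by \cite[Proposition B.18]{BSS-I};
\item the localisation sequence, by functoriality of the excision sequence under proper pushforward, which holds on Kresch's groups.
\end{itemize}
The delicate point is that specialisation is defined via the localisation sequence and is well defined only after showing independence of the lift. I will check that pushforward of a lift is a lift, which reduces to compatibility of $p_*$ with restriction to opens.

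Finally, for rational coefficients in the relatively DM case, the same argument goes through, using the $\bb Q$-pushforward of \cite{BSS-I} in place of the integral one.
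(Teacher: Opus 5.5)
You follow the paper's overall plan: run Manolache's argument \cite{manolache-pull}, replacing Kresch's compatibilities for projective pushforward with \cite[Proposition B.18]{BSS-I}. Your deformation-to-the-normal-cone step (flat pullback to $G'\times\mathbb{A}^1$, excision, Gysin pullback along the special fibre) is exactly the paper's Step 1. Your treatment of $0^!_{\mathfrak E}$ as the inverse of flat pullback is also fine.

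\textbf{The gap.} You run the deformation argument for an arbitrary DM-type $f$. Describing $\sigma$ through a deformation space $M^\circ_{F'/G'}$ is the construction available when $f$ is a closed immersion, where $C_{f'}$ is an honest cone. For general DM-type $f$, the specialisation lands in the intrinsic normal cone stack $\mathfrak{C}_{F'/G'}$. You give no construction or reference for a degeneration of $G'\times\mathbb{A}^1$ to $\mathfrak{C}_{F'/G'}$ over $\mathbb{P}^1$ having the excision and base-change properties you use.

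This is why the paper proves the specialisation compatibility only when $f$ is a closed immersion, and explicitly warns that this alone does not yet give the theorem even in that case. It then invokes Manolache's Step 2, which goes through unchanged once the BSS compatibilities are available. That step both passes to general DM-type $f$ and finishes the argument. Your proposal is missing this reduction from general DM-type $f$ to the closed-immersion case; you need to supply it, or else genuinely construct a deformation to the intrinsic normal cone with the required properties.

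\textbf{A smaller inaccuracy.} Even for a closed immersion, $M^\circ_{F''/G''}$ is not equal to $M^\circ_{F'/G'}\times_{G'}G''$. It is only a closed substack of it, just as $C_{F''/G''}\to q^*C_{F'/G'}$ is only a closed immersion, since extended Rees algebras do not commute with non-flat base change.

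What you actually need still holds. The induced map $M^\circ_{F''/G''}\to M^\circ_{F'/G'}$ is a closed immersion followed by a base change of $p$. It is therefore proper, and representable (resp.\ of relative DM type) when $p$ is. It lies over $\mathbb{P}^1$, so restricting it over $\mathbb{A}^1$ and over the special fibre gives the cartesian squares required to apply \cite[Proposition B.18]{BSS-I}. The paper sets it up this way, with $Q$ the composite $C_{f''}\to q^*C_{f'}\to C_{f'}$.
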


\begin{proof}
The proof is essentially identical to that of Manolache, except that in 3 places she refers to \cite{Kresch1999Cycle-groups-fo} for the compatibility of flat and Gysin pullbacks with projective push-forward, and we simply replace those references with ones to \cite[Proposition B.18]{BSS-I} where the same statements are proven for the proper pushforward. For the convenience of the reader we spell out some of the details. To avoid distinguishing between the representable and DM-type cases, we will for the rest of the argument simply write $\CH$ where we mean ``$\CH_\bb Z$ if $p$ is representable, or $\CH = \CH_\bb Q$ if $p$ is relatively DM type". 

Suppose first that $f$ is a closed immersion (not necessarily a regular embedding). 
Writing $C_{f'}$ for the intrinsic normal cone of ${f'}$, Kresch constructs a map $\sigma' \colon \CH(G' ) \to \CH(C_{f'})$. We begin (paralleling Manolache) by proving that the diagram 
\begin{equation}
\begin{tikzcd}
  \CH(G'') \arrow[d, "p_*"] \arrow[r, "\sigma '' "] & \CH(C_{f''}) \ar[d, "Q_*"] \\
\CH(G') \ar[r, "\sigma' "]& \CH(C_{f'})
\end{tikzcd}
\end{equation}
commutes, where $Q$ is the composite of the closed immersion $C_{f''} \to q^*C_{f'}$ with the proper morphism $q^*C_{f'} \to C_{f'}$. 

Recall that $M^\circ_{f'}$ is the deformation to the normal cone, fitting into a diagram 
\begin{equation}
\begin{tikzcd}
\bb A^1 \times G' \arrow[r, "i" ] \arrow[d] & M^\circ_{f'} \ar[d] & C_{f'} \ar[l]\ar[d] \\
\bb A^1 \ar[r] & \bb P^1 & 0. \ar[l, "s"]
\end{tikzcd}
\end{equation}
The morphism $\sigma'$ is defined as the composite 
\begin{equation}
\CH_*(G') \to \CH_{* + 1}(G' \times \bb A^1) \xleftarrow{\sim} \frac{\CH_{* + 1}(M^\circ_{f'})}{\CH_{* + 1}(C_{f'})} \xrightarrow{s^!} \CH_*(C_{f'})
\end{equation}
where the first morphism is flat pullback, and the second follows from excision.

The induced map $M^\circ_{f''} \to M^\circ_f$ is proper, DM type (and representable if $p$ was), and restricts to a map $C_{f''} \to C_{f'}$. It therefore suffices to check that each of the squares in the following diagram commutes: 
\begin{equation}
\begin{tikzcd}
\CH_*(G'') \ar[r] \ar[d, "p_*"]&  \CH_{* + 1}(G'' \times \bb A^1) \ar[d, "p_*"]&  \frac{\CH_{* + 1}(M^\circ_{f''})}{\CH_{* + 1}(C_{f''})} \ar[l, "\sim"] \ar[r, "s^!"] \ar[d, "Q_*"]&  \CH_*(C_{f''})\ar[d, "Q_*"]\\
\CH_*(G') \ar[r] &  \CH_{* + 1}(G' \times \bb A^1) &  \frac{\CH_{* + 1}(M^\circ_{f'})}{\CH_{* + 1}(C_{f'})} \ar[l, "\sim"] \ar[r, "s^!"] &  \CH_*(C_{f'}). 
\end{tikzcd}
\end{equation}
Proceeding from left to right, the commutativity of the squares follow in turn from compatibility of proper pushforward with flat pullback, flat pullback, and Gysin pullback, all of which are proven in \cite[Proposition B.18]{BSS-I}. 

At this point we have completed the analogue of Step 1 in Manolache's argument (we warn the reader that this does \emph{not} mean that we have yet proven the theorem in the case where $f$ is a closed immersion). Her Step 2 (which completes the proof for general $f$, including closed immersions) now goes through completely unchanged in our setting. 
\end{proof}

\section{Pushing forward homological piecewise polynomial functions}
\label{sec:push_hpp}

Let $X$ be a finite cone stack with boundary, meaning that all but finitely many cones in the ambient Artin fan lie in the boundary. If $X$ is locally free then there is a natural isomorphism $\sPP_*(X) \longsimeq \CH_*(X)$ defined in \cite[Theorem 59]{RPSS_log_taut}. The goal of this section is to extend this as far as possible in the non-smooth case. 

To construct a map, choose a locally free log alteration $\tilde X \to X$. Pull back the strict piecewise polynomial to $\tilde X$ and push forward the resulting Chow class:
\begin{equation}\label{eqn:Psimapdef}
    \Psi : \sPP_*(X) \to \sPP_*(\tilde X) \isom \CH_*(\tilde X) \to \CH_*(X). 
\end{equation}
By the projection formula, the composite $\sPP_*(X) \to \CH_*(X)$ is independent of choice.

\begin{remark}\label{rem:Psicompatiblepushforwardpullback}
    Let $p : X \to Y$ be a strict, representable map of finite cone stacks with boundary which is a closed immersion (resp.\ smooth). Then 
    \[
        \Psi_Y \circ p_* = p_* \circ \Psi_X \qquad 
        (\text{resp.\ } \Psi_X \circ p^* = p^* \circ \Psi_Y).
    \]
    If $X, Y$ are already locally free, the statements hold by naturality of the isomorphisms (28) and Proposition 67 in \cite{RPSS_log_taut}. Otherwise, choose a locally free log alteration $\tilde Y \to Y$ and use $\tilde X = X \times_Y^{\ell} \tilde Y$ to reduce to that case. 
\end{remark}

We do not expect the map $\Psi$ is surjective, but this section will prove the following. 
\begin{proposition}\label{prop:Psiinj}
If $X$ is a finite cone stack, the map $\Psi : \sPP_*(X) \to \CH_*(X)$ of \ref{eqn:Psimapdef} is injective. 
\end{proposition}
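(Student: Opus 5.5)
The plan is to reduce injectivity to the locally free case, where $\Psi$ is an isomorphism by \cite[Theorem 59]{RPSS_log_taut}, using a pullback map on Chow groups that splits the pushforward.

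\textbf{Step 1: choose a good alteration.} Fix a locally free log alteration $\pi\colon \tilde X \to X$; we may take it to be a subdivision followed by a root construction, so that it is proper, of relative log dimension $0$ and of DM type. The first complication is that $X$ itself is not locally free, so there is no Gysin pullback $\pi^!\colon \CH_*(X) \to \CH_*(\tilde X)$ directly.

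\textbf{Step 2: construct a left inverse through the ambient Artin fan.} For a finite cone stack $X$, the ambient Artin fan $\Acal_X$ is a smooth algebraic stack. The map $\pi$ is the base change of a map $\tilde{\Acal} \to \Acal_X$ of Artin fans, possibly after refining so that $\tilde{\Acal}$ is smooth. This map is lci, being a map between smooth stacks of DM type. It therefore gives a refined Gysin pullback
\[
\pi^!\colon \CH_*(X) \to \CH_*(\tilde X)
\]
for $X \subset \Acal_X$ closed.

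\textbf{Step 3: compute $\pi^!\circ\Psi$.} For $f\in\sPP_*(X)$ we have $\Psi(f) = \pi_*\Psi_{\tilde X}(\pi^*f)$. I would verify that
\[
\pi^!\pi_*\Psi_{\tilde X}(\pi^*f) = \Psi_{\tilde X}(\pi^*f),
\]
or at least that this holds up to classes supported on cones that do not affect $\pi^*f$. The argument combines two ingredients:
\begin{itemize}
\item compatibility of the proper pushforward with Gysin pullback (\ref{thm:push_pull_compatibility} and \cite[Proposition B.18]{BSS-I}), applied to the fibre square $\tilde X\times_X\tilde X$;
\item the fact that on Artin fans both operations are computed combinatorially by \cite[Prop 67, 76]{RPSS_log_taut}, and that pullback followed by pushforward along a subdivision is the identity on strict piecewise polynomials coming from the base.
\end{itemize}

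\textbf{Step 4: conclude injectivity.} Once Step 3 is established, if $\Psi(f)=0$ then $\Psi_{\tilde X}(\pi^* f)=0$. Hence $\pi^*f=0$, since $\Psi_{\tilde X}$ is an isomorphism on $\tilde X$. Finally $f=0$, since pullback of strict piecewise polynomials along a surjective subdivision or root is injective.

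\textbf{Main obstacle.} The hardest step is Step 3. The excess part of $\tilde X\times_X \tilde X$ can contribute extra terms, so $\pi^!\pi_*$ need not be the identity on $\CH_*(\tilde X)$. If a direct argument fails, I would instead reduce to a stacky point. Using excision and the finite stratification of $X$ by locally closed cone strata $B\G_m^r/G$ (with boundary), it suffices to prove injectivity for a single cone $\sigma$. Here the map $\Psi$ goes from strict piecewise polynomials on $\sigma$ vanishing on the boundary into $\CH_*(\Acal_\sigma)$. Its injectivity there can be checked by equivariant localisation, identifying $\CH_*$ of the toric Artin fan with polynomial data on the torus-fixed cones.
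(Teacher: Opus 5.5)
Your proposal has a genuine gap: the main route breaks at Step 2. The Artin fan $\Acal_X$ is smooth exactly when $X$ is locally free, and in that case there is nothing to prove, since $\Psi$ is already an isomorphism by \ref{thm:prrshomPP}. The proposition is about the non-locally-free case, where $\Acal_X$ is singular and $\tilde{\Acal}\to\Acal_X$ is typically not lci. For example, take $\sigma$ the cone over a square. Then $\Acal_\sigma$ is the torus quotient of the threefold node $xy=zw$, and subdividing along a diagonal gives the small resolution. The pullback of $d(xy-zw)$ vanishes along the exceptional $\P^1$, so the relative cotangent complex has Tor-amplitude $[-2,0]$. Hence there is no refined Gysin map $\pi^!\colon\CH_*(X)\to\CH_*(\tilde X)$; the construction of \ref{rmk:stackylcipb} needs both stacks smooth. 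Even granting some such map, the identity in Step 3 is the whole content and does not follow formally from birationality. With $X=B\G_m^3$ the closed point of $\Acal_\sigma$, the alteration $\tilde X$ contains the stratum $B\G_m^2$ of the diagonal wall, which has dimension $-2>-3=\dim X$.

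Your fallback is the right strategy and is essentially the paper's proof: induct on $|X|$, remove an open point $U=[B\G_m^n/G]$ with closed complement $i\colon Z\to X$, and reduce to $U$. As stated, though, it omits two inputs.

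\emph{Injectivity of $i_*$.} Excision only gives the right exact sequence $\CH_*(Z)\to\CH_*(X)\to\CH_*(U)\to 0$. The diagram chase needs more: if $\Psi_X(f)=0$ then $f|_U=0$, so $f$ comes from some $g\in\sPP_*(Z)$ with $i_*\Psi_Z(g)=0$, and you need $i_*$ injective to conclude. The paper gets this from the vanishing of the boundary map $\CH_*(U,1)\to\CH_*(Z)$, via \cite[Proposition 60]{RPSS_log_taut} and arguments of \cite{BSS-II}.

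\emph{The point case.} Here the target is $\CH_*(U)$, not $\CH_*(\Acal_\sigma)$. Chow groups of a singular Artin cone are only controlled by polynomials after localising, so the localisation must be done explicitly. For $G$ trivial, set $S=\sPP_*(\Acal_P)=\Q[x_1,\dots,x_n]$ and $K=\mathrm{Frac}(S)$. The projection formula through a resolution gives $\Psi_{\Acal_P}(f)=f(h_1,\dots,h_n)\cap[\Acal_P]$. The map $f\mapsto f(h)|_U\cap[U]$ identifies $S\cong\CH_*(U)$, and $j_*\colon\CH_*(U)_K\to\CH_*(\Acal_P)_K$ is an isomorphism \cite{edidingrahambottlocalization}. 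Moreover $[\Acal_P]=e\cdot j_*[U]$ with $e\neq0$ \cite{Brion1998}. Hence $S\to\CH_*(\Acal_P)_K$ is injective. By \ref{rem:Psicompatiblepushforwardpullback}, $j_*\circ\Psi_U$ followed by localisation is the restriction of this map to $\sPP_*(U)\subseteq S$, so $\Psi_U$ is injective. A nontrivial $G$ is handled by taking invariants with $\Q$-coefficients.
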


\begin{proof}
We induct on the cardinality of $|X|$. The minimal points of $|X|$ are open: this can be checked on an \'etale cover by Artin cones, where open sets are those closed under generization. Let $U \in X$ be some such point, so $U = [B\GG_m^n/G]$ for $n \in \NN$ and $G$ a finite group. We have a diagram 
\begin{equation}\label{eqn:commdiaglocchernsPPCH}
    \begin{tikzcd}
    0 \ar[r] &  \CH_*(Z)  \arrow[r]  & \CH_*(X) \ar[r] & \CH_*(U) \ar[r] & 0 \\
    0 \ar[r] & \sPP_*(Z) \ar[r] \ar[u, "\Psi_Z"] & \sPP_*(X) \ar[r] \ar[u, "\Psi_X"] & \sPP_*(U) \ar[u, "\Psi_U"]
    \end{tikzcd}
\end{equation}
where the top row is exact by \cite[Proposition 60]{RPSS_log_taut} because the map $\CH_*(U, 1) \to \CH_*(Z)$ is zero, using arguments from \cite{BSS-II}. The vertical map $\Psi_Z$ is injective by our induction hypothesis. To show that $\Psi_X$ is injective, it suffices (by a small diagram chase) to show $\Psi_U$ is injective. We have thus reduced to the following \ref{lem:Psimapstackypointcase}. 
\end{proof}

\begin{lemma}\label{lem:Psimapstackypointcase}
    Let $G$ be a finite group acting on a sharp, f.s.\ monoid $P$ with $n = {\rm rank} P^\gp$ and write $X \coloneqq \bra{\af{P}/G}$ for the stack quotient. Let $U \coloneqq [B\GG_m^n/G] \in [\af{P}/G]$ be the unique stacky closed point.  The map $\Psi_U : \sPP_*(U) \to \CH_*(U)$ is injective. 
\end{lemma}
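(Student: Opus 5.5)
We first describe both sides explicitly. The cone stack $U$ is the single open cone $\sigma^\circ/G$, where $\sigma = \on{Hom}(P,\R_{\ge 0})$, and it has empty boundary inside the star. Hence $\sPP_*(U)$ is the ring of $G$-invariant polynomial functions on $\sigma^\gp = \on{Hom}(P^\gp,\Q)$, i.e.\ $\Q[x_1,\dots,x_n]^G$ with the $x_i$ a basis of $P^\gp\otimes \Q$ on which $G$ acts linearly. On the Chow side, $\CH_*(U) = \CH_*([B\G_m^n/G]) = \CH^*_{\G_m^n \rtimes G}(\pt)$ with $\Q$-coefficients. Since rational Chow groups of a quotient by a finite group are the invariants of the cover, this equals $\CH^*(B\G_m^n)^G = \Q[x_1,\dots,x_n]^G$. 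Here the $x_i = c_1$ of the tautological line bundles $\ca O(m)$ for $m \in P^\gp$, and $G$ acts through its action on $P^\gp$.

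Next we identify $\Psi_U$. On $U$ every strict PP function is already polynomial, so no alteration is needed. After pulling back along the finite étale cover $B\G_m^n \to U$, $\Psi$ is the map of \cite[Theorem~59]{RPSS_log_taut} for the locally free cone $\R_{>0}^n$ (after choosing a basis of $P^\gp$, or a lattice refinement that makes it free). That map sends a linear function $m$ to $c_1(\ca O(m))$ and is multiplicative. So, up to the usual shift in grading, it is the identity $\Q[x_1,\dots,x_n] \to \CH^*(B\G_m^n)$. Compatibility of $\Psi$ with pullback along this étale (smooth, strict) map is given by \ref{rem:Psicompatiblepushforwardpullback}.

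Injectivity then follows from a commutative square. Its top row is $\sPP_*(U) \hookrightarrow \sPP_*(B\G_m^n)$, the inclusion of invariants. Its bottom row is $\CH_*(U) \to \CH_*(B\G_m^n)$, the flat pullback. Its vertical maps are the two $\Psi$'s, and the one on $B\G_m^n$ is an isomorphism. The composite $\sPP_*(U) \to \CH_*(B\G_m^n)$ is therefore injective, so $\Psi_U$ is injective. This argument does not need the rational pullback $\CH_*(U)\to\CH_*(B\G_m^n)$ to be injective, although with $\Q$-coefficients it is, via the pushforward and division by $|G|$.

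The main care point is making the isomorphism $\CH^*(B\G_m^n) \cong \Q[x_1,\dots,x_n]$ compatible with $\Psi$. When $P$ is not free, $\af{P}$ is not smooth, but $U$ itself is $[B\G_m^n/G]$ and only $P^\gp$ matters. Concretely, we should check that $\Psi$ of a monomial in linear functions is the corresponding product of first Chern classes of the line bundles $\ca O(m)$. This is what \cite{RPSS_log_taut} establishes on locally free cones, and it can be transported via a $G$-equivariant lattice-preserving identification on the open stratum.
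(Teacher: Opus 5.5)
There is a genuine gap. Your argument only works when $P\cong\N^n$, and that case is already covered by \cite[Theorem~59]{RPSS_log_taut}. The lemma exists to handle non-free $P$, where $\af{P}$ is singular.

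\textbf{What $U$ and $\sPP_*(U)$ actually are.} As an idealised Artin fan, $U\subset[\af{P}/G]$ corresponds to the cone stack with boundary $(\sigma,\partial\sigma)/G$, where $\sigma=\on{Hom}(P,\R_{\ge 0})$. This depends on $P$, not only on $P^\gp$, and its boundary is all of $\partial\sigma$, not empty. So $\sPP_*(U)$ is not $\Q[x_1,\dots,x_n]^G$. A homological PP must vanish on every facet $F$ of $\sigma$, so $\sPP_*(U)$ consists of the $G$-invariant multiples of $\prod_F \ell_F$, where $\ell_F$ is the linear form cutting out $F$.

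\textbf{Why the identification of $\Psi_U$ fails.} When $P$ is free, $\prod_F\ell_F=x_1\cdots x_n$ and this is the shift you allude to. If $\sigma$ has $r>n$ facets, for example the cone over a square with $n=3$ and $r=4$, then every nonzero element of $\sPP_*(U)$ has degree $\ge r>n$. So $[U]$ is not in the image, and $\Psi_U$ cannot be ``the identity up to shift''.

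The underlying reason is how $\Psi_U$ is defined: pull back to a locally free log alteration and push forward. For non-simplicial $\sigma$ this alteration must genuinely subdivide, so $\tilde U\to U$ has positive-dimensional fibres (compare the $\P^1$-bundle in the paper's $\min(x,y)$ example). No choice of basis, lattice refinement, or ``lattice-preserving identification on the open stratum'' reduces you to $\R^n_{>0}$.

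What is true, for $G$ trivial, is
\[\Psi_U(f)=(ef)(h_1,\dots,h_n)\cap[U],\]
where $e$ is the equivariant multiplicity of $\af{P}$ at its fixed point (this is the point of the remark following the lemma). One has $e=(x_1\cdots x_n)^{-1}$ only in the free case. In general $e\prod_F\ell_F$ is a polynomial of degree $r-n$, and its non-vanishing is exactly the content of the lemma. Your proposal has no way to control it.

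\textbf{How the paper avoids computing $\Psi_U$.}
\begin{enumerate}
\item It embeds $U$ into $X=\af{P}$, which has empty boundary, via $j$, and uses $j_*\circ\Psi_U=\Psi_X|_{\sPP_*(U)}$.
\item It identifies $\Psi_X(f)=f(h)\cap[X]$ using the projection formula and birationality of the alteration.
\item After tensoring with $K=\mathrm{Frac}(S)$, $j_*$ becomes an isomorphism (Edidin--Graham localization), and $[X]=e\,j_*[U]$ with $e\neq 0$ (Brion).
\item Hence $f\mapsto f(h)\cap[X]$ is injective on $S\supseteq\sPP_*(U)$, and so $\Psi_U$ is injective.
\end{enumerate}
Your final step, passing to $G$-invariants with $\Q$-coefficients, matches the paper's and is fine.
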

Recall that we use $\Q$ coefficients in $\sPP_*$ and $\CH_*$.
\begin{proof}
    We begin with the case where $G$ is trivial. Write $j : U \to X$ for the inclusion and choose a basis $x_1, \cdots, x_n \in P^\gp$. Write 
    \[
        S \coloneqq \sPP_*(X) = \Sym^* P^\gp \otimes \Q = \Q[x_1, \cdots, x_n], 
    \] 
    \[
    K \coloneqq {\rm Frac}(S) = \Q(x_1, \cdots, x_n)
    \] 
    for the ring $S$ of piecewise polynomial functions on $X$ and its fraction field $K$. Abbreviate $M_K \coloneqq M \otimes_S K$ for an $S$-module $M$. Let $h_i = c_1(\OO_X(x_i)) \in \CH^*(X)$ be the operational Chern classes of the corresponding line bundles on $X$. 

    We recall a few classical results.  
    \begin{itemize}
        \item Pushforward along the closed immersion $j : U \to X$ induces an isomorphism $j_* : \CH_*(U)_K \longsimeq \CH_*(X)_K$ of Chow groups tensored with $K$ \cite[proof~of~Theorem 1]{edidingrahambottlocalization}. 
        \item The fundamental classes differ by an ``equivariant multiplicity'' $e \in K$ in $\CH_*(X)_K$: $e \cdot j_*[U] = [X]$ \cite[Corollary 15]{Brion1998}. This multiplicity is nonzero $e \neq 0$ because $[X]$ is non-zero.
    \end{itemize}
    The map $S = \sPP_*(X) \to \CH_*(U)$ sending 
    \[f(x_1, \cdots, x_n) \mapsto f(h_1, \cdots, h_n)|_U \cap [U]\] 
    is an isomorphism \cite[Remark after Proposition 1]{edidingrahambottlocalization}. The map 
    \begin{equation}\label{eqn:normalizedchernisomorphism}
        \begin{split}
            \sPP_*(X)_K &\longsimeq \CH_*(X)_K       \\
            f(x_1, \cdots, x_n) &\mapsto f(h_1, \cdots, h_n) \cap [X]
        \end{split}
    \end{equation}
    can also be seen to be an isomorphism, as 
    \[f(h_1, \cdots, h_n) \cap [X] = e \cdot j_*(f(h_1, \cdots, h_n)|_U \cap [U])\] 
    by the projection formula and the above. 

    We identify the map $\Psi_X$ with \ref{eqn:normalizedchernisomorphism}. Let $p : B \to X$ be a log alteration by a locally free Artin fan. If $f \in S$ is a piecewise polynomial, then  
    \begin{equation*}
        \begin{split}
            \Psi_X(f) &\coloneqq p_*\Psi_B(f|_B) \\
                &=p_*(f(h_1, \cdots, h_n)|_B \cap [B])     \\
                &=f(h_1, \cdots, h_n) \cap [X]
        \end{split}
    \end{equation*}
    by the projection formula and birationality of $p$. 

    We have a commutative diagram by \ref{rem:Psicompatiblepushforwardpullback}
    \[
    \begin{tikzcd}
        \sPP_*(U) \ar[rr, "\Psi_U"] \ar[d]       &       &\CH_*(U) \ar[d, "j_*"]       \\
        \sPP_*(X) \ar[r]       &\sPP_*(X)_K \ar[r, "\sim"]        &\CH_*(X)_K.
    \end{tikzcd}
    \]  
    As the bottom horizontal composite is injective and $\sPP_*(U) \subseteq \sPP_*(X)$ is a subgroup, $\Psi_U$ is indeed injective. 

    Now let $G$ be any finite group. Recalling that we work with $\Q$-coefficients, passing to $B\bb G_m^n/G$ simply means taking $G$-invariants both for $\sPP_*$ and $\CH_*$. In particular, this preserves injectivity. 
\end{proof}

\begin{remark}
    Beware that $\Psi_U$ is not the map sending a piecewise polynomial $f \in \Sym^* P^\gp \otimes \Q$ to 
    $f(h_1, \cdots, h_n) \cap [U]$,
    using notation from the proof of \ref{lem:Psimapstackypointcase}. 
    This is because $j_*[U]$ and $[X]$ differ by $e \in K^*$. 
\end{remark}

\section{Logarithmic virtual pullback}
\label{sec:log_vir_pull}

All stacks in this section are locally of finite type over an arbitrary field $k$. The results hold for Chow groups with either $\Z$ or $\Q$ coefficients. 

In this appendix we will define a log Gysin pullback for morphisms endowed with a log perfect obstruction theory. Our definition is inspired by \cite{sammolchointersectiontheorypreprint,Bae2022Chow-rings-I,adeelkhanpb}.

Let $X$ be a quasicompact log algebraic stack. Let $\Alt(X)$ denote the poset of log alterations of $X$, and let $\cal C_X \subseteq \Alt(X)$ be the subposet of log alterations $\tilde X \to X$ pulled back from a map $\scr B \to \af{X}$ to the Artin fan $\af{X}$ of $X$ such that $\scr B$ admits a strict map to $\bra{\Aff^N/\GG_m^N}$ for some $N \in \NN$. 
The subposet ${\cal C}_X$ is cofinal \cite[Theorem 4.6.2]{Abramovich2017Boundedness-of-}, \cite[Appendix B]{drink}, \cite[Theorem 11]{kkmstoroidalembeddings}. By \cite[Proposition B.5]{drink}, a map of log algebraic stacks $T \to \tilde X$ with $\tilde X \in \cal C_X$ fits in a unique commutative square 
\[
\begin{tikzcd}
    T \ar[r] \ar[d]       &\tilde X \ar[d]       \\
    \af{T} \ar[r]      &\af{\tilde X}
\end{tikzcd}
\]
We shall frequently use this functoriality of Artin fans without comment. 

Let $f : X \to Y$ be a quasicompact morphism with $Y$ quasicompact. Write $\Alt(f)$ for the category with objects $\tilde f$ commutative squares 
\[
\begin{tikzcd}
    \tilde X \ar[r, "{\tilde f}"] \ar[d]        &\tilde Y \ar[d]       \\
    X \ar[r]       &Y
\end{tikzcd}
\]
where the vertical maps $\tilde X \to X, \tilde Y \to Y$ are log alterations. Write $\cal C \subseteq \Alt(f)$ for the subposet of $\tilde f : \tilde X \to \tilde Y$ such that $\tilde X, \tilde Y$ belong to ${\cal C}_X, {\cal C}_Y$ as above. Remark that $\cal C \subseteq \Alt(f)$ is cofinal, as are its images in $\Alt(X), \Alt(Y)$. 

Suppose $f : X \to Y$ is endowed with a log perfect obstruction theory $\Cl{f} \subseteq E$ and let $\tilde f : \tilde X \to \tilde Y$ lie in $\cal C \subseteq \Alt(f)$. We will first construct a Gysin pullback $f^!: \CH(\tilde{Y}) \to \CH(\tilde{X})$ and in \ref{thm:loggysinlogchowpullback} show that this induces a pullback on log Chow. Endow $\tilde f$ with the induced log perfect obstruction theory 
\begin{equation}\label{eqn:logaltpblogpot}
    \Cl{\tilde f} \subseteq \Cl{f}|_{\tilde X} \subseteq E|_{\tilde X}
\end{equation}
using the closed immersions of \cite[Remarks 2.3, 2.14]{herrthesis}. 

Choose Artin fans $\scr B, \scr C$ admitting strict maps to free Artin fans 
\[
	\scr B \to \bra{\Aff^N/\GG_m^N}, \qquad \scr C \to \bra{\Aff^{N'}/\GG_m^{N'}} \qquad N, N' \in \NN
\]
that fit in a commutative diagram 
\[
\begin{tikzcd}
    \tilde X \ar[r, "g"] \ar[dr] \ar[rr, bend left=30, "{\tilde f}"]       &\tilde W \ar[r] \ar[d] \lpbstrict       &\tilde Y \ar[d]       \\
            &\scr B \ar[r, "\psi", swap]         &\scr C
\end{tikzcd}
\]
with $\tilde X \to \scr B, \tilde Y \to \scr C$ strict. Such Artin fans exist and can be taken to be $\scr B = \af{\tilde X}$, $\scr C = \af{\tilde Y}$ because $\tilde f \in \cal C$. Write $\tilde W$ for the f.s.\ pullback and $g, \psi$ for the labelled arrows. 

Note that $g : \tilde X \to \tilde W$ is strict and $\tilde W \to \tilde Y$ is log \'etale. Log perfect obstruction theories on $\tilde f$ are equivalent to ordinary perfect obstruction theories on $g$ by the isomorphism \cite[Proposition 2.5]{herrthesis}
\[
    \Cl{\tilde X/\tilde Y} = C_{\tilde X/\tilde W}.
\]

Define a pullback map $f^!$ as the composite 
\begin{equation}\label{eqn:levelwiseloggysin}
    \CH(\tilde Y) \overset{\psi^!}{\longrightarrow} \CH(\tilde W) \overset{g^!}{\longrightarrow} \CH(\tilde X),
\end{equation}
where $g$ the Gysin map coming from the perfect obstruction theory $E$ and $\psi^!$ is described in the next remark. 

\begin{remark}\label{rmk:stackylcipb}
    Let $s : B \to C$ be a morphism of smooth algebraic stacks. Let $V \to C$ be a morphism from an algebraic stack $V$ and write $W \coloneqq V \times_C B$ for the pullback. We need a pullback map 
    \[
        s^! : \CH(V) \to \CH(W)
    \]
    akin to l.c.i.\ pullback, but $s$ need not be representable and its cotangent complex may have cohomology in positive degrees. 
    
    Factor through the graphs of the maps
    \[
    \begin{tikzcd}
        W \ar[r] \ar[d] \pb       &V \times B \ar[r] \ar[d] \pb         &V \ar[d]      \\
        B \ar[r, "{s'}", swap]       &C \times B \ar[r, "{s''}", swap]         &C.
    \end{tikzcd}
    \]
    Now $s' = \Gamma_s$ is representable and l.c.i.\ and $s''$ is smooth, so we can take the composite $s^! \coloneqq {s'}^! \circ {s''}^*$ of the flat and l.c.i.\ pullbacks. 
\end{remark}

We use these ``levelwise" log pullbacks \ref{eqn:levelwiseloggysin} to define a log Gysin map on log Chow before checking that these homomorphisms are independent of choices and well-defined.

\begin{theorem}\label{thm:loggysinlogchowpullback}
    Let $f : X \to Y$ be a quasicompact morphism of quasicompact log algebraic stacks. Suppose $f$ is equipped with a log perfect obstruction theory $\Cl{f} \subseteq E$, and pull it back to all log alterations $\tilde f \in \Alt(f)$ as in \ref{eqn:logaltpblogpot}. The maps \ref{eqn:levelwiseloggysin} are independent of the choice of $\scr B \to \scr C$ and compatible with restriction to other log alterations $\tilde f$, inducing a morphism on log Chow 
    \begin{equation}\label{eqn:potpullback}
        f^! \coloneqq \colim_{\tilde f \in \cal C \subseteq \Alt(f)} \tilde f^! : \LogCH_*(Y) \to \LogCH_*(X).
    \end{equation}
    If $\tilde f \in \Alt(f)$ is a log alteration of $f$, it induces the same map on log Chow $f^! = \tilde f^!$. 
\end{theorem}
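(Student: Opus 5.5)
The statement to prove is \ref{thm:loggysinlogchowpullback}: the levelwise pullbacks \ref{eqn:levelwiseloggysin} are independent of the choices made, and they are compatible with one another, so they descend to log Chow. I would argue in three steps.

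\textbf{Step 1: independence of the choice of $\scr B \to \scr C$.} Fix $\tilde f \in \cal C$ and consider two choices of Artin fans, $\scr B_1 \to \scr C_1$ and $\scr B_2 \to \scr C_2$.

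By functoriality of Artin fans \cite[Proposition B.5]{drink}, both choices receive maps from $\af{\tilde X} \to \af{\tilde Y}$. It therefore suffices to compare an arbitrary choice $\scr B \to \scr C$ with the canonical one $\af{\tilde X} \to \af{\tilde Y}$. This gives a commutative diagram with $\tilde W_0 = \tilde Y \times_{\af{\tilde Y}} \af{\tilde X}$ and $\tilde W = \tilde Y \times_{\scr C} \scr B$, together with a map $\tilde W_0 \to \tilde W$. This map is a pullback of $\af{\tilde X} \to \scr B \times_{\scr C} \af{\tilde Y}$, which is a map of smooth stacks and is log étale.

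Since $g_0 : \tilde X \to \tilde W_0$ and $g : \tilde X \to \tilde W$ are both strict, the induced perfect obstruction theories agree under $\Cl{\tilde f} = C_{\tilde X/\tilde W}$. I would then use functoriality of virtual pullback for the composite $\tilde X \to \tilde W_0 \to \tilde W$ \cite[Theorem 4.8]{manolache-pull}. The required compatibility datum comes from the isomorphism $\LL_{\tilde X/\tilde W} \simeq \LL_{\tilde X/\tilde W_0}$ together with the fact that $\tilde W_0 \to \tilde W$ is étale on the strict locus.

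The substantive point in this step is $\psi_0^! = \iota^! \circ \psi^!$ for the stacky lci pullbacks of \ref{rmk:stackylcipb}. Here $\iota$ is the lci map between the smooth stacks $\af{\tilde X}$ and $\scr B\times_{\scr C}\af{\tilde Y}$. This identity follows from functoriality of lci pullbacks for the graph factorizations, because smooth pullback commutes with refined Gysin maps.

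\textbf{Step 2: compatibility with refinement.} Let $\tilde f' \to \tilde f$ be a further log alteration in $\cal C$, with maps $a: \tilde X' \to \tilde X$ and $b: \tilde Y'\to\tilde Y$. The transition maps on log Chow are the Gysin maps along $\af{\tilde X'}\to\af{\tilde X}$ and $\af{\tilde Y'}\to\af{\tilde Y}$. I need to show $\tilde f'^!\circ b^! = a^!\circ \tilde f^!$.

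For this, I would choose for $\tilde f'$ the Artin fan data obtained by pulling back along $\af{\tilde Y'}\to\af{\tilde Y}$. The two composites then become, respectively, stacky lci pullbacks followed by a virtual pullback $g^!$, along two routes through a common cube. Commutativity then reduces to two facts:

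(a) Virtual pullback commutes with lci Gysin pullback. This is \cite[Theorem 4.3]{manolache-pull}, extended to the non-representable stacky maps via the graph trick of \ref{rmk:stackylcipb}.

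(b) Functoriality of stacky lci pullbacks for a commuting square of smooth Artin fans, since lci pullbacks compose.

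I must also check that the obstruction theory $\Cl{\tilde f'}\subseteq E|_{\tilde X'}$ matches the pullback obstruction theory of $g$ along the fs base change. This holds because $\tilde X'\to \tilde W\times_{\tilde W}\tilde W'$ is strict and log étale.

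\textbf{Step 3: passage to the colimit.} Cofinality of $\cal C$ in $\Alt(f)$, together with Steps 1 and 2, gives a well-defined colimit map $f^!$. The final claim, that $f^! = \tilde f^!$ for any $\tilde f\in\Alt(f)$, follows because $\cal C_{\tilde f}$ is cofinal in $\cal C_f$ and the obstruction theories agree by construction \ref{eqn:logaltpblogpot}.

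The main obstacle I expect is Step 2(a). The square involving the virtual pullback $g^!$ and the Artin fan pullback is a fs fibre square, and fs fibre squares need not be fibre squares of underlying stacks. My first attempt would be to arrange this by using locally free (smooth) Artin fans and integral, saturated maps between them after further refinement; cofinality of $\cal C$ permits such refinements. Once the square is an honest fibre square, Manolache's commutation applies directly.
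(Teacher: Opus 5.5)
Your outline is the paper's proof. You compare an arbitrary choice of Artin fans with the canonical $\af{\tilde X}\to\af{\tilde Y}$ via functoriality of Artin fans. You then treat a refinement $\tilde f'\to\tilde f$ through the cube of Artin fans, using two facts: lci pullbacks along the commuting but non-cartesian bottom face compose, and virtual pullback commutes with Gysin pullback \cite[Theorem 4.3]{manolache-pull}.

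\textbf{The step that fails.} Your remedy for the obstacle you flag in Step 2(a) does not work. You cannot make the maps between the Artin fans integral and saturated by refining. The transition maps $\scr B'\to\scr B$ are subdivisions, and a non-trivial subdivision is never integral: integral plus log \'etale would make it flat, and a flat proper birational map is an isomorphism. Further refinement only produces more maps of the same kind.

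\textbf{What you are missing.} Strictness already does the job. Set $\tilde W=\tilde Y\times_{\scr C}\scr B$ and $\tilde W'=\tilde Y'\times_{\scr C'}\scr B'$, and use $\tilde Y'=\tilde Y\times_{\scr C}\scr C'$. Pasting fs pullbacks gives $\tilde W'=\tilde Y\times_{\scr C}\scr B'=\tilde W\times_{\scr B}\scr B'$. Since also $\tilde X'=\tilde X\times_{\scr B}\scr B'$, the square with $g'\colon\tilde X'\to\tilde W'$ over $g\colon\tilde X\to\tilde W$ is an fs pullback.

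An fs fibre product in which one leg is strict is the fibre product of underlying stacks, and here $g$, $\tilde X\to\scr B$ and $\tilde W\to\scr B$ are all strict. Hence:
\begin{itemize}
\item the square is honestly cartesian with no further refinement, so Manolache's commutation applies directly;
\item $\tilde X'\to\tilde X\times_{\tilde W}\tilde W'$ is an isomorphism, not merely strict and log \'etale, which is what makes the obstruction theory on $g'$ literally the base change of that on $g$.
\end{itemize}

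\textbf{A smaller point in Step 1.} The same idea sharpens your ``log \'etale'' and ``\'etale on the strict locus''. The maps $\af{\tilde X}\to\scr B$ and $\af{\tilde Y}\to\scr C$ are themselves strict. Their strict locus is open, and it contains the closed points of the Artin fans, since these lie in the images of the strict maps from $\tilde X$ and $\tilde Y$. Strict maps of Artin fans are \'etale, and several things follow:
\begin{itemize}
\item $\scr B\times_{\scr C}\af{\tilde Y}$ is \'etale over $\scr B$, which is what actually justifies your claim that it is smooth;
\item $\iota$ and $\tilde W_0\to\tilde W$ are \'etale everywhere;
\item cotangent complexes and intrinsic normal cones simply pull back, and $\iota^!$ is flat pullback, so no separate compatibility-datum argument is needed.
\end{itemize}
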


\begin{definition}
    \label{def:potpullback}
    The \emph{log Gysin pullback homomorphism} $f^!$ is the pullback on log Chow groups from \ref{eqn:potpullback}.
\end{definition}

\begin{proof}[Proof of \ref{thm:loggysinlogchowpullback}]
    The equality $\tilde f^! = f^!$ is derived from how the log Gysin pullbacks are defined levelwise on all suitable log alterations $\cal C \subseteq \Alt(f)$. 

    Consider a map $\psi : \scr B \to \scr C$ of Artin fans admitting strict maps to free Artin fans fitting in a commutative square with $\tilde f$ as above. We get an induced factorization through the Artin fans of $\tilde X, \tilde Y$ using \cite[Proposition B.5]{drink}: 
    \[
    \begin{tikzcd} 
        \tilde X \ar[r] \ar[d]        &\tilde Y \ar[d]       \\
        \af{\tilde X} \ar[r] \ar[d]       &\af{\tilde Y} \ar[d]    \\
        \scr B \ar[r]      &\scr C.
    \end{tikzcd}    
    \]

    The morphisms $\af{\tilde X} \to \scr B, \af{\tilde Y} \to \scr C$ are in fact strict, as the locus where they are strict is an open on the source which must contain the closed points of the Artin fans as they are in the image of $\tilde X, \tilde Y$. Strict maps between Artin fans are \'etale, so the cotangent complex and intrinsic normal cone of $\scr B \to \scr C$ pulls back to that of $\af{\tilde X} \to \af{\tilde Y}$:
    \[
        \ccx{\scr B/\scr C}|_{\af{\tilde X}} = \ccx{\af{\tilde X}/\af{\tilde Y}}, \qquad C_{\scr B/\scr C}|_{\af{\tilde X}} = C_{\af{\tilde X}/\af{\tilde Y}}.
    \]
    Writing $\tilde W' = \tilde Y \times_{\af{\tilde Y}} \af{\tilde X}$, the induced morphism $\tilde W' \to \tilde W$ is \'etale. It results that the two pullbacks \ref{eqn:levelwiseloggysin} defined using $\scr B \to \scr C$ and $\af{\tilde X} \to \af{\tilde Y}$ are the same. 
    
    We must show that the morphisms \ref{eqn:levelwiseloggysin} are compatible with the pullback maps defining $\LogCH_*$. Let $\tilde f_i : \tilde X_i \to \tilde Y_i \in \cal C \subseteq \Alt(f)$ be a pair of log alterations of $f$. As this poset is filtered, we can assume there is a map between them 
    \[
    \begin{tikzcd}
        \tilde X_1 \ar[r, "{\tilde f_1}"] \ar[d]      &\tilde Y_1 \ar[d]         \\
        \tilde X_2 \ar[r, "{\tilde f_2}"] \ar[d]      &\tilde Y_2 \ar[d]         \\
        X \ar[r]       &Y.
    \end{tikzcd}
    \]

    As $\tilde f_i$ lie in $\cal C$, we may choose Artin fans $\scr B_i, \scr C_i$ which map strictly to some $\af{}^N$ and fit in cartesian diagrams
    \[
        \begin{tikzcd}
            \tilde X_1 \ar[r] \ar[d] \lpbstrict      &\scr B_1 \ar[d]       \\
            \tilde X_2 \ar[r] \ar[d] \lpbstrict      &\scr B_2 \ar[d]       \\
            X \ar[r]       &\af{X}
        \end{tikzcd} \qquad
        \begin{tikzcd}
            \tilde Y_1 \ar[r] \ar[d] \lpbstrict      &\scr C_1 \ar[d]       \\
            \tilde Y_2 \ar[r] \ar[d] \lpbstrict      &\scr C_2 \ar[d]      \\
            Y \ar[r]       &\af{Y}.
        \end{tikzcd}
    \]

    Form the diagram 
    \[
    \begin{tikzcd}
        \tilde X_1 \ar[r, "{g_1}"] \ar[d] \lpbstrict      &\tilde W_1 \ar[r] \ar[d]         &\tilde Y_1 \ar[d]         \\
        \tilde X_2 \ar[r, "{g_2}", swap]      &\tilde W_2 \ar[r]         &\tilde Y_2.
    \end{tikzcd}
    \]
    We claim the left square in this rectangle is an f.s.\ pullback, in which case it is also a pullback of underlying schemes because the bottom horizontal arrow is strict. It suffices to show that $\tilde W_1 \to \tilde W_2$ is pulled back from $\scr B_1 \to \scr B_2$, as $\tilde X_1 \to \tilde X_2$ is. But all of the faces except the top and bottom face of the cube are f.s.\ pullback squares, so it results: 
    \[
    \begin{tikzcd}
        \tilde W_1 \ar[rr] \ar[dr] \ar[dd]      &&\tilde Y_1 \ar[dr] \ar[dd]         \\
                &\tilde W_2         &&\tilde Y_2  \ar[dd] \ar[from=ll, crossing over]        \\
        \scr B_1 \ar[rr, "\psi_1", near start] \ar[dr, "\varphi", swap]        &&\scr C_1 \ar[dr, "\varphi'"]       \\
                &\scr B_2 \ar[rr, "\psi_2", swap] \ar[from=uu, crossing over]       &&\scr C_2.
    \end{tikzcd}
    \]

    Name the morphisms between the Artin fans $\psi_i, \varphi, \varphi'$ as in the bottom face of the cube. This bottom face is not cartesian, but its commutativity suffices to show the l.c.i.\ pullbacks agree
    \[
        \psi_1^! \circ {\varphi'}^! = \varphi^! \circ \psi_2^!
    \]
    and are both equal to the l.c.i.\ pullback along the composite $\scr B_1 \to \scr C_2$. So we have a commutative diagram of pullbacks 
    \[
    \begin{tikzcd}
        \CH(\tilde Y_2) \ar[r, "{\psi_2^!}"] \ar[d, "{{\varphi'}^!}", swap]      &\CH(\tilde W_2) \ar[r, "{g_2^!}"] \ar[d, "\varphi^!"]        &\CH(\tilde X_2) \ar[d, "{\varphi^!}"]         \\
        \CH(\tilde Y_1) \ar[r, "{\psi_1^!}", swap]      &\CH(\tilde W_1) \ar[r, "{g_1^!}", swap]         &\CH(\tilde X_1),
    \end{tikzcd}
    \]
    where the right square commutes because the perfect obstruction theory of $g_1$ is the restriction of $g_2$ and by standard compatibility of Gysin pullbacks \cite[Theorem 4.3]{manolache-pull}, \cite[Theorem 6.4]{Fulton1984Intersection-th}. Given classes $\alpha_i \in \CH(\tilde Y_i)$ satisfying $\alpha_1 = {\varphi'}^! \alpha_2$, this means the resulting classes 
    \[
        \beta_i \coloneqq g_i^! \circ \psi_i^! (\alpha_i)
    \]
    satisfy $\beta_1 = \varphi^! \beta_2$, which was to be shown. 
\end{proof}

\begin{proposition}
\label{prop:potpullbackfunctorial}
    If $X \overset{f}{\longrightarrow} Y \overset{g}{\longrightarrow} Z$ are a pair of morphisms equipped with log perfect obstruction theories $\Cl{f} \subseteq E$, $\Cl{g} \subseteq F$ and a compatibility datum \cite[Remark 3.4]{herrthesis}, \cite[Definition 4.5]{manolache-pull}, the resulting pullbacks are compatible in that 
        \[
            (g \circ f)^! = f^! g^!
        \]
    as morphisms from $\LogCH_*(Z)$ to $\LogCH_*(X)$. 
\end{proposition}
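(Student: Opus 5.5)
The strategy is to reduce to the levelwise statement on a single log alteration and then invoke Manolache's functoriality for ordinary virtual pullbacks together with functoriality of l.c.i.\ pullbacks between smooth Artin fans. By \ref{thm:loggysinlogchowpullback}, $f^!$, $g^!$ and $(g\circ f)^!$ may each be computed on any sufficiently fine log alteration. We therefore choose compatible alterations $\tilde X \to \tilde Y \to \tilde Z$ lying in the respective cofinal subposets $\cal C$. Cofinality in $\Alt(f)$ and $\Alt(g)$ and filteredness let us first refine $\tilde Z$, then $\tilde Y$ over the pullback, then $\tilde X$. On these alterations we work with the Artin fans $\af{\tilde X} \to \af{\tilde Y} \to \af{\tilde Z}$, all locally free. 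Write $\tilde W_f = \tilde Y \times_{\af{\tilde Y}} \af{\tilde X}$, $\tilde W_g = \tilde Z \times_{\af{\tilde Z}} \af{\tilde Y}$ and $\tilde W_{gf} = \tilde Z \times_{\af{\tilde Z}} \af{\tilde X}$. There is then a strict map $\tilde W_f \to \tilde W_{gf}$, obtained as the base change of $\tilde W_g \to \tilde Z$ along the evident maps, and in fact $\tilde W_f \to \tilde W_{gf}$ is the base change of the strict map $\tilde Y \to \tilde W_g$. This gives a diagram in which $\tilde X \to \tilde W_f \to \tilde W_{gf}$ consists of strict maps, the upper square over $\tilde Y \to \tilde W_g$ is cartesian, and the Artin fan maps sit below.

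Unwinding the definitions \ref{eqn:levelwiseloggysin}, we have $(g\circ f)^! = g_{gf}^!\circ\psi_{gf}^!$ and $f^! g^! = g_f^!\psi_f^!\, g_g^!\psi_g^!$. There are three steps.

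First, $\psi_f^!$ commutes with $g_g^!$. This holds because $\psi_f$ is pulled back along $\tilde W_g \to \tilde Z$ (really along $\af{\tilde X}\to\af{\tilde Y}$), while $g_g^!$ is a virtual pullback for a strict map. Commutativity of virtual pullback with the l.c.i.-type pullback of \ref{rmk:stackylcipb} follows from Manolache's commutativity theorem, since the latter is a composite of flat pullback and a Gysin map for a regular embedding. We thereby rewrite $f^!g^!$ as $g_f^!\,(g_g^!)'\,\psi_f'^!\,\psi_g^!$, where the primes denote the base-changed maps.

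Second, $\psi_f'^!\psi_g^! = \psi_{gf}^!$. This follows from functoriality of l.c.i.\ pullbacks for the composite of smooth Artin fans $\af{\tilde X}\to\af{\tilde Y}\to\af{\tilde Z}$, checked through the graph factorisation of \ref{rmk:stackylcipb} (Fulton, Theorem 6.5-type argument).

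Third, $g_f^!\,(g_g^!)' = g_{gf}^!$ for the strict maps $\tilde X \to \tilde W_f \to \tilde W_{gf}$. Here we transport the log compatibility datum on $\Cl{f},\Cl{g},\Cl{gf}$ through the identifications $\Cl{\tilde X/\tilde Y}=C_{\tilde X/\tilde W_f}$ of \cite[Proposition 2.5]{herrthesis} to an ordinary compatibility datum for these strict maps, and then apply \cite[Theorem 4.8]{manolache-pull}.

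I expect this last step to be the main obstacle. The identification of log cotangent complexes with ordinary cotangent complexes of the strict factorisations must respect the distinguished triangle, in particular the term $\ccx{\tilde W_f/\tilde W_{gf}}$, which is pulled back from $\ccx{\tilde Y/\tilde W_g}$. I would verify this using base change of cotangent complexes along the cartesian square and the log étaleness of $\tilde W \to \tilde Y$, so that log and ordinary cotangent complexes agree on each strict piece. Finally, the resulting levelwise identities are compatible with transition maps by \ref{thm:loggysinlogchowpullback}, which yields the equality on $\LogCH_*$.
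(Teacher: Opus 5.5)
Your proposal is correct and follows the paper's proof essentially step for step: your $\tilde W_f, \tilde W_g, \tilde W_{gf}$ are the paper's $V, W, U$ (with $V \to U$ the base change of $Y \to W$), and your three steps are exactly its three ingredients. These are the commutation $\varphi^! {g'}^! = {g''}^! \varphi^!$ via \cite[Theorem 4.3]{manolache-pull}, the functoriality $(\psi\circ\varphi)^! = \varphi^!\psi^!$ of the l.c.i.\ pullbacks between the Artin fans, and the transport of the log compatibility datum to an ordinary one for $X \to V \to U$, which the paper takes from \cite[Theorem 3.12]{herrthesis}. One small correction to your sketch of that last step: $\ccx{\tilde W_f/\tilde W_{gf}}$ need not be the pullback of $\ccx{\tilde Y/\tilde W_g}$, since the base change along a non-flat map of Artin fans is not Tor-independent, but it receives a natural map from it, and that suffices because $\tilde W_f \to \tilde W_{gf}$ carries the pulled-back perfect obstruction theory.
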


\begin{proof}
    As ${\tilde f}^! = f^!$ for log alterations $\tilde f \in \cal C \subseteq \Alt(f)$ by Theorem \ref{thm:loggysinlogchowpullback}, we can freely replace $X, Y, Z$ by log alterations which admit strict maps to free Artin cones $\af{}^N$. Their Artin fans are thus functorial by \cite[Proposition B.5]{drink}. It similarly suffices to check the levelwise maps \ref{eqn:levelwiseloggysin} $(g \circ f)^! = f^! g^!$ agree, as the same argument applies to each log alteration of $X, Y, Z$. 
    
    Write 
    \[
        W = Z \times_{\af{Z}}^{\ell s} \af{Y}, \quad
        V = Y \times_{\af{Y}}^{\ell s} \af{X}, \quad
        U = Z \times_{\af{Z}}^{\ell s} \af{X}
    \]
    and form the diagram
    \[
    \begin{tikzcd}
        X \ar[rr, "f'"] \ar[dr, "{f''}", swap]   &  &U \ar[rr] \ar[dd] \lpbstrict       &&W \ar[dd] \ar[dr] \ar[ddr, phantom, "\ulcorner \ell s", near start]        \\
            &V \ar[ur, "{g''}"] \ar[dr]  \ar[phantom, drrr, "{\!\!\!\!\!\!\!\!\!\!\!\!\ulcorner \ell s}", near start]      &&Y \ar[ur, "{g'}"] \ar[dr] \ar[from=ll, crossing over]    &       &Z \ar[d] \ar[from=ll, crossing over]      \\
            &&\af{X} \ar[rr, "\varphi", swap]         &&\af{Y} \ar[r, "\psi", swap]         &\af{Z}. 
    \end{tikzcd}
    \]
    Label arrows as written. 

    The log perfect obstruction theory for $g : Y \to Z$ gives an ordinary perfect obstruction theory for $g' : Y \to W$, and we equip $g'' : V \to U$ with the restriction of that perfect obstruction theory. The logarithmic compatibility datum for $X \to Y \to Z$ induces an ordinary compatibility datum for $X \to V \to U$ as argued in \cite[Theorem 3.12]{herrthesis}, so ${f''}^! {g''}^! = {f'}^!$. 

    The two pullbacks we must equate are defined as the composites: 
    \[
    \begin{tikzcd}[row sep = small, column sep = small]
        f^! g^! :   &\CH(Z) \ar[r, "\psi^!"]  &\CH(W) \ar[r, "{{g'}^!}"]  &\CH(Y) \ar[r, "\varphi^!"]  &\CH(V) \ar[r, "{{f''}^!}"] &\CH(X)       \\
        (g \circ f)^!:  &\CH(Z) \ar[r, "{(\psi \circ \varphi)^!}"] &\CH(U) \ar[r, "{f'}^!"] &\CH(X).
    \end{tikzcd}
    \]
    As $g''$ has the perfect obstruction theory pulled back from $g'$, the pullbacks agree ${g''}^! = {g'}^!$ where both defined. The Gysin pullbacks are compatible $\varphi^! {g'}^! = {g''}^! \varphi^!$ \cite[Theorem 4.3]{manolache-pull}, \cite[Theorem 6.4]{Fulton1984Intersection-th}. Then 
    \begin{align*}
        (g \circ f)^! &= {f'}^! (\psi \circ \varphi)^!  \\
            &={f''}^! {g''}^! \varphi^! \psi^! \\
            &={f''}^! \varphi^! {g'}^! \psi^! \\
            &=f^! g^!
    \end{align*}
    and we are done. 
\end{proof}

We need a result from \cite[Theorem 5.8]{sammolchointersectiontheorypreprint}, which also proves the converse.

\begin{definition}[{\cite[Definition 5.9]{sammolchointersectiontheorypreprint}}]
Let $f: X \to Y$ be integral and saturated. Then $f$ is \emph{log lci} if the truncation $\tau_{\leq -1}\lccx{X/Y}$ of the log cotangent complex is perfect.
\end{definition}

\begin{theorem}[{\cite[Theorem 5.8]{sammolchointersectiontheorypreprint}}]\label{thm:loglcimaps}
    Let $f: X \to Y$ be integral and saturated. Then $f$ is log lci if and only if $f$ factors as a strict regular closed immersion $X \to W$ followed by a log smooth map $W \to Y$.
\end{theorem}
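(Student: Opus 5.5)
The plan is to prove the two implications separately. Throughout we use two standard properties of the log cotangent complex $\lccx{}$ (Olsson/Gabber). First, it agrees with the ordinary cotangent complex for strict morphisms. Second, for a log smooth integral morphism it is the locally free sheaf of log differentials $\lkah{}$ placed in degree $0$. We also use the transitivity triangle for a composite.

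\emph{Sufficiency.} Suppose $f$ factors as a strict regular closed immersion $i\colon X \to W$ followed by a log smooth map $h\colon W \to Y$. Since $f$ is integral and saturated, we may check that $h$ can be taken integral near the image of $X$; otherwise we replace $W$ by the open locus where it is. The transitivity triangle reads
\[
i^*\lccx{W/Y} \to \lccx{X/Y} \to \lccx{X/W} \overset{+1}{\to}.
\]
Here $i^*\lccx{W/Y} = i^*\lkah{W/Y}$ is locally free in degree $0$. Because $i$ is strict, $\lccx{X/W} = \ccx{X/W} = N^\vee_{X/W}[1]$, which is locally free in degree $-1$. Hence $\lccx{X/Y}$ is perfect of amplitude $[-1,0]$, and so its truncation $\tau_{\le -1}$ is perfect.

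\emph{Necessity.} The first step uses Artin fans to reduce to a strict problem. We work locally on $X$ and $Y$, so that strict maps to Artin cones $\af{X}$ and $\af{Y}$ exist and $f$ induces $\af{X} \to \af{Y}$. Set $W_0 = Y \times_{\af{Y}} \af{X}$. Since $f$ is integral and saturated, the map $\af{X} \to \af{Y}$ may be taken integral and saturated. Then the fs fibre product agrees with the ordinary one, and $W_0 \to Y$ is flat and log étale, in particular log smooth. The induced map $g\colon X \to W_0$ is strict. Since $\lccx{W_0/Y} = 0$ by log étaleness, the transitivity triangle gives $\lccx{X/Y} \simeq \lccx{X/W_0} = \ccx{X/W_0}$.

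The second step produces the closed immersion. Locally on $X$ we choose a closed embedding of $X$ into affine space $W = \Aff^n_{W_0}$, with the log structure pulled back from $W_0$. Then $W \to W_0$ is strict smooth, so $W \to Y$ is log smooth, and $j\colon X \to W$ is a strict closed immersion. From $\ccx{X/W_0}$ and the triangle for $X \to W \to W_0$, the complex $\ccx{X/W}$ is connective, satisfies $H^0(\ccx{X/W}) = 0$ (since $j$ is a closed immersion), and has $\tau_{\le -1}$ perfect. Hence $\ccx{X/W}$ is perfect and concentrated in degree $-1$. A closed immersion of locally noetherian schemes whose cotangent complex is perfect in degree $-1$ is a regular (Koszul) immersion, by the Avramov–Illusie criterion: vanishing of $H^{-2}$ of the cotangent complex forces the conormal sheaf to be locally free and generated by a regular sequence. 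This gives the factorisation locally.

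I expect the main obstacle to be twofold. The first part is making the Artin-fan reduction rigorous: checking that $\af{X} \to \af{Y}$ inherits integrality and saturation from $f$, and that the log cotangent complex of a log étale morphism of Artin fans vanishes with the chosen model of $\lccx{}$. The second part is globalising the local factorisation in the necessity direction. I would first try to read the statement as local on $X$, which is all the log Gysin construction of this appendix requires. If a global factorisation is needed, I would use a relatively quasi-projective embedding of $X$ over $W_0$ under the paper's finiteness hypotheses.
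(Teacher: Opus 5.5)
The paper does not prove this statement; it is quoted from Molcho's preprint, so your argument has to stand on its own. Its overall architecture is sound: you reduce to the strict case through $W_0=Y\times_{\af{Y}}\af{X}$, which is the same device the appendix uses to turn log obstruction theories into ordinary ones. There is, however, a genuine error in how you handle the perfectness condition, and it affects both directions.

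\textbf{Sufficiency.} You pass from ``$\lccx{X/Y}$ is perfect of amplitude $[-1,0]$'' to ``$\tau_{\le -1}\lccx{X/Y}$ is perfect''. This implication is false. If $\lccx{X/Y}\simeq[E^{-1}\to E^0]$, then $\tau_{\le -1}\lccx{X/Y}=H^{-1}[1]$ is a kernel, and such kernels are usually not perfect. For example, take $X=\operatorname{Spec}k[x]/(x^2)\to\operatorname{Spec}k$ with trivial log structures and $\operatorname{char}k\neq 2$. It factors as the regular immersion $X\hookrightarrow\Aff^1$ followed by a smooth map. Yet $\ccx{X/k}\simeq[\OO_X\xrightarrow{2x}\OO_X]$, so $\tau_{\le -1}\ccx{X/k}\simeq k[1]$, which is not perfect over $k[x]/(x^2)$. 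Read literally, the ``if'' direction is therefore false and cannot be proved. The hypothesis must be read as ``$\lccx{X/Y}$ is perfect of Tor-amplitude in $[-1,0]$'', and you should say explicitly that this is the notion you use. Reading it as $\tau_{\ge -1}$ does not work either: $\operatorname{Spec}k\to\operatorname{Spec}k[t]/(t^2)$ has cotangent complex with $\tau_{\ge -1}\simeq k[1]$, but it is not lci. With the corrected reading, your sufficiency argument is correct as written. Your integrality remark is also fine, since along $i(X)$ the ghost maps of $h$ are those of $f$.

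\textbf{Necessity.} Here you use the hypothesis in its literal form, and the decisive step then fails. From $\tau_{\le -1}\ccx{X/W_0}$ perfect you cannot deduce that $\ccx{X/W}$ is perfect. The cone of $\tau_{\le -1}\ccx{X/W_0}\to\ccx{X/W}$ is $K[1]$ with $K=\ker(j^*\Omega_{W/W_0}\to\Omega_{X/W_0})$. This is perfect exactly when $\Omega_{X/W_0}$ has finite Tor-dimension, and nothing in the literal hypothesis gives that. Nor would ``perfect with $H^0=0$'' give concentration in degree $-1$, since nothing bounds the amplitude from below. From bare perfectness you would need Avramov's theorem on Quillen's conjecture, not an $H^{-2}$ criterion.

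With the corrected hypothesis the step is immediate. $\ccx{X/W}$ is the cone of $j^*\Omega_{W/W_0}\to\ccx{X/W_0}$, so it has Tor-amplitude in $[-1,0]$ and $H^0=0$. Hence it is locally $[P^{-1}\twoheadrightarrow P^0]\simeq N[1]$ with $N$ locally free. The classical criterion (Andr\'e; Illusie), applied smooth-locally on $W$, then shows that such a closed immersion of locally Noetherian schemes is regular.

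\textbf{A smaller repair.} $W_0$ is in general an algebraic stack, and a closed embedding of $X$ into the trivial bundle $\Aff^n\times W_0$ over $W_0$ need not exist, even locally on $X$. For $X=\Aff^1$ with its toric log structure over $\operatorname{Spec}k$, one gets $W_0=[\Aff^1/\GG_m]$ with $g$ the quotient map. After base change along $\Aff^1\to[\Aff^1/\GG_m]$ the source becomes $\Aff^1\times\GG_m$. The $\Aff^n$-component of any $W_0$-morphism factors through the projection to $X$, so it collapses the fibre $\{0\}\times\GG_m$ to a point. Embed instead into the total space of a vector bundle on $W_0$. Local models of $W_0$ are torus quotients of affine schemes, so they have the resolution property and such a bundle exists.
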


Now we can define the Gysin pullback for log lci maps.

\begin{definition}
\label{def:loglcipullback}
For $f: X \to Y$ a log lci map, and any map $Z \to Y$, we write $f^!: \LogCH(Z) \to \LogCH(X \times_Y^\ell Z)$ for the Gysin pullback \ref{def:potpullback}, where the f.s.\ pullback $f' : X \times_Y^\ell Z \to Z$ is endowed with the log POT pulled back from $f$.
\end{definition}

\begin{remark}
    Three different definitions for Gysin pullbacks $f^!: \LogCH(Y) \to \LogCH(X)$ along log lci maps $f: X \to Y$ appear in \cite[Section~5]{sammolchointersectiontheorypreprint}; the first is the same as our definition in the case $Z = Y$.
\end{remark}

We can then equip a log lci morphism $f$ with a canonical log perfect obstruction theory $E = \Cl{f}$. Functoriality of log lci pullback follows from \ref{prop:potpullbackfunctorial} using the canonical compatibility datum provided by the distinguished triangle of (ordinary) cotangent complexes associated to the sequence 
\[
	X \to \Log Y \to \Log Z.
\]

For a log algebraic stack $X$, write 
\[
    {\rm log}K^\circ (X) = \colim_{\tilde X \in {\cal C}_X} K_\circ(\tilde X),
\]
where the colimit is over the pullback maps described in \cite[\S 4.1]{drink} just as in Chow. See also \cite[\S 2.4.6]{itologmotives}.
The above arguments apply equally well to define a log Gysin pullback in $K$-theory using the pullbacks defined in \cite{fengquvirtualpullbackkthy} or \cite{ypleeoriginalkthy}. 

\begin{corollary}
    Let $f : X \to Y$ be as in \ref{thm:loggysinlogchowpullback}. There is a pullback morphism 
    \[
        f^! : {\rm log}K^\circ (Y) \longrightarrow {\rm log}K^\circ (X)
    \]
    defined on log alterations in ${\cal C} \subseteq \Alt(f)$ by the analogous pullbacks to \ref{eqn:levelwiseloggysin} in $K$-theory. In the situation of \ref{thm:loggysinlogchowpullback}, the pullback morphisms agree:
    \[(g \circ f)^! = f^! g^! : {\rm log}K^\circ (Z) \longrightarrow {\rm log}K^\circ (X)\] 
\end{corollary}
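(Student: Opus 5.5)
The plan is to reduce to the levelwise definition of the pullbacks and invoke the ordinary (non-logarithmic) K-theoretic virtual pullback formalism in place of Chow, repeating the arguments of \ref{thm:loggysinlogchowpullback} and \ref{prop:potpullbackfunctorial} verbatim. First, for $\tilde f \in \cal C \subseteq \Alt(f)$ with chosen Artin fans $\psi\colon \scr B \to \scr C$, define
\[ K_\circ(\tilde Y) \xrightarrow{\psi^!} K_\circ(\tilde W) \xrightarrow{g^!} K_\circ(\tilde X), \]
where $g^!$ is the virtual pullback in $K$-theory of \cite{fengquvirtualpullbackkthy} (or \cite{ypleeoriginalkthy}) attached to the ordinary perfect obstruction theory on the strict map $g$ corresponding to $\Cl{f}\subseteq E$ via \cite[Proposition 2.5]{herrthesis}. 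The map $\psi^!$ is defined as in \ref{rmk:stackylcipb}: flat pullback along the smooth projection $V\times B\to V$ (exact on coherent sheaves), followed by the lci pullback along the representable regular immersion $\Gamma_\psi$, i.e.\ derived tensor with its structure sheaf.

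Second, independence of choices: replacing $\scr B \to \scr C$ by $\af{\tilde X}\to\af{\tilde Y}$ changes $\tilde W$ only by an \'etale map and does not change the intrinsic normal cone, so the two composites agree. This is the same argument as before, now using that $K$-theoretic virtual pullback depends only on the cone and the obstruction bundle stack. Compatibility with the transition maps defining ${\rm log}K^\circ$ follows from the same cube diagram as in the proof of \ref{thm:loggysinlogchowpullback}. The needed inputs are that lci pullbacks along a commutative (not necessarily cartesian) square of smooth Artin fans compose correctly, and that $K$-theoretic virtual pullbacks commute with lci/flat pullbacks in fibre squares; the latter is \cite[Theorem 4.3]{manolache-pull}'s $K$-theoretic analogue, proven in \cite{fengquvirtualpullbackkthy}. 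Passing to the colimit defines $f^!$.

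Third, functoriality: given a compatibility datum, pass to log alterations admitting strict maps to free Artin cones. Then reproduce the diagram in the proof of \ref{prop:potpullbackfunctorial}, which uses the $K$-theoretic functoriality ${f''}^!{g''}^! = {f'}^!$ of virtual pullbacks under an ordinary compatibility datum (again from \cite{fengquvirtualpullbackkthy}), together with the commutation $\varphi^!{g'}^! = {g''}^!\varphi^!$.

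The main obstacle I expect is the stacky lci pullback $\psi^!$ in $K$-theory, since $\psi$ is non-representable and the Artin fans are only smooth stacks. Here I would rely on the graph factorisation of \ref{rmk:stackylcipb}, checking that $\Gamma_\psi$ is a representable regular immersion so that finite Tor-amplitude makes derived pullback well-defined on $G$-theory. I would also check that composition of two such pullbacks agrees with the pullback of the composite. This should follow from the standard identification of composed graph pullbacks via the deformation to the normal cone, which exists in $G$-theory for quotient stacks.
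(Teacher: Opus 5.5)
Your proposal follows the paper's argument. The paper only asserts that the proofs of \ref{thm:loggysinlogchowpullback} and \ref{prop:potpullbackfunctorial} carry over verbatim with the $K$-theoretic virtual pullbacks of \cite{fengquvirtualpullbackkthy} or \cite{ypleeoriginalkthy} replacing the Chow-theoretic ones, and you spell out exactly that. One small correction: as \ref{rmk:stackylcipb} says, the graph $\Gamma_\psi$ is representable and l.c.i.\ but in general not an immersion (already the diagonal of $[\Aff^1/\GG_m]$ has fibre $\GG_m$ over the origin), so you should define $\psi^!$ as the refined pullback along a representable morphism of finite Tor-amplitude (equivalently, Qu's virtual pullback for the canonical obstruction theory), rather than trying to verify that $\Gamma_\psi$ is a regular immersion.
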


\section{Example calculation}
\label{sec:example_calculation}
In this section we provide a sample calculation of \ref{thm:loggluingkunneth}. We let $X = \P^2$ with its toric log structure, and we denote its coordinates as $(x_0: x_1:x_2)$. We denote its three divisors as $D_0 = V(x_0), D_1 = V(x_1), D_2 = V(x_2)$, and record contact orders as $a \in \Z^3$. We write $e_0,e_1,e_2$ for the standard basis vectors of $\bb Z^3$.

We will give a calculation for the pullback of a genus $2$ log Gromov--Witten invariant $\LogGW(X; \Lambda, \gamma)$ with \[\Lambda = (g = 2, n = 4, \beta = kH, a)\] for a fixed $k > 0$, with contact orders $a_i = ke_{i-1}$ for $1 \leq i \leq 3$ and $a_4 = 0$, and \[
\gamma=\bigl([\mathrm{pt}]_{D_0},\,[\mathrm{pt}]_{D_1},\,1,\,[\mathrm{pt}]_{\bb P^2}\bigr).
\]
To start, we will computing the homological degree of $\LogGW(X;\Lambda,\gamma)$ for this choice of $X$ and any $\Lambda, \gamma$. The virtual dimension of $\Mpt_{\Lambda}^t(X^t)$ is $-n-p$ where $p$ is the number of markings with a negative tangency. Since $X$ is toric, its log tangent bundle $T^\log_X$ is trivial of rank $2$, with three generators $d\log x_i$ and relation $\sum_i d\log x_i = 0$. Then the relative perfect obstruction theory for $\Mpt_\Lambda(X)/\Mpt_\Lambda(X^t)$ is given by $\pi_* \Ocal_C^2$ of relative virtual dimension $-2(g-1)$. In total the virtual relative dimension of $\trop: \Mpt_\Lambda(X)/\Mpt_\Lambda^t(X^t)$ is $-2(g-1) + 3g-3+2n$, meaning we have the formula \begin{equation}\label{eq:dimRef}\mathrm{vdim} \LogGW(X;\Lambda,\gamma) = g-1+n-p - \sum_i \deg \gamma_i\end{equation} for the homological degree. For our choice of $\Lambda$, this gives homological degree $1-1+4-0-4 = 1$.

We then consider the following gluing map
\[
\gl\colon \Mpt_{1,\{p_1,q_+\}}\times\Mpt_{1,\{p_2,p_3,p_4,q_-\}}\longrightarrow
\Mpt_{2,4},
\]
and wish to compute $\gl^* \LogGW(X;\Lambda,\gamma)$ in homology using the formula in \ref{thm:loggluingkunneth}. We will prove the following formula.

\begin{proposition}
\label{prop:theexample}
The degree of the dimension $0$ class $\gl^! \LogGW(X;\Lambda,\gamma)$ is
\[
	\frac{-k^3}{24}\binom{k+2}{5}.
\]
\end{proposition}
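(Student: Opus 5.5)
The plan is to evaluate the right-hand side of \ref{thm:loggluingkunneth} term by term. First, enumerate $\gl^*\Lambda$. By \ref{rem:sepglLambda}, an element is determined by a splitting $\beta = k_+H + k_-H$ together with a choice of stratum. Balancing on the $(g=1,\{p_1,q_+\})$ side forces the contact order $b$ at $q_+$ to be $b = 3k_+\cdot(1,1,1)\cdot\tfrac13 - ke_0$, that is $b_j = k_+ - k\delta_{j0}$. Balancing on the other side then fixes the contact order at $q_-$ as $-b$.

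Next, apply the dimension formula \ref{eq:dimRef} to each factor. This rules out most splittings:
\begin{itemize}
\item For the $+$ factor, $g=1$, $n=2$, and $p\le 1$. Its insertions are $[\mathrm{pt}]_{D_0}$ together with a K\"unneth class $\delta_+$.
\item For the $-$ factor, the insertions $[\mathrm{pt}]_{D_1}$, $1$ and $[\mathrm{pt}]_{\P^2}$ already take codimension $3$.
\end{itemize}
Requiring both pierced invariants to be zero-dimensional should leave only the splittings where $b$ has a negative entry, namely $0<k_+<k$. For these, $b=(k_+-k,k_+,k_+)$ has one negative coordinate, so $E_\pm$ are the Radon divisors in the blowup $\hat X$. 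Here $E = E_+\cap E_-$ is a point or a $\P^1$, depending on the subdivision needed to make $\ev_\pm$ integral and saturated. I expect that the cone structure here is simple enough (a single negative coordinate) that the star/Radon subdivision already suffices, so $m_\Xi=1$. In that case $\hat X$ is the toric blowup adding the ray $\rho_+$ (or $\rho_-$), and $c_\pm^\Xi$ are computed from \ref{prop:radonsubdiv} as the ratios between $r_\pm$ and the ray functions $s_\pm$. These should be of the form $\gcd$-type factors, plausibly $c_+c_- = k-k_+$ or similar.

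Third, compute the two families of pierced invariants. The $-$ factor ($g=1$, four markings, positive contact at $q_-$ after the sign flip, three point-type constraints) should be a classical log invariant. I would compute it by degenerating or by tropical correspondence, since $\P^2$ is toric. Genus-one toric log invariants with $\lambda_1$-type behaviour contribute factors like $\tfrac{1}{24}$, and this is where I expect the $-\tfrac{1}{24}$ to come from, via the standard genus-one vertex factor $-\tfrac{1}{24}$ of the Bousseau-type $\lambda_g$ correspondence. The $+$ factor, with one negative tangency, is a genuinely pierced invariant. I would compute it tropically using \ref{def:vfc}: the class $\ell_1 r_1\ell_+ r_+$ on a low-dimensional tropical moduli space, pulled back through the toric (trivial) obstruction theory. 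The obstruction theory contributes $\lambda_1$-type classes for $g=1$, which again produce a Hodge integral.

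Finally, sum over $k_+$. The result should collapse to a sum of the form $\sum_{k_+} k_+^a(k-k_+)^b$, and I would verify that it equals $\tfrac{k^3}{24}\binom{k+2}{5}$ up to sign, using Faulhaber-type identities. The main obstacle is the third step, specifically the pierced genus-one invariant with a negative contact order. Its tropical moduli space is not pure-dimensional, so one must carefully identify which cones support $\ell r$ and with what multiplicities. Before attempting the general $k$ computation, I would first fully carry out the case $k=3$ as a check.
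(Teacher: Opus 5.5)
\textbf{There is a genuine gap: the enumeration of $\gl^*\Lambda$ is wrong, and you keep exactly the splittings that do not occur.}

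Your balancing step is fine: it gives $b=(k_+-k,\,k_+,\,k_+)$. The problem is what comes next. By \ref{def:contact_orders}, a contact order is a pair $(\sigma,b)$ with $b\in\sigma^{\gp}$ for a cone $\sigma$ of $X^t$. Since $D_0\cap D_1\cap D_2=\emptyset$, every cone of the fan of $\P^2$ has at most two rays, so $b$ can have at most two nonzero coordinates.

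For $0<k_+<k$ all three coordinates of $b$ are nonzero, so these splittings give no discrete data at all. Hence $\gl^*\Lambda$ consists only of $k_+=0$ (with $b=-ke_0$) and $k_+=k$ (with $b=(0,k,k)$). No dimension count can select the intermediate range. Your phrase ``$b$ has a negative entry, namely $0<k_+<k$'' also drops $k_+=0$, even though that $b$ has a negative entry, and it is the only term that survives.

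You also give no argument for the $k_+=k$ term, and one is needed. There $\beta_-=0$, so the four-pointed genus-one component is contracted. Its contact orders $ke_1,\,ke_2,\,0,\,(0,-k,-k)$ force its image into $D_1\cap D_2$. Since $\gamma_4=[\mathrm{pt}]_{\P^2}$ can be represented by a point off $D_1\cap D_2$, this term vanishes.

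\textbf{With the enumeration fixed, the computation looks different from what you anticipate.}
\begin{itemize}
\item There is a single term. No blowup is needed, $m_\Xi=1$, $E=D_0\cong\P^1$ and $c_+c_-=k$.
\item By dimension, only the K\"unneth summand $\P^1\otimes\mathrm{pt}$ contributes. So the formula reduces to $k\,\LogGW(X;\Xi_+,\gamma_+)\boxtimes\LogGW(X;\Xi_-,\gamma_-)$.
\item The factor $-\tfrac1{24}$ does not come from the $\P^2$-side via a $\lambda_g$-type correspondence. It comes from the contracted pierced factor $\Xi_+=(1,2,0,\{ke_0,-ke_0\})$, whose class is $-\tfrac1k\LogDR_1(k,-k)\lambda_1=-\tfrac{k}{24}$. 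This uses $\DR_1(k,-k)=k^2\psi_1-\lambda_1$, $\lambda_1^2=0$ and $\int\psi_1\lambda_1=\tfrac1{24}$.
\item The factor $\binom{k+2}{5}=\sum_{a+b+c=k,\ a,b,c>0}abc$ is not a sum over splittings $k_+$. It is a sum over tropical triangle types inside the single classical genus-one invariant $\LogGW(X;\Xi_-,\gamma_-)=k\binom{k+2}{5}$.
\item In that invariant, each type contributes $ab\cdot\tfrac{c}{k}\cdot k^2$. The last factor counts lifts of $k$-torsion points on the tropical curve to the log elliptic curve.
\end{itemize}
So your planned Faulhaber-type sum over $k_+$ has no counterpart in the actual computation.
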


\begin{remark}
This calculation could be lifted to homological log Chow groups instead of homology, as the diagonals whose K\"unneth decomposition we need also admit a Chow--K\"unneth decomposition. That would give a formula for the pushforward of $\gl^! \LogGW(X;\Lambda,\gamma)$ as a point class on the moduli space $\Mpt_{1,2} \times \Mpt_{1,4}$.
\end{remark}

We first need to determine which pairs of discrete data appear in $\gl^* \Mpt_\Lambda(X)$, that is, we need to determine the set $\gl^* \Lambda$. By \ref{rem:sepglLambda} the pairs $(\Xi_+, \Xi_-) \in \gl^* \Lambda$ are uniquely determined by the curve classes $\beta_+, \beta_-$ by the global balancing condition $\beta_+ D_j = \sum_i a_{ij}^+$.

\begin{lemma}
For $(\Xi_+, \Xi_-) \in \gl^* \Lambda$ we have $\beta_+ = 0$ or $\beta_+ = kH$.
\end{lemma}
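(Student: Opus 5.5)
The plan is to combine the global balancing condition with the requirement, built into \ref{def:contact_orders}, that the contact order at the glued node be supported on a single cone of $X^t$.

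First I pin down the possible curve classes. By the definition of $\gl^*\Lambda$ (\ref{def:pullbackdiscretedata}), an element $(\Xi_+,\Xi_-)$ arises from an actual pierced map to $\P^2$ whose source is glued from a genus $1$ curve carrying $p_1,q_+$ and a genus $1$ curve carrying $p_2,p_3,p_4,q_-$. Each piece is a map from a prestable curve to $\P^2$, so $\beta_\pm$ are effective. Since $\on{Pic}(\P^2)=\Z H$, this means $\beta_+=dH$ and $\beta_-=(k-d)H$ for some integer $0\le d\le k$.

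Next I apply global balancing to $\Xi_+$. Summing the vertex balancing conditions of \ref{def:balanced} over the $+$ side gives $\beta_+\cdot D_j=\sum_i a^+_{ij}$. The markings on that side are $p_1$, with contact order $ke_0$, and $q_+$, with some contact order $b\in\Z^3$. Hence
\[
b=(d,d,d)-ke_0=(d-k,\,d,\,d).
\]
As a sanity check, the $-$ side then carries $-b=(k-d,-d,-d)$ together with $ke_1$, $ke_2$ and $0$. This satisfies balancing against $\beta_-=(k-d)H$ for every $d$, so balancing alone will not cut down the possibilities. This is the one step that needs care: the restriction must come from the geometry of the fan, not from degree bookkeeping.

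The key input is that the contact order at $q_\pm$ is a genuine contact order $(\sigma,b)$ in the sense of \ref{def:contact_orders}, produced by the refinement $s_1$ of the edge slope. Concretely, the image of $q_\pm$ lies in the closure of the stratum of a cone $\sigma$ of the fan of $\P^2$, and $b\in\sigma^\gp$. Every cone of the fan of $\P^2$ has dimension at most $2$, because the three toric divisors have empty common intersection. Under the map $\CO(X)\to\Z^3$, the vector $b$ can therefore have nonzero entries only for the divisors containing the stratum of $\sigma$, so at most two entries of $b$ are nonzero.

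For $0<d<k$, all three entries $d-k$, $d$, $d$ of $b$ are nonzero, which is a contradiction. Hence $d\in\{0,k\}$. In these cases $b=(-k,0,0)$ or $b=(0,k,k)$ respectively, both of which are supported on a cone of the fan, so $\beta_+=0$ or $\beta_+=kH$.
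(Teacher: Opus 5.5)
Your proposal is correct and follows the paper's own argument. Effectivity gives $\beta_+ = dH$ with $0 \le d \le k$, and global balancing on the $+$ side forces the contact order at $q_+$ to be $b = (d-k,d,d)$. Since $D_0 \cap D_1 \cap D_2 = \emptyset$ (equivalently, $b$ must be supported on a single cone of the fan), at most two entries of $b$ can be nonzero, which rules out $0 < d < k$.
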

\begin{proof}
We must have $\beta_+ = dH$ for some $0 \leq d \leq k$. Then the global balancing condition tells us that the contact order $b$ along the new marking $q_+$ is given by $(d-k,d,d)$. However, the triple intersection $D_0 \cap D_1 \cap D_2$ is empty, so at most two can be non-zero, so $d = 0$ or $d = k$.
\end{proof}

\begin{lemma}
The term in \ref{thm:loggluingkunneth} where $(\Xi_+,\Xi_-)$ satisfies $\beta_+ = kH$ is $0$.
\end{lemma}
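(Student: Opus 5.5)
The plan is to show that, when $\beta_+ = kH$, every summand of \ref{thm:loggluingkunneth} indexed by this $(\Xi_+,\Xi_-)$ contains a factor $\LogGW(\hat X;\Xi_-',\gamma_-\cup\delta_{\Xi;i;k-})$ which vanishes. The reason is that the insertion $[\mathrm{pt}]_{\bb P^2}$ at $p_4$ restricts to zero on the locus where the evaluation map at $p_4$ actually lands. So I will not analyse the $\Xi_+$ side at all; it suffices to kill the $\Xi_-$ factor.

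First I record the data on the minus side. By global balancing, $\beta_+ = kH$ forces the contact order at $q_+$ to be $b = (0,k,k)$. Hence $\Xi_-$ has the following data: genus $1$, curve class $\beta_- = 0$, and markings $p_2,p_3,p_4,q_-$ with contact orders $ke_1$, $ke_2$, $0$ and $(0,-k,-k)$. The insertions are $[\mathrm{pt}]_{D_1}$, $1$, $[\mathrm{pt}]_{\bb P^2}$, together with the K\"unneth factor at $q_-$.

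Next I pass to $\hat X$. Let $\Xi'\in\pi^*(\Xi;i)$, and consider the composite of a map $C\to\hat X$ with $\hat X\to X$. By the compatibility in \ref{def:compatible_disc_data}, the curve class restricted to $\on{Pic}(X)$ is $\beta_-=0$. Since $X=\bb P^2$ is projective, our standing assumption in \ref{sec:glue:log} gives that every irreducible component of the source is contracted in $X$. The curve is connected, so the whole map $C\to X$ is constant. The contact order $ke_1$ at $p_2$ forces the image to lie in $D_1$, and the contact order $ke_2$ at $p_3$ forces it to lie in $D_2$. Therefore every stable map in $\Mptm_{\Xi'_-}(\hat X)$, composed to $X$, is the constant map to the torus-fixed point $P = D_1\cap D_2$. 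Consequently the composite $\Mptm_{\Xi'_-}(\hat X)\xrightarrow{\ev_4}\hat X\to X$ factors through the inclusion $\iota\colon\{P\}\hookrightarrow X$.

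Then I conclude the vanishing. The insertion at $p_4$ is pulled back from $X$, so $\ev_4^*[\mathrm{pt}]_{\bb P^2}$ factors through $\iota^*[\mathrm{pt}]_{\bb P^2}$. This is a class of degree $2$ on a point, so it is zero in Chow cohomology, and likewise in cohomology. Hence the capped virtual class, and therefore $\LogGW(\hat X;\Xi_-',\gamma_-\cup\delta_{\Xi;i;k-})$, vanishes for every $i$, $k$ and $\Xi'$. Every term of the $\boxtimes$-sum therefore vanishes.

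The main point needing care is the contractedness step. We must make sure that maps to the blowup $\hat X$, whose curve class may be a nonzero combination of exceptional classes, still map to a single point of $X$. This follows because compatibility of discrete data pins the restriction of the class to $\on{Pic}(X)$, and the contraction criterion is applied on $X$. A second point is that we should interpret $\gamma_-$ on $\hat X$ as pulled back from $X$, so that the factorisation through $\iota$ applies.
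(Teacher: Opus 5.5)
Your argument is correct and is essentially the paper's. Both rest on the same observation: $\beta_-=0$ forces the component carrying $p_2,p_3,p_4$ to be contracted in $X$, and the contact orders $ke_1,ke_2$ pin its image to the point $D_1\cap D_2$, so the point insertion at $p_4$ kills the class. The difference is only bookkeeping. The paper applies this to the restriction of $\gl^![\Mpt_\Lambda(X)]^\vir\cap\ev_4^*\gamma_4$ to the $\Xi$-component of the glued space and moves the point class off $D_1\cap D_2$. You instead apply it directly to each factor $\LogGW(\hat X;\Xi'_-,\dots)$ on the right-hand side. There, your care in composing to $X$, rather than claiming contraction in $\hat X$, is exactly what is needed.
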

\begin{proof}
We will show that the restriction of $\gl^* [\Mpt_\Lambda(X)]^\vir \cap \ev_4^* \gamma_4$ is $0$. Since $\beta_- = 0$, the entire second component is contracted to a point in (a blowup of) $X$. Since the four contact orders are $ke_1, ke_2, 0 , -ke_1-ke_2$, this point must map to both $D_1$ and $D_2$. But $\gamma_4 = [\pt]_\P^2$. Taking any point outside of $D_1 \cap D_2$ as a representative, we see this component of $\gl^* [\Mpt_\Lambda(X)]^\vir \cap \ev_4^* \gamma_4$ is supported on an empty moduli space, and is hence $0$.
\end{proof}

So it remains to compute the contribution from the component where $\beta_+ = 0$, and the contact order $b$ along the new edge (oriented from $q_+$ to $q_-$) is $-ke_0$. To spell out the discrete data, we have
\begin{align*}
\Xi_+&=\bigl(g=1,n=2,\ \beta=0,\ \{ke_0,\,-ke_0\}\bigr)\\
\Xi_-&=\bigl(g=1,n=4\ \beta=kH,\ \{ke_1,\,ke_2,\,0,\,ke_0\}\bigr).
\end{align*}
The tropical evaluation maps are already integral, hence there is no need to blowup, and we have $m_\Xi = 1$, and $E = E_{\Xi;i} = D_0 = \P^1$. The contribution $c_+^\Xi c_-^\Xi$ is $k\cdot 1$. After capping with respectively $\gamma_1$ and $\gamma_2,\gamma_3,\gamma_4$ the virtual dimensions of the two parts are by \ref{eq:dimRef} $0$ and $1$. The diagonal of $\P^1$ has Chow--K\"unneth decomposition $\mathrm{pt} \tensor \P^1 + \P^1 \tensor \mathrm{pt}$, and by dimension counts only the term $\P^1 \tensor \mathrm{pt}$ contributes. All in all the formula \ref{thm:loggluingkunneth} reduces to
\begin{equation}
\label{eq:kunnethcomputationintermediate}
	\gl^* \LogGW(X;\Lambda,\gamma) = k\LogGW(X;\Xi_+,\gamma_+) \boxtimes \LogGW(X;\Xi_-,\gamma_-)
\end{equation}
where
\[
	\gamma_+ = ([\mathrm{pt}]_{D_0},1), \gamma_- = ([\mathrm{pt}]_{D_1},\,1,\,[\mathrm{pt}]_{\P^2},[\mathrm{pt}]_{D_0}).
\]

We will now compute both terms separately.

\subsection{The pierced logarithmic Gromov--Witten invariant}
The first invariant has discrete data
\[\Xi_+=\bigl(g=1,n=2,\ \beta=0,\ \{ke_0,\,-ke_0\}\bigr),\]
and is hence a pierced logarithmic Gromov--Witten invariant (although in this simple case by \ref{prop:comparisonbnr} it is up to a factor $k$ equal to a refined log invariant from \cite{BNR}).
As a moduli space, $M_\Xi(X)$ is isomorphic to $\LogDRL_1(k,-k) \times D_0$, the product of the logarithmic double ramification locus and $D_0$ (remembering the image of the map). The obstruction theory however is somewhat different, given by $\pi_* T_\log X$, and hence the virtual structure is given by the log double double ramification cycle $\LogDDR_1((k,-k),(0,0)) \times D_0$. We compute the virtual fundamental class is $-\frac1k \LogDR_1(k,-k) \cdot \lambda_1 \times [D_0]$. Here the factor $1/k$ comes from the tropical virtual fundamental class $\ell_1 \ell_+ r_+$, where $r_+$ is $\frac{T}{k}$, with $T$ measuring the distance from the pierced leg to the wall.

\begin{proposition}
\label{prop:theexamplepierced}
We have \[\LogGW(X;\Xi_+,\gamma_+) = -\frac 1k \LogDR_1(k,-k) \lambda_1 = -\frac{k}{24} [\mathrm{pt}].\]
\end{proposition}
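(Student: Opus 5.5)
The plan is to compute the class as a product of a tropical factor and a logarithmic double double ramification cycle, and then integrate. The first step identifies the moduli space. Since $\beta_+=0$, every stable pierced map with discrete data $\Xi_+$ is constant on underlying schemes, with image a point of $D_0$. The only non-trivial data are therefore the logarithmic lift along $D_0$ and a trivial lift in the remaining directions, so $\Mpt_{\Xi_+}(X)\cong \Mpt_{\LogDRL}\times D_0$. Here the first factor is the pierced analogue of the log DR locus for contact orders $(k,-k)$ in genus $1$.

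The second step computes the virtual class via \ref{def:virtual_classes}: we apply $\trop^!$ to the homological piecewise polynomial $\ell_1\ell_+ r_+$. The relative obstruction theory is $\pi_*f^*T^\log_X$. Since $T^\log_X\cong\mathcal O^2$ is trivial, this is $\pi_*\mathcal O_C^{2}$, so its obstruction part contributes $c_{top}(\mathbb E^\vee\otimes\mathcal O^2)$ corrected for the direction already accounted for. Split $T^\log_X$ into the $D_0$-normal direction and the tangent direction of $D_0$.

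The normal direction, combined with the tropical class, should give the log double ramification cycle. Concretely, $\ell_1\ell_+$ is the fundamental class of the pierced-curve directions, and $r_+$ is the distance from the tip of the pierced leg to the wall, which is $T/k$, where $T$ is the edge length cutting out the DR condition. Pulling $\ell_1\ell_+ T$ back reproduces the BNR-type refined class, so by \ref{prop:comparisonbnr} (up to the factor $k$) it equals $\LogDR_1(k,-k)$, and dividing by $k$ gives the factor $\frac1k$.

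The tangent direction of $D_0$ is a trivial line with no log structure, so it contributes the Euler class of $R^1\pi_*\mathcal O$, namely $(-1)^{g}\lambda_g=-\lambda_1$, times $[D_0]$. Altogether the virtual class is $-\frac1k\LogDR_1(k,-k)\lambda_1\times[D_0]$.

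The third step caps with $\ev_1^*[\mathrm{pt}]_{D_0}$, which kills the $[D_0]$ factor, and pushes forward to $\Mpt^\st_{1,2}$ to give $-\frac1k\LogDR_1(k,-k)\lambda_1$. Finally, on $\overline{\M}_{1,2}$ the standard formula gives $\DR_1(k,-k)=k^2\psi_1-\ldots$, and integrating against $\lambda_1$ yields $\int \DR_1(a,-a)\lambda_1=\frac{a^2}{24}$. The log DR agrees with the DR here after pushforward, because $\lambda_1$ is pulled back. This gives $-\frac1k\cdot\frac{k^2}{24}=-\frac{k}{24}$.

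The main obstacle is the second step: justifying rigorously that the splitting of the obstruction theory into the normal direction (giving DR via the tropical $r_+$ factor) and the tangent direction (giving $-\lambda_1$) is compatible with the log Gysin pullback. I would first try realising $T^\log_X|_{D_0}=\mathcal O\oplus T_{D_0}$ and using \ref{prop:potpullbackfunctorial} to factor $\trop^!$ through the target $D_0$ with its induced log structure.
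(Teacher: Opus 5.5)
Your proposal is correct and follows essentially the same route as the paper. You identify the space with $\LogDRL_1(k,-k)\times D_0$ and read off the virtual class $-\frac1k\LogDR_1(k,-k)\lambda_1\times[D_0]$ from the obstruction theory $\pi_*T^{\log}_X\cong\pi_*\mathcal O_C^{2}$ together with $r_+=T/k$; the paper packages your normal/tangent splitting as $\LogDDR_1((k,-k),(0,0))\times D_0$ rather than going through \ref{prop:comparisonbnr}, and then uses the same integration: $\DR_1(k,-k)=k^2\psi_1-\lambda_1$, $\psi_1\lambda_1=\frac1{24}$ and $\lambda_1^2=0$ give $-\frac1k\cdot\frac{k^2}{24}=-\frac{k}{24}$.
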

\begin{proof}
This follows from the classical computation $\DR_1(k,-k)= k^2 \psi_1 - \lambda_1$ and $\psi_1 \lambda_1 = \frac{1}{24}$.
\end{proof}

\subsection{The classical logarithmic Gromov--Witten invariant}

Now we will consider the second invariant. To keep the notation in this subsection compact and readable, we reorder the points, and rename them $q_0, q_1, q_2, q_3$. Then the discrete data is
\[
\Xi_-=\bigl(g=1,n=4\ \beta=kH,\ \{ke_0, ke_1, ke_2, 0\}\bigr)
\]
and insertions
\[\gamma_- = ([\mathrm{pt}]_{D_0}, [\mathrm{pt}]_{D_1},\,1,\,[\mathrm{pt}]_{\P^2}).\]

One could use \cite{MandelRuddat2020LogGWToric} to determine this invariant; however, we give a short, more self-contained proof, based on techniques from \cite{Ranganathan2023Logarithmic-Gromov-Witten}. There log Gromov--Witten invariants of toric varieties are expressed as intersections of logarithmic double ramification cycles multiplied by a piecewise polynomial function. We will first express our invariant in such a way, effectively summarising a special case of \cite[Theorem~B]{Ranganathan2023Logarithmic-Gromov-Witten}, and then compute the resulting expression.

We will first determine the virtual fundamental class multiplied with the point insertions at $q_0,q_1$.

First, the moduli space $M_{\Xi_-}(X)$ naturally maps to the logarithmic double double ramification locus $S = \LogDDRL((k,0,-k,0),(0,k,-k,0))$. Over $S$ there is a $\G_\log^2$-torsor of trivialisations, and $M_{\Xi_-}(X)$ is a blowup of this torsor. This torsor has a section, by asking for $\ev_0(q_i) = 1$ inside $D_i = \P^1$ for $i = 0,1$. Then we see $[M_{\Xi_-}(X)]^\vir \cap \ev_0^* \gamma_0 \cap \ev_1^* \gamma_1$ is represented by the pushforward of the virtual fundamental class $[S]^\vir$ along this torsor section.

Then we get a composition $\ev_3: S \to \G_\log^2$, and we need to compute the virtual degree of this map, i.e., $[S]^\vir \cap \ev_3^* \gamma_3$. For this, we first set up some notation. We denote the rays of $\P^2$ by $\rho_0, \rho_1, \rho_2$, and their corresponding PL functions by $\phi_0, \phi_1, \phi_2$. Then we take the representative of $\gamma_3$ given by the PP function $\phi_0 \phi_1$, we write $\phi = \ev_3^* \phi_0 \phi_1$ and the invariant we are after is $[S]^\vir \cap \phi$. That is exactly the result of \cite[Theorem~B]{Ranganathan2023Logarithmic-Gromov-Witten}; now we must compute this intersection product.

Tropically, the map $S \to \G_\log^2$ is given by the universal curve $S$ over $M = \LogDDRL^t((k,0,-k),(0,k,-k)) \subset \Mpt_{1,3}^t$ mapping to $\G_\log^2$ with slopes $(k,0),(0,k),(-k,-k)$ at the legs. Here the legs corresponding to $q_0,q_1$ lie on the rays $\rho_0,\rho_1$, but $q_2$ need not. An example of such a tropical curve, i.e. the tropicalisation of a fiber of $S/M$, is shown in \ref{fig:tropE}.

\tikzset{
  midarr/.style={postaction={decorate,decoration={markings,
      mark=at position #1 with {\arrow{Stealth[length=3.0mm]}}}}},
  midarr/.default=0.58,
  curve/.style={blue,line width=1.45pt},
  edgeE/.style={blue,line width=3.2pt},
  ray/.style={black,line width=0.6pt},
}

\begin{figure}
\centering
\begin{tikzpicture}[scale=1.5]

\coordinate (O)  at (0,0);
\coordinate (V1) at (0,1);
\coordinate (V2) at (1,0);
\coordinate (V3) at (-1,-1);
\coordinate (P1) at (0,2.1);
\coordinate (P2) at (2.1,0);
\coordinate (P3) at (-2.1,-2.1);

\draw[ray] (O) -- (0,2.9)   node[above] {$\rho_1$};
\draw[ray] (O) -- (2.9,0)   node[right] {$\rho_0$};
\draw[ray] (O) -- (-2.6,-2.6) node[below left] {$\rho_2$};

\draw[curve,midarr] (V1) -- (P1);
\draw[curve,midarr] (V2) -- (P2);
\draw[curve,midarr] (V3) -- (P3);

\draw[edgeE,midarr] (V1) -- (V2);
\draw[curve,midarr] (V2) -- (V3);
\draw[curve,midarr] (V3) -- (V1);

\fill[blue] (V1) circle (2.0pt);
\fill[blue] (V2) circle (2.0pt);
\fill[blue] (V3) circle (2.0pt);
\fill (O) circle (1.4pt);


\node[right=2pt] at (0,1.60)      {$(0,3)$};
\node[above=2pt] at (1.60,0)      {$(3,0)$};
\node[above left=-1pt] at (-1.6,-1.6) {$(-3,-3)$};

\node[blue,below left=0pt] at (0.55,0.55) {$\mathbf{E}$};
\node[above right=0pt]     at (0.55,0.60) {$(1,-1)$};
\node[below=4pt]        at (0.05,-0.5)    {$(-2,-1)$};
\node[left=3pt]         at (-0.5,0)    {$(1,2)$};

\end{tikzpicture}
\caption{A tropical curve mapping to $\P^2$, with the slopes at each edge given. This appears as the tropicalisation of a fiber of $S/M$.}
\label{fig:tropE}
\end{figure}

We see the piecewise polynomial $\phi_0\phi_1$ is non-zero on exactly the interior of the positive orthant. This vanishes everywhere, unless there is an edge connecting the vertices incident to $q_0, q_1$. The tropical curves that meet this interior are all given by curves of topological type as pictured in \ref{fig:tropE}, but with slopes $(a,-b),(a-k,-b),(a,k-b)$ along the inner triangle (starting at the marked edge $E$), where $a, b, c = k - a - b > 0$. Then $\phi$ vanishes everywhere except on the interior of the edge $E$ connecting the vertices $q_0, q_1$. Since this class is $0$-zero dimensional (it is supported on the singular locus of $S$, over the singular curves in $M$), its virtual fundamental class and fundamental class agree.

We now fix $(a,b,c)$ summing to $k$, and a logarithmic curve $(C,f) \in M$ whose tropicalisation $f^t: C^t \subset \R^2$ has slopes as described above. We first note that because the slopes along $E$ are $a,b$, the pullback is $ab[E]$. Next, fixing an identification $\psi: C^t \isom \R/\ell \R$ where $\ell$ of the loop and $\psi(q_2) = 0$ and using that $C^t$ embeds into $\R^2$ with given slopes, we find $\psi(q_0) = \frac{a}{k} \ell$ and $\psi(q_1) = -\frac{b}{k} \ell = \frac{a+c}{k} \ell$. Then $E$ is the edge between $\psi(q_0)$ and $\psi(q_1)$ of length $\frac{c}{k} \ell$, and the pushforward $\pi_* [E]$ along $S/M$ gives the $\frac{c}{k} [(C,f)] \in H_0(M)$.

It remains to count how many logarithmic curves in $M$ have tropicalisation with these slopes. As we saw, the slopes uniquely determine the image of the torsion points $q_0, q_1$ in the tropicalisation of the curve as $\frac{a}{k} \ell, -\frac{b}{k} \ell$ (when $q_2$ is taken of the origin as the log elliptic curve), and whenever these are the tropicalisations, there is piecewise linear function $C^t \to \R^2$ with these slopes. So the answer is the square of the number of lifts from a $k$-torsion point of $C^t$ to a $k$-torsion point of $C$, for $C$ the unique non-smooth log elliptic curve. The respective $k$-torsion groups of $C,C^t$ have cardinality $k^2,k$, so the total answer is $\left(\frac{k^2}{k}\right)^2 = k^2$.

All in all, we find the following proposition.
\begin{proposition}
\label{prop:theexample:classic}
We have \[\LogGW(X;\Xi_-,\gamma_-) = k \sum_{a+b+c = k} abc [\mathrm{pt}] = k\binom{k+2}{5} [\mathrm{pt}].\]
\end{proposition}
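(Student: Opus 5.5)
The plan is to assemble the local computations carried out just before the statement into a single sum over tropical types, and then evaluate that sum combinatorially. First, by the reduction above (the torsor section imposing $\ev(q_i)=1$ in $D_i$ for $i=0,1$, followed by \cite[Theorem~B]{Ranganathan2023Logarithmic-Gromov-Witten}), the invariant equals the degree of $[S]^\vir\cap\phi$ with $\phi=\ev_3^*\phi_0\phi_1$. I will argue that this class is supported on the finitely many points of $M$ whose tropicalisation has the triangle type of \ref{fig:tropE}. Such a type has inner slopes $(a,-b),(a-k,-b),(a,k-b)$ with $a,b,c=k-a-b\geq 1$. The reason is that $\phi_0\phi_1$ vanishes off the interior of the positive orthant, and only the edge $E$ joining the vertices carrying $q_0,q_1$ can meet that interior. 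Since this locus is zero-dimensional and lies in the singular locus, the virtual and ordinary fundamental classes agree there, so the computation becomes a purely local count at each such point.

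Second, for a fixed triple $(a,b,c)$ I multiply three local factors. The restriction of $\phi$ to the universal curve is $ab[E]$, because the two coordinates grow with slopes $a$ and $b$ along $E$. Next, pushing forward along the universal curve $S\to M$ gives the fraction $c/k$. This follows from the positions $\psi(q_0)=\tfrac ak\ell$ and $\psi(q_1)=\tfrac{a+c}{k}\ell$ on the loop of length $\ell$, so that $E$ has length $\tfrac ck\ell$. Finally, the number of logarithmic curves in $M$ realising this tropical type is $(k^2/k)^2=k^2$. To justify this count I will identify the realisations with pairs of $k$-torsion points on the nodal log elliptic curve that lift the prescribed tropical $k$-torsion points; the $k$-torsion groups of $C$ and $C^t$ have orders $k^2$ and $k$. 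Multiplying gives $ab\cdot\tfrac ck\cdot k^2=k\,abc$ for each triple. Summing over types then yields $k\sum_{a+b+c=k}abc\,[\mathrm{pt}]$.

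Third, the closed form follows from a generating function. Since $\sum_{a\ge1}a x^a=x/(1-x)^2$, the sum $\sum_{a+b+c=k,\ a,b,c\geq1}abc$ is the coefficient of $x^k$ in $x^3(1-x)^{-6}$. That coefficient is $\binom{(k-3)+5}{5}=\binom{k+2}{5}$.

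I expect the main obstacle to be the realisability count $k^2$, together with the claim that each realisation contributes multiplicity one. Two things must be checked: that the two torsion lifts can be chosen independently, and that each resulting log curve and map is unobstructed with trivial automorphisms. The approach I would try first is to describe the fibre of $M$ over the triangle cone explicitly, as $\LogDDRL$ restricted to the nodal log elliptic curve. On that fibre, the conditions $\OO(kq_0-kq_2)$ and $\OO(kq_1-kq_2)$ trivial become torsion conditions in the generalised Jacobian $\G_m$ extended by its tropical part. I would then verify transversality directly on this description.
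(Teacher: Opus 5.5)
Your proposal follows the paper's argument essentially step for step: you reduce to $[S]^\vir\cap\ev_3^*\phi_0\phi_1$, localise to the triangle types with $a,b,c\geq 1$, and multiply the same three local factors $ab$, $c/k$ and $k^2$ to get $k\,abc$ per type. The only additions are your generating-function derivation of $\sum_{a+b+c=k}abc=\binom{k+2}{5}$, which the paper states without proof, and the transversality check you propose; the paper likewise asserts the realisation count $k^2$ with multiplicity one without spelling that check out.
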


\begin{proof}[Proof of \ref{prop:theexample}]
This now follows directly from \ref{eq:kunnethcomputationintermediate} and \ref{prop:theexample:classic,prop:theexamplepierced}.
\end{proof}

\bibliographystyle{alpha}
\bibliography{prebib}

\end{document}